\newcommand{\g}{\mathfrak{g}}
\newcommand{\gaff}{\mathfrak{\hat{g}}}
\newcommand{\aff}{\mathrm{aff}}
\newcommand{\Uq}{U_{q}(\mathfrak{g})}
\newcommand{\Uaff}{U_{q}(\mathfrak{\hat{g}})}
\newcommand{\Udash}{U'_{q}(\mathfrak{\hat{g}})}
\newcommand{\Imin}{I_{\mathrm{min}}}
\newcommand{\Pcal}{\mathcal{P}}
\newcommand{\Ycal}{\mathcal{Y}}
\newcommand{\Zcal}{\mathcal{Z}}
\newcommand{\U}{\mathcal{U}}
\newcommand{\Fcal}{\mathcal{F}}
\newcommand{\Ccal}{\mathcal{C}}
\newcommand{\pb}{\mathbf{p}}
\newcommand{\sbold}{\mathbf{s}}
\newcommand{\Vaff}{V_{\mathrm{aff}}}
\newcommand{\Laff}{L_{\mathrm{aff}}}
\newcommand{\Baff}{B_{\mathrm{aff}}}
\newcommand{\Haff}{H_{\mathrm{aff}}}
\newcommand{\Caff}{C_{\mathrm{aff}}}
\newcommand{\Zbb}{\mathbb{Z}}
\newcommand{\Nbb}{\mathbb{N}}
\newcommand{\Cbb}{\mathbb{C}}
\newcommand{\Qbb}{\mathbb{Q}}
\newcommand{\Pbar}{\overline{P}}
\newcommand{\et}{\Tilde{e}}
\newcommand{\ft}{\Tilde{f}}
\newcommand{\wt}{\mathrm{wt}}
\newcommand{\wpr}{\bigwedge^{r}\Vaff}
\newcommand{\vac}[1]{\langle {#1}\vert}
\newcommand{\Eaff}[1]{E_{#1}^{(1)}}
\newcommand{\xa}{x_{\alpha}}
\newcommand{\xb}{x_{\beta}}
\newcommand{\xt}{x_{\theta}}
\newcommand{\xmt}{x_{-\theta}}
\newcommand{\sign}{\mathrm{sign}}
\newcommand{\presign}{\mathrm{pre{\hbox{-}}sign}}
\numberwithin{equation}{section}
\newcommand\blfootnote[1]{%
  \begingroup
  \renewcommand\thefootnote{}\footnote{#1}%
  \addtocounter{footnote}{-1}%
  \endgroup
}
\colorlet{lightgreen}{white!55!green}
\colorlet{verylightgreen}{white!75!green}
\colorlet{lightorange}{white!65!orange}
\colorlet{verylightorange}{white!65!orange}
\colorlet{darkgreen}{black!10!green}
\colorlet{lightcyan}{white!75!cyan}
\def\cell#1{%
\ifdim#1pt=0pt\cellcolor{white}\else
\ifdim#1pt=1pt\cellcolor{verylightgreen}\else
\ifdim#1pt=2pt\cellcolor{lightorange}\else
\ifdim#1pt=-1pt\cellcolor{verylightred}\else
\cellcolor{lightcyan}\fi\fi\fi\fi
#1}
\definecolor{electricpurple}{rgb}{0.75, 0.0, 1.0}
\definecolor{shockingpink}{rgb}{0.99, 0.06, 0.75}
\definecolor{saddlebrown}{rgb}{0.55, 0.27, 0.07}
\definecolor{yellow(ncs)}{rgb}{1.0, 0.83, 0.0}
\colorlet{col0}{black}
\colorlet{col1}{red}
\colorlet{col2}{yellow(ncs)}
\colorlet{col3}{green!75!black}
\colorlet{col4}{electricpurple}
\colorlet{col5}{blue}
\colorlet{col6}{orange}
\colorlet{col7}{shockingpink}
\colorlet{col8}{saddlebrown!80!white}
\newcommand{\cross}{\text{\ding{54}}}
\renewcommand{\circle}{\text{\ding{108}}}
\renewcommand{\star}{\text{\ding{72}}}
\renewcommand{\square}{\text{\ding{110}}}
\lstdefinelanguage{Sage}[]{Python}
{morekeywords={True,False,sage,singular},
sensitive=true}
\definecolor{dblackcolor}{rgb}{0.0,0.0,0.0}
\definecolor{dbluecolor}{rgb}{.01,.02,0.7}
\definecolor{dredcolor}{rgb}{0.8,0,0}
\definecolor{dgraycolor}{rgb}{0.30,0.3,0.30}
\begin{document}

\newtheoremstyle{thmstyleone}
  {\topsep}
  {\topsep}
  {\itshape}
  {0pt}
  {\bfseries}
  {}
  {.5em}
  {\thmname{#1}\hspace{.3em}\thmnumber{#2.}}

\newtheoremstyle{thmstyletwo}
  {\topsep}
  {\topsep}
  {\normalfont}
  {0pt}
  {\itshape}
  {}
  {.5em}
  {\thmname{#1}\hspace{.3em}\thmnumber{#2.}}

\newtheoremstyle{thmstylethree}
  {\topsep}
  {\topsep}
  {\normalfont}
  {0pt}
  {\bfseries}
  {}
  {.5em}
  {\thmname{#1}\hspace{.3em}\thmnumber{#2.}}

\theoremstyle{thmstyleone}
\newtheorem{thm}{Theorem}[section]
\newtheorem{prop}[thm]{Proposition}
\newtheorem{lem}[thm]{Lemma}
\newtheorem{cor}[thm]{Corollary}

\theoremstyle{thmstyletwo}
\newtheorem{eg}[thm]{Example}
\newtheorem{rmk}[thm]{Remark}

\theoremstyle{thmstylethree}
\newtheorem{defn}[thm]{Definition}

\raggedbottom

\title{Young wall realizations of level 1 irreducible highest weight and Fock space crystals of quantum affine algebras in type E}
\author{Duncan Laurie}
\affil{\normalsize{Mathematical Institute, University of Oxford, Andrew Wiles Building,} \\ \normalsize{Woodstock Road, Oxford, OX2 6GG, United Kingdom.}}
\date{}

\maketitle\blfootnote{E-mail: \url{duncan.laurie@maths.ox.ac.uk}}\blfootnote{ORCID: 0009-0006-9331-4835.}\blfootnote{2020 \emph{Mathematics subject classification}: 05E10, 17B37, 20G42.}\blfootnote{Key words and phrases: quantum affine algebra, crystal basis, Fock space representation, Young wall realization, perfect crystal, energy function.}

\vspace{-40pt}

\begin{abstract}
    \noindent
    We construct Young wall models for the crystal bases of level $1$ irreducible highest weight representations and Fock space representations of quantum affine algebras in types $E_{6}^{(1)}$, $E_{7}^{(1)}$ and $E_{8}^{(1)}$.
    In each case, Young walls consist of coloured blocks stacked inside the relevant Young wall pattern which satisfy a certain combinatorial condition.
    Moreover the crystal structure is described entirely in terms of adding and removing blocks.
\end{abstract}

\tableofcontents

\setlength{\parindent}{0pt}

\section{Introduction} \label{Introduction}

Originally introduced by Lusztig \cite{Lusztig90} in the finite $ADE$ case and Kashiwara \cite{Kashiwara90} in classical types, and then extended to all symmetrizable types in \cite{Kashiwara91}, the theory of crystal bases provides a powerful tool for studying representations of quantum groups.
Crystal bases retain much of the structural information of their corresponding representations, and their combinatorial description frequently enables us to reduce challenging questions in representation theory -- such as the decomposition of tensor products -- to far more tractable problems in combinatorics.
\\

It is often possible to obtain concrete realizations of crystal bases, which shed light on the structure of the representations.
Producing such combinatorial models is therefore an important problem in the representation theory of quantum groups.
A key goal in this direction is to construct realizations of the crystal bases of irreducible integrable highest weight representations, since these form the connected components of the crystal bases of all integrable representations.
\\

Young wall and Young tableau models are a particularly significant class of combinatorial realizations.
Here, the vertices of a crystal are represented by stackings of coloured blocks inside certain patterns, and arrows correspond simply to adding or removing a block.
In the case of finite quantum groups, Kashiwara-Nakashima \cite{KN94} described the crystal bases of finite dimensional $\Uq$-modules in all non-exceptional types in terms of (generalised) Young tableaux.
For quantum affine algebras, the theory of perfect crystals and energy functions developed by Kang et al. \cites{KKMMNN92,KKMMNN92(2)} provides a \textit{path realization} of irreducible highest weight crystals $B(\lambda)$.
This was then used by Kang \cite{Kang03} to build Young wall models for the level $1$ irreducible highest weight crystals in types $A_{n}^{(1)}$, $A_{2n-1}^{(2)}$, $A_{2n}^{(2)}$, $B_{n}^{(1)}$, $D_{n}^{(1)}$ and $D_{n+1}^{(2)}$.
Similar constructions were later obtained by Hong-Kang-Lee \cite{HKL04} in type $C_{n}^{(1)}$, and more recently by Fan-Han-Kang-Shin \cite{FHKS23} in types $G_{2}^{(1)}$ and $D_{4}^{(3)}$.
\\

Following initial work by Kashiwara, Miwa and Stern \cites{KMS95,Stern95} in type $A$, Kashiwara-Miwa-Petersen-Yung \cite{KMPY96} gave a semi-infinite wedge construction of Fock space representations $\Fcal(\lambda)$ for quantum affine algebras.
For a $\Udash$-module $V$ satisfying various technical conditions, one considers the semi-infinite limit $\lim_{r\rightarrow\infty} \wpr$ of the $q$-exterior powers of its affinization, taken along a certain ground state vector.
This space can be endowed with the structure of a $\Uaff$-module, and is called a Fock space representation.
Among other things, Kashiwara et al. described the crystal bases $B(\Fcal(\lambda))$ of these representations in terms of the crystal basis $B$ of $V$ and its energy function $H$, and gave the decomposition of $\Fcal(\lambda)$ as a sum of irreducible representations.
Kashiwara \cite{Kashiwara02} later proved that certain \textit{good} $\Udash$-modules possess the properties required to produce Fock spaces via this process, and moreover obtained global bases for the ensuing representations.
Furthermore, just as level $1$ irreducible highest weight crystals $B(\lambda)$ have found realizations in terms of Young walls, so too have Fock space crystals.
Indeed, Kang and Kwon \cites{KK04,KK08} obtained such models for $B(\Fcal(\lambda))$ in types $A_{n}^{(1)}$, $A_{2n-1}^{(2)}$, $A_{2n}^{(2)}$, $B_{n}^{(1)}$, $D_{n}^{(1)}$ and $D_{n+1}^{(2)}$.
\\

The purpose of this paper is to construct Young wall models for both the level $1$ irreducible highest weight crystals $B(\lambda)$ and Fock space crystals $B(\Fcal(\lambda))$ of quantum affine algebras in types $\Eaff{6}$, $\Eaff{7}$ and $\Eaff{8}$.
Our starting point in each case is a Young column model for a certain level $1$ perfect crystal and its affinization.
Then for the irreducible highest weight crystals, we derive a Young wall model for $B(\lambda)$ using the path realization.
For the Fock space crystals, we first prove that our level $1$ perfect crystals are the crystal bases of good $\Udash$-modules, and then we use the description of $B(\Fcal(\lambda))$ provided by \cites{KMPY96,Kashiwara02}.
\\

This paper is organised as follows.
Section \ref{Preliminaries} establishes our notational conventions for quantum affine algebras, and outlines the necessary preliminaries regarding their representations and crystal bases.
Generalising to the abstract notion of an affine or classical crystal, we recall the definitions of perfect crystals and energy functions, and present the path realization of the irreducible highest weight crystals $B(\lambda)$.
We define the notion of a good $\Udash$-module in the sense of Kashiwara \cite{Kashiwara02}, and give the construction of the Fock space representations of $\Uaff$ together with their global and crystal bases.
\\

In Section \ref{Perfect crystal section} we introduce level $1$ perfect crystals in each of the types $\Eaff{6}$, $\Eaff{7}$ and $\Eaff{8}$ and give descriptions of their energy functions.
For types $\Eaff{6}$ and $\Eaff{7}$ we use the crystal basis of a level $0$ fundamental representation associated to a minuscule node of the affine Dynkin diagram.
However, for type $\Eaff{8}$ we require the uniform construction of Benkart-Frenkel-Kang-Lee \cite{BFKL06}.
We then provide Young column realizations for each of these crystals and their affinizations as (equivalence classes of) stackings of blocks inside a corresponding Young column pattern.
These patterns are formed by splitting each cube within an infinite strip of unit cubes into blocks, according to a collection of vertical cuts.
\\

In Section \ref{Irreducible highest weight section} we derive Young wall models for the level $1$ irreducible highest weight crystals.
We define the structure of an affine crystal on a set of \textit{reduced} Young walls, which are particular stackings of blocks inside a Young wall pattern that stabilise to a certain ground state wall.
In order to better understand their structure, we prove that these reduced Young walls satisfy a \textit{right block property} and are built on top of the ground state wall.
We then prove that this Young wall crystal is isomorphic to the path realization of $B(\lambda)$, reinterpreting the energy function condition on adjacent elements of a path as a more combinatorial condition on adjacent columns in a Young wall.
\\

Section \ref{Fock space section} constructs Young wall models for the crystal bases of level $1$ Fock space representations $\Fcal(\lambda)$.
Using our Young column realizations from Section \ref{Perfect crystal section}, we once again define a certain set of Young walls inside the relevant Young wall pattern which stabilise to the ground state wall.
Endowing this set with an affine crystal structure as in Section \ref{Irreducible highest weight section}, we obtain a crystal which provides a combinatorial model for the Fock space crystal $B(\Fcal(\lambda))$.
We then investigate the structure of these Young walls in more detail, proving a right block property (which is slightly weakened in type $\Eaff{8}$) and giving a combinatorial condition for a Young wall to lie inside our Fock space model.
\\

\textbf{Acknowledgements.}
The author would like to thank Prof. Kevin McGerty for helpful discussions and support during the preparation of this paper.
This research was financially supported by the Engineering and Physical Sciences Research Council [grant number EP/T517811/1].

\section{Preliminaries} \label{Preliminaries}

Throughout this paper we shall consider an untwisted affine Kac-Moody algebra $\gaff$ with Cartan matrix $A = (a_{ij})_{i,j\in I}$ and index set $I = \lbrace 0,\dots,n\rbrace$.
Its Cartan subalgebra $\hat{\mathfrak{h}}$ has a basis consisting of the simple coroots $\lbrace h_{i} ~\vert~ i\in I\rbrace$ together with a scaling element $d$.
The corresponding simple roots $\alpha_{i}$ and fundamental weights $\Lambda_{i}$ for each $i\in I$ lie inside the dual space $\hat{\mathfrak{h}}^{*}$.
\\

Each node $i\in I$ of the Dynkin diagram associated to $\gaff$ has numerical labels $a_{i}$ and $a_{i}^{\vee}$ as given in \cite{Kac90}*{Chapter 4}, from which we can define the null root $\delta = \sum_{i\in I} a_{i}\alpha_{i}$ and canonical central element $c = \sum_{i\in I} a_{i}^{\vee}h_{i}$.
The affine Cartan matrix $A$ is symmetrized by the diagonal matrix $D = \mathrm{diag}(d_{0},\dots,d_{n})$ with each $d_{i} = a_{i}^{\vee}a_{i}^{-1}$, which is to say that the product $DA$ is symmetric.
\\

A vertex $i\in I$ is minuscule if it is sent to $0$ by some automorphism of the affine Dynkin diagram (or equivalently if $a_{i} = 1$), and we denote the set of minuscule nodes by $\Imin$.
An automorphism is inner if it fixes the $0$ vertex, and thus restricts to an automorphism of the finite Dynkin diagram.
The outer automorphism group $\Omega$ is then the quotient of the usual automorphism group by the subgroup of inner automorphisms, and therefore has elements indexed by $\Imin$.
In particular, for each $i\in\Imin$ we let $\pi_{i}$ be the corresponding element of $\Omega$, which is uniquely determined by the condition $\pi_{i}(0) = i$.
\\

The natural pairing between $\hat{\mathfrak{h}}$ and $\hat{\mathfrak{h}}^{*}$ is given by
$\langle\Lambda_{i},h_{j}\rangle = \delta_{ij}$,
$\langle\Lambda_{i},d\rangle = \langle\delta,h_{j}\rangle = 0$
and $\langle\delta,d\rangle = 1$,
which allows us to define the following:
\begin{itemize}
    \item the affine weight lattice $P = \bigoplus_{i\in I}\Zbb\Lambda_{i}\oplus\Zbb\delta$,
    \item the dual lattice $P^{\vee} = \bigoplus_{i\in I}\Zbb h_{i}\oplus\Zbb d$,
    \item the set of dominant affine integral weights $P^{+} = \bigoplus_{i\in I}\Nbb\Lambda_{i}\oplus\Zbb\delta$,
    \item the classical weight lattice $\Pbar = \bigoplus_{i\in I}\Zbb\Lambda_{i}$ which can be viewed both as a sublattice and a quotient of $P$,
    \item the set of dominant classical integral weights $\Pbar^{+} = \bigoplus_{i\in I}\Nbb\Lambda_{i}$.
\end{itemize}
The affine Weyl group $W = \langle s_{i} ~\vert~ i\in I\rangle$ acts on both $P$ and $\Pbar$ via $s_{i}(x) = x - \langle x,h_{i}\rangle\alpha_{i}$.
The level of an affine or classical weight $\lambda$ is given by its pairing $\langle\lambda,c\rangle$ with the canonical central element, and is invariant under the Weyl group action.
\\

The $q$-integers, $q$-factorials and $q$-binomial coefficients are defined as
\begin{align*}
    [s] = \frac{q^{s}-q^{-s}}{q-q^{-1}},
    \quad
    [s]! = \prod_{\ell=1}^{s} [\ell],
    \quad
    \begin{bmatrix}{s}\\ {r}\end{bmatrix} = \frac{[s]!}{[s-r]!\,[s]!},
\end{align*}
respectively for all non-negative integers $s\geq r$.
More generally, we furthermore let $\{x\} = (x - x^{-1})/(q - q^{-1})$ and
\begin{align*}
    \begin{Bmatrix}
        x \\
        r
    \end{Bmatrix}
    =
    \frac{\{x\}\{q^{-1}x\}\dots\{q^{1-r}x\}}{[r]!}.
\end{align*}

Our affine Lie algebras $\gaff$ shall be of simply laced type throughout, and so we can define the quantum affine algebra $\Uaff$ to be the unital associative $\Qbb(q)$-algebra generated by $e_{i}$, $f_{i}$ ($i\in I$) and $q^{h}$ ($h\in P^{\vee}$) subject to the relations
\begin{itemize}
    \item $\displaystyle q^{0} = 1, \quad q^{h}q^{h'} = q^{h+h'}$,
    \item $\displaystyle q^{h}e_{i}q^{-h} = q^{\langle\alpha_{i},h\rangle}e_{i}, \quad q^{h}f_{i}q^{-h} = q^{-\langle\alpha_{i},h\rangle}f_{i}$,
    \item $\displaystyle [e_{i},f_{i}] = \frac{\delta_{ij}}{q-q^{-1}} (k_{i} - k_{i}^{-1})$,
    \item $\displaystyle \sum_{s=0}^{1-a_{ij}} (-1)^{s} e_{i}^{(s)} e_{j} e_{i}^{(1-a_{ij}-s)} = 0$ whenever $i\not= j$,
    \item $\displaystyle \sum_{s=0}^{1-a_{ij}} (-1)^{s} f_{i}^{(s)} f_{j} f_{i}^{(1-a_{ij}-s)} = 0$ whenever $i\not= j$,
\end{itemize}
where $e_{i}^{(s)} = e_{i}^{s}/[s]!$, $f_{i}^{(s)} = f_{i}^{s}/[s]!$ and $k_{i} = q^{h_{i}}$ for each $i\in I$.
We let $\Udash$ be the subalgebra generated by all $e_{i}$, $f_{i}$ and $k_{i}^{\pm 1}$, which can alternatively be obtained by replacing $P^{\vee}$ with $\Pbar^{\vee} = \bigoplus_{i\in I}\Zbb h_{i}$ in the above.
Both $\Uaff$ and $\Udash$ have a coproduct $\Delta$ given by
\begin{align} \label{coproduct}
    \Delta(q^{h}) = q^{h}\otimes q^{h}, \,\,
    \Delta(e_{i}) = e_{i}\otimes k_{i}^{-1} + 1\otimes e_{i}, \,\,
    \Delta(f_{i}) = f_{i}\otimes 1 + k_{i}\otimes f_{i}.
\end{align}

\begin{rmk}
    Our coproduct $\Delta$ is the same as the one used for example in \cites{KKMMNN92,Kashiwara02}, while that of \cite{KMPY96} is obtained by exchanging the tensor factors in (\ref{coproduct}).
    We refer the reader to \cite{KMPY96}*{\S 2.2} for a nice explanation of how the various coproducts and tensor products of crystals for quantum affine algebras relate to one another.
\end{rmk}

\subsection{Representations of quantum affine algebras}

Recall that a $\Uaff$-module $V$ is integrable if it decomposes as a direct sum of its weight spaces
$V_{\lambda} = \lbrace u\in V ~\vert~ q^{h}\cdot u = q^{\langle \lambda,h\rangle}u~\mathrm{for~all}~h\in P^{\vee}\rbrace$ and all $e_{i}$ and $f_{i}$ generators act locally nilpotently.
An element $u\in V$ is said to be extremal if there exists a set of vectors $\lbrace u_{w}\rbrace_{w\in W}$ such that
\begin{itemize}
    \item $u_{e} = u$,
    \item if $\langle w\lambda,h_{i}\rangle\geq 0$ then $e_{i}u_{w}=0$ and $f_{i}^{(\langle w\lambda,h_{i}\rangle)}u_{w}=u_{s_{i}w}$,
    \item if $\langle w\lambda,h_{i}\rangle\leq 0$ then $f_{i}u_{w}=0$ and $e_{i}^{(-\langle w\lambda,h_{i}\rangle)}u_{w}=u_{s_{i}w}$.
\end{itemize}
If such a set exists then it must be unique, and each $u_{w}$ spans $V_{w\lambda}$.
In this case, we say that $V$ is an extremal weight module.
For each $\lambda\in P$ let $V(\lambda)$ be the $\Uaff$-module generated by a non-zero vector $u_{\lambda}$, subject only to the condition that it is an extremal vector of weight $\lambda$.
In particular, if $\lambda$ is dominant then $V(\lambda)$ is the irreducible highest weight module of highest weight $\lambda$.
\\

Consider $V(\varpi_{i})$ where $\varpi_{i} = \Lambda_{i} - a_{i}^{\vee}\Lambda_{0}$ is the $i$th level $0$ fundamental weight in $P$.
Since $\varpi_{i} + \Zbb \delta \subset W\varpi_{i}$ there exists a unique $\Udash$-module automorphism $z$ of $V(\varpi_{i})$ sending $u_{\varpi_{i}}$ to $u_{\varpi_{i}+\delta}$.
The quotient $W(\varpi_{i}) = V(\varpi_{i})/(z-1)V(\varpi_{i})$ is an irreducible integrable $\Udash$-module, called the $i$th level $0$ fundamental representation (see \cite{Kashiwara02}*{\S 5.3} for more details).
\\

Conversely, for each integrable $\Udash$-module $V = \bigoplus_{\lambda\in\Pbar}V_{\lambda}$ we can define the structure of a $\Uaff$-module on its affinization $\Vaff = \Cbb[z,z^{-1}]\otimes V$ as follows.
The actions of the $q^{h}$ are determined by setting $(\Vaff)_{\lambda+n\delta} = z^{n}V_{\lambda}$ for all $\lambda\in\Pbar$ and $n\in\Zbb$, while all $e_{i}$ and $f_{i}$ act by $z^{\delta_{i0}}\otimes e_{i}$ and $z^{-\delta_{i0}}\otimes f_{i}$ respectively.
It is easy to see that passing $W(\varpi_{i})$ through this affinization process recovers the extremal weight module $V(\varpi_{i})$.

\subsection{Crystals of quantum affine algebras} \label{crystals preliminaries}

We start by briefly recalling the definition of the crystal basis of an integrable $\Uaff$-module.
Note that the corresponding definitions for representations of $\Udash$ are obtained simply by replacing the weight lattice $P$ with $\Pbar$.
For a more detailed introduction to crystal bases, we refer the reader to \cites{Kashiwara91,Kashiwara95}.
\\

Consider an integrable $\Uaff$-module $V = \bigoplus_{\lambda\in P} V_{\lambda}$.
For each $i\in I$, we can write any $u\in V_{\lambda}$ uniquely as a sum
\begin{align*}
    u = \sum f_{i}^{(n)}u_{n}
\end{align*}
over integers $n\geq\max\lbrace-\langle\lambda,h_{i}\rangle,0\rbrace$ where each $u_{n}\in V_{\lambda+n\alpha_{i}}\cap\ker(e_{i})$.
We then define linear endomorphisms $\et_{i}$ and $\ft_{i}$ of $V$ by
\begin{align*}
    \et_{i}u = \sum f_{i}^{(n-1)}u_{n}, \quad
    \ft_{i}u = \sum f_{i}^{(n+1)}u_{n},
\end{align*}
which however do not in general respect the $\Uaff$-module structure.
Let $A$ be the subring of functions in $\Qbb(q)$ that are regular at $q=0$.
\begin{defn}
    A crystal lattice $L$ of an integrable $\Uaff$-module $V$ is a free $A$-submodule of $V$ such that
    \begin{itemize}
        \item $V \cong \Qbb\otimes_{A} L$,
        \item $L = \bigoplus_{\lambda\in P}L_{\lambda}$ where $L_{\lambda} = L\cap V_{\lambda}$,
        \item $\et_{i}L\subset L$ and $\ft_{i}L\subset L$ for all $i\in I$.
    \end{itemize}
\end{defn}

\begin{defn}
    A crystal basis of an integrable $\Uaff$-module $V$ is a pair $(L,B)$ such that
    \begin{itemize}
        \item $L$ is a crystal lattice of $V$,
        \item $B$ is a basis of $L/qL$ as a vector space over $A/qA\cong\Qbb$,
        \item $B = \bigsqcup_{\lambda\in P}B_{\lambda}$ where $B_{\lambda} = B\cap L_{\lambda}/qL_{\lambda}$,
        \item $\et_{i}B\subset B\sqcup\lbrace 0\rbrace$ and $\ft_{i}B\subset B\sqcup\lbrace 0\rbrace$ for all $i\in I$,
        \item $\ft_{i}b = b'$ if and only if $b = \et_{i}b'$ for all $b,b'\in B$ and $i\in I$.
    \end{itemize}
\end{defn}

To avoid confusion, we shall occasionally denote the crystal basis of a representation $V$ by $(L(V),B(V))$.
It is clear that if a collection $\lbrace V_{j}\rbrace_{j\in J}$ of $\Uaff$-modules each has a crystal basis, then their direct sum $\bigoplus_{j\in J} V_{j}$ has crystal basis $(\bigoplus L(V_{j}), \bigoplus B(V_{j}))$.
\\

Every crystal basis has associated ($I$-coloured, directed) crystal graph, formed on the vertex set $B$ by including an $i$-arrow $b \xrightarrow{i} b'$ whenever $\ft_{i}b = b'$.
Here, a connected component of the spanning subgraph containing only $i$-arrows is called an $i$-string.
Moreover, we can define a weight function $\wt : B \rightarrow P$ by sending elements of $B_{\lambda}$ to $\lambda$, as well as maps $\varepsilon_{i},\varphi_{i} : B \rightarrow \Nbb$ for each $i\in I$ given by
\begin{align} \label{varphi and varepsilon maximum definitions}
    \varepsilon_{i}(b) = \max\lbrace n ~\vert~ \et_{i}^{n}b\neq 0\rbrace, \quad
    \varphi_{i}(b) = \max\lbrace n ~\vert~ \ft_{i}^{n}b\neq 0\rbrace.
\end{align}
It was proven in \cite{Kashiwara94} that for any $\lambda\in P$, the irreducible highest weight representation $V(\lambda)$ has a crystal basis $(L(\lambda),B(\lambda))$.
In particular, when $\lambda$ is dominant this has the following description.
\begin{thm}
    \cite{Kashiwara91} For each $\lambda\in P^{+}$ the lattice $L(\lambda)$ is the smallest $A$-submodule of $V(\lambda)$ containing $u_{\lambda}$, and $B(\lambda)$ is the set of all non-zero vectors in $L(\lambda)/qL(\lambda)$ of the form $\ft_{i_{\ell}}\dots\ft_{i_{1}}u_{\lambda}$.
\end{thm}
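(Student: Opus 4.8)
This is Kashiwara's foundational theorem, and the plan is to reproduce his \emph{grand loop} argument: a simultaneous induction on weight that verifies all of the crystal basis axioms at once. Write $L(\lambda)$ for the $A$-span of the vectors $\ft_{i_{\ell}}\dots\ft_{i_{1}}u_{\lambda}$ ($\ell\geq 0$, $i_{j}\in I$) and $B(\lambda)$ for the set of nonzero elements of $L(\lambda)/qL(\lambda)$ represented by such vectors. One must show that $(L(\lambda),B(\lambda))$ is a crystal basis of $V(\lambda)$ and that $L(\lambda)$ is the smallest $A$-lattice containing $u_{\lambda}$ and stable under all $\ft_{i}$.

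First I would settle the rank $1$ case, which both anchors the induction and supplies the local model at each colour $i$. Every integrable $U_{q}(\mathfrak{sl}_{2})$-module is a direct sum of finite dimensional irreducibles $V(m)$, and in $V(m)$ with highest weight vector $v$ one computes directly from the defining relations that $e\,v^{(k)} = [m-k+1]\,v^{(k-1)}$, where $v^{(k)} = f^{(k)}v$. Hence $\et(v^{(k)}) = v^{(k-1)}$ and $\ft(v^{(k)}) = v^{(k+1)}$, so that $L = \bigoplus_{k} A\,v^{(k)}$ together with $B = \lbrace v^{(k)} \bmod qL : 0\leq k\leq m\rbrace$ forms a crystal basis of $V(m)$; taking direct sums handles all integrable $U_{q}(\mathfrak{sl}_{2})$-modules. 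In particular, for any $i$, restricting to the $U_{q}(\mathfrak{sl}_{2})_{i}$ action governs $\et_{i}$ and $\ft_{i}$ along $i$-strings.

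Next comes the grand loop. Filter $V(\lambda)$ by the height $\ell = \mathrm{ht}(\lambda-\mu)$ of its weight spaces $V(\lambda)_{\mu}$, and for each $\ell$ prove \emph{simultaneously}, for all $\mu$ with $\mathrm{ht}(\lambda-\mu)\leq\ell$, the package: (i) $L(\lambda) = \bigoplus_{\mu} L(\lambda)_{\mu}$ with $L(\lambda)_{\mu} = L(\lambda)\cap V(\lambda)_{\mu}$; (ii) for every $i$, both $\et_{i}$ and $\ft_{i}$ preserve $L(\lambda)$, and along each $i$-string $L(\lambda)$ and $B(\lambda)$ match the rank $1$ model; (iii) $B(\lambda)$ is a basis of $L(\lambda)/qL(\lambda)$; (iv) $\et_{i}B(\lambda)\subseteq B(\lambda)\sqcup\lbrace 0\rbrace$, $\ft_{i}B(\lambda)\subseteq B(\lambda)\sqcup\lbrace 0\rbrace$, and $\et_{i}\ft_{i}b = b$ whenever $\ft_{i}b\neq 0$ (and dually). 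The inductive step passes each assertion from height $\leq\ell$ to height $\ell+1$ by applying a single $\ft_{i}$ and reading off the new vectors from the rank $1$ picture; the crucial subtlety is that this is well defined, i.e.\ that applying $\et_{i}$, or a different $\ft_{j}$, to a monomial $\ft_{i_{\ell}}\dots\ft_{i_{1}}u_{\lambda}$ again lands in $L(\lambda)$ and has image in $B(\lambda)\sqcup\lbrace 0\rbrace$ modulo $qL(\lambda)$.

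Controlling this interaction of distinct colours is where the real work lies, and I expect it to be the main obstacle. One reduces to rank $2$: for $i\neq j$ the $q$-Serre relations pin down the commutation of $e_{i}$ with $f_{j}$, and hence of $\et_{i}$ with $\ft_{j}$, up to correction terms supported on strictly lower weight spaces, which are already under control by the inductive hypothesis; assembling these estimates over all pairs of colours, together with compatibility of $L(\lambda)$ with each $U_{q}(\mathfrak{sl}_{2})_{i}$ action, closes the loop. Once it closes, $(L(\lambda),B(\lambda))$ satisfies every crystal basis axiom. Minimality is then formal: any $A$-submodule $L'$ with $u_{\lambda}\in L'$ and $\ft_{i}L'\subseteq L'$ for all $i$ contains every $\ft_{i_{\ell}}\dots\ft_{i_{1}}u_{\lambda}$, hence all of $L(\lambda)$, while $L(\lambda)$ is itself such a submodule; the stated description of $B(\lambda)$ is exactly its definition.
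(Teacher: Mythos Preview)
The paper does not prove this theorem: it is stated with the citation \cite{Kashiwara91} and used as a black box, with no proof or sketch given in the paper itself. So there is nothing in the paper to compare your argument against.

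That said, your outline is a faithful high-level summary of Kashiwara's original approach in \cite{Kashiwara91}: the rank $1$ base case, the simultaneous induction on height (the ``grand loop''), and the reduction of the cross-colour compatibility to rank $2$ via the $q$-Serre relations are exactly the architecture of that paper. Be aware, though, that what you have written is a roadmap rather than a proof; the actual grand loop in \cite{Kashiwara91} involves a substantially larger list of mutually dependent statements (around a dozen, including compatibility with tensor products and with the crystal $B(\infty)$ of $U_{q}^{-}$), and the rank $2$ estimates you allude to are delicate and occupy most of the work. Your sketch is correct in spirit, but filling in the details would essentially mean reproducing a significant portion of Kashiwara's paper.
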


We can generalise the idea of these crystal bases for integrable representations of $\Uaff$ and $\Udash$ to the following more abstract notion.

\begin{defn}
    An affine (resp. classical) crystal is a set $B$ together with maps $\et_{i},\ft_{i} : B \rightarrow B\sqcup\lbrace 0\rbrace$ and $\varepsilon_{i},\varphi_{i} : B \rightarrow \Zbb\cup\lbrace -\infty\rbrace$ for each $i\in I$, and a weight function $\wt : B \rightarrow P$ (resp. $\wt : B \rightarrow \Pbar$), such that
    \begin{itemize}
        \item $\varphi_{i}(b) - \varepsilon_{i}(b) = \langle h_{i},\wt(b)\rangle$,
        \item $\wt(\et_{i}b) = \wt(b) + \alpha_{i}$ if $\et_{i}b\in B$,
        \item $\wt(\ft_{i}b) = \wt(b) - \alpha_{i}$ if $\ft_{i}b\in B$,
        \item $\varepsilon_{i}(\et_{i}b) = \varepsilon_{i}(b) - 1$, $\varphi_{i}(\et_{i}b) = \varphi_{i}(b) + 1$ if $\et_{i}b\in B$,
        \item $\varepsilon_{i}(\ft_{i}b) = \varepsilon_{i}(b) + 1$, $\varphi_{i}(\ft_{i}b) = \varphi_{i}(b) - 1$ if $\ft_{i}b\in B$,
        \item $\ft_{i}b = b'$ if and only if $b = \et_{i}b'$,
        \item if $\varphi_{i}(b) = -\infty$ then $\et_{i}b = \ft_{i}b = 0$,
    \end{itemize}
    for all $b,b'\in B$ and $i\in I$.
\end{defn}

Similarly to how we defined extremal vectors of quantum affine representations, an element $b\in B$ is extremal if there exists a subset $\lbrace b_{w}\rbrace_{w\in W}$ of $B$ such that
\begin{itemize}
    \item $b_{e} = b$,
    \item if $\langle w\lambda,h_{i}\rangle\geq 0$ then $\et_{i}b_{w}=0$ and $\ft_{i}^{\langle w\lambda,h_{i}\rangle}b_{w}=b_{s_{i}w}$,
    \item if $\langle w\lambda,h_{i}\rangle\leq 0$ then $\ft_{i}b_{w}=0$ and $\et_{i}^{-\langle w\lambda,h_{i}\rangle}b_{w}=b_{s_{i}w}$.
\end{itemize}

An element of a crystal is therefore extremal if and only if it lies at the start or end of each $i$-string it is contained in.
Just as we introduced the idea of affinizing $\Udash$-modules in the previous subsection, classical crystals can also be affinized.

\begin{defn}
    The affinization $\Baff$ of a classical crystal $B$ is the set $\lbrace z^{n}b ~\vert~ n\in\Zbb, b\in B\rbrace$, with the structure of an affine crystal as follows:
    \begin{itemize}
        \item $\wt(z^{n}b) = \wt(b) + n\delta$,
        \item $\varepsilon_{i}(z^{n}b) = \varepsilon_{i}(b)$,
        \item $\varphi_{i}(z^{n}b) = \varphi_{i}(b)$,
        \item $\et_{i}(z^{n}b) = z^{n+\delta_{i0}}(\et_{i}b)$,
        \item $\ft_{i}(z^{n}b) = z^{n-\delta_{i0}}(\ft_{i}b)$.
    \end{itemize}
\end{defn}

We remark that if a $\Udash$-module $V$ has crystal basis $(L,B)$ then its affinization $\Vaff$ has crystal basis $(\Laff,\Baff)$, where $\Laff$ is defined similarly.
In particular, this applies to the crystal bases of the level $0$ fundamental representations $W(\varpi_{i})$ and extremal weight modules $V(\varpi_{i})$.
\\

A crystal morphism $\Psi : B \rightarrow B'$ between two affine or classical crystals is a map $\Psi : B\sqcup\lbrace 0\rbrace \rightarrow B'\sqcup\lbrace 0\rbrace$ satisfying the following conditions for all $b,b'\in B$ and $i\in I$:
\begin{itemize}
    \item $\Psi(0) = 0$,
    \item if $\Psi(b)\in B'$ then $\wt(\Psi(b)) = \wt(b)$, $\varepsilon_{i}(\Psi(b)) = \varepsilon_{i}(b)$ and $\varphi_{i}(\Psi(b)) = \varphi_{i}(b)$,
    \item if $\Psi(b),\Psi(b')\in B'$ and $\ft_{i}b = b'$ then $\ft_{i}\Psi(b) = \Psi(b')$ and $\Psi(b) = \et_{i}\Psi(b')$.
\end{itemize}
And $\Psi$ is an isomorphism if $\Psi : B\sqcup\lbrace 0\rbrace \rightarrow B'\sqcup\lbrace 0\rbrace$ is a bijection.
Given an (iso)morphism $\Psi : B \rightarrow B'$ of classical crystals, $\Psi_{\mathrm{aff}}(z^{n}b) := z^{n}\Psi(b)$ defines an (iso)morphism $\Psi_{\mathrm{aff}} : \Baff \rightarrow \Baff'$ between their affinizations.
\\

Two affine or classical crystals $B$ and $B'$ have a tensor product $B\otimes B'$ defined on the set $B\times B'$ with crystal structure given by
\begin{align} \label{tensor product of crystals}
\begin{split}
    \et_{i}(b\otimes b') &=
    \begin{cases}
    \et_{i}b\otimes b' & \mathrm{if~}\varphi_{i}(b)\geq \varepsilon_{i}(b'), \\
    b\otimes \et_{i}b' & \mathrm{if~}\varphi_{i}(b) < \varepsilon_{i}(b'),
    \end{cases} \\
    \ft_{i}(b\otimes b') &=
    \begin{cases}
    \ft_{i}b\otimes b' & \mathrm{if~}\varphi_{i}(b) > \varepsilon_{i}(b'), \\
    b\otimes \ft_{i}b' & \mathrm{if~}\varphi_{i}(b) \leq \varepsilon_{i}(b'),
    \end{cases} \\
    \wt(b\otimes b') &= \wt(b) + \wt(b'), \\
    \varepsilon_{i}(b\otimes b') &= \max(\varepsilon_{i}(b),\varepsilon_{i}(b') - \langle h_{i},\wt(b)\rangle), \\
    \varphi_{i}(b\otimes b') &= \max(\varphi_{i}(b'), \varphi_{i}(b) + \langle h_{i},\wt(b')\rangle).
\end{split}
\end{align}
\begin{prop}
    If $(L,B)$ and $(L',B')$ are the crystal bases of $\Uaff$ or $\Udash$ modules $V_{1}$ and $V_{2}$, then $(L\otimes_{A} L',B\otimes B')$ is the crystal basis of $V_{1}\otimes V_{2}$.
\end{prop}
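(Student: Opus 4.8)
The plan is to reduce the statement to the rank $1$ case and then carry out an explicit computation there; this is of course Kashiwara's original tensor product theorem \cite{Kashiwara91}, so one could simply cite it, but I will indicate the argument. First I would dispose of the purely structural points. The $A$-module $L\otimes_{A}L'$ is free, and from $V_{1}\cong\Qbb(q)\otimes_{A}L$, $V_{2}\cong\Qbb(q)\otimes_{A}L'$ we get $V_{1}\otimes V_{2}\cong\Qbb(q)\otimes_{A}(L\otimes_{A}L')$; the weight decomposition $L\otimes_{A}L' = \bigoplus_{\lambda}(L\otimes_{A}L')_{\lambda}$ follows from those of $L$ and $L'$ together with $\wt(b\otimes b') = \wt(b)+\wt(b')$. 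The natural map furnishes an isomorphism $(L\otimes_{A}L')/q(L\otimes_{A}L')\cong (L/qL)\otimes_{\Qbb}(L'/qL')$, so the underlying set $B\times B'$ of $B\otimes B'$ is a $\Qbb$-basis of this quotient, compatible with the weight decomposition. What then remains is to show that $L\otimes_{A}L'$ is stable under every $\et_{i}$ and $\ft_{i}$, and that modulo $q$ these operators act on $B\otimes B'$ by the rule (\ref{tensor product of crystals}); the equivalence $\ft_{i}b=b'\iff b=\et_{i}b'$ is then automatic.

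Next I would observe that both remaining claims concern one index $i\in I$ at a time. Fixing $i$, let $\mathfrak{a}_{i}=\langle e_{i},f_{i},k_{i}^{\pm 1}\rangle\cong U_{q}(\mathfrak{sl}_{2})$. By (\ref{coproduct}) the coproduct $\Delta$ restricts to the standard coproduct of $U_{q}(\mathfrak{sl}_{2})$ on $\mathfrak{a}_{i}$, and the Kashiwara operators $\et_{i},\ft_{i}$ depend only on the $\mathfrak{a}_{i}$-module structure; moreover $(L,B)$ restricts to a crystal basis of $V_{1}|_{\mathfrak{a}_{i}}$, and likewise for $V_{2}$. As all of the data in sight (lattices, bases, the operators $\et_{i},\ft_{i}$, and the tensor product rule) are compatible with direct sums, and every integrable $U_{q}(\mathfrak{sl}_{2})$-module with crystal basis decomposes compatibly into finite-dimensional irreducibles, I may assume $V_{1}=V(m)$ and $V_{2}=V(m')$ are the irreducibles of highest weights $m,m'\geq 0$, equipped with their standard crystal bases $L(m)=\bigoplus_{j=0}^{m}A v_{j}$, $v_{j}=f_{i}^{(j)}v_{0}$, and $B(m)=\{v_{j}\bmod q\}$, and similarly for $V(m')$.

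The remaining, and genuinely substantive, step is the explicit verification for $V(m)\otimes V(m')$. Using $\Delta(f_{i})=f_{i}\otimes 1 + k_{i}\otimes f_{i}$ and $\Delta(e_{i})=e_{i}\otimes k_{i}^{-1}+1\otimes e_{i}$, I would expand the action of the divided powers $f_{i}^{(n)}$ and $e_{i}^{(n)}$ on a vector $v_{j}\otimes v'_{k}$ by means of the $q$-binomial ($q$-Vandermonde) identities, extract the leading term as $q\to 0$, and read off that $\et_{i}$ and $\ft_{i}$ send $v_{j}\otimes v'_{k}$ modulo $q$ to the vector prescribed by (\ref{tensor product of crystals}) — with $\varepsilon_{i}(v_{j})=j$, $\varphi_{i}(v_{j})=m-j$, and analogously for $v'_{k}$ — or to $0$; in particular $\bigoplus_{j,k}A(v_{j}\otimes v'_{k})=L(m)\otimes_{A}L(m')$ is stable under these operators. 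Reassembling the resulting local statements over all $i\in I$ shows that $(L\otimes_{A}L',B\otimes B')$ satisfies every axiom of a crystal basis. I expect the main obstacle to be exactly this rank $1$ bookkeeping: one must track the $k_{i}$-factors appearing in the coproduct carefully, since it is precisely their leading powers in $q$ that produce the asymmetric inequalities distinguishing the two branches in (\ref{tensor product of crystals}).
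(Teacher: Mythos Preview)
The paper does not prove this proposition at all; it is stated without proof in the preliminaries section as a standard background result, implicitly attributed to Kashiwara's foundational work \cite{Kashiwara91}. Your sketch is a faithful outline of Kashiwara's original argument and is correct as such, so there is nothing to compare: you have supplied a proof where the paper simply cites one.
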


We next introduce the notion of a perfect crystal as developed by Kang et al. in \cites{KKMMNN92,KKMMNN92(2)}.
For each element $b$ of a classical crystal $B$, define associated weights $\varepsilon(b) = \sum_{i\in I} \varepsilon_{i}(b)\Lambda_{i}$ and $\varphi(b) = \sum_{i\in I} \varphi_{i}(b)\Lambda_{i}$ in $\Pbar$.

\begin{defn} \label{perfect crystal definition}
    A perfect crystal of level $\ell\in\Zbb_{>0}$ is a classical crystal $B$ such that
    \begin{itemize}
        \item there is a finite dimensional irreducible $\Udash$-module whose crystal graph is isomorphic to $B$,
        \item $B\otimes B$ is connected,
        \item there exists some weight $\mu\in\Pbar$ with $\wt(B) \subset \mu + \sum_{i\in I\setminus\lbrace 0\rbrace} \Zbb_{\leq 0}\alpha_{i}$ and $\#\lbrace b\in B ~\vert~ \wt(b) = \mu\rbrace = 1$,
        \item $\langle\varepsilon(b),c\rangle\geq \ell$ for all $b\in B$,
        \item for any $\lambda\in\Pbar^{+}$ with $\langle\lambda,c\rangle = \ell$ there exist unique $b^{\lambda},b_{\lambda}\in B$ with $\varepsilon(b^{\lambda}) = \varphi(b_{\lambda}) = \lambda$.
    \end{itemize}
\end{defn}

The importance of perfect crystals is demonstrated by the following results.

\begin{thm}
    \cite{KKMMNN92} Let $B$ be a perfect crystal of level $\ell$.
    If $\lambda\in\Pbar^{+}$ has $\langle\lambda,c\rangle = \ell$ then there is a unique isomorphism of classical crystals
    \begin{align*}
        \Psi : B(\lambda) \xrightarrow{\sim} B(\varepsilon(b_{\lambda}))\otimes B
    \end{align*}
    given by $u_{\lambda} \mapsto u_{\varepsilon(b_{\lambda})}\otimes b_{\lambda}$, where $u_{\lambda}$ is the highest weight element of $B(\lambda)$ and $b_{\lambda}\in B$ is as in Definition \ref{perfect crystal definition}.
\end{thm}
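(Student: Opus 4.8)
The plan is to put $\mu := \varepsilon(b_{\lambda})$, to construct $\Psi$ as the unique morphism of classical crystals sending $u_{\lambda}$ to $b_{0} := u_{\mu}\otimes b_{\lambda}$, and then to show that $\Psi$ is bijective. First I would record the consequences of perfectness that make the statement meaningful. Since the $\Udash$-module with crystal $B$ has level $0$ we have $\langle\wt(b),c\rangle = 0$ for every $b\in B$, whence $\wt(b) = \varphi(b) - \varepsilon(b)$ in $\Pbar$ and $\langle\varepsilon(b),c\rangle = \langle\varphi(b),c\rangle$; applying this to $b_{\lambda}$, which exists and satisfies $\varphi(b_{\lambda}) = \lambda$ by Definition \ref{perfect crystal definition}, gives $\mu\in\Pbar^{+}$, $\langle\mu,c\rangle = \langle\lambda,c\rangle = \ell$ and $\wt(b_{\lambda}) = \lambda - \mu$. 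I would also observe that $b_{\lambda}$ is the \emph{unique} element of $B$ with $\varepsilon(\,\cdot\,) = \mu$, since Definition \ref{perfect crystal definition} provides a unique such element $b^{\mu}$ and $\varepsilon(b_{\lambda}) = \mu$ forces $b_{\lambda} = b^{\mu}$. Throughout I use freely that the crystal of an integrable module has non-negative $\varepsilon_{i}$ and $\varphi_{i}$, with $\et_{i}$ annihilating exactly the elements on which $\varepsilon_{i}$ vanishes.

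Next I would verify that $b_{0} = u_{\mu}\otimes b_{\lambda}$ is a highest weight element of $B(\mu)\otimes B$ of weight $\lambda$. From $\varphi_{i}(u_{\mu}) = \langle h_{i},\mu\rangle = \langle h_{i},\varepsilon(b_{\lambda})\rangle = \varepsilon_{i}(b_{\lambda})$ and the tensor product rule (\ref{tensor product of crystals}) we always land in the branch $\et_{i}(u_{\mu}\otimes b_{\lambda}) = (\et_{i}u_{\mu})\otimes b_{\lambda} = 0$, for every $i\in I$; moreover $\wt(b_{0}) = \mu + \wt(b_{\lambda}) = \lambda$, $\varepsilon_{i}(b_{0}) = \max(0,\,\varepsilon_{i}(b_{\lambda}) - \langle h_{i},\mu\rangle) = 0$, and so $\varphi_{i}(b_{0}) = \langle h_{i},\lambda\rangle$. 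Now $B(\mu)\otimes B$ is (isomorphic to) the crystal basis of the integrable $\Udash$-module $V(\mu)\otimes V_{B}$, where $V_{B}$ is the finite-dimensional $\Udash$-module with crystal $B$; this tensor product has weights bounded above in $\Pbar$, hence lies in the semisimple category of integrable modules in category $\mathcal{O}$ and decomposes as $\bigoplus_{\nu}V(\nu)^{\oplus m_{\nu}}$. Therefore $B(\mu)\otimes B\cong\bigsqcup_{\nu}B(\nu)^{\sqcup m_{\nu}}$, and the elements killed by all $\et_{i}$ are precisely the source vectors $u_{\nu}$, one per summand. It is thus enough to show that $b_{0}$ is the only such element: this will give $B(\mu)\otimes B\cong B(\lambda)$ with $b_{0}$ corresponding to $u_{\lambda}$, and uniqueness of $\Psi$ is then automatic, as $B(\lambda)$ is generated from $u_{\lambda}$ by the $\ft_{i}$.

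The crux is therefore to show that $b_{0}$ is the unique highest weight element of $B(\mu)\otimes B$. Suppose $u\otimes b$ satisfies $\et_{i}(u\otimes b) = 0$ for all $i\in I$, and fix $i$. If $\varphi_{i}(u) < \varepsilon_{i}(b)$, then (\ref{tensor product of crystals}) gives $\et_{i}b = 0$, so $\varepsilon_{i}(b) = 0$ and $\varphi_{i}(u) < 0$, contradicting that $B(\mu)$ is the crystal of an integrable module; hence $\varphi_{i}(u)\geq\varepsilon_{i}(b)$ and $\et_{i}u = 0$, i.e. $\varepsilon_{i}(u) = 0$. As this holds for every $i$, the element $u$ must be the unique $\et_{i}$-highest element $u_{\mu}$ of $B(\mu)$, and then $\langle h_{i},\mu\rangle = \varphi_{i}(u_{\mu})\geq\varepsilon_{i}(b)$ for all $i$. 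Pairing $\mu - \varepsilon(b) = \sum_{i}(\langle h_{i},\mu\rangle - \varepsilon_{i}(b))\Lambda_{i}$ with $c = \sum_{i}a_{i}^{\vee}h_{i}$ and using $a_{i}^{\vee} > 0$ yields $\langle\mu - \varepsilon(b),c\rangle\geq 0$, whereas $\langle\varepsilon(b),c\rangle\geq\ell = \langle\mu,c\rangle$ forces $\langle\mu - \varepsilon(b),c\rangle\leq 0$; so every summand vanishes, $\varepsilon(b) = \mu$, and $b = b_{\lambda}$ by the uniqueness noted above. Hence $u\otimes b = b_{0}$, which combined with the decomposition completes the argument.

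I expect the main obstacle to be precisely this final case analysis together with the structural facts it rests on: that integrability of $B(\mu)$ rules out $\varphi_{i}(u) < 0$; that the inequality $\langle\varepsilon(b),c\rangle\geq\ell$, when $\mu$ is dominant of level $\ell$, is rigid enough to pin down $\varepsilon(b) = \mu$ exactly (here the positivity of the marks $a_{i}^{\vee}$ is essential); and that $B(\mu)\otimes B$ genuinely arises from a completely reducible integrable module, so that counting highest weight elements determines it up to isomorphism. The weight, $\varepsilon_{i}$ and $\varphi_{i}$ computations for $b_{0}$ and the bookkeeping with the tensor product rule, by contrast, are routine.
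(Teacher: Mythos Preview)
The paper does not supply a proof of this theorem; it is quoted as a result of \cite{KKMMNN92} and used without argument. Your proof is correct and is essentially the standard one from that reference: identify $u_{\varepsilon(b_{\lambda})}\otimes b_{\lambda}$ as a highest weight element of weight $\lambda$, use complete reducibility of $V(\mu)\otimes V_{B}$ to decompose the crystal into connected highest weight pieces, and then invoke the level bound $\langle\varepsilon(b),c\rangle\geq\ell$ together with the uniqueness of $b^{\mu}$ to show there is only one such piece.
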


Letting $\lambda_{0} = \lambda$, $b_{r} = b_{\lambda_{r}}$ and $\lambda_{r+1} = \varepsilon(b_{\lambda_{r}})$ for all $r\in\Nbb$, we can then obtain a sequence of crystal isomorphisms by applying the theorem repeatedly:
\begin{alignat*}{6}
    &B(\lambda) ~~&\longrightarrow&~~ &B(\lambda_{1})\otimes B& ~~&\longrightarrow&~~ &B(\lambda_{2})\otimes B\otimes B& ~~&\longrightarrow&~~ \cdots \\
    &~~u_{\lambda} ~~&\longmapsto&~~ &u_{\lambda_{1}}\otimes b_{0}& ~~&\longmapsto&~~ &u_{\lambda_{2}}\otimes b_{1}\otimes b_{0}& ~~&\longmapsto&~~ \cdots
\end{alignat*}
The infinite sequence $\pb_{\lambda} = (b_{r})_{r=0}^{\infty} = \dots \otimes b_{2}\otimes b_{1}\otimes b_{0}$ is called the ground state sequence of weight $\lambda$.
The set of $\lambda$-paths
\begin{align*}
    \Pcal(\lambda) = \lbrace \pb = (p_{r})_{r=0}^{\infty} = \dots \otimes p_{2}\otimes p_{1}\otimes p_{0} ~\vert~ \mathrm{all~} p_{r}\in B,~ p_{r} = b_{r} \mathrm{~for~} r \gg 0 \rbrace
\end{align*}
can be endowed with the structure of a classical crystal as follows.
If $\pb\in\Pcal(\lambda)$ has $p_{r}=b_{r}$ for all $r\geq k$ then set
\begin{align} \label{crystal structure on paths}
\begin{split}
    &\wt(\pb) = \lambda_{k} + \wt(\pb'), \\
    &\et_{i}\pb = \dots\otimes p_{k+1}\otimes\et_{i}(p_{k}\otimes\dots\otimes p_{0}), \\
    &\ft_{i}\pb = \dots\otimes p_{k+1}\otimes\ft_{i}(p_{k}\otimes\dots\otimes p_{0}), \\
    &\varepsilon_{i}(\pb) = \max(\varepsilon_{i}(\pb') - \varphi_{i}(b_{k}),0), \\
    &\varphi_{i}(\pb) = \varphi_{i}(\pb') + \max(\varphi_{i}(b_{k}) - \varepsilon_{i}(\pb'),0),
\end{split}
\end{align}
where $\pb' = p_{k-1}\otimes\dots\otimes p_{0}$.

\begin{thm} \label{path realization theorem}
    \cite{KKMMNN92} There is an isomorphism $\Psi_{\lambda} : B(\lambda) \xrightarrow{\sim} \Pcal(\lambda)$ of classical crystals given by $u_{\lambda} \mapsto \pb_{\lambda}$.
\end{thm}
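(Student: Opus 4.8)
The plan is to obtain $\Psi_\lambda$ as a ``stabilized limit'' of the iterated crystal isomorphisms supplied by the preceding theorem. Applying that theorem in turn to $\lambda = \lambda_0, \lambda_1, \lambda_2, \dots$ produces for each $k \geq 0$ a composite isomorphism of classical crystals $\Psi^{(k)} \colon B(\lambda) \xrightarrow{\sim} B(\lambda_k) \otimes B^{\otimes k}$ with $u_\lambda \mapsto u_{\lambda_k} \otimes b_{k-1} \otimes \dots \otimes b_0$, and these fit together via $\Psi^{(k+1)} = (\Phi_k \otimes \mathrm{id}_{B^{\otimes k}}) \circ \Psi^{(k)}$, where $\Phi_k \colon B(\lambda_k) \xrightarrow{\sim} B(\lambda_{k+1}) \otimes B$ is the isomorphism with $u_{\lambda_k} \mapsto u_{\lambda_{k+1}} \otimes b_k$. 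Writing $\Psi^{(k)}(b) = c_k \otimes p_{k-1} \otimes \dots \otimes p_0$, this compatibility forces $\Psi^{(k+1)}(b) = c_{k+1} \otimes p_k \otimes p_{k-1} \otimes \dots \otimes p_0$ where $\Phi_k(c_k) = c_{k+1} \otimes p_k$; in particular, passing from $k$ to $k+1$ only prepends a factor and leaves the tail untouched, so the sequence $(p_r)_{r \geq 0}$ depends only on $b$. I would then \emph{define} $\Psi_\lambda(b) = \dots \otimes p_2 \otimes p_1 \otimes p_0$, after which everything reduces to checking that this lands in $\Pcal(\lambda)$ and is an isomorphism of classical crystals.

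The crux --- and the step I expect to be the only real obstacle --- is to prove that $c_k = u_{\lambda_k}$ for all $k \gg 0$, for this forces $p_k = b_k$ for all $k \gg 0$ and hence $\Psi_\lambda(b) \in \Pcal(\lambda)$. Set $d_k = \mathrm{ht}(\lambda_k - \wt(c_k)) \in \Nbb$, the depth of $c_k$ in $B(\lambda_k)$; since $B(\lambda_k)$ is a highest weight crystal, every expression $c_k = \ft_{j_{d_k}} \cdots \ft_{j_1} u_{\lambda_k}$ has exactly $d_k$ factors. Applying the isomorphism $\Phi_k$, which commutes with the Kashiwara operators, gives $c_{k+1} \otimes p_k = \ft_{j_{d_k}} \cdots \ft_{j_1}(u_{\lambda_{k+1}} \otimes b_k)$; by the tensor product rule (\ref{tensor product of crystals}) each $\ft_{j_m}$ acts on exactly one of the two factors, the left-acting ones successively build $c_{k+1}$ out of $u_{\lambda_{k+1}}$, and therefore $d_{k+1}$ equals the number of left-acting steps. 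The decisive point is that the \emph{first} operator $\ft_{j_1}$ cannot act on the left factor: because $u_{\lambda_{k+1}}$ is a highest weight vector, $\varphi_{j_1}(u_{\lambda_{k+1}}) = \langle h_{j_1}, \lambda_{k+1} \rangle = \langle h_{j_1}, \varepsilon(b_k) \rangle = \varepsilon_{j_1}(b_k)$ (using $\lambda_{k+1} = \varepsilon(b_{\lambda_k})$ and $\varepsilon(b) = \sum_i \varepsilon_i(b)\Lambda_i$), which is not $> \varepsilon_{j_1}(b_k)$, so (\ref{tensor product of crystals}) routes $\ft_{j_1}$ to the right factor. Hence at least one of the $d_k$ steps is right-acting whenever $d_k \geq 1$, so $d_{k+1} \leq d_k - 1$; thus $(d_k)$ strictly decreases while positive, and $c_k = u_{\lambda_k}$ for all $k \geq d_0 = \mathrm{ht}(\lambda - \wt(b))$.

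It then remains to check that $\Psi_\lambda$ is an isomorphism of classical crystals, which I expect to be routine. Once $k$ exceeds $d_0$, the path $\Psi_\lambda(b)$ has $p_r = b_r$ for $r \geq k$ and corresponds under $\Psi^{(k)}$ to $u_{\lambda_k} \otimes p_{k-1} \otimes \dots \otimes p_0$; since $\varepsilon_i(u_{\lambda_k}) = 0$ and $\varphi_i(u_{\lambda_k}) = \langle h_i, \lambda_k \rangle = \varphi_i(b_k)$, the tensor product formulas (\ref{tensor product of crystals}) collapse precisely to the definitions of $\wt, \varepsilon_i, \varphi_i, \et_i, \ft_i$ on $\Pcal(\lambda)$ recorded in (\ref{crystal structure on paths}) (taking $k$ one larger than $d_0$ also guarantees that $\et_i$ and $\ft_i$ never disturb the stable leftmost factor $u_{\lambda_k}$, matching the path formulas). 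As each $\Psi^{(k)}$ is an isomorphism of classical crystals, $\Psi_\lambda$ then preserves $\wt$, $\varepsilon_i$, $\varphi_i$ and intertwines the Kashiwara operators. Injectivity follows from injectivity of a single $\Psi^{(k)}$; for surjectivity, given $\pb \in \Pcal(\lambda)$ with $p_r = b_r$ for $r \geq k$, set $b = (\Psi^{(k)})^{-1}(u_{\lambda_k} \otimes p_{k-1} \otimes \dots \otimes p_0)$ and use the compatibility $\Psi^{(k+1)} = (\Phi_k \otimes \mathrm{id}) \circ \Psi^{(k)}$ together with $\Phi_k(u_{\lambda_k}) = u_{\lambda_{k+1}} \otimes b_k$ to conclude $\Psi_\lambda(b) = \pb$. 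Finally, $\Psi_\lambda(u_\lambda) = \pb_\lambda$ is immediate from $\Psi^{(k)}(u_\lambda) = u_{\lambda_k} \otimes b_{k-1} \otimes \dots \otimes b_0$.
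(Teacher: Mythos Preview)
The paper does not supply its own proof of this theorem: it is stated with the citation \cite{KKMMNN92} and used as a black box, so there is no in-paper argument to compare against. Your proposal is a correct reconstruction of the standard proof from that reference; the key step --- showing that the highest-weight component $c_k$ stabilises to $u_{\lambda_k}$ via the strict depth decrease $d_{k+1} \leq d_k - 1$, forced by $\varphi_i(u_{\lambda_{k+1}}) = \varepsilon_i(b_k)$ --- is exactly the mechanism used in \cite{KKMMNN92}, and the remaining verifications (compatibility with (\ref{crystal structure on paths}), bijectivity) are handled correctly.
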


This is called the path realization of the irreducible highest weight crystal $B(\lambda)$.

\subsection{Energy functions}

\begin{defn}
    An energy function for an affine or classical crystal $B$ is a map $H : B\otimes B \rightarrow \Zbb$ satisfying the condition
    \begin{align*}
        H(\ft_{i}(b\otimes b')) =
        \begin{cases}
            H(b\otimes b') & \mathrm{if~} i\not= 0, \\
            H(b\otimes b') + 1 & \mathrm{if~} i = 0 \mathrm{~and~} \ft_{i}(b\otimes b') = (\ft_{i}b)\otimes b' \not= 0, \\
            H(b\otimes b') - 1 & \mathrm{if~} i = 0 \mathrm{~and~} \ft_{i}(b\otimes b') = b\otimes (\ft_{i}b') \not= 0. \\
        \end{cases}
    \end{align*}
\end{defn}

Such an $H$ is therefore determined up to constant shift provided that $B\otimes B$ is connected, and can often be normalised by specifying that $H(b \otimes b) = 0$ for any extremal element $b\in B$.
Moreover existence for all perfect crystals was shown in \cite{KKMMNN92}.
\\

Given an energy function $H$ on a classical crystal $B$, the affine energy function $\Haff$ for $\Baff$ is defined by
\begin{align} \label{Haff definition}
    \Haff(z^{m}a\otimes z^{n}b) = H(a\otimes b) + m - n
\end{align}
for all $m,n\in\Zbb$ and $a,b\in B$.

\begin{rmk} \label{energy function difference remark}
    It is crucial to note that while our definitions of energy functions and affine energy functions match those of \cite{Kashiwara02}, they are equal to \emph{minus} those of references such as \cites{BFKL06,FHKS23,KKMMNN92,KKMMNN92(2),KMPY96}.
\end{rmk}

\begin{lem} \label{Haff constant on components}
    The affine energy function $\Haff$ is constant on each connected component of $\Baff\otimes\Baff$.
\end{lem}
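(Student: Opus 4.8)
The plan is to show that $\Haff$ is invariant under all the Kashiwara operators $\et_{i}, \ft_{i}$ acting on $\Baff \otimes \Baff$, since the connected components of $\Baff \otimes \Baff$ are precisely the equivalence classes under the equivalence relation generated by these operators. The content of the lemma is therefore that the $m - n$ shift in the definition \eqref{Haff definition} exactly cancels the $\pm 1$ jumps in $H$ coming from the $i = 0$ case in the definition of an energy function, once we pass to the affinization where $\et_{0}, \ft_{0}$ also shift the powers of $z$.

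\textbf{Step 1: Reduce to checking invariance under $\ft_{i}$.} Since $\et_{i}$ and $\ft_{i}$ are mutually inverse (wherever defined) on a crystal, it suffices to show $\Haff(\ft_{i}(z^{m}a \otimes z^{n}b)) = \Haff(z^{m}a \otimes z^{n}b)$ whenever $\ft_{i}(z^{m}a \otimes z^{n}b) \neq 0$, for every $i \in I$. Two cases arise from the tensor product rule \eqref{tensor product of crystals}: either $\ft_{i}$ acts on the left factor or on the right factor.

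\textbf{Step 2: The case $i \neq 0$.} Here $\et_{0}, \ft_{0}$ are not involved, so $\ft_{i}$ acts on $\Baff \otimes \Baff$ by $\ft_{i}(z^{m}a \otimes z^{n}b) = z^{m}(\ft_{i}(a\otimes b))$ with the powers of $z$ unchanged (using that $\varepsilon_{i}, \varphi_{i}$ ignore the $z$-grading, so the branch of \eqref{tensor product of crystals} taken is the same as for $a \otimes b$ in $B \otimes B$). Writing $\ft_{i}(a \otimes b)$ as either $(\ft_{i}a)\otimes b$ or $a \otimes (\ft_{i}b)$, we get in both subcases $H(\ft_{i}(a\otimes b)) = H(a \otimes b)$ by the defining property of the energy function for $i \neq 0$, and since the powers $m, n$ are unchanged, $\Haff$ is preserved.

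\textbf{Step 3: The case $i = 0$.} Now $\ft_{0}(z^{m}a \otimes z^{n}b) = z^{-1}\ft_{0}(z^{m}a \otimes z^{n}b)$-type shifts enter via $\ft_{0}(z^{k}x) = z^{k-1}(\ft_{0}x)$. If $\ft_{0}$ acts on the left factor, the result is $z^{m-1}(\ft_{0}a) \otimes z^{n}b$, so the new affine energy is $H((\ft_{0}a)\otimes b) + (m-1) - n = H(a\otimes b) + 1 + (m-1) - n = H(a\otimes b) + m - n$, using the $i = 0$, left-factor clause of the energy function definition. If $\ft_{0}$ acts on the right factor, the result is $z^{m}a \otimes z^{n-1}(\ft_{0}b)$, giving $H(a\otimes(\ft_{0}b)) + m - (n-1) = H(a\otimes b) - 1 + m - n + 1 = H(a\otimes b) + m - n$, using the $i = 0$, right-factor clause. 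Either way $\Haff$ is unchanged. Combined with Step 2, $\Haff$ is invariant under all $\ft_{i}$ (hence all $\et_{i}$), so it is constant on connected components.

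\textbf{The main point to be careful about} is the bookkeeping in Step 3: one must confirm that the branch of the tensor product rule selected for $\ft_{0}(a \otimes b)$ in $B \otimes B$ is literally the same as for $\ft_{0}(z^{m}a \otimes z^{n}b)$ in $\Baff \otimes \Baff$ — this holds because $\varepsilon_{0}(z^{n}b) = \varepsilon_{0}(b)$ and $\varphi_{0}(z^{m}a) = \varphi_{0}(a)$ by the definition of the affinization — and to match the sign conventions of \eqref{Haff definition} with the energy function definition exactly, being mindful of Remark \ref{energy function difference remark}. Everything else is a direct substitution.
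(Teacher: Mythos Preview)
Your proof is correct and is precisely the standard direct verification argument: checking that the $z$-shifts from $\ft_{0}$ in the affinization exactly compensate the $\pm 1$ jumps of $H$ prescribed by the energy function axiom. The paper does not spell this out but simply cites \cite{FHKS23}*{\S 3.3} (adjusted for the sign convention of Remark~\ref{energy function difference remark}), where the same computation appears; so your approach coincides with what the paper defers to.
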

\begin{proof}
    This is exactly the same as in \cite{FHKS23}*{\S 3.3} after adjusting for the sign difference mentioned in Remark \ref{energy function difference remark}.
\end{proof}

Returning to the path realization of the previous subsection, the notion of an energy function $H$ on $B$ allows us to endow $\Pcal(\lambda)$ with the structure of an affine crystal by replacing the weight function in (\ref{crystal structure on paths}) with
\begin{align} \label{affine weights on paths}
    \wt(\pb) = \lambda_{k} + \wt(\pb') + \delta\,\sum_{r=0}^{\infty} (r+1)(H(p_{r+1}\otimes p_{r})\! -\! H(b_{r+1}\otimes b_{r})).
\end{align}
Theorem \ref{path realization theorem} can then be strengthened to an isomorphism of affine crystals by viewing $\lambda$ inside $P^{+}$ rather than $\Pbar^{+}$.

\subsection{Fock space representations} \label{Fock space preliminaries}

Kashiwara-Miwa-Petersen-Yung \cite{KMPY96} constructed Fock space representations of $\Uaff$ as semi-infinite limits of $q$-wedge products $\wpr$ along a certain ground state vector, where $V$ is a $\Udash$-module satisfying particular assumptions.
Kashiwara \cite{Kashiwara02} later showed that these assumptions are satisfied by so-called `good modules' and further studied the resulting Fock spaces, for example obtaining global bases for the representations.
In this subsection we shall recall the definition of a good $\Udash$-module and the construction of the Fock spaces, as well as some fundamental results on their structure.
\\

We say that a subset $J\subset I$ is of finite type if $\g_{J} = \langle e_{i},f_{i},k_{i}^{\pm 1} ~\vert~ i\in J\rangle$ is a finite dimensional semisimple Lie algebra.
An affine or classical crystal $B$ is regular if for any finite type $J\subset I$, if we forget all arrows with labels in $I\setminus J$ then $B$ is isomorphic to the crystal basis of an integrable $\U_{q}(\g_{J})$-module.
By \cite{KKMMNN92}*{Proposition 2.4.4} this is equivalent to the same condition but with $J$ limited to subsets of order $2$.

\begin{defn} \label{simple crystal definition}
    A finite regular classical crystal $B$ whose weights are of level $0$ is simple if there exists some $\lambda\in\Pbar$ such that $\vert B_{\lambda}\vert = 1$, and the weight of any extremal element of $B$ is contained in its Weyl group orbit $W\lambda$.
\end{defn}

We shall also require the notion of a global basis.
Recall that $A$ is the subring of rational functions in $\Qbb(q)$ that are regular at $q=0$.
Let $-$ be the automorphism of $\Qbb(q)$ sending $q\mapsto q^{-1}$.
Then $A_{\infty} = \overline{A}$ is the subring of functions in $\Qbb(q)$ which are regular at $q=\infty$.

\begin{defn}
    A balanced triple $(L,L_{\infty},V_{\Qbb})$ for a $\Qbb(q)$-vector space $V$ consists of
    \begin{itemize}
        \item an $A$-submodule $L$ of $V$,
        \item an $A_{\infty}$-submodule $L_{\infty}$ of $V$,
        \item a $\Qbb[q,q^{-1}]$-submodule $V_{\Qbb}$ of $V$,
    \end{itemize}
    each of which generates $V$ as a $\Qbb(q)$-vector space, such that the following equivalent conditions are satisfied:
    \begin{itemize}
        \item the natural projection $E \rightarrow L/qL$ is an isomorphism,
        \item the natural projection $E \rightarrow L_{\infty}/q^{-1}L_{\infty}$ is an isomorphism,
        \item $(L\cap V_{\Qbb})\oplus (q^{-1}L_{\infty}\cap V_{\Qbb}) \rightarrow V_{\Qbb}$ is an isomorphism,
        \item $A\otimes_{\Qbb} E \rightarrow L$,
        $\overline{A}\otimes_{\Qbb} E \rightarrow L_{\infty}$,
        $\Qbb[q,q^{-1}]\otimes_{\Qbb} E \rightarrow V_{\Qbb}$
        and
        $\Qbb(q)\otimes_{\Qbb} E \rightarrow V$
        are all isomorphisms,
    \end{itemize}
    where $E = L\cap L_{\infty}\cap V_{\Qbb}$.
\end{defn}

We shall also denote by $-$ the ring automorphism of $\Udash$ which sends $q$, $q^{h}$, $e_{i}$ and $f_{i}$ to $q^{-1}$, $q^{-h}$, $e_{i}$ and $f_{i}$ respectively.
A bar involution of a $\Udash$-module $V$ is then an involution $-$ satisfying $\overline{a\cdot u} = \overline{a}\cdot\overline{u}$ for all $a\in\Udash$ and $u\in V$.
\\

Let $\Udash_{\Qbb}$ be the $\Qbb[q,q^{-1}]$-subalgebra of $\Udash$ generated by
$\genfrac\{\}{0pt}{2}{q^{h}}{n}$, $e_{i}^{(n)}$ and $f_{i}^{(n)}$ for all $h\in\Pbar^{\vee}$, $i\in I$ and $n\geq 0$.
\begin{defn}
    An integrable $\Udash$-module $V$ with crystal basis $(L,B)$ has a global basis if there exists a $\Udash_{\Qbb}$-submodule $V_{\Qbb}$ such that
    \begin{itemize}
        \item $\overline{V_{\Qbb}} = V_{\Qbb}$ and $u - \overline{u} \in (q-1)V_{\Qbb}$ for all $u\in V_{\Qbb}$,
        \item $(L,\overline{L},V_{\Qbb})$ is a balanced triple.
    \end{itemize}
\end{defn}

In this case, letting $G : L/qL \xrightarrow{\sim} E$ be the inverse of the natural projection, $\lbrace G(b) ~\vert~ b\in B\rbrace$ forms a basis for $V$ called the (lower) global basis.
Such bases satisfy the following nice properties, among many others:
\begin{itemize}
    \item $\overline{G(b)} = G(b)$ for all $b\in B$,
    \item for all $i\in I$ and $b\in B$ we have
    \begin{align*}
        f_{i}G(b) &=
        [\varepsilon_{i}(b) + 1]
        G(\ft_{i}b) + \sum_{\varphi_{i}(b')\geq\varphi_{i}(b)} F^{i}_{b,b'}G(b'), \\
        e_{i}G(b) &=
        [\varphi_{i}(b) + 1]
        G(\et_{i}b) + \sum_{\varepsilon_{i}(b')\geq\varepsilon_{i}(b)} E^{i}_{b,b'}G(b'),
    \end{align*}
    where the coefficients above satisfy
    \begin{alignat*}{6}
        &F^{i}_{b,b'}& &\in&~&q^{1-\varphi_{i}(b')}\Zbb[q]\cup q^{\varphi_{i}(b')-1}\Zbb[q^{-1}],& \\
        &E^{i}_{b,b'}& &\in&~&q^{1-\varepsilon_{i}(b')}\Zbb[q]\cup q^{\varepsilon_{i}(b')-1}\Zbb[q^{-1}].&
    \end{alignat*}
    The conditions on the sums are each equivalent to requiring that $b'$ lies in a strictly longer $i$-string than $b$.
\end{itemize}
The above definitions also hold for integrable $\Uaff$-modules by simply replacing $\Pbar$ with $P$.
For a more detailed introduction to global bases and their properties, see for example \cite{Kashiwara93}.
We are now ready to define a good $\Udash$-module in the sense of Kashiwara \cite{Kashiwara02}.

\begin{defn}
    A finite dimensional $\Udash$-module $V$ is good if it has a crystal basis $(L,B)$ with $B$ simple, a bar involution, and a global basis.
\end{defn}

Since a simple crystal is always connected \cite{AK97}, such a module must be irreducible.

\begin{eg}
    \begin{itemize}
        \item Any level $0$ fundamental representation $W(\varpi_{i})$ is a good $\Udash$-module \cite{Kashiwara02}.
        \item In Proposition \ref{BFKL crystals are good} we prove that the level $1$ perfect crystals of Benkart-Frenkel-Kang-Lee \cite{BFKL06} are crystal bases of good modules in all affine types.
    \end{itemize}
\end{eg}

We shall now explain the construction of Fock space representations for quantum affine algebras.
Consider a good $\Udash$-module $V$ with crystal basis $(L,B)$.
We define a $\Uaff$-submodule
\begin{align*}
    N = \Uaff[z^{\pm 1}\otimes z^{\pm 1},z\otimes 1 + 1\otimes z](u\otimes u)
\end{align*}
of $\Vaff^{\otimes 2}$, which is independent of a choice of extremal vector $u\in\Vaff$.
Kashiwara-Miwa-Petersen-Yung \cite{KMPY96}*{\S 3.3} introduced the $q$-wedge product $\wpr$ as the quotient of $\Vaff^{\otimes r}$ by
\begin{align*}
    N_{r} = \sum_{k=0}^{r-2} \Vaff^{\otimes k}\otimes N \otimes \Vaff^{\otimes(r-k-2)},
\end{align*}
which has a $\Uaff$-module structure coming from the coproduct (\ref{coproduct}).
For each $v_{1},\dots,v_{r}\in\Vaff$ the image of $v_{1}\otimes\dots\otimes v_{r}$ in $\wpr$ is denoted by $v_{1}\wedge\dots\wedge v_{r}$.

\begin{rmk}
    An equivalent definition of the submodule $N$ is as $\ker(R-1)$ where $R$ is the action of the R-matrix on $\Vaff\otimes\Vaff$.
\end{rmk}

We say that a sequence $(s_{k})_{k=1}^{r}$ in $\Baff$ is normally ordered if each $\Haff(s_{k+1}\otimes s_{k})>0$, and call $G(s_{r})\wedge\dots\wedge G(s_{1})$ a normally ordered wedge.
It was shown in \cite{KMPY96} that $\wpr$ has a crystal basis with
\begin{itemize}
    \item $L(\wpr)$ the image of $L(\Vaff^{\otimes r}) = L(\Vaff)\otimes_{A}\dots\otimes_{A} L(\Vaff)$ in $\wpr$,
    \item $B(\wpr)$ the set of normally ordered sequences in $\Baff$.
\end{itemize}
Here we identify each element of $B(\wpr)$ with the image of its associated normally ordered wedge in $L(\wpr)/qL(\wpr)$.
\\

Suppose further that $B$ is a perfect crystal of level $\ell$ as defined in Section \ref{crystals preliminaries}.
Recall that for any weight $\lambda\in\Pbar^{+}$ with $\langle\lambda,c\rangle = \ell$ the ground state sequence of weight $\lambda$ is the unique sequence $\pb_{\lambda} = (b_{r})_{r=0}^{\infty}$ in $B$ with $\varphi(b_{r+1}) = \varepsilon(b_{r})$ for all $r\geq 0$ and $\varphi(b_{0}) = \lambda$.
Letting $m_{r}\in\Zbb$ be such that $\Haff(z^{m_{r+1}}b_{r+1}\otimes z^{m_{r}}b_{r}) = 1$ for all $r\geq 0$, we can also define a ground state sequence $\sbold_{\lambda} = (g_{r})_{r=0}^{\infty} = (z^{m_{r}}b_{r})_{r=0}^{\infty}$ in $\Baff$.
\\

Let $\overline{\Fcal(\lambda)} = \lim_{r\rightarrow\infty} \wpr$ be the inductive limit of the $q$-exterior powers with respect to the maps $\wpr\rightarrow \bigwedge^{r+1}\Vaff$ which send $v \mapsto G(g_{r})\wedge v$.
We similarly define $L(\overline{\Fcal(\lambda)})$ to be the inductive limit of the $L(\wpr)$.

\begin{defn}
    The Fock space $\Fcal(\lambda)$ is the quotient $\overline{\Fcal(\lambda)}/\bigcap_{m>0}q^{m}L(\overline{\Fcal(\lambda)})$.
\end{defn}

\begin{rmk}
    We can endow the Fock space with a separated $q$-adic topology by letting $\lbrace q^{m}L(\Fcal(\lambda)) ~\vert~ m\geq 0\rbrace$ be the neighbourhood system of $0$, where $L(\Fcal(\lambda))$ is the image of $L(\overline{\Fcal(\lambda)})$ in $\Fcal(\lambda)$.
    Moreover, the expression for the action of $f_{i}$ in Proposition \ref{Fock space action} converges with respect to this topology.
\end{rmk}

Any element of $\Fcal(\lambda)$ can be written as a linear combination of vectors $\vac{r}\wedge v$ where $v\in\wpr$ and we define $\vac{r} = \dots\wedge G(g_{r+1})\wedge G(g_{r})$ for each $r\in\Nbb$.

\begin{prop} \label{Fock space action}
    \cite{KMPY96} The Fock space $\Fcal(\lambda)$ is equipped with a well-defined $\Uaff$-module structure given by
    \begin{itemize}
        \item $\wt(\vac{r}\wedge v) = \lambda_{r} + \wt(v)$,
        \item $e_{i}(\vac{r}\wedge v) = \vac{r}\wedge e_{i}v$,
        \item $\displaystyle f_{i}(\vac{r}\wedge v) = k_{i}\vac{r}\wedge f_{i}v + \sum_{s>0} k_{i}\vac{r+s}\wedge f_{i}G(g_{r+s-1})\wedge\dots\wedge G(g_{r})\wedge v$,
    \end{itemize}
    for any $v\in\wpr$ and $r\geq 0$.
    Here $\wt(v)$, $e_{i}v$ and $f_{i}v$ come from the action of $\Uaff$ on $\wpr$ via the coproduct (\ref{coproduct}).
\end{prop}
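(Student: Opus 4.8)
The plan is to verify that the formulas defining the action of the generators on $\Fcal(\lambda)$ are well-defined and satisfy the defining relations of $\Uaff$, by carefully transporting the $\Uaff$-module structure on each finite $q$-wedge $\wpr$ through the inductive limit. First I would record the basic compatibility at finite level: using the coproduct (\ref{coproduct}) applied to the vector $G(g_{r+s})\wedge\dots\wedge G(g_{r})\wedge v \in \bigwedge^{r+s+1}\Vaff$, one computes $e_{i}$ and $f_{i}$ of such a vector in terms of their action on the tensor factors. Since each $G(g_{k})$ is a global basis element whose weight component under $e_{i}$ (resp. $f_{i}$) is controlled by $\varepsilon_{i}$ and $\varphi_{i}$ via the formulas recalled just before the definition of a good module, the terms obtained by letting $e_{i}$ or $f_{i}$ hit one of the $G(g_{k})$ factors either vanish or land in lattice orders that grow with $s$. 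The key point — which I would isolate as a lemma — is that the ground state sequence $\sbold_{\lambda}=(g_{r})$ is \emph{normally ordered}, i.e.\ $\Haff(g_{r+1}\otimes g_{r})=1>0$, so that $\vac{r}$ is a genuine (infinite) normally ordered wedge, and moreover that $e_{i}G(g_{k})$ has no component along wedges that would survive in the limit. This is what makes $e_{i}(\vac{r}\wedge v)=\vac{r}\wedge e_{i}v$ consistent across different choices of truncation level $r$: passing from level $r$ to level $r+1$ replaces $v$ by $G(g_{r})\wedge v$ and one must check $e_{i}(G(g_{r})\wedge v) = G(g_{r})\wedge e_{i}v$ modulo terms killed in $\Fcal(\lambda)$, which follows because $e_{i}G(g_{r})\in q\,L$ (as $\varphi_{i}(g_r)=0$ for the relevant $i$, the coefficient $[\varphi_i(g_r)+1]=1$ contributes to $G(\et_i g_r)$ which is again a ground-state-type wedge, and the correction terms lie in higher lattice filtration).

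Next I would treat $f_{i}$, which is the genuinely infinite formula. Here the iterated application of $\Delta(f_{i})=f_{i}\otimes 1 + k_{i}\otimes f_{i}$ to $G(g_{r+s-1})\wedge\dots\wedge G(g_{r})\wedge v$ produces, besides the ``diagonal'' term $k_{i}(\vac{r}\wedge f_{i}v)$, a sum over $s>0$ of terms where $f_{i}$ hits the newly appended ground-state block $G(g_{r+s-1})$; the factor $k_{i}$ out front accumulates from the $k_{i}\otimes f_{i}$ summands in the coproduct applied to all the $G(g_{k})$ to the left. I would show this series converges in the separated $q$-adic topology on $\Fcal(\lambda)$ using the bound $f_{i}G(g_{k}) = [\varepsilon_{i}(g_{k})+1]G(\ft_{i}g_{k}) + \sum F^{i}_{g_k,b'}G(b')$ with $F^{i}_{g_k,b'}\in q^{1-\varphi_i(b')}\Zbb[q]\cup q^{\varphi_i(b')-1}\Zbb[q^{-1}]$, combined with the fact that a wedge of the form $\dots\wedge G(\ft_i g_{r+s-1})\wedge G(g_{r+s-2})\wedge\dots$ fails to be normally ordered at the $(r+s-1,r+s-2)$ step and hence its straightening introduces an increasing power of $q$ as $s$ grows — this is precisely the mechanism from \cite{KMPY96} and is what makes the sum lie in $\bigcap$ of no $q^m L$, i.e.\ gives a genuine element of $\Fcal(\lambda)$. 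The well-definedness (independence of truncation level $r$) is then the matching compatibility statement for $f_i$: raising $r$ to $r+1$ reabsorbs the $s=1$ term of the old sum into the diagonal term of the new one.

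Finally I would check the relations. The Cartan relations $q^h e_i q^{-h}=q^{\langle\alpha_i,h\rangle}e_i$ etc.\ are immediate from the weight formula $\wt(\vac{r}\wedge v)=\lambda_r+\wt(v)$ together with the grading of $\wpr$. The Serre relations for the $e_i$ among themselves, and for the $f_i$ among themselves, follow from the corresponding relations on each $\wpr$ (already established in \cite{KMPY96}) by passing to the limit, using that the inductive limit maps are $\Uaff$-equivariant up to the controlled corrections above. The only relation requiring real work is $[e_i,f_j]=\delta_{ij}(k_i-k_i^{-1})/(q-q^{-1})$: one must commute $e_i$ past the infinite $f_i$-sum, and the ``extra'' terms where $e_i$ lands on one of the appended $G(g_{r+s-1})$ blocks must telescope against the ground-state relation $\varphi(g_{r+1})=\varepsilon(g_r)$ (equivalently $\Haff(g_{r+1}\otimes g_r)=1$) to collapse the infinite sum down to the single diagonal contribution $\delta_{ij}(k_i-k_i^{-1})/(q-q^{-1})\cdot(\vac{r}\wedge v)$. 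I expect this commutator computation to be the main obstacle: it is where the perfectness of $B$ and the precise normalization of $\Haff$ (Remark \ref{energy function difference remark}) enter essentially, and where one needs the global-basis expansion coefficients to cancel exactly rather than merely to a good approximation. The cleanest route is probably to reduce it, as in \cite{KMPY96}, to the identity $e_i(\vac{r}\wedge v) = \vac{r}\wedge e_i v$ plus a finite-level commutator relation on $\bigwedge^{r+s}\Vaff$, and then invoke Lemma \ref{Haff constant on components} to see that the correction terms one is ignoring really do vanish in $\Fcal(\lambda)$.
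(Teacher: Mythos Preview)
The paper does not prove this proposition at all: it is stated with the citation \cite{KMPY96} and no argument is given. Your proposal is therefore not comparable to anything in the paper itself---you are sketching (a version of) the original proof from Kashiwara--Miwa--Petersen--Yung, which the present paper simply quotes as background.

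That said, a few points in your sketch would not survive as written. The claim that $\varphi_{i}(g_{r})=0$ ``for the relevant $i$'' is false in general: the ground state elements satisfy $\varphi(b_{r+1})=\varepsilon(b_{r})$, which constrains the full weight $\varphi(b_{r})$ but does not force individual $\varphi_{i}(g_{r})$ to vanish. Consequently $e_{i}G(g_{r})$ need not lie in $qL$, and the reason $e_{i}(\vac{r}\wedge v)=\vac{r}\wedge e_{i}v$ is well-defined is not that such terms vanish outright, but rather that the wedge $e_{i}G(g_{r+s-1})\wedge G(g_{r+s-2})\wedge\cdots$ fails to be normally ordered and its straightening pushes it into arbitrarily high powers of $q$ as $s\to\infty$---the same mechanism you correctly invoke for $f_{i}$. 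Similarly, the commutator $[e_{i},f_{j}]$ verification in \cite{KMPY96} does not rely on perfectness of $B$ or on Lemma~\ref{Haff constant on components}; it is a direct computation using the $q$-adic estimates and the finite-level relations on $\wpr$. Your instinct that this is ``the main obstacle'' is right, but the resolution is analytic (convergence in the $q$-adic topology) rather than combinatorial.
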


We say that an infinite sequence $\sbold = (s_{r})_{r=0}^{\infty}$ in $\Baff$ is normally ordered if all $\Haff(s_{r+1}\otimes s_{r})>0$, and call $G(\sbold) := \dots \wedge G(s_{2})\wedge G(s_{1})\wedge G(s_{0})$ a normally ordered wedge.
The set $B(\Fcal(\lambda))$ of normally ordered sequences with $s_{r} = g_{r}$ for $r\gg 0$ has the structure of an affine crystal via (\ref{crystal structure on paths}) and (\ref{affine weights on paths}), just as for $\Pcal(\lambda)$.
\\

The following are just some of the properties of Fock spaces proven in \cites{KMPY96,Kashiwara02}:
\begin{itemize}
    \item $\Fcal(\lambda)$ is an integrable highest weight $\Uaff$-module with weights contained in $\lambda + \sum_{i\in I} \Zbb_{\leq 0}\alpha_{i}$.
    In particular, its character is given by
    \begin{align*}
        \mathrm{ch}(\Fcal(\lambda)) = \mathrm{ch}(V(\lambda))\cdot\prod_{k>0}(1 - e^{-k\delta})^{-1}
    \end{align*}
    and it is therefore a direct sum of irreducible highest weight modules $V(\lambda - m\delta)$ with $m\in\Nbb$.
    \item $(L(\Fcal(\lambda)),B(\Fcal(\lambda)))$ is a crystal basis of $\Fcal(\lambda)$ by identifying each $\sbold\in B(\Fcal(\lambda))$ with the image of $G(\sbold)$ in $L(\Fcal(\lambda))/qL(\Fcal(\lambda))$.
    \item The highest weight elements of $B(\Fcal(\lambda))$ are precisely those of the form $\dots\wedge z^{n_{2}}g_{2}\wedge z^{n_{1}}g_{1}\wedge z^{n_{0}}g_{0}$ with $n_{0}\leq n_{1}\leq n_{2}\leq \dots$ and $n_{r} = 0$ for $r\gg 0$.
    \item If $\sbold\in B(\Fcal(\lambda))$ has $s_{k} = g_{k}$ for some $k\in\Nbb$, then $s_{r} = g_{r}$ for all $r\geq k$.
    \item The normally ordered wedges $\lbrace G(\sbold) ~\vert~ \sbold\in B(\Fcal(\lambda))\rbrace$ form a global basis for $\Fcal(\lambda)$.
\end{itemize}
It is clear that the submodule generated by the ground state vector $G(\sbold_{\lambda})$ is a copy of the irreducible highest weight representation $V(\lambda)$.
On the level of affine crystals, this tells us that the connected component of $\sbold_{\lambda}$ in $B(\Fcal(\lambda))$ is isomorphic to $B(\lambda)$.
The following proposition provides a more concrete description of this subcrystal.

\begin{prop} \label{component of ground state sequence proposition}
    The connected component of $\sbold_{\lambda}$ in $B(\Fcal(\lambda))$ consists of the sequences $\sbold = (s_{r})_{r=0}^{\infty}$ with all $\Haff(s_{r+1} \otimes s_{r}) = 1$.
\end{prop}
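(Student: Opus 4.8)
The plan is to prove the two inclusions separately, using the general relationship between energy functions and the affine crystal structure on paths. Write $\sbold_\lambda = (g_r)_{r=0}^\infty$ for the ground state sequence in $\Baff$, recalling that by construction $\Haff(g_{r+1}\otimes g_r) = 1$ for all $r\geq 0$. Since $B(\Fcal(\lambda))$ carries its affine crystal structure via (\ref{crystal structure on paths}) and (\ref{affine weights on paths}) exactly as $\Pcal(\lambda)$ does, and the connected component of $\sbold_\lambda$ is isomorphic to $B(\lambda)$ by the discussion preceding the proposition, it suffices to identify which normally ordered sequences lie in this component.

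For the inclusion $\supseteq$, suppose $\sbold = (s_r)_{r=0}^\infty$ is a normally ordered sequence with $\Haff(s_{r+1}\otimes s_r) = 1$ for all $r$ (in particular $s_r = g_r$ for $r\gg 0$, so $\sbold\in B(\Fcal(\lambda))$). I would show by downward induction, starting from the tail where $\sbold$ agrees with $\sbold_\lambda$, that $\sbold$ can be reached from $\sbold_\lambda$ by applying Kashiwara operators. The key tool is Lemma \ref{Haff constant on components}: $\Haff$ is constant on connected components of $\Baff\otimes\Baff$, so the condition $\Haff(s_{r+1}\otimes s_r)=1$ is preserved under the crystal operations that act within a fixed pair of adjacent factors. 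Concretely, if $\sbold$ and $\sbold_\lambda$ first differ at position $k$, then the truncated segments $s_k\otimes\cdots\otimes s_0$ and $g_k\otimes\cdots\otimes g_0$ have the same weight modulo the energy correction (using (\ref{affine weights on paths}) and the fact that all the $\Haff(s_{r+1}\otimes s_r) - \Haff(g_{r+1}\otimes g_r)$ vanish), and connectedness of finite tensor powers of the perfect crystal $B$ lets me move between them via $\et_i,\ft_i$ while keeping the tail fixed and all adjacent energies equal to $1$.

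For the inclusion $\subseteq$, I must show that every element of the connected component of $\sbold_\lambda$ satisfies $\Haff(s_{r+1}\otimes s_r) = 1$ for all $r$. This follows because the component of $\sbold_\lambda$ is generated from $\sbold_\lambda$ by the $\et_i,\ft_i$, and these operators only ever modify finitely many adjacent factors at a time; by Lemma \ref{Haff constant on components} each individual quantity $\Haff(s_{r+1}\otimes s_r)$ is invariant under any such move (when $\et_i$ or $\ft_i$ acts within the pair, the pair stays in its connected component; when it acts entirely to the left or right of the pair, the pair is literally unchanged), so the values $\Haff(s_{r+1}\otimes s_r) = 1$ inherited from $\sbold_\lambda$ persist throughout the component.

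The main obstacle I anticipate is the bookkeeping in the $\supseteq$ direction: one must carefully track how the crystal operators $\et_i(p_k\otimes\cdots\otimes p_0)$ in (\ref{crystal structure on paths}) interact with the energy condition across the boundary between position $k$ and position $k+1$, i.e. verify that $\Haff(s_{k+1}\otimes s_k) = 1$ is genuinely maintained and not just $\Haff$ within the truncated block. This requires invoking the defining property of $\Haff$ together with the compatibility of the energy function with the tensor product rule, and checking that normal orderedness is never violated en route. Once this local analysis is in place, the global statement assembles by the stabilization property (if $s_k = g_k$ then $s_r = g_r$ for all $r\geq k$) already recorded in the excerpt.
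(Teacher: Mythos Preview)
Your $\subseteq$ direction is essentially correct and matches the mechanism used elsewhere in the paper (cf.\ the proof of Proposition~\ref{reduced walls closed under e and f proposition}): when a Kashiwara operator acts on position $k$ of a sequence, the tensor-product rule guarantees that $(s_{k+1},s_k)$ and $(s_k,s_{k-1})$ each stay in their connected component of $\Baff\otimes\Baff$, so Lemma~\ref{Haff constant on components} preserves all the values $\Haff(s_{r+1}\otimes s_r)$.

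Your $\supseteq$ direction, however, has a real gap. The claim that the truncated segments ``have the same weight modulo the energy correction'' is not correct: the energy correction in (\ref{affine weights on paths}) only adjusts the $\delta$-component, and the classical weights of $s_k\otimes\cdots\otimes s_0$ and $g_k\otimes\cdots\otimes g_0$ are genuinely different. More seriously, connectedness of $B^{\otimes(k+1)}$ does not transfer to the path crystal in the way you suggest. A path in $B^{\otimes(k+1)}$ from $g_k\otimes\cdots\otimes g_0$ to $s_k\otimes\cdots\otimes s_0$ may at intermediate steps modify the leftmost factor away from $b_k$; once that happens, the crystal operators on $B(\Fcal(\lambda))$ (which are defined via (\ref{crystal structure on paths}) using a \emph{larger} truncation) no longer agree with the operators on your fixed $(k{+}1)$-fold tensor product, and you lose control of the tail. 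You flag exactly this boundary issue as the ``main obstacle'', but the proposal does not contain the idea needed to resolve it.

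The paper's proof avoids this difficulty entirely by working in the other direction: it writes down an explicit map
\[
(p_r)_{r=0}^\infty \;\longmapsto\; (z^{n_r}p_r)_{r=0}^\infty,
\qquad n_r = n_{r+1} - 1 + H(p_{r+1}\otimes p_r),\quad n_r = m_r \text{ for } r\gg 0,
\]
from the path realization $\Pcal(\lambda)\cong B(\lambda)$ into $B(\Fcal(\lambda))$, checks that it is a crystal morphism, and observes that its image is exactly the set of sequences with all $\Haff(s_{r+1}\otimes s_r)=1$. Since $B(\lambda)$ is connected and $\pb_\lambda\mapsto\sbold_\lambda$, the image is the connected component of $\sbold_\lambda$. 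This packages both inclusions at once and makes the ``bookkeeping'' you were worried about automatic: the recursive definition of $n_r$ is precisely what forces each adjacent $\Haff$ to equal $1$, and the morphism property replaces any need to trace paths through finite tensor powers.
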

\begin{proof}
    It is easy to show that if $n_{r} = n_{r+1} - 1 + H(p_{r+1} \otimes p_{r})$ for all $r \geq 0$ with $n_{r} = m_{r}$ for $r \gg 0$ then
    \begin{align*}
        (p_{r})_{r=0}^{\infty} \mapsto (z^{n_{r}}p_{r})_{r=0}^{\infty}
    \end{align*}
    defines a morphism $B(\lambda) \rightarrow B(\Fcal(\lambda))$ which bijects onto the desired subcrystal.
    Since $\pb_{\lambda}$ is sent to $\sbold_{\lambda}$ and $B(\lambda)$ is connected, we are done.
\end{proof}

We can therefore further think of this subcrystal as the set of $\sbold = (s_{r})_{r=0}^{\infty}$ such that applying $z$ to any entry gives a sequence which no longer lies in $B(\Fcal(\lambda))$.

\section{Level 1 perfect crystals in type E} \label{Perfect crystal section}

In this section we present a level $1$ perfect crystal in types $\Eaff{6}$, $\Eaff{7}$ and $\Eaff{8}$ and describe the energy function in each case.
We shall also confirm that they are the crystal bases of good $\Udash$-modules, and can therefore be used to construct the Fock space crystals $B(\Fcal(\lambda))$ for all level $1$ weights $\lambda\in\Pbar^{+}$.
Finally, we construct a Young column model for each level $1$ perfect crystal, which we will use in later sections to build Young wall models for $B(\lambda)$ and $B(\Fcal(\lambda))$.
\\

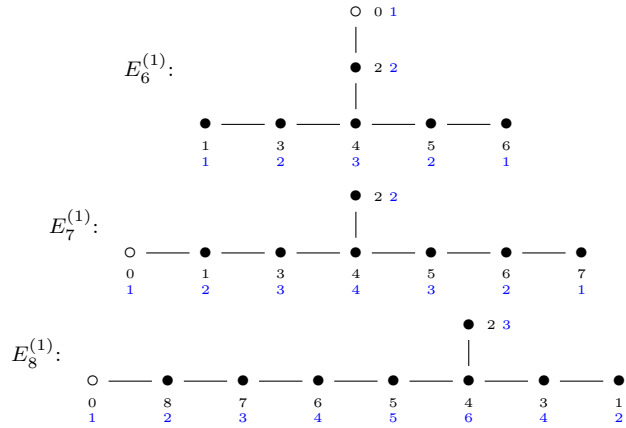
\begin{figure}[htbp]
    \centering
    \begin{tikzpicture}
    \node (label) at (-2.75,-0.25) {\footnotesize $E^{(1)}_{6}$:};
    \node (0) at (0,0.5) {$\circ$};
    \node (1) at (-2,-1) {$\bullet$};
    \node (6) at (0,-0.25) {$\bullet$};
    \node (2) at (-1,-1) {$\bullet$};
    \node (3) at (0,-1) {$\bullet$};
    \node (4) at (1,-1) {$\bullet$};
    \node (5) at (2,-1) {$\bullet$};
    \node (a0) at (0.4,0.5) {\tiny
    $\color{black} 0$
    $\color{blue} 1$ };
    \node (a6) at (0.4,-0.25) {\tiny
    $\color{black} 2$
    $\color{blue} 2$ };
    \node (a1) at (-2,-1.4) {\tiny
    $\begin{array}{c}
        \color{black} 1 \\
        \color{blue} 1
    \end{array}$ };
    \node (a2) at (-1,-1.4) {\tiny
    $\begin{array}{c}
        \color{black} 3 \\
        \color{blue} 2
    \end{array}$ };
    \node (a3) at (0,-1.4) {\tiny
    $\begin{array}{c}
        \color{black} 4 \\
        \color{blue} 3
    \end{array}$ };
    \node (a4) at (1,-1.4) {\tiny
    $\begin{array}{c}
        \color{black} 5 \\
        \color{blue} 2
    \end{array}$ };
    \node (a5) at (2,-1.4) {\tiny
    $\begin{array}{c}
        \color{black} 6 \\
        \color{blue} 1
    \end{array}$ };
    \draw (0) edge (6) (6) edge (3) (1) edge (2) (2) edge (3) (3) edge (4) (4) edge (5);
    \end{tikzpicture}
    
    \begin{tikzpicture}
    \node (label) at (-3.75,-0.625) {\footnotesize $E^{(1)}_{7}$:};
    \node (0) at (-3,-1) {$\circ$};
    \node (1) at (-2,-1) {$\bullet$};
    \node (7) at (0,-0.25) {$\bullet$};
    \node (2) at (-1,-1) {$\bullet$};
    \node (3) at (0,-1) {$\bullet$};
    \node (4) at (1,-1) {$\bullet$};
    \node (5) at (2,-1) {$\bullet$};
    \node (6) at (3,-1) {$\bullet$};
    \node (a0) at (-3,-1.4) {\tiny
    $\begin{array}{c}
        \color{black} 0 \\
        \color{blue} 1
    \end{array}$ };
    \node (a7) at (0.4,-0.25) {\tiny
    $\color{black} 2$
    $\color{blue} 2$ };
    \node (a1) at (-2,-1.4) {\tiny
    $\begin{array}{c}
        \color{black} 1 \\
        \color{blue} 2
    \end{array}$ };
    \node (a2) at (-1,-1.4) {\tiny
    $\begin{array}{c}
        \color{black} 3 \\
        \color{blue} 3
    \end{array}$ };
    \node (a3) at (0,-1.4) {\tiny
    $\begin{array}{c}
        \color{black} 4 \\
        \color{blue} 4
    \end{array}$ };
    \node (a4) at (1,-1.4) {\tiny
    $\begin{array}{c}
        \color{black} 5 \\
        \color{blue} 3
    \end{array}$ };
    \node (a5) at (2,-1.4) {\tiny
    $\begin{array}{c}
        \color{black} 6 \\
        \color{blue} 2
    \end{array}$ };
    \node (a6) at (3,-1.4) {\tiny
    $\begin{array}{c}
        \color{black} 7 \\
        \color{blue} 1
    \end{array}$ };
    \draw (0) edge (1) (7) edge (3) (1) edge (2) (2) edge (3) (3) edge (4) (4) edge (5) (5) edge (6);
    \end{tikzpicture}
    
    \begin{tikzpicture}
    \node (label) at (-3.75,-0.625) {\footnotesize $E^{(1)}_{8}$:};
    \node (0) at (-3,-1) {$\circ$};
    \node (1) at (-2,-1) {$\bullet$};
    \node (8) at (2,-0.25) {$\bullet$};
    \node (2) at (-1,-1) {$\bullet$};
    \node (3) at (0,-1) {$\bullet$};
    \node (4) at (1,-1) {$\bullet$};
    \node (5) at (2,-1) {$\bullet$};
    \node (6) at (3,-1) {$\bullet$};
    \node (7) at (4,-1) {$\bullet$};
    \node (a0) at (-3,-1.4) {\tiny
    $\begin{array}{c}
        \color{black} 0 \\
        \color{blue} 1
    \end{array}$ };
    \node (a8) at (2.4,-0.25) {\tiny
    $\color{black} 2$
    $\color{blue} 3$ };
    \node (a1) at (-2,-1.4) {\tiny
    $\begin{array}{c}
        \color{black} 8 \\
        \color{blue} 2
    \end{array}$ };
    \node (a2) at (-1,-1.4) {\tiny
    $\begin{array}{c}
        \color{black} 7 \\
        \color{blue} 3
    \end{array}$ };
    \node (a3) at (0,-1.4) {\tiny
    $\begin{array}{c}
        \color{black} 6 \\
        \color{blue} 4
    \end{array}$ };
    \node (a4) at (1,-1.4) {\tiny
    $\begin{array}{c}
        \color{black} 5 \\
        \color{blue} 5
    \end{array}$ };
    \node (a5) at (2,-1.4) {\tiny
    $\begin{array}{c}
        \color{black} 4 \\
        \color{blue} 6
    \end{array}$ };
    \node (a6) at (3,-1.4) {\tiny
    $\begin{array}{c}
        \color{black} 3 \\
        \color{blue} 4
    \end{array}$ };
    \node (a7) at (4,-1.4) {\tiny
    $\begin{array}{c}
        \color{black} 1 \\
        \color{blue} 2
    \end{array}$ };
    \draw (0) edge (1) (8) edge (5) (1) edge (2) (2) edge (3) (3) edge (4) (4) edge (5) (5) edge (6) (6) edge (7);
    \end{tikzpicture}
    
    \caption{\hspace{.5em}The type $E$ untwisted affine Dynkin diagrams.
    Black labels are vertex numbers, and
    blue labels are the numerical values $a_{i} = a_{i}^{\vee}$}
    \label{fig:Affine_Dynkin_diagrams}
\end{figure}

Figure \ref{fig:Affine_Dynkin_diagrams} gives the type $E$ untwisted affine Dynkin diagrams, with vertices numbered as in Bourbaki.
Recall that the canonical central element $c$ is equal to $\sum_{i\in I} a_{i}^{\vee}h_{i}$ and hence the level $1$ dominant weights in $\Pbar^{+}$ are precisely the fundamental weights $\Lambda_{i}$ with $i\in\Imin$, namely
\begin{itemize}
    \item $\Lambda_{0}$, $\Lambda_{1}$ and $\Lambda_{6}$ in type $\Eaff{6}$,
    \item $\Lambda_{0}$ and $\Lambda_{7}$ in type $\Eaff{7}$,
    \item $\Lambda_{0}$ in type $\Eaff{8}$.
\end{itemize}

Let us establish a colouring convention for the arrows in our crystal graphs and the blocks in Young columns and Young walls.
This should make these structures easier to understand and analyse, and their patterns and symmetries easier to notice.
Each label $i\in I$ shall correspond to a particular colour, uniformly across types $\Eaff{6}$, $\Eaff{7}$ and $\Eaff{8}$:
\begin{align} \label{colours}
\begin{split}
    &\mathcolor{col0}{0\mathrm{~is~black}};~
    \mathcolor{col1}{1\mathrm{~is~red}};~
    \mathcolor{col2}{2\mathrm{~is~yellow}};~
    \mathcolor{col3}{3\mathrm{~is~green}};~
    \mathcolor{col4}{4\mathrm{~is~purple}}; \\
    &
    \mathcolor{col5}{5\mathrm{~is~blue}};~
    \mathcolor{col6}{6\mathrm{~is~orange}};~
    \mathcolor{col7}{7\mathrm{~is~pink}};~
    \mathcolor{col8}{8\mathrm{~is~brown}}.
\end{split}
\end{align}

Let $B_{6}$ be the crystal basis of the level $0$ fundamental representation $W(\varpi_{1})$ in type $\Eaff{6}$, and $B_{7}$ be that of $W(\varpi_{7})$ in type $\Eaff{7}$.
Figures \ref{B6 crystal graph} and \ref{B7 crystal graph} contain the associated crystal graphs, with arrows coloured according to (\ref{colours}).
It is easy to verify directly that they are level $1$ perfect crystals.
(Alternatively, this is a consequence of \cite{Hiroshima21}*{Theorem 5.1} since they are examples of Kirillov-Reshetikhin crystals $B^{r,s}$ with $r$ minuscule.)
So via the path realization of Theorem \ref{path realization theorem} these crystals can be used to construct the crystal basis $B(\lambda)$ for each level $1$ weight $\lambda\in\Pbar^{+}$.
Furthermore, Kashiwara \cite{Kashiwara02} proved that level $0$ fundamental representations are good, and hence they can also be used to construct the Fock space crystals $B(\Fcal(\lambda))$ as in Section \ref{Fock space preliminaries}.
\\

The basis elements of $B_{6}$ and $B_{7}$ can be labelled according to their incident edges as follows, precisely as in Jones-Schilling \cite{JS10}.
In each case, the weight function is injective and all $i$-strings have length at most $1$, so we may denote any element $b$ by $\overline{i_{1}}\dots \overline{i_{m}}j_{1}\dots j_{n}$ where an $\overline{i}$ (resp. $i$) records an $i$-arrow into (resp. out of) $b$.
Without loss of generality we shall always take $i_{1}<\dots<i_{m}$ and $j_{1}<\dots<j_{n}$.
\\

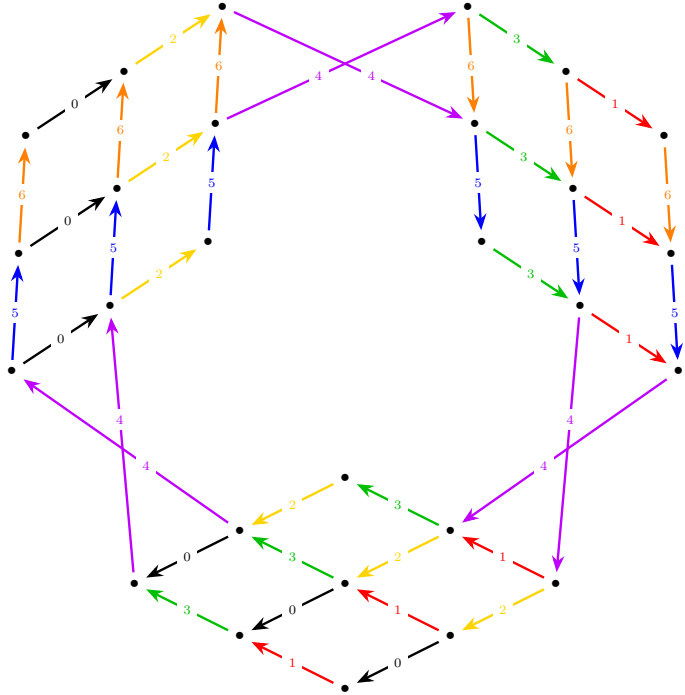
\begin{figure}[H]
    \centering
    \begin{tikzpicture}[-{Stealth[scale=0.9]}, scale=0.7, transform shape, line width = 0.9pt]
    \node(d30) at (30:3) {$\bullet$};
    \node(e30) at (30:5) {$\bullet$};
    \node(f30) at (30:7) {$\bullet$};
    \node(g30) at (56.56505118:4.4721359549996) {$\bullet$};
    \node(h30) at (48.43494882:6.3245553203368) {$\bullet$};
    \node(i30) at (68.65980825:6.4031242374328) {$\bullet$};
    \node(b30) at (3.434948823:4.4721359549996) {$\bullet$};
    \node(c30) at (11.56505118:6.3245553203368) {$\bullet$};
    \node(a30) at (-8.659808254:6.4031242374328) {$\bullet$};
    \node(d150) at (150:3) {$\bullet$};
    \node(e150) at (150:5) {$\bullet$};
    \node(f150) at (150:7) {$\bullet$};
    \node(g150) at (176.5650512:4.4721359549996) {$\bullet$};
    \node(h150) at (168.4349488:6.3245553203368) {$\bullet$};
    \node(i150) at (188.6598083:6.4031242374328) {$\bullet$};
    \node(b150) at (123.4349488:4.4721359549996) {$\bullet$};
    \node(c150) at (131.5650512:6.3245553203368) {$\bullet$};
    \node(a150) at (111.3401917:6.4031242374328) {$\bullet$};
    \node(d90) at (-90:3) {$\bullet$};
    \node(e90) at (-90:5) {$\bullet$};
    \node(f90) at (-90:7) {$\bullet$};
    \node(g90) at (-63.434948822922:4.4721359549996) {$\bullet$};
    \node(h90) at (-71.565051177078:6.3245553203368) {$\bullet$};
    \node(i90) at (-51.34019174591:6.4031242374328) {$\bullet$};
    \node(b90) at (-116.5650511771:4.4721359549996) {$\bullet$};
    \node(c90) at (-108.4349488229:6.3245553203368) {$\bullet$};
    \node(a90) at (-128.6598082541:6.4031242374328) {$\bullet$};
    \tikzstyle{every node}=[midway, fill=white]
    \draw[col0] (h90) -- (f90) node{$\scriptstyle 0$};
    \draw[col0] (e90) -- (c90) node{$\scriptstyle 0$};
    \draw[col0] (b90) -- (a90) node{$\scriptstyle 0$};
    \draw[col1] (f90) -- (c90) node{$\scriptstyle 1$};
    \draw[col1] (h90) -- (e90) node{$\scriptstyle 1$};
    \draw[col1] (i90) -- (g90) node{$\scriptstyle 1$};
    \draw[col2] (d90) -- (b90) node{$\scriptstyle 2$};
    \draw[col2] (g90) -- (e90) node{$\scriptstyle 2$};
    \draw[col2] (i90) -- (h90) node{$\scriptstyle 2$};
    \draw[col3] (c90) -- (a90) node{$\scriptstyle 3$};
    \draw[col3] (e90) -- (b90) node{$\scriptstyle 3$};
    \draw[col3] (g90) -- (d90) node{$\scriptstyle 3$};
    \draw[col1] (h30) -- (f30) node{$\scriptstyle 1$};
    \draw[col1] (e30) -- (c30) node{$\scriptstyle 1$};
    \draw[col1] (b30) -- (a30) node{$\scriptstyle 1$};
    \draw[col6] (f30) -- (c30) node{$\scriptstyle 6$};
    \draw[col6] (h30) -- (e30) node{$\scriptstyle 6$};
    \draw[col6] (i30) -- (g30) node{$\scriptstyle 6$};
    \draw[col3] (d30) -- (b30) node{$\scriptstyle 3$};
    \draw[col3] (g30) -- (e30) node{$\scriptstyle 3$};
    \draw[col3] (i30) -- (h30) node{$\scriptstyle 3$};
    \draw[col5] (c30) -- (a30) node{$\scriptstyle 5$};
    \draw[col5] (e30) -- (b30) node{$\scriptstyle 5$};
    \draw[col5] (g30) -- (d30) node{$\scriptstyle 5$};
    \draw[col6] (h150) -- (f150) node{$\scriptstyle 6$};
    \draw[col6] (e150) -- (c150) node{$\scriptstyle 6$};
    \draw[col6] (b150) -- (a150) node{$\scriptstyle 6$};
    \draw[col0] (f150) -- (c150) node{$\scriptstyle 0$};
    \draw[col0] (h150) -- (e150) node{$\scriptstyle 0$};
    \draw[col0] (i150) -- (g150) node{$\scriptstyle 0$};
    \draw[col5] (d150) -- (b150) node{$\scriptstyle 5$};
    \draw[col5] (g150) -- (e150) node{$\scriptstyle 5$};
    \draw[col5] (i150) -- (h150) node{$\scriptstyle 5$};
    \draw[col2] (c150) -- (a150) node{$\scriptstyle 2$};
    \draw[col2] (e150) -- (b150) node{$\scriptstyle 2$};
    \draw[col2] (g150) -- (d150) node{$\scriptstyle 2$};
    \draw[col4] (a30)--(g90) node[pos=0.6]{$\scriptstyle 4$};
    \draw[col4] (b30)--(i90) node[pos=0.4]{$\scriptstyle 4$};
    \draw[col4] (a90)--(g150) node[pos=0.6]{$\scriptstyle 4$};
    \draw[col4] (b90)--(i150) node[pos=0.4]{$\scriptstyle 4$};
    \draw[col4] (a150)--(g30) node[pos=0.6]{$\scriptstyle 4$};
    \draw[col4] (b150)--(i30) node[pos=0.4]{$\scriptstyle 4$};
    \end{tikzpicture}
    \caption{\hspace{.5em}The crystal graph of $B_{6}$}\label{B6 crystal graph}
\end{figure}

\input{E7_level_1_perfect_crystal}

Recall from Definition \ref{perfect crystal definition} that in a level $1$ perfect crystal, for each $\lambda\in\Pbar^{+}$ with $\langle\lambda,c\rangle = 1$ we denote by $b^{\lambda}$ and $b_{\lambda}$ the unique elements with $\varepsilon(b^{\lambda}) = \varphi(b_{\lambda}) = \lambda$.
\begin{itemize}
    \item For $B_{6}$ these are $b^{\Lambda_{6}} = b_{\Lambda_{0}} = \overline{6}0$, $b^{\Lambda_{1}} = b_{\Lambda_{6}} = \overline{1}6$ and $b^{\Lambda_{0}} = b_{\Lambda_{1}} = \overline{0}1$.
    \item For $B_{7}$ these are $b^{\Lambda_{7}} = b_{\Lambda_{0}} = \overline{7}0$ and $b^{\Lambda_{0}} = b_{\Lambda_{7}} = \overline{0}7$.
\end{itemize}

\begin{rmk}
    \begin{itemize}
        \item The outer automorphism group $\Omega$ in type $\Eaff{6}$ can be seen in the rotational symmetry of $B_{6}$ in Figure \ref{B6 crystal graph}.
        Indeed, twisting $W(\varpi_{1})$ by the automorphism of $\Udash$ induced from an anticlockwise rotation $\sigma\in\Omega$ corresponds on the crystal level to the anticlockwise rotation which sends each $b_{\Lambda_{i}}$ to $b_{\Lambda_{\sigma(i)}}$.
        \item Similarly, twisting $W(\varpi_{7})$ by the automorphism of $\Udash$ induced from the reflection of the $\Eaff{7}$ Dynkin diagram corresponds to rotating $B_{7}$ by $180^{\circ}$ around the centre of Figure \ref{B7 crystal graph}.
    \end{itemize}
\end{rmk}

\subsection{The level 1 perfect crystals of Benkart-Frenkel-Kang-Lee}

Since in type $\Eaff{8}$ there are no non-zero minuscule vertices in $I$, our level $1$ perfect crystal must be formed in a different manner.
Benkart-Frenkel-Kang-Lee \cite{BFKL06} provide a uniform construction of a level $1$ perfect crystal for all affine types, which we include here (in the untwisted case only).
Denote the sets of roots, positive roots and negative roots of the corresponding finite Lie algebra $\g$ by $\Phi$, $\Phi^{+}$ and $\Phi^{-}$ respectively, let $I_{0} = I\setminus\lbrace 0\rbrace$, and let $\theta = \sum_{i\in I_{0}} a_{i}\alpha_{i}$ be the highest root in $\Phi$.

\begin{thm} \label{uniform level 1 perfect crystal theorem}
    \cite{BFKL06} The classical crystal $B$ with elements
    \begin{align*}
        \lbrace \xa ~\vert~ \alpha\in\Phi^{+}\sqcup\Phi^{-}\rbrace \sqcup \lbrace y_{i} ~\vert~ i\in I_{0}\rbrace \sqcup \lbrace \emptyset \rbrace
    \end{align*}
    and functions $\wt$, $\et_{i}$, $\ft_{i}$, $\varepsilon_{i}$ and $\varphi_{i}$ given by the crystal graph structure
    \begin{itemize}
        \item $\xa \xrightarrow{i} \xb$ whenever $\alpha - \alpha_{i} = \beta$ for some $\alpha,\beta\in\Phi^{+}\sqcup\Phi^{-}$ and $i\in I_{0}$,
        \item $x_{\alpha_{i}} \xrightarrow{i} y_{i} \xrightarrow{i} x_{-\alpha_{i}}$ for all $i\in I_{0}$,
        \item $\xa \xrightarrow{0} \xb$ whenever $\alpha + \theta = \beta$ for some $\alpha,\beta\in(\Phi^{+}\sqcup\Phi^{-})\setminus\lbrace\pm\theta\rbrace$,
        \item $\xmt \xrightarrow{0} \emptyset \xrightarrow{0} \xt$,
    \end{itemize}
    is a level $1$ perfect crystal with $b^{\Lambda_{0}} = b_{\Lambda_{0}} = \emptyset$.
\end{thm}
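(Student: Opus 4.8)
The plan is to establish two things: that $B$ as described is genuinely a classical crystal, and that it satisfies the five conditions of Definition \ref{perfect crystal definition}. The first part is routine; among the second, the real content lies in exhibiting the underlying finite dimensional $\Udash$-module and in the connectedness of $B \otimes B$, and this is where I expect the main difficulty.

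First I would check that $B$ is a classical crystal. Assign $\wt(\xa)$ to be the image of $\alpha$ in $\Pbar$ and $\wt(y_i) = \wt(\emptyset) = 0$. The key point is that for each fixed $i \in I$ the $i$-arrows partition $B$ into chains, each an ordinary $\mathfrak{sl}_2$-string compatible with the weights: for $i \in I_{0}$ these are the $\alpha_i$-strings through the roots, the one through $\alpha_i$ being the length-three chain $x_{\alpha_i} \xrightarrow{i} y_i \xrightarrow{i} x_{-\alpha_i}$, with all others of length at most two and $\emptyset$ a singleton, while for $i = 0$ the same holds with $\theta$ in place of $\alpha_i$ and $\emptyset$ in place of $y_i$. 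Reading $\et_i, \ft_i, \varepsilon_i, \varphi_i$ off these chains in the usual way makes all the crystal axioms immediate, and the same inspection applied to the rank-two subdiagrams shows $B$ is regular.

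Next I would dispatch the three combinatorial conditions. For the weight condition take $\mu = \theta$ (viewed in $\Pbar$): since $\theta$ is the highest root, $\wt(B) \subset \theta + \sum_{i \in I_{0}} \Zbb_{\leq 0}\alpha_i$ and $\xt$ is the unique element of that weight. For $\langle \varepsilon(b), c \rangle \geq 1$ it is enough (all $a_i^{\vee} \geq 1$) that every $b$ has $\et_i b \neq 0$ for some $i \in I$: indeed $\et_0 \xt = \emptyset$, $\et_0 \emptyset = \xmt$, $\et_i y_i = x_{\alpha_i}$ and $\et_i x_{-\alpha_i} = y_i$; for a root $\alpha \neq \theta$ in $\Phi^{+}$ one has $\alpha + \alpha_i \in \Phi^{+}$ for some $i \in I_{0}$, so $\et_i \xa \neq 0$; and for $\beta \in \Phi^{+}$ non-simple one has $\beta = \gamma + \alpha_i$ with $\gamma \in \Phi^{+}$, $i \in I_{0}$, so $\et_i x_{-\beta} = x_{-\gamma} \neq 0$. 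Finally, the level-$1$ dominant weights of $\Pbar^{+}$ are the $\Lambda_j$ with $a_j^{\vee} = 1$, namely $\Lambda_0$ together with the $\Lambda_j$ for $j \in \Imin$; for each I would locate $b^{\Lambda_j}$ and $b_{\Lambda_j}$ explicitly, these both being $\emptyset$ when $j = 0$ (since $\varepsilon_0(\emptyset) = \varphi_0(\emptyset) = 1$ and $\varepsilon_i(\emptyset) = \varphi_i(\emptyset) = 0$ for $i \in I_{0}$, giving $b^{\Lambda_0} = b_{\Lambda_0} = \emptyset$ as claimed) and specific elements $\xa$ when $j \in \Imin$, with uniqueness following in each case from the multiplicity-one weight count and the shape of the $i$-strings.

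The hard part will be the remaining condition: producing a finite dimensional irreducible $\Udash$-module whose crystal graph is $B$ --- which also yields regularity and, via simplicity, bears on the connectedness of $B \otimes B$. Following \cite{BFKL06}, I would do this uniformly by hand: on $M = \bigoplus_{b \in B} \Qbb(q) v_b$ define an action of $e_i, f_i, q^{h}$ by explicit formulas whose matrix coefficients are $q$-integers times powers of $q$ dictated by the combinatorics of $\Phi$, arranged so that the reduction at $q = 0$ recovers the crystal structure above; then verify the defining relations of $\Udash$ --- only the quantum Serre relations needing real work, checked from the root system --- show $M$ is irreducible, and check that $\bigl(\bigoplus_{b \in B} A v_b, B\bigr)$ is a crystal basis. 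In the $\Eaff{n}$ cases there is a shortcut: $B$ should coincide with the Kirillov--Reshetikhin crystal $B^{i_0,1}$ at the adjoint node $i_0$ (the node of the affine Dynkin diagram joined to $0$), equivalently the crystal basis of the level-$0$ fundamental module $W(\varpi_{i_0})$, which is good by \cite{Kashiwara02} and hence has a crystal basis; one would identify the two by matching $\Uq$-decompositions (both being $V(\theta) \oplus V(0)$) and noting that the remaining $0$-arrows are forced by the weights, since $\alpha_0$ becomes $-\theta$ in $\Pbar$. Either way, once $B$ is realised as the crystal of a good module it is simple by \cite{AK97}, hence connected, and $B \otimes B$ is connected by a direct check on the crystal graph --- moving any $b \otimes b'$ to $\emptyset \otimes \emptyset$ along the explicit arrows --- or, appealing to simplicity, by standard properties of simple crystals. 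I expect the construction and verification of $M$ to be the only genuinely laborious step.
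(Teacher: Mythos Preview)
The paper does not prove this theorem; it is quoted verbatim from \cite{BFKL06} and treated as a black box. What the paper does do, in the paragraphs preceding Proposition~\ref{BFKL crystals are good}, is recall from \cite{BFKL06} the explicit $\Udash$-module $V$ on the span of $B$ with the action (\ref{BFKL module structure}), together with the fact that $(L,B)$ is its crystal basis --- but only in order to go further and show that $V$ is a \emph{good} module. So there is nothing here to compare your proposal against: you have sketched a proof of a result the paper simply imports.

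That said, your outline is an accurate summary of what \cite{BFKL06} actually does, and the module you describe constructing ``by hand'' is exactly the $V$ the paper writes down in (\ref{BFKL module structure}). Your alternative shortcut via Kirillov--Reshetikhin crystals is also sound in the $E$-type cases and is mentioned in passing in the paper (just after Theorem~\ref{uniform crystal maximal vector theorem}, where it notes that $B = B^{i,s}$ for $X_n^{(1)} \neq A_n^{(1)}$). One small point: your identification of the level-$1$ dominant weights as ``$\Lambda_0$ together with the $\Lambda_j$ for $j \in \Imin$'' double-counts, since $0 \in \Imin$ by definition; and the uniqueness argument for $b^{\Lambda_j}, b_{\Lambda_j}$ when $j \neq 0$ needs slightly more than ``multiplicity-one weight count'', since $\varepsilon$ and $\varphi$ are not determined by weight alone --- but this is easily patched by inspecting the explicit $i$-strings.
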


Note that if we forget all $0$-arrows then we are left with the crystal basis $B(0)\sqcup B(\theta)$ for the finite quantum group $\Uq$.
So in all untwisted types, the crystal graph of $B$ is essentially formed by taking the Hasse diagram of the root poset, separating out

\begin{center}
    \begin{tikzpicture}[-{Stealth[scale=0.9]}, scale=0.75, line width = 0.9pt]
    \node(u-1) at (-2,2) {\small $\alpha_{1}$};
    \node(u0) at (0,2) {\small $\cdots$};
    \node(u1) at (2,2) {\small $\alpha_{n}$};
    \node at (0,1) {\small $\cdots$};
    \node(m0) at (0,0) {\small $0$};
    \node at (0,-1) {\small $\cdots$};
    \node(d-1) at (-2,-2) {\small $-\alpha_{1}$};
    \node(d0) at (0,-2) {\small $\cdots$};
    \node(d1) at (2,-2) {\small $-\alpha_{n}$};
    \draw[col0, -] (-1.75,1.75) -- (-1.2,1.2);
    \node[col0, fill=none] at (-1,1) {$\scriptstyle 1$};
    \draw[col0] (-0.8,0.8) -- (-0.2,0.2);
    \draw[col0, -] (1.75,1.75) -- (1.2,1.2);
    \node[col0, fill=none] at (1,1) {$\scriptstyle n$};
    \draw[col0] (0.8,0.8) -- (0.2,0.2);
    \draw[col0] (-1.2,-1.2) -- (-1.75,-1.75);
    \node[col0, fill=none] at (-1,-1) {$\scriptstyle 1$};
    \draw[col0, -] (-0.25,-0.25) -- (-0.8,-0.8);
    \draw[col0] (1.2,-1.2) -- (1.75,-1.75);
    \node[col0, fill=none] at (1,-1) {$\scriptstyle n$};
    \draw[col0, -] (0.25,-0.25) -- (0.8,-0.8);

    \node at (4,0) {$\mathrm{into}$};

    \node at (6,2) {\small $\alpha_{1}$};
    \node at (8,2) {\small $\cdots$};
    \node at (10,2) {\small $\alpha_{n}$};
    \node at (8,1.1) {\small $\cdots$};
    \node at (6,0) {\small $y_{1}$};
    \node at (8,0) {\small $\cdots$};
    \node at (10,0) {\small $y_{n}$};
    \node at (8,-0.9) {\small $\cdots$};
    \node at (6,-2) {\small $-\alpha_{1}$};
    \node at (8,-2) {\small $\cdots$};
    \node at (10,-2) {\small $-\alpha_{n}$};
    \draw[col0, -] (6,1.75) -- (6,1.35);
    \node[col0, fill=none] at (6,1.1) {$\scriptstyle 1$};
    \draw[col0] (6,0.85) -- (6,0.3);
    \draw[col0, -] (10,1.75) -- (10,1.35);
    \node[col0, fill=none] at (10,1.1) {$\scriptstyle n$};
    \draw[col0] (10,0.85) -- (10,0.3);
    \draw[col0] (6,-1.15) -- (6,-1.75);
    \node[col0, fill=none] at (6,-0.9) {$\scriptstyle 1$};
    \draw[col0, -] (6,-0.25) -- (6,-0.65);
    \draw[col0] (10,-1.15) -- (10,-1.75);
    \node[col0, fill=none] at (10,-0.9) {$\scriptstyle n$};
    \draw[col0, -] (10,-0.25) -- (10,-0.65);
\end{tikzpicture}
\end{center}

and adding $\emptyset$ and the $0$-arrows.
Let $\tilde{I}$ be the set of vertices adjacent to $0$ in the affine Dynkin diagram, and define
\begin{align*}
    \Phi^{\pm}_{k} = \lbrace \alpha \in \Phi^{\pm} ~\vert~ \mathrm{the~coefficients~of~} \lbrace\alpha_{i}\rbrace_{i\in\tilde{I}} \mathrm{~in~} \alpha \mathrm{~sum~to~} \pm k\rbrace
\end{align*}
for each $k\in\Nbb$.
Then there is a decomposition $\Phi^{\pm} = \Phi^{\pm}_{0}\sqcup\Phi^{\pm}_{1}\sqcup\Phi^{\pm}_{2}$ and the $0$-arrows in $B$ can be described as follows.

\begin{lem} \label{0-arrows lemma}
    The $0$-strings in $B$ are precisely $\xmt \xrightarrow{0} \emptyset \xrightarrow{0} \xt$ together with $\xa \xrightarrow{0} x_{\alpha + \theta}$ for every $\alpha\in\Phi^{-}_{1}$ (equivalently $x_{\beta - \theta} \xrightarrow{0} \xb$ for every $\beta\in\Phi^{+}_{1}$).
\end{lem}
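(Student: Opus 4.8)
The plan is to work with the grading $s\colon\Phi\to\Zbb$ sending a root $\gamma$ to the sum of the coefficients of the simple roots $\alpha_i$, $i\in\tilde{I}$, in its expansion, and to deduce the statement from two facts about the highest root $\theta$. First I would record that $\sum_{i\in\tilde{I}}a_i=2$: for $j\in I_0$ the quantity $\langle\theta,h_j\rangle$ equals the number of edges joining $j$ to the affine node $0$, so it is $1$ for $j\in\tilde{I}$ and $0$ otherwise, and pairing this with the marks via $\theta^\vee=\sum_{i\in I_0}a_i h_i$ gives $\sum_{i\in\tilde{I}}a_i=\langle\theta,\theta^\vee\rangle=2$. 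Hence $s(\theta)=2$, and since every positive root is $\le\theta$ coefficientwise, $s$ takes values in $\lbrace0,1,2\rbrace$ on $\Phi^{+}$ and in $\lbrace0,-1,-2\rbrace$ on $\Phi^{-}$; this gives the asserted decomposition $\Phi^{\pm}=\Phi^{\pm}_0\sqcup\Phi^{\pm}_1\sqcup\Phi^{\pm}_2$. I would also note the identity $\langle\theta,\gamma^\vee\rangle=s(\gamma)$ for every $\gamma\in\Phi$, which holds because in the simply-laced case the coefficients of $\gamma^\vee$ in the simple coroots $h_j$ coincide with those of $\gamma$ in the $\alpha_j$.

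The two facts I would then prove are: \textbf{(A)} $\theta$ is the unique root with $s=2$ (and therefore $-\theta$ the unique one with $s=-2$); and \textbf{(B)} if $\gamma\in\Phi^{+}_1$ then $\theta-\gamma\in\Phi^{+}_1$. For (A): if $s(\gamma)=2$ then $\langle\theta,\gamma^\vee\rangle=2$, and since all roots of a simply-laced system have equal length this gives $(\theta,\gamma)=(\theta,\theta)=(\gamma,\gamma)$, hence $(\theta-\gamma,\theta-\gamma)=0$ and $\gamma=\theta$. For (B): $\langle\theta,\gamma^\vee\rangle=s(\gamma)=1>0$ and $\gamma\neq\theta$, so $\theta-\gamma$ is a root, and $s(\theta-\gamma)=1>0$ forces it to be positive, hence $\theta-\gamma\in\Phi^{+}_1$.

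With (A) and (B) in hand, the lemma follows by unwinding the description of the $0$-arrows in Theorem \ref{uniform level 1 perfect crystal theorem}. The string $\xmt\xrightarrow{0}\emptyset\xrightarrow{0}\xt$ is a complete connected component of the $0$-coloured subgraph since $\pm2\theta\notin\Phi$. Every other $0$-arrow has the form $\xa\xrightarrow{0}x_{\alpha+\theta}$ with $\alpha,\alpha+\theta\in\Phi\setminus\lbrace\pm\theta\rbrace$. As $\theta$ is the highest root, $\alpha\in\Phi^{+}$ is impossible, so $\alpha\in\Phi^{-}$; then (A) excludes $s(\alpha)=0$ (which would force $\alpha+\theta=\theta$) and $s(\alpha)=-2$ (which would force $\alpha=-\theta$), leaving $\alpha\in\Phi^{-}_1$. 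Conversely, for $\alpha\in\Phi^{-}_1$, applying (B) to $\gamma=-\alpha$ shows $\alpha+\theta\in\Phi^{+}_1$, and the $s$-values show $\alpha,\alpha+\theta\neq\pm\theta$, so $\xa\xrightarrow{0}x_{\alpha+\theta}$ really is a $0$-arrow of $B$. Finally $s(\alpha-\theta)=-3$ and $s(\alpha+2\theta)=3$ lie outside $\lbrace0,\pm1,\pm2\rbrace$, so $\xa$ has no incoming and $x_{\alpha+\theta}$ no outgoing $0$-arrow; thus each $\xa\xrightarrow{0}x_{\alpha+\theta}$ is its own $0$-string, and the substitution $\beta=\alpha+\theta$ (a bijection $\Phi^{-}_1\to\Phi^{+}_1$) yields the parenthetical reformulation.

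The only genuinely delicate point is fact (A), together with the need to keep the simply-laced hypothesis in view throughout: the equality case of Cauchy--Schwarz and the identification of $\gamma^\vee$ with $\gamma$ both rely on all roots having the same length, and one should remember that $\tilde{I}$ may be a single vertex (as in types $\Eaff{6}$, $\Eaff{7}$, $\Eaff{8}$, where it is $\lbrace2\rbrace$, $\lbrace1\rbrace$, $\lbrace8\rbrace$ respectively) or a pair of vertices, with $\sum_{i\in\tilde{I}}a_i=2$ in all cases. Since only these three types are needed here, one could alternatively verify the decomposition, (A) and (B) directly by inspection of the explicit root systems.
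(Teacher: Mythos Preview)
Your proof is correct and follows essentially the same idea as the paper's: the grading $s(\gamma)$ you introduce is exactly $-\langle\gamma,h_0\rangle$, which is what the paper invokes via the identity $\langle\alpha_i,h_0\rangle=-\delta_{i\in\tilde{I}}$. The paper's one-line argument leaves your facts (A) and (B) implicit (effectively relying on the highest-root property and on the regularity of the crystal for the converse direction), whereas you spell them out explicitly and self-containedly from root-system properties; this is a welcome elaboration rather than a different route.
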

\begin{proof}
    This follows from the fact that every $0$-string other than $\xmt \xrightarrow{0} \emptyset \xrightarrow{0} \xt$ is of length at most $1$ by construction, and $\langle \alpha_{i},h_{0}\rangle = -\delta_{i \in \tilde{I}}$ for all $i\in I_{0}$.
\end{proof}

Denote the crystal $B$ in type $\Eaff{8}$ by $B_{8}$, which is in particular equal to the Kirillov-Reshetikhin crystal $B^{8,1}$ of the level $0$ fundamental representation $W(\varpi_{8})$.
We include its crystal graph in Appendix \ref{B8 crystal graph appendix} to give an idea of its structure, but to avoid cluttering we remove all vertex and edge labels, as well as all $0$-arrows other than $\xmt \xrightarrow{0} \emptyset \xrightarrow{0} \xt$.
However, edges are coloured according to our conventions (\ref{colours}) and the $0$-arrows can easily be deduced from Lemma \ref{0-arrows lemma}, together with the vertex icons:
\begin{itemize}
    \item $\xa$ with $\alpha\in\Phi^{\pm}_{0}$ are $\cross$
    \item $\xa$ with $\alpha\in\Phi^{\pm}_{1}$ are $\circle$
    \item $\xa$ with $\alpha\in\Phi^{\pm}_{2}$ are $\star$
    \item $\emptyset$ and $y_{i}$ with $i\in I_{0}$ are $\square$
\end{itemize}

Next we shall verify that $B$ is the crystal basis of a good $\Udash$ module, and thus in particular can be used to construct the Fock space crystal $B(\Fcal(\Lambda_{0}))$.
We present the untwisted simply laced case only, but the same proof works for all affine types with only minor edits.
\\

As part of their proof that $B$ is a level $1$ perfect crystal, Benkart-Frenkel-Kang-Lee \cite{BFKL06} show that the space
\begin{align*}
    V = \left( \bigoplus_{\beta\in\Phi^{+}\sqcup\Phi^{-}} \Cbb(q) \xb \right)
    \oplus
    \left( \bigoplus_{j\in I_{0}} \Cbb(q) y_{j} \right)
    \oplus
    \Cbb(q)\emptyset
\end{align*}
has the structure of an integrable $\Udash$-module via
\begin{align} \label{BFKL module structure}
\begin{split}
    &q^{h} \cdot \xb = q^{\langle \alpha,h\rangle} \xb, \quad
    q^{h} \cdot y_{j} = y_{j}, \quad
    q^{h} \cdot \emptyset = \emptyset, \\
    &e_{i} \cdot \xb =
    \begin{cases}
        [\varphi_{i}(\xb) + 1] x_{\beta+\alpha_{i}} &\mathrm{if~} \beta+\alpha_{i}\in\Phi^{+}\sqcup\Phi^{-} \\
        y_{i} &\mathrm{if~} \beta=-\alpha_{i} \\
        0 &\mathrm{otherwise}
    \end{cases}
    \quad (i\not= 0), \\
    &f_{i} \cdot \xb =
    \begin{cases}
        [\varepsilon_{i}(\xb) + 1] x_{\beta-\alpha_{i}} &\mathrm{if~} \beta-\alpha_{i}\in\Phi^{+}\sqcup\Phi^{-} \\
        y_{i} &\mathrm{if~} \beta=\alpha_{i} \\
        0 &\mathrm{otherwise}
    \end{cases}
    \quad (i\not= 0), \\
    &e_{i} \cdot \emptyset = 0, \quad
    e_{i} \cdot y_{j} = [\langle \alpha_{i},h_{j}\rangle] x_{\alpha_{i}}
    \quad (i\not= 0), \\
    &f_{i} \cdot \emptyset = 0, \quad
    f_{i} \cdot y_{j} = [\langle \alpha_{i},h_{j}\rangle] x_{-\alpha_{i}}
    \quad (i\not= 0), \\
    &e_{0} \cdot \xb =
    \begin{cases}
        x_{\beta-\theta} &\mathrm{if~} \beta-\theta\in\Phi^{+}\sqcup\Phi^{-}, \\
        \emptyset &\mathrm{if~} \beta=\theta, \\
        0 &\mathrm{otherwise},
    \end{cases}
    \\
    &f_{0} \cdot \xb =
    \begin{cases}
        x_{\beta+\theta} &\mathrm{if~} \beta+\theta\in\Phi^{+}\sqcup\Phi^{-}, \\
        \emptyset &\mathrm{if~} \beta=-\theta, \\
        0 &\mathrm{otherwise},
    \end{cases}
    \\
    &e_{0} \cdot \emptyset = [2] x_{-\theta}, \quad
    e_{0} \cdot y_{j} = 0, \\
    &f_{0} \cdot \emptyset = [2] x_{-\theta}, \quad
    f_{0} \cdot y_{j} = 0,
\end{split}
\end{align}
and moreover that $V$ has crystal basis $(L,B)$ with $L$ the free $A$-submodule of $V$ generated by $B_{V} := \lbrace x_{\beta},y_{j},\emptyset ~\vert~ \beta\in\Phi^{+}\sqcup\Phi^{-}, j\in I_{0} \rbrace \subset V$.
As the crystal basis of a finite dimensional integrable $\Udash$-module, $B$ is moreover a \emph{regular} finite classical crystal (see for example \cite{HKKOT00}*{p.117}).
\\

Since an element of a crystal is extremal if and only if it lies at the start or end of every $i$-string it is contained in, the extremal elements of $B$ are precisely the elements $x_{\beta}$.
Then as $\Phi\setminus\lbrace 0\rbrace$ is a single Weyl group orbit, by picking $\lambda = \theta$ in Definition \ref{simple crystal definition} we see that $B$ is simple.
\\

Since $\overline{[n]} = [n]$ for all $n\in\Zbb$ there is a bar involution $-$ of $V$ which sends $q\rightarrow q^{-1}$ and fixes each element of $B_{V}$.
Then letting $V_{\Qbb}$ be the $\Udash_{\Qbb}$-submodule of $V$ generated by $B_{V}$, it is clear that $\overline{V_{\Qbb}} = V_{\Qbb}$.
From (\ref{BFKL module structure}) any $u\in V_{\Qbb}$ is a linear combination of elements of $B_{V}$ where each coefficient is some $f \in \Qbb[q,q^{-1}]$ multiplied by $[n]^{\pm 1}$ factors.
So as $f - \overline{f} \in (q-1)\Qbb[q,q^{-1}]$ we have $u - \overline{u} \in (q-1)V_{\Qbb}$.
\\

It remains to show that $(L,\overline{L},V_{\Qbb})$ is a balanced triple.
First note that $A\cap\overline{A}\cap\Qbb[q,q^{-1}] = \Qbb$ since an element of $\Qbb[q,q^{-1}]$ is regular at both $q=0$ and $q=\infty$ if and only if it is constant.
Then since $\overline{L}$ is the free $\overline{A}$-module generated by $B_{V}$ we have that $E = L\cap\overline{L}\cap V_{\Qbb}$ is the $\Qbb$-span of $B_{V}$.
\\

Now, $qL$ is the $qA$-span of $B_{V}$ and $qA$ is the set of $f(q)\in \Qbb(q)$ without a pole at $0$ and with $f(0) = 0$.
So $L/qL$ is isomorphic to the $(A/qA)$-span of $B_{V}$.
But $A/qA \cong \Qbb$ via the evaluation-at-$0$ map, so $E \rightarrow L/qL$ via inclusion into $L$ and then projection to $L/qL$ is an isomorphism, and hence $(L,\overline{L},V_{\Qbb})$ is balanced.
\\

The preimage of $B\subset L/qL$ in $E$ is a global basis for $V$, and is precisely equal to $B_{V}$.
So we may conclude the following.

\begin{prop} \label{BFKL crystals are good}
    The level $1$ perfect crystal of Benkart-Frenkel-Kang-Lee is the crystal basis of a good $\Udash$-module in all affine types.
\end{prop}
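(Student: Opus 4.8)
The plan is to verify the three conditions in the definition of a good $\Udash$-module for the Benkart--Frenkel--Kang--Lee module $V$ of (\ref{BFKL module structure}): that its crystal $B$ is simple, that $V$ carries a bar involution, and that $V$ admits a global basis. That $V$ is finite dimensional is clear since it has finite basis $B_{V}$, and integrability together with the fact that $(L,B)$ is a crystal basis (with $L$ the free $A$-module on $B_{V}$) is part of Theorem \ref{uniform level 1 perfect crystal theorem}; as the crystal basis of a finite dimensional integrable $\Udash$-module, $B$ is automatically finite and regular.

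First I would establish simplicity. Since an element of a crystal is extremal exactly when it lies at the start or end of every $i$-string through it, reading off the crystal structure in Theorem \ref{uniform level 1 perfect crystal theorem} shows that the extremal elements of $B$ are precisely the $\xb$ with $\beta\in\Phi^{+}\sqcup\Phi^{-}$. In the simply laced case the roots form a single Weyl orbit $W\theta$, so taking $\lambda=\theta$ in Definition \ref{simple crystal definition} gives $|B_{\theta}|=1$ and places every extremal weight in $W\theta$; since $q^{h}$ acts on every element of $B_{V}$ through a finite weight, the weights of $B$ are of level $0$, and so $B$ is simple.

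Next I would produce the bar involution and the global basis. Each structure constant in (\ref{BFKL module structure}) is either $1$ or a $q$-integer $[n]$, hence fixed by $q\mapsto q^{-1}$; therefore the $\Qbb$-linear map $-$ on $V$ sending $q\mapsto q^{-1}$ and fixing every element of $B_{V}$ is compatible with the ring automorphism $-$ of $\Udash$, i.e. is a bar involution. For the global basis I would take $V_{\Qbb}$ to be the $\Udash_{\Qbb}$-submodule generated by $B_{V}$; then $\overline{V_{\Qbb}}=V_{\Qbb}$ is immediate, and since by (\ref{BFKL module structure}) every element of $V_{\Qbb}$ is a $\Qbb[q,q^{-1}]$-combination of the $B_{V}$ whose coefficients are each a Laurent polynomial times a product of $[n]^{\pm1}$ factors, bar-invariance of $[n]$ together with $f-\overline{f}\in(q-1)\Qbb[q,q^{-1}]$ gives $u-\overline{u}\in(q-1)V_{\Qbb}$. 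It then remains to check $(L,\overline{L},V_{\Qbb})$ is balanced: a Laurent polynomial regular at $q=0$ and at $q=\infty$ is constant, so $A\cap\overline{A}\cap\Qbb[q,q^{-1}]=\Qbb$ and hence $E:=L\cap\overline{L}\cap V_{\Qbb}$ is the $\Qbb$-span of $B_{V}$; since $A/qA\cong\Qbb$ via evaluation at $0$, the composite $E\hookrightarrow L\twoheadrightarrow L/qL$ is an isomorphism. Thus $V$ has a global basis, which one identifies with $B_{V}$ itself, so $V$ is good; the argument is uniform over affine types up to cosmetic changes, which proves the proposition.

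I expect the bar-involution step to be the only real content: one must confirm that fixing $B_{V}$ is simultaneously consistent with the ring automorphism $-$ of $\Udash$ on all of $e_{0},\dots,e_{n}$, $f_{0},\dots,f_{n}$ and the $q^{h}$, and this reduces to the observation that every coefficient appearing in (\ref{BFKL module structure}) lies in the bar-fixed subring $\Zbb[q,q^{-1}]$ (indeed is a product of $q$-integers). The simplicity and balanced-triple verifications are then routine bookkeeping with the subrings $A$, $\overline{A}$ and $\Qbb[q,q^{-1}]$.
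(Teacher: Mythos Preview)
Your proposal is correct and follows essentially the same route as the paper: you verify simplicity via the observation that the extremal elements are exactly the $x_{\beta}$ and that the nonzero roots form a single Weyl orbit, construct the bar involution by fixing $B_{V}$ and using $\overline{[n]}=[n]$, take $V_{\Qbb}$ to be the $\Udash_{\Qbb}$-submodule generated by $B_{V}$, and check the balanced triple by the same $A\cap\overline{A}\cap\Qbb[q,q^{-1}]=\Qbb$ argument. The paper's proof is identical in structure and detail, including the identification of the global basis with $B_{V}$ itself.
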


\subsection{Energy functions}

A quick check verifies that for both $B_{6}$ and $B_{7}$ the energy function $H$ admits the following nice description.

\begin{lem} \label{B6 and B7 energy function lemma}
    For all $a,b\in B_{6}$ (resp. $a,b\in B_{7}$), $H(b\otimes a)$ is equal to the minimum number of $0$-arrows in a path $a\rightarrow\dots\rightarrow b$ in $B_{6}$ (resp. $B_{7}$), or equivalently the number of $0$-arrows in a minimal length path $a\rightarrow\dots\rightarrow b$.
\end{lem}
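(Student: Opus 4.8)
The plan is to verify that the proposed formula defines an energy function and then to fix the additive constant. Since $B_{6}$ and $B_{7}$ are perfect crystals, $B\otimes B$ is connected (Definition \ref{perfect crystal definition}), so an energy function exists and, as noted right after its definition, is unique up to an additive constant; we normalise $H$ by setting $H(b_{0}\otimes b_{0})=0$ for some fixed extremal $b_{0}$, which is legitimate because $W(\varpi_{1})$ and $W(\varpi_{7})$ are minuscule and hence \emph{every} element of $B_{6}$ (resp.\ $B_{7}$) is extremal. Write $N(b\otimes a)$ for the minimum number of $0$-arrows occurring in a directed path $a\to\dots\to b$ in the crystal graph of $B_{6}$ (resp.\ $B_{7}$). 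It then suffices to prove: (i) $N$ is well defined, i.e.\ some such path always exists; (ii) $N$ obeys the defining relation of an energy function; (iii) $N(b_{0}\otimes b_{0})=0$. Granting these, $N$ and $H$ are energy functions agreeing at $b_{0}\otimes b_{0}$, so $N=H$; item (iii) is immediate from the trivial path, and the ``equivalently'' clause is dealt with at the end.

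For (i) we must show that the crystal graph of $B_{6}$ (resp.\ $B_{7}$) is strongly connected as a directed graph, which can be read off directly from Figures \ref{B6 crystal graph} and \ref{B7 crystal graph}. Conceptually, deleting all $0$-arrows leaves the finite minuscule crystal $\bar{B}$ with a unique source $u^{+}$ (highest weight) and a unique sink $u^{-}$ (lowest weight); every vertex reaches $u^{-}$, and is reached from $u^{+}$, using only the operators $\ft_{i}$ with $i\neq 0$, so it is enough to exhibit a directed path $u^{-}\to\dots\to u^{+}$. Since $\alpha_{0}=\delta-\theta$, a $0$-arrow raises the classical weight by the highest root $\theta$, and $\varphi_{0}(u^{-})=1$ because $u^{-}$ has lowest weight; thus starting at $u^{-}$ one may alternately apply $\ft_{0}$ and descend inside $\bar{B}$, and that finitely many such steps reach $u^{+}$ is immediate from the explicit graphs.

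The content of the lemma is (ii), which I expect to be the main obstacle. The crucial simplification is that all $i$-strings in $B_{6}$ and $B_{7}$ have length at most $1$: consequently each vertex is either the source of a $0$-arrow, the target of a $0$-arrow, or incident to no $0$-arrow (and similarly for each colour $i\neq 0$), so the tensor product rule (\ref{tensor product of crystals}) for $B\otimes B$ takes a very restricted form. Given a directed path $a\to\dots\to b$ realising $N(b\otimes a)$ and an arrow $b\xrightarrow{i}\ft_{i}b$, appending it shows $N(\ft_{i}b\otimes a)\le N(b\otimes a)$ if $i\neq 0$, and $N(\ft_{i}b\otimes a)\le N(b\otimes a)+1$ if $i=0$; the reverse inequalities, together with the corresponding statements when $\ft_{i}$ acts on the second tensor factor, follow from a short local analysis of the crystal graph using $b=\et_{i}\ft_{i}b$ and the commutation of distinct colours along length-$1$ strings. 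Matching the sign convention of Remark \ref{energy function difference remark}, these moves change $N$ by $0$ when $i\neq 0$, by $+1$ when $i=0$ and $\ft_{i}(b\otimes a)=(\ft_{i}b)\otimes a$, and by $-1$ when $i=0$ and $\ft_{i}(b\otimes a)=b\otimes(\ft_{i}a)$, which is precisely the energy function relation. Because $B_{6}$ has only six $0$-arrows and $B_{7}$ correspondingly few, in practice this amounts to a finite and essentially mechanical verification on the graphs of Figures \ref{B6 crystal graph} and \ref{B7 crystal graph}.

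Combining (i)--(iii) gives $N=H$. For the final ``equivalently'' clause, note that $N=H$ takes values in a bounded range of non-negative integers, and that any minimal-length directed path $a\to\dots\to b$ breaks up into geodesic segments of $\bar{B}$ separated by single $0$-arrows; one checks from the explicit structure that no such path can use more $0$-arrows than $N(b\otimes a)$, while any directed path uses at least $N(b\otimes a)$ of them, so a minimal-length path uses exactly that number.
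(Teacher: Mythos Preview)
The paper's own proof is a single sentence: ``A quick check verifies\ldots'' --- i.e.\ a direct finite computation on the $27^{2}$ (resp.\ $56^{2}$) pairs. Your attempt is therefore considerably more structured than what the paper provides, and the overall strategy (show that $N$ satisfies the energy recursion, then normalise) is sound.

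That said, there is a genuine gap in step (ii). The easy directions of the inequalities (append/prepend an arrow) are fine, but the reverse inequalities do \emph{not} follow from ``local'' commutation alone. Knowing that $\ft_{i}\ft_{j}x=\ft_{j}\ft_{i}x$ at a single vertex does not by itself let you convert a minimal path $a\to\cdots\to\ft_{i}b$ into one ending at $b$: you must slide the $i$-arrow through the entire path. What makes this work in $B_{6}$ and $B_{7}$ is a stronger fact you use implicitly but do not state: in a crystal where every string has length $\le 1$, two outgoing (resp.\ incoming) arrows at the same vertex can never have adjacent colours (if $\varphi_{i}(x)=\varphi_{j}(x)=1$ with $a_{ij}=-1$, then $\langle h_{i},\wt(\ft_{j}x)\rangle=2$, impossible). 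Using this, one tracks $\varepsilon_{i}$ backwards from $\ft_{i}b$ (or $\varphi_{i}$ forwards from $a$): it stays equal to $1$ until an $i$-arrow is met, which must happen since $\varepsilon_{i}(a)=0$, and at every intermediate step commutation applies. This yields the required modified path and completes (ii). Without this global ``push-through'' argument spelled out, your step (ii) really does collapse back to the paper's finite check.

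For the ``equivalently'' clause there is a cleaner argument than the one you sketch: each step of a directed path changes the classical weight by $-\alpha_{j}$ ($j\neq 0$) or by $+\theta$ ($j=0$), so a path $a\to\cdots\to b$ with exactly $k$ $0$-arrows has length $k+\mathrm{ht}(\wt(a)-\wt(b)+k\theta)$, where $\mathrm{ht}$ denotes the sum of simple-root coefficients. This is strictly increasing in $k$, hence minimal length forces $k=N(b\otimes a)$.
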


In \cite{BFKL06} the authors give a description of the energy function for their uniform level $1$ perfect crystal.
However, in the process of writing this paper we noticed some inaccuracies in the results there.
Here we provide an updated description in the untwisted case, which in particular includes $B_{8}$.
We have communicated proofs of these corrections to the authors.

\begin{rmk}
    Aside from the aforementioned differences, our energy function $H$ is obtained from that of \cite{BFKL06} by first multiplying by $-1$ (see Remark \ref{energy function difference remark}), and then adding $2$ to renormalise so that $H(b \otimes b) = 0$ for any extremal element $b\in B$.
\end{rmk}

In all untwisted types, let $B$ be the uniform level $1$ perfect crystal of Theorem \ref{uniform level 1 perfect crystal theorem}.
Call an element $b_{1} \otimes b_{2}$ of $B \otimes B$ maximal if $\et_{i}(b_{1}\otimes b_{2}) = 0$ for all $i\in I_{0}$.
We denote by $\Ccal(b_{1}\otimes b_{2})$ its \textit{classical connected component} in $B \otimes B$ after removing all $0$-arrows, on which the energy function $H$ is constant by definition.

\begin{thm} \label{uniform crystal maximal vector theorem}
    The maximal vectors and their energy functions are as follows:
    \begin{itemize}
        \item $\emptyset \otimes \emptyset$ with $H=2$ and classical connected component $\lbrace \emptyset \otimes \emptyset \rbrace$,
        \item $x_{\theta} \otimes x_{-\theta}$ with $H=2$ and classical connected component $\lbrace x_{\theta} \otimes x_{-\theta} \rbrace$,
        \item $\emptyset \otimes x_{\theta}$ with $H=1$ and classical connected component $\lbrace \emptyset \otimes b ~\vert~ b\not= \emptyset \rbrace$,
        \item $x_{\theta} \otimes \emptyset$ with $H=1$ and classical connected component $\lbrace b \otimes \emptyset ~\vert~ b\not= \emptyset \rbrace$,
        \item $x_{\theta} \otimes y_{i}$ for each $i\in\tilde{I}$, with $H=2$ and classical connected components as described below,
        \item $x_{\theta}\otimes x_{\theta-\alpha_{i}}$ with $H=1$ for each $i\in\tilde{I}$,
        \item $x_{\theta}\otimes x_{\theta}$ with $H=0$,
        \item $x_{\theta}\otimes x_{\beta}$ with $H=2$ for each maximal $\beta \in \Lambda_{0}^{+}$,
    \end{itemize}
    and in type $C_{n}^{(1)}$, since a double arrow connects $0$ to its adjacent vertex,
    \begin{itemize}
        \item $x_{\theta} \otimes x_{-\alpha_{i}}$ with $H=2$ for $i\in\tilde{I}$.
    \end{itemize}
\end{thm}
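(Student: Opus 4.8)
The plan is to separate off the $0$-arrows. Forgetting them turns $B$ into the $\g$-crystal $B(0)\sqcup B(\theta)$ observed after Theorem \ref{uniform level 1 perfect crystal theorem}, where $B(0)=\{\emptyset\}$ and $B(\theta)$ is the adjoint crystal with unique $\g$-highest weight element $x_{\theta}$; recall also that the extremal elements of $B$ are precisely the $x_{\alpha}$. A maximal vector is by definition a $\g$-highest weight element of $B\otimes B$, and each such element generates a classical connected component on which the energy function $H$ is constant. So the task is to (i) classify the maximal vectors and identify their components, and (ii) evaluate $H$ on each.

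For (i), the tensor product rule \eqref{tensor product of crystals} gives that $\et_{i}(b_{1}\otimes b_{2})=0$ for all $i\in I_{0}$ precisely when $b_{1}$ is $\g$-highest weight (hence $b_{1}\in\{\emptyset,x_{\theta}\}$) and $\varphi_{i}(b_{1})\geq\varepsilon_{i}(b_{2})$ for all $i\in I_{0}$. If $b_{1}=\emptyset$ then $\varphi_{i}(\emptyset)=0$ forces $b_{2}$ to be $\g$-highest weight as well, yielding $\emptyset\otimes\emptyset$ and $\emptyset\otimes x_{\theta}$. If $b_{1}=x_{\theta}$ then $\varphi_{i}(x_{\theta})=\langle\theta,h_{i}\rangle=-a_{i0}$ (since $x_{\theta}$ is highest weight and $\theta=\delta-\alpha_{0}$), which is $0$ for $i\in I_{0}\setminus\tilde{I}$ and $1$ for $i\in\tilde{I}$, becoming $2$ at the vertex adjacent to $0$ in type $C_{n}^{(1)}$. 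We therefore run through the crystal graph of $B(\theta)$ and collect the $b_{2}$ that are highest weight for the Levi $\g_{I_{0}\setminus\tilde{I}}$ and additionally satisfy $\varepsilon_{i}(b_{2})\leq -a_{i0}$ for $i\in\tilde{I}$. Using the strings $x_{\alpha_{i}}\xrightarrow{i}y_{i}\xrightarrow{i}x_{-\alpha_{i}}$ (so that $\varepsilon_{i}(x_{-\alpha_{i}})=2$) one finds the $\g_{I_{0}\setminus\tilde{I}}$-highest weight elements of $B(\theta)$ to be $x_{\theta}$, $x_{-\theta}$, the $x_{\theta-\alpha_{i}}$ and $y_{i}$ for $i\in\tilde{I}$, the $x_{-\alpha_{i}}$ for $i\in\tilde{I}$, and the $x_{\beta}$ with $\beta$ maximal in $\Phi^{+}_{0}$; the extra bound $\varepsilon_{i}(b_{2})\leq -a_{i0}$ retains all of these except the $x_{-\alpha_{i}}$, which survive exactly in type $C_{n}^{(1)}$. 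Adjoining the three vectors with an $\emptyset$ factor reproduces the list in the statement. The components of the first four items are immediate: $\{\emptyset\otimes\emptyset\}$ and $\{x_{\theta}\otimes x_{-\theta}\}$ are singletons (weight $0$), while $\{\emptyset\otimes b\mid b\neq\emptyset\}$ and $\{b\otimes\emptyset\mid b\neq\emptyset\}$ are full since $B(\theta)$ is $\g$-connected; the components of the $x_{\theta}\otimes y_{i}$ are the $\g$-subcrystals these generate, described afterwards.

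For (ii), anchor at the normalisation $H(x_{\theta}\otimes x_{\theta})=0$ (valid as $x_{\theta}$ is extremal) and walk along $0$-arrows, reading off from \eqref{tensor product of crystals} whether $\et_{0}$ or $\ft_{0}$ hits the left or right factor and updating $H$ by $\pm1$ via the energy axiom. Applying $\et_{0}$ twice on the right gives $x_{\theta}\otimes x_{\theta}\to x_{\theta}\otimes\emptyset\to x_{\theta}\otimes x_{-\theta}$ with $H=0,1,2$; then $\ft_{0}(x_{-\theta}\otimes\emptyset)=\emptyset\otimes\emptyset$ acts on the left (as $\varphi_{0}(x_{-\theta})=2>\varepsilon_{0}(\emptyset)=1$), giving $H(\emptyset\otimes\emptyset)=2$, and $\ft_{0}(\emptyset\otimes\emptyset)=\emptyset\otimes x_{\theta}$ acts on the right, giving $H(\emptyset\otimes x_{\theta})=1$; in particular $H\equiv1$ on the two slabs $\{b\otimes\emptyset\}$ and $\{\emptyset\otimes b\}$. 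For each remaining maximal vector $x_{\theta}\otimes b$ one checks $\varepsilon_{0}(b)=0$ when $b\in\{y_{i},x_{\beta}\}$ (and when $b=x_{-\alpha_{i}}$ in type $C_{n}^{(1)}$), so $\et_{0}(x_{\theta}\otimes b)=\emptyset\otimes b$ on the left, whence $H(x_{\theta}\otimes b)=H(\emptyset\otimes b)+1=2$; whereas for $b=x_{\theta-\alpha_{i}}$ the $0$-string $x_{-\alpha_{i}}\xrightarrow{0}x_{\theta-\alpha_{i}}$ lets $\et_{0}$ pass to $x_{\theta}\otimes x_{-\alpha_{i}}$ and then to $\emptyset\otimes x_{-\alpha_{i}}$, so a short $0$-arrow chain from $\emptyset\otimes x_{-\alpha_{i}}$ (which has $H=1$) yields $H(x_{\theta}\otimes x_{\theta-\alpha_{i}})=1$. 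I expect the main obstacle to be the case-by-case enumeration in step (i): one must work carefully through the full crystal graph of $B(\theta)$, tracking the $y_{i}$-versus-$x_{-\alpha_{i}}$ distinction and the type-dependent multiple bond at the affine node in $C_{n}^{(1)}$.
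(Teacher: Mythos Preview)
Your argument is correct, and it is genuinely different from what the paper does. The paper does not actually prove this theorem: it states the result as a corrected version of \cite{BFKL06}, notes that proofs of the corrections were communicated privately to those authors, and then points out that since $B$ is a Kirillov--Reshetikhin crystal the maximal vectors and their energies can be verified computationally in SageMath. Your approach, by contrast, is a direct and self-contained argument: you use the tensor product rule \eqref{tensor product of crystals} to reduce the classification of maximal vectors to finding the $\g_{I_{0}\setminus\tilde I}$-highest weight elements of $B(\theta)$ subject to the bound $\varepsilon_{i}(b_{2})\le -a_{i0}$ at $i\in\tilde I$, and then compute $H$ by anchoring at $H(x_{\theta}\otimes x_{\theta})=0$ and walking along explicit $0$-strings. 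The $0$-arrow chain you describe for $x_{\theta}\otimes x_{\theta-\alpha_{i}}$ (via $x_{\theta}\otimes x_{-\alpha_{i}}$ and $\emptyset\otimes x_{-\alpha_{i}}$) is exactly right and gives the value $1$. What your method buys is a uniform conceptual argument that works in all untwisted types at once and explains \emph{why} the list is what it is; what the paper's deferral to SageMath buys is that the delicate part you flag --- the complete enumeration of Levi-highest weight elements in $B(\theta)$, including the careful separation of $y_{i}$ from $x_{-\alpha_{i}}$ and the $C_{n}^{(1)}$ double-bond exception --- becomes a finite machine check rather than a case analysis. If you want your proof to stand alone, that enumeration is the one step you would still need to write out carefully.
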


We do not provide a complete description of every classical connected component since this will not be required for our work, but do include the case $\Ccal(x_{\theta} \otimes y_{i})$ -- for further details and its crystal structure, see the proof of \cite{BFKL06}*{Proposition 5.3}.
To this end, define the support of any $\gamma\in\Phi$ to be $\mathrm{supp}(\gamma) = \lbrace j\in I_{0} ~\vert~ \langle \Lambda_{j},\gamma\rangle \not= 0\rbrace$.

\begin{prop} \label{component of xt otimes yi}
    Outside types $A_{n}^{(1)}$ and $C_{n}^{(1)}$ the component $\Ccal(x_{\theta} \otimes y_{i})$ consists of the elements:
    \begin{itemize}
        \item $\xt\otimes y_{i}$ and $y_{i}\otimes x_{-\theta}$,
        \item $x_{\theta-\alpha}\otimes x_{-\beta}$ and $x_{\beta}\otimes x_{-\theta+\alpha}$ for all $\gamma\in\Phi^{+}\setminus\lbrace\theta\rbrace$, with $\alpha = \alpha_{j_{1}}+\dots+\alpha_{j_{t}}$ and $\beta = \theta-\gamma-\alpha$, where $j_{1}\sim\dots\sim j_{t} = i$ is the shortest sequence of vertices in $I_{0}\setminus\mathrm{supp}(\gamma)$ connecting $\mathrm{supp}(\gamma)$ to $i$,
        \item $x_{\theta-\alpha_{j_{1}}-\dots-\alpha_{j_{t}}} \otimes x_{-\theta+\alpha_{j_{1}}+\dots+\alpha_{j_{t}}}$ for all $j\in I_{0}$, where $i = j_{1}\sim\dots\sim j_{t} = j$ is the shortest sequence of vertices connecting $i$ to $j$.
    \end{itemize}
    In type $A_{n}^{(1)}$ the elements of $\Ccal(x_{\theta} \otimes y_{i})$ are:
    \begin{itemize}
        \item $x_{\gamma}\otimes y_{i}$ and $y_{i}\otimes x_{-\gamma}$ for all $\gamma\in\Phi^{+}$ with $i\in\mathrm{supp}(\gamma)$,
        \item $x_{\theta-\alpha}\otimes x_{-\beta}$ and $x_{\beta}\otimes x_{-\theta+\alpha}$ for all $\gamma\in\Phi^{+}$ with $i\not\in\mathrm{supp}(\gamma)$, with $\alpha = \alpha_{j_{1}}+\dots+\alpha_{j_{t}}$ and $\beta = \theta-\gamma-\alpha$, where $j_{1}\sim\dots\sim j_{t} = i$ is the shortest sequence of vertices in $I_{0}\setminus\mathrm{supp}(\gamma)$ connecting $\mathrm{supp}(\gamma)$ to $i$,
        \item $y_{i}\otimes y_{i}$ and $x_{\theta-\alpha_{j_{1}}-\dots-\alpha_{j_{t}}} \otimes x_{-\theta+\alpha_{j_{1}}+\dots+\alpha_{j_{t}}}$ for all $j\in I_{0}\setminus\lbrace i\rbrace$, where $n+1-i = j_{1}\sim\dots\sim j_{t} = j$ is the shortest sequence of vertices connecting $n+1-i$ to $j$.
    \end{itemize}
    In type $C_{n}^{(1)}$ the elements of $\Ccal(x_{\theta} \otimes y_{i})$ are:
    \begin{itemize}
        \item $x_{\gamma}\otimes y_{i}$ and $y_{i}\otimes x_{-\gamma}$ for all $\gamma\in\Phi^{+}$ with $i\in\mathrm{supp}(\gamma)$,
        \item $x_{\gamma+\alpha}\otimes x_{-\alpha}$ and $x_{\beta}\otimes x_{-\theta+\alpha}$ for all $\gamma\in\Phi^{+}$ with $i\not\in\mathrm{supp}(\gamma)$, with $\alpha = \alpha_{j_{1}}+\dots+\alpha_{j_{t}}$ and $\beta = \theta-\gamma-\alpha$, where $j_{1}\sim\dots\sim j_{t} = i$ is the shortest sequence of vertices in $I_{0}\setminus\mathrm{supp}(\gamma)$ connecting $\mathrm{supp}(\gamma)$ to $i$,
        \item $y_{i}\otimes y_{i}$ and $x_{\alpha_{j_{1}}+\dots+\alpha_{j_{t}}} \otimes x_{-\alpha_{j_{1}}-\dots-\alpha_{j_{t}}}$ for all $j\in I_{0}\setminus\lbrace i\rbrace$, where $i = j_{1}\sim\dots\sim j_{t} = j$ is the shortest sequence of vertices connecting $i$ to $j$.
    \end{itemize}
\end{prop}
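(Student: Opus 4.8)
The plan is to work throughout with the classical crystal structure, i.e. after forgetting all $0$-arrows. As recalled after Theorem \ref{uniform level 1 perfect crystal theorem}, $B$ then decomposes as $B(0)\sqcup B(\theta)$ as a $\Uq$-crystal, so $\Ccal(x_{\theta}\otimes y_{i})$ is precisely the connected component of $x_{\theta}\otimes y_{i}$ inside $B(\theta)\otimes B(\theta)$. To identify this component it suffices to show that the concrete subset $S\subseteq B(\theta)\otimes B(\theta)$ listed in the statement (i) contains $x_{\theta}\otimes y_{i}$, (ii) is connected as a subgraph of the crystal graph, and (iii) is stable under $\et_{j}$ and $\ft_{j}$ for every $j\in I_{0}$, meaning $\et_{j}S,\ft_{j}S\subseteq S\sqcup\lbrace 0\rbrace$; any such $S$ is forced to equal $\Ccal(x_{\theta}\otimes y_{i})$. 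The first point is immediate ($x_{\theta}\otimes y_{i}$ is the $\gamma=\theta$ entry), and it is convenient to note at the outset that $x_{\theta}\otimes y_{i}$ is maximal: from $\varepsilon_{j}(y_{i})=\delta_{ij}$, the fact that $x_{\theta}$ is the highest weight element with $\varphi_{i}(x_{\theta})=\langle\theta,h_{i}\rangle\geq 1$ for $i\in\tilde{I}$, and the tensor product rule (\ref{tensor product of crystals}), one gets $\et_{j}(x_{\theta}\otimes y_{i})=(\et_{j}x_{\theta})\otimes y_{i}=0$ for all $j\in I_{0}$; this is the only place the hypothesis $i\in\tilde{I}$ is used.

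The computational heart is point (iii). First I would record the explicit crystal structure of $B(\theta)$ from Theorem \ref{uniform level 1 perfect crystal theorem}: its elements are the $x_{\gamma}$ ($\gamma\in\Phi$) together with the weight-zero $y_{j}$; for $j\in I_{0}$ and $\gamma\neq\pm\alpha_{j}$ the $j$-string through $x_{\gamma}$ has length at most one and is governed by $\langle\gamma,h_{j}\rangle\in\lbrace -1,0,1\rbrace$ (so $\ft_{j}x_{\gamma}=x_{\gamma-\alpha_{j}}$ exactly when $\gamma-\alpha_{j}\in\Phi$), while $x_{\alpha_{j}}\xrightarrow{j}y_{j}\xrightarrow{j}x_{-\alpha_{j}}$; and all the $\varepsilon_{j},\varphi_{j}$ needed to apply (\ref{tensor product of crystals}) are read off from the roots. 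Then, running over the listed elements of $S$ and each $j\in I_{0}$, I would compute $\ft_{j}$ and $\et_{j}$ via (\ref{tensor product of crystals}) and check the result lies in $S\sqcup\lbrace 0\rbrace$. The natural organisation is induction on the height of $\theta-\gamma$: starting from $x_{\theta}\otimes y_{i}$, each $\ft_{j}$ either lowers the left tensor factor (when $\varphi_{j}(\mathrm{left})>\varepsilon_{j}(\mathrm{right})$), sliding it down the $B(\theta)$-chain towards $x_{-\theta}$, or lowers the right factor otherwise; one shows that the weight-$\gamma$ elements so reached are exactly $x_{\theta-\alpha}\otimes x_{-\beta}$ with $\alpha=\alpha_{j_{1}}+\dots+\alpha_{j_{t}}$ the sum along the shortest path in $I_{0}\setminus\mathrm{supp}(\gamma)$ from $\mathrm{supp}(\gamma)$ to $i$ and $\beta=\theta-\gamma-\alpha$ (so $\mathrm{wt}=\gamma$ automatically), the boundary cases $\gamma=\pm\theta$ degenerating to $x_{\theta}\otimes y_{i}$ and $y_{i}\otimes x_{-\theta}$, the negative-weight half being handled symmetrically, and the weight-zero row $\iota(y_{j})$ corresponding to the shortest path from $i$ to $j$ in the full diagram. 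Connectedness (ii) falls out of the same induction, and Proposition \ref{component of xt otimes yi} follows for the generic types.

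For types $A_{n}^{(1)}$ and $C_{n}^{(1)}$ I would run the identical argument with the non-simply-laced crystal structure of $B(\theta)$ from \cite{BFKL06}. In type $A_{n}^{(1)}$ the Dynkin diagram is a path, so the relevant ``shortest path'' is just an explicit interval, and because the adjoint occurs with multiplicity two and its weight-zero space behaves differently the element $y_{i}\otimes y_{i}$ appears in the weight-zero row; in type $C_{n}^{(1)}$ the double bond at node $0$ modifies the $j$-strings near $x_{\pm\theta}$, which replaces $x_{\theta-\alpha}\otimes x_{-\beta}$ by $x_{\gamma+\alpha}\otimes x_{-\alpha}$ on half of the list and again produces $y_{i}\otimes y_{i}$. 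These variants are routine once the generic case is in place, and they match the three sublists in the statement.

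The main obstacle is the bookkeeping inside the induction: verifying that the rule (\ref{tensor product of crystals}) genuinely outputs the shortest-path element at every step. Concretely one needs the internal lemma that subtracting $\alpha_{j}$ from $\gamma$ either leaves $\mathrm{supp}(\gamma)$, and hence the connecting path and $\alpha$, unchanged, or changes $\mathrm{supp}(\gamma)$ by exactly $\lbrace j\rbrace$ while lengthening or shortening the path by exactly $\alpha_{j}$; and then that in each case this is compatible with which tensor factor the rule selects for $\ft_{j}$. The delicate situations are the ``balanced'' ones, where $\varphi_{j}(\mathrm{left})=\varepsilon_{j}(\mathrm{right})$ so that $\ft_{j}$ acts on the right factor while $\et_{j}$ acts on the left, or where a $j$-string passes through some $y_{j}$; reconciling the two descriptions across such points is precisely where the behaviour of types $A_{n}^{(1)}$ and $C_{n}^{(1)}$ diverges from the rest, and so requires the separate treatment above.
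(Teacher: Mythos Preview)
The paper does not actually prove this proposition: immediately before the statement it says that for further details and the crystal structure of $\Ccal(x_{\theta}\otimes y_{i})$ one should consult the proof of \cite{BFKL06}*{Proposition 5.3}, and immediately after it remarks that (except in type $A_{n}^{(1)}$) the classical components can alternatively be computed in SageMath since $B$ is a Kirillov--Reshetikhin crystal. So there is no in-paper argument to compare against.

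Your strategy---exhibit the listed set $S$, show it contains the maximal vector $x_{\theta}\otimes y_{i}$, and verify $S$ is connected and stable under all $\et_{j},\ft_{j}$ with $j\in I_{0}$---is exactly the natural way to identify a classical component and is in the spirit of the BFKL argument the paper defers to. The outline is sound and the observations you make (the $i\in\tilde{I}$ hypothesis giving maximality, the inductive descent on $\mathrm{ht}(\theta-\gamma)$, the shortest-path bookkeeping, and the places where $A_{n}^{(1)}$ and $C_{n}^{(1)}$ diverge) are all correct diagnoses of where the work lies. What you have written is a plan rather than a proof: the ``internal lemma'' about how subtracting $\alpha_{j}$ interacts with $\mathrm{supp}(\gamma)$ and the connecting path, and the reconciliation at the balanced tensor-product cases, are precisely the content that must be checked and are not yet carried out. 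If you intend this as a genuine alternative to citing \cite{BFKL06}, those verifications need to be written down; otherwise your proposal amounts to the same deferral the paper makes.
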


We do however note that $B$ is a Kirillov-Reshetikhin crystal in each type $X_{n}^{(1)} \not= A_{n}^{(1)}$.
In particular, it equals $B^{i,s} = B(s\varpi_{i})$ where $i$ is the unique element in $\tilde{I}$ and $s = -a_{0i}$.
Therefore the maximal vectors, together with their energy functions and classical connected components, can easily be computed using SageMath \cite{SageMath}.

\begin{lstlisting}
sage: K=crystals.kirillov_reshetikhin.LSPaths(['X',n,1],i,s)
sage: H=K.local_energy_function(K)
sage: K2=crystals.TensorProduct(K,K)
sage: hw=K2.classically_highest_weight_vectors()
sage: for b in hw:
          print("({},{}) {}".format(b[1],b[0],H(b)))
sage: C=K2.subcrystal(generators=[hw[k]],index_set=[1,...,n])
\end{lstlisting}

Here we reverse the factors {\tt b[0]} and {\tt b[1]} of {\tt b} in the penultimate line of code since by default SageMath uses an alternative tensor crystal structure, where tensor factors are swapped compared to (\ref{tensor product of crystals}).
Type $A_{n}^{(1)}$ can be dealt with similarly, by using the fact that $B = B(\varpi_{1} + \varpi_{n})$ is a connected component of $B^{1,1}\otimes B^{n,1}$.

\subsection{Young column realizations}

We are now ready to present for each of our level $1$ perfect crystals $B_{6}$, $B_{7}$ and $B_{8}$ a new combinatorial model in terms of \emph{Young columns}.
In each case, a Young column pattern splits the infinite vertical column of unit cubes into a particular arrangement of $I$-coloured blocks.
Vertices in our finite crystals then correspond to certain valid stackings of blocks inside this pattern, up to an equivalence relation, and $i$-arrows correspond to adding / removing an $i$-coloured block ($i$-block).
\\

An essential condition for all Young columns (except for three in type $\Eaff{8}$) is that there is no empty space below any block, which can be thought of as a \textit{stable under gravity} condition.
So it is important to make clear exactly how each unit cube is cut into blocks.
We therefore introduce the following diagrammatic conventions.
\\

Our unit cubes shall always be split via a collection of vertical cuts, and hence all of the information is contained in the horizontal cross-section at any point.
So we shall represent each Young column pattern as a vertical strip formed by lining up the cross-sections of each unit cube, one on top of another.
Thicker lines separate the cross-sections of cubes, and thinner lines indicate how each cube is cut.
\\

In types $\Eaff{7}$ and $\Eaff{8}$ we shall also stretch the cross-section diagrams horizontally by factors of $1.5$ and $3$ respectively, in order to make our diagrams more clear.
This is all demonstrated in Figure \ref{Block splitting diagrams}, which contains examples of this process.

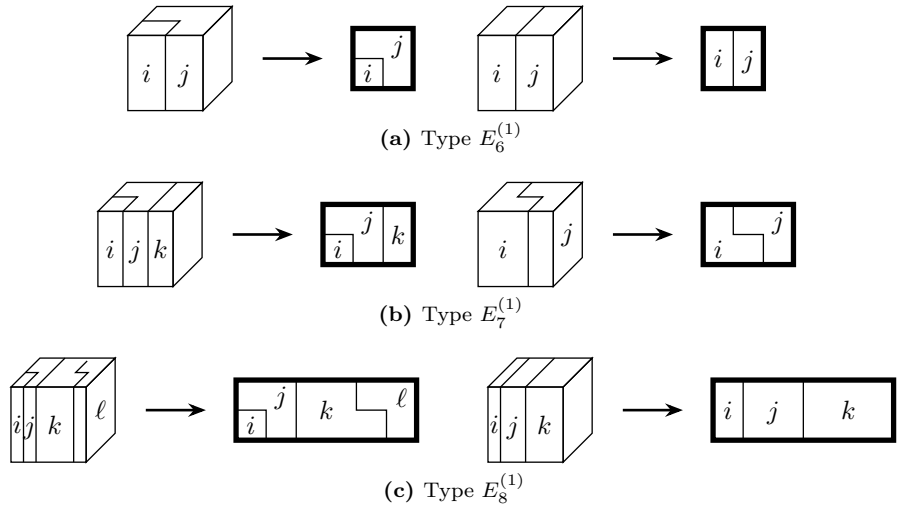
\begin{figure}[H]
    \centering
    \subfloat[Type $\Eaff{6}$]{
    \begin{tikzpicture}[scale=0.2, line width = 0.5]

\draw (0,0,0) -- ++(5,0,0) -- ++(0,5,0) -- ++(-5,0,0) -- cycle;
\draw (0,5,0) -- ++(0,0,-5) -- ++(5,0,0) -- ++(0,0,5) -- cycle;
\draw (5,0,0) -- ++(0,0,-5) -- ++(0,5,0) -- ++(0,0,5) -- cycle;

\draw (2.5,0,0) -- ++(0,5,0) -- ++(0,0,-2.5) -- ++(-2.5,0,0);
\node at (1.25,2.5,0) {$i$};
\node at (3.75,2.3,0) {$j$};

\draw[-{Stealth[scale=0.9]}, line width = 1] (8,2.5,-2.5) -- (12,2.5,-2.5);

\draw[line width = 2] (14,0.5,-2.5) rectangle ++(4,4,0);

\draw (16,0.5,-2.5) -- ++(0,2,0) -- ++(-2,0,0);
\node at (15,1.5,-2.5) {$i$};
\node at (17,3.3,-2.5) {$j$};

\end{tikzpicture}

\qquad

\begin{tikzpicture}[scale=0.2, line width = 0.5]

\draw (0,0,0) -- ++(5,0,0) -- ++(0,5,0) -- ++(-5,0,0) -- cycle;
\draw (0,5,0) -- ++(0,0,-5) -- ++(5,0,0) -- ++(0,0,5) -- cycle;
\draw (5,0,0) -- ++(0,0,-5) -- ++(0,5,0) -- ++(0,0,5) -- cycle;

\draw (2.5,0,0) -- ++(0,5,0) -- ++(0,0,-5);
\node at (1.25,2.5,0) {$i$};
\node at (3.75,2.3,0) {$j$};

\draw[-{Stealth[scale=0.9]}, line width = 1] (8,2.5,-2.5) -- (12,2.5,-2.5);

\draw[line width = 2] (14,0.5,-2.5) rectangle ++(4,4,0);

\draw (16,0.5,-2.5) -- ++(0,4,0);
\node at (15,2.5,-2.5) {$i$};
\node at (17,2.3,-2.5) {$j$};

\end{tikzpicture}
    }
    \\
    \subfloat[Type $\Eaff{7}$]{
    \begin{tikzpicture}[scale=0.2, line width = 0.5]

\draw (0,0,0) -- ++(5,0,0) -- ++(0,5,0) -- ++(-5,0,0) -- cycle;
\draw (0,5,0) -- ++(0,0,-5) -- ++(5,0,0) -- ++(0,0,5) -- cycle;
\draw (5,0,0) -- ++(0,0,-5) -- ++(0,5,0) -- ++(0,0,5) -- cycle;

\draw (5/3,0,0) -- ++(0,5,0) -- ++(0,0,-2.5) -- ++(-5/3,0,0);
\draw (10/3,0,0) -- ++(0,5,0) -- ++(0,0,-5);
\node at (5/6,2.5,0) {$i$};
\node at (2.5,2.3,0) {$j$};
\node at (25/6,2.5,0) {$k$};

\draw[-{Stealth[scale=0.9]}, line width = 1] (8,2.5,-2.5) -- (12,2.5,-2.5);

\draw[line width = 2] (14,0.5,-2.5) rectangle ++(6,4,0);

\draw (16,0.5,-2.5) -- ++(0,2,0) -- ++(-2,0,0);
\draw (18,0.5,-2.5) -- ++(0,4,0);
\node at (15,1.5,-2.5) {$i$};
\node at (17,3.3,-2.5) {$j$};
\node at (19,2.5,-2.5) {$k$};

\end{tikzpicture}

\qquad

\begin{tikzpicture}[scale=0.2, line width = 0.5]

\draw (0,0,0) -- ++(5,0,0) -- ++(0,5,0) -- ++(-5,0,0) -- cycle;
\draw (0,5,0) -- ++(0,0,-5) -- ++(5,0,0) -- ++(0,0,5) -- cycle;
\draw (5,0,0) -- ++(0,0,-5) -- ++(0,5,0) -- ++(0,0,5) -- cycle;

\draw (20/6,0,0) -- ++(0,5,0) -- ++(0,0,-2.5) -- ++(-5/3,0,0) -- ++(0,0,-2.5);
\node at (5/3,2.5,0) {$i$};
\node at (5,2.5,-2.5) {$j$};

\draw[-{Stealth[scale=0.9]}, line width = 1] (8,2.5,-2.5) -- (12,2.5,-2.5);

\draw[line width = 2] (14,0.5,-2.5) rectangle ++(6,4,0);

\draw (18,0.5,-2.5) -- ++(0,2,0) -- ++(-2,0,0) -- ++(0,2,0);
\node at (15,1.5,-2.5) {$i$};
\node at (19,3.3,-2.5) {$j$};

\end{tikzpicture}
    }
    \\
    \subfloat[Type $\Eaff{8}$]{
    \begin{tikzpicture}[scale=0.2, line width = 0.5]

\draw (0,0,0) -- ++(5,0,0) -- ++(0,5,0) -- ++(-5,0,0) -- cycle;
\draw (0,5,0) -- ++(0,0,-5) -- ++(5,0,0) -- ++(0,0,5) -- cycle;
\draw (5,0,0) -- ++(0,0,-5) -- ++(0,5,0) -- ++(0,0,5) -- cycle;

\draw (5/6,0,0) -- ++(0,5,0) -- ++(0,0,-2.5) -- ++(-5/6,0,0);
\draw (10/6,0,0) -- ++(0,5,0) -- ++(0,0,-5);
\draw (25/6,0,0) -- ++(0,5,0) -- ++(0,0,-2.5) -- ++(-5/6,0,0) -- ++(0,0,-2);
\node at (5/12,2.5,0) {$i$};
\node at (15/12,2.3,0) {$j$};
\node at (35/12,2.5,0) {$k$};
\node at (5,2.5,-2.5) {$\ell$};

\draw[-{Stealth[scale=0.9]}, line width = 1] (8,2.5,-2.5) -- (12,2.5,-2.5);

\draw[line width = 2] (14,0.5,-2.5) rectangle ++(12,4,0);

\draw (16,0.5,-2.5) -- ++(0,2,0) -- ++(-2,0,0);
\draw (18,0.5,-2.5) -- ++(0,4,0);
\draw (24,0.5,-2.5) -- ++(0,2,0) -- ++(-2,0,0) -- ++(0,2,0);
\node at (15,1.5,-2.5) {$i$};
\node at (17,3.3,-2.5) {$j$};
\node at (20,2.5,-2.5) {$k$};
\node at (25,3.3,-2.5) {$\ell$};

\end{tikzpicture}

\qquad

\begin{tikzpicture}[scale=0.2, line width = 0.5]

\draw (0,0,0) -- ++(5,0,0) -- ++(0,5,0) -- ++(-5,0,0) -- cycle;
\draw (0,5,0) -- ++(0,0,-5) -- ++(5,0,0) -- ++(0,0,5) -- cycle;
\draw (5,0,0) -- ++(0,0,-5) -- ++(0,5,0) -- ++(0,0,5) -- cycle;

\draw (5/6,0,0) -- ++(0,5,0) -- ++(0,0,-5);
\draw (15/6,0,0) -- ++(0,5,0) -- ++(0,0,-5);
\node at (5/12,2.5,0) {$i$};
\node at (20/12,2.3,0) {$j$};
\node at (45/12,2.5,0) {$k$};

\draw[-{Stealth[scale=0.9]}, line width = 1] (8,2.5,-2.5) -- (12,2.5,-2.5);

\draw[line width = 2] (14,0.5,-2.5) rectangle ++(12,4,0);

\draw (16,0.5,-2.5) -- ++(0,4,0);
\draw (20,0.5,-2.5) -- ++(0,4,0);
\node at (15,2.5,-2.5) {$i$};
\node at (18,2.3,-2.5) {$j$};
\node at (23,2.5,-2.5) {$k$};

\end{tikzpicture}
    }
    \caption{\hspace{.5em}Examples of displaying cut unit cubes}\label{Block splitting diagrams}
\end{figure}

Moreover, blocks in our diagrams shall be coloured according to (\ref{colours}) when they are contained in the relevant Young column or Young wall, and coloured white if they are not.

\subsubsection{Types \texorpdfstring{$\Eaff{6}$}{E6(1)} and \texorpdfstring{$\Eaff{7}$}{E7(1)}} \label{E6 and E7 Young columns}

Figure \ref{E6 and E7 Young column patterns} contains the Young column patterns for types $\Eaff{6}$ and $\Eaff{7}$.

\begin{figure}[H]
    \centering
    \subfloat[Type $\Eaff{6}$]{
    \qquad
    \begin{tikzpicture}[scale=0.18, line width = 0.5]

    \node at (2,-2) {\LARGE $\vdots$};

    \draw[fill=col0] (0,0) rectangle ++(2,2);
    \node[color=white] at (1,1) {$0$};
    \draw[fill=col4] (0,2) |- (4,4) |- (2,0) |- cycle;
    \node[color=white] at (3,3) {$4$};

    \draw[fill=col2] (0,4) rectangle ++(2,4);
    \node[color=white] at (1,6) {$2$};
    \draw[fill=col5] (2,4) rectangle ++(2,4);
    \node[color=white] at (3,6) {$5$};

    \draw[fill=col6] (2,10) rectangle ++(2,2);
    \node[color=white] at (3,11) {$6$};
    \draw[fill=col4] (0,8) |- (2,12) |- (4,10) |- cycle;
    \node[color=white] at (1,9) {$4$};

    \draw[fill=col3] (0,12) rectangle ++(2,4);
    \node[color=white] at (1,14) {$3$};
    \draw[fill=col5] (2,12) rectangle ++(2,4);
    \node[color=white] at (3,14) {$5$};

    \draw[fill=col1] (0,16) rectangle ++(2,2);
    \node[color=white] at (1,17) {$1$};
    \draw[fill=col4] (0,18) |- (4,20) |- (2,16) |- cycle;
    \node[color=white] at (3,19) {$4$};

    \draw[fill=col3] (0,20) rectangle ++(2,4);
    \node[color=white] at (1,22) {$3$};
    \draw[fill=col2] (2,20) rectangle ++(2,4);
    \node[color=white] at (3,22) {$2$};

    \draw[fill=col0] (2,26) rectangle ++(2,2);
    \node[color=white] at (3,27) {$0$};
    \draw[fill=col4] (0,24) |- (2,28) |- (4,26) |- cycle;
    \node[color=white] at (1,25) {$4$};

    \draw[fill=col5] (0,28) rectangle ++(2,4);
    \node[color=white] at (1,30) {$5$};
    \draw[fill=col2] (2,28) rectangle ++(2,4);
    \node[color=white] at (3,30) {$2$};

    \draw[fill=col6] (0,32) rectangle ++(2,2);
    \node[color=white] at (1,33) {$6$};
    \draw[fill=col4] (0,34) |- (4,36) |- (2,32) |- cycle;
    \node[color=white] at (3,35) {$4$};

    \draw[fill=col5] (0,36) rectangle ++(2,4);
    \node[color=white] at (1,38) {$5$};
    \draw[fill=col3] (2,36) rectangle ++(2,4);
    \node[color=white] at (3,38) {$3$};

    \draw[fill=col1] (2,42) rectangle ++(2,2);
    \node[color=white] at (3,43) {$1$};
    \draw[fill=col4] (0,40) |- (2,44) |- (4,42) |- cycle;
    \node[color=white] at (1,41) {$4$};

    \draw[fill=col2] (0,44) rectangle ++(2,4);
    \node[color=white] at (1,46) {$2$};
    \draw[fill=col3] (2,44) rectangle ++(2,4);
    \node[color=white] at (3,46) {$3$};

    \draw[line width = 2] (0,0) rectangle ++(4,4);
    \draw[line width = 2] (0,4) rectangle ++(4,4);
    \draw[line width = 2] (0,8) rectangle ++(4,4);
    \draw[line width = 2] (0,12) rectangle ++(4,4);
    \draw[line width = 2] (0,16) rectangle ++(4,4);
    \draw[line width = 2] (0,20) rectangle ++(4,4);
    \draw[line width = 2] (0,24) rectangle ++(4,4);
    \draw[line width = 2] (0,28) rectangle ++(4,4);
    \draw[line width = 2] (0,32) rectangle ++(4,4);
    \draw[line width = 2] (0,36) rectangle ++(4,4);
    \draw[line width = 2] (0,40) rectangle ++(4,4);
    \draw[line width = 2] (0,44) rectangle ++(4,4);

    \node at (2,51) {\LARGE $\vdots$};

    \draw[line width = 2] (4,-2) -- (4,50);
    \draw[line width = 2] (0,-2) -- (0,50);
    \end{tikzpicture}
    \qquad
    }
    ~~~~~~~~~
    \subfloat[Type $\Eaff{7}$]{
    \qquad
    \begin{tikzpicture}[scale=0.18, line width = 0.5]

    \node at (3,-2) {\LARGE $\vdots$};

    \draw[fill=col0] (0,0) rectangle ++(2,2);
    \node[color=white] at (1,1) {$0$};
    \draw[fill=col3] (0,2) |- (4,4) |- (2,0) |- cycle;
    \node[color=white] at (3,3) {$3$};
    \draw[fill=col2] (4,0) rectangle ++(2,4);
    \node[color=white] at (5,2) {$2$};

    \draw[fill=col1] (0,4) rectangle ++(2,4);
    \node[color=white] at (1,6) {$1$};
    \draw[fill=col4] (2,4) rectangle ++(4,4);
    \node[color=white] at (4,6) {$4$};

    \draw[fill=col5] (2,10) |- (6,12) |- (4,8) |- cycle;
    \node[color=white] at (5,11) {$5$};
    \draw[fill=col3] (0,8) |- (2,12) |- (4,10) |- cycle;
    \node[color=white] at (1,9) {$3$};

    \draw[fill=col4] (0,12) rectangle ++(4,4);
    \node[color=white] at (2,14) {$4$};
    \draw[fill=col6] (4,12) rectangle ++(2,4);
    \node[color=white] at (5,14) {$6$};

    \draw[fill=col5] (2,16) |- (4,20) |- (6,18) |- cycle;
    \node[color=white] at (3,17) {$5$};
    \draw[fill=col2] (0,16) rectangle ++(2,4);
    \node[color=white] at (1,18) {$2$};
    \draw[fill=col7] (4,18) rectangle ++(2,2);
    \node[color=white] at (5,19) {$7$};

    \draw[fill=col4] (0,20) rectangle ++(4,4);
    \node[color=white] at (2,22) {$4$};
    \draw[fill=col6] (4,20) rectangle ++(2,4);
    \node[color=white] at (5,22) {$6$};

    \draw[fill=col5] (2,26) |- (6,28) |- (4,24) |- cycle;
    \node[color=white] at (5,27) {$5$};
    \draw[fill=col3] (0,24) |- (2,28) |- (4,26) |- cycle;
    \node[color=white] at (1,25) {$3$};

    \draw[fill=col1] (0,28) rectangle ++(2,4);
    \node[color=white] at (1,30) {$1$};
    \draw[fill=col4] (2,28) rectangle ++(4,4);
    \node[color=white] at (4,30) {$4$};

    \draw[line width = 2] (0,0) rectangle ++(6,4);
    \draw[line width = 2] (0,4) rectangle ++(6,4);
    \draw[line width = 2] (0,8) rectangle ++(6,4);
    \draw[line width = 2] (0,12) rectangle ++(6,4);
    \draw[line width = 2] (0,16) rectangle ++(6,4);
    \draw[line width = 2] (0,20) rectangle ++(6,4);
    \draw[line width = 2] (0,24) rectangle ++(6,4);
    \draw[line width = 2] (0,28) rectangle ++(6,4);

    \node at (3,35) {\LARGE $\vdots$};

    \draw[line width = 2] (6,-2) -- (6,34);
    \draw[line width = 2] (0,-2) -- (0,34);
    \end{tikzpicture}
    \qquad
    }
    \caption{\hspace{.5em}Young column patterns for types $\Eaff{6}$ and $\Eaff{7}$}\label{E6 and E7 Young column patterns}
\end{figure}
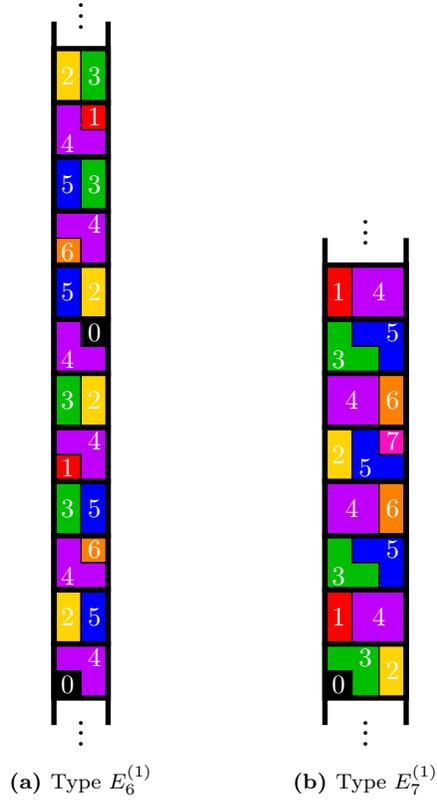

We now define the notion of a Young column inside each pattern, as well as an equivalence relation on these Young columns.

\begin{defn}
    In types $\Eaff{6}$ and $\Eaff{7}$ a Young column is an collection of blocks inside the Young column pattern such that
    \begin{itemize}
    \item the height of the blocks is bounded above, $\hfill \refstepcounter{equation}(\theequation)\label{columns bounded above}$
    \item there is no empty space below any block. $\hfill \refstepcounter{equation}(\theequation)\label{column gravity condition}$
    \end{itemize}
\end{defn}

\begin{eg}
    Figure \ref{E6 and E7 Young column examples and non-examples} contains some examples and non-examples of Young columns.
    In each type, the first two diagrams represent valid stackings of blocks inside the relevant Young column pattern, while the rightmost column fails (\ref{column gravity condition}).
\end{eg}

\input{E6_and_E7_Young_column_examples_and_non-examples}

\begin{defn}
    Young columns are equivalent if they can be obtained from one another via vertical shift, or rotation by $180^{\circ}$ around the vertical axis.
\end{defn}

Note that from Figure \ref{E6 and E7 Young column patterns}, rotation shall only really be relevant in type $\Eaff{6}$ at this stage.
The set of (equivalence classes of) Young columns can be endowed with a crystal structure as follows.

\begin{defn}
    \begin{itemize}
        \item An $i$-block in a Young column $y$ is removable if removing it from $y$ gives another Young column.
        \item An $i$-block in the Young column pattern that is not in $y$ is addable if adding it to $y$ gives another Young column.
    \end{itemize}
\end{defn}

A quick check then verifies that the following endows the set of Young columns inside each pattern with the structure of an affine crystal.
\begin{itemize}
    \item The $\ft_{i}$ act on a Young column $y$ by adding an addable $i$-block if this exists, and sending to $0$ otherwise.
    \item Similarly, $\et_{i}$ removes a removable $i$-block from $y$ if possible, and maps to $0$ otherwise.
    \item As in (\ref{varphi and varepsilon maximum definitions}) let $\varphi_{i}(y) = \max\lbrace n ~\vert~ \ft_{i}^{n}y\neq 0\rbrace$ and $\varepsilon_{i}(y) = \max\lbrace n ~\vert~ \et_{i}^{n}y\neq 0\rbrace$.
    \item Specify the weight of a particular Young column whose only addable block is a $0$-block to be $\Lambda_{0}$.
    The weight function is then determined by extending via $\wt(\ft_{i}y) = \wt(y) - \alpha_{i}$ if $\ft_{i}y\not= 0$ and $\wt(\et_{i}y) = \wt(y) + \alpha_{i}$ if $\et_{i}y\not= 0$.
\end{itemize}

Furthermore, by projecting the weights to $\Pbar$ this descends to a classical crystal structure on the set of equivalence classes of Young columns, which we denote by $C_{6}$ and $C_{7}$ in types $\Eaff{6}$ and $\Eaff{7}$ respectively.
It is clear that the affinizations $(C_{6})_{\aff}$ and $(C_{7})_{\aff}$ recover the sets of Young columns together with the affine crystal structure above.
\\

From Figures \ref{B6 crystal graph} and \ref{B7 crystal graph} we see that the equivalence classes of Young columns inside our patterns provide us with combinatorial models our level $1$ perfect crystals $B_{6}$ and $B_{7}$.

\begin{prop} \label{E6 and E7 Young column isomorphisms}
    There are isomorphisms $\psi : B_{6} \rightarrow C_{6}$ and $\psi : B_{7} \rightarrow C_{7}$ of classical crystals.
\end{prop}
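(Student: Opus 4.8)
The plan is to exploit the rigidity of all four crystals so that the isomorphism is forced by weights alone, reducing the statement to a finite comparison. First I would record that $C_{6}$ and $C_{7}$ are \emph{finite} classical crystals: the height bound (\ref{columns bounded above}) together with the vertical periodicity of the patterns of Figure \ref{E6 and E7 Young column patterns} leaves only finitely many equivalence classes of Young columns, and enumerating them gives $|C_{6}| = 27 = \dim W(\varpi_{1})$ and $|C_{7}| = 56 = \dim W(\varpi_{7})$. I would then check, directly from the patterns, that in both $C_{6}$ and $C_{7}$ every $i$-string has length at most $1$ and the weight function is injective on equivalence classes --- exactly the two properties already observed for $B_{6}$ and $B_{7}$. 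The string bound holds because no column of either pattern stacks two blocks of the same colour so that one becomes addable immediately after the other; the only place needing care is the behaviour of the L-shaped $4$-blocks (and, in type $\Eaff{7}$, the L-shaped $3$- and $5$-blocks) that straddle two layers, and the seam where the period repeats.

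The key observation is then purely formal: if $B$ is a classical crystal in which every $i$-string has length at most $1$, then $\varepsilon_{i}(b),\varphi_{i}(b)\in\{0,1\}$ for all $b\in B$ and $i\in I$, so $\langle h_{i},\wt(b)\rangle\in\{-1,0,1\}$ with $\varphi_{i}(b)=\max(\langle h_{i},\wt(b)\rangle,0)$ and $\varepsilon_{i}(b)=\max(-\langle h_{i},\wt(b)\rangle,0)$, and there is an $i$-arrow $b\xrightarrow{i}b'$ precisely when $\langle h_{i},\wt(b)\rangle=1$ and $\wt(b')=\wt(b)-\alpha_{i}$. Hence any two classical crystals with injective weight functions and all strings of length at most $1$ are isomorphic, via the unique weight-preserving bijection, as soon as they have the same multiset of weights. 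So it remains only to verify that $B_{6}$ and $C_{6}$ (resp. $B_{7}$ and $C_{7}$) have the same weights. For $B_{6}$ these are the $27$ weights of the minuscule representation, i.e. the Weyl group orbit of $\varpi_{1}$ (suitably normalised); for $C_{6}$ I would compute the weight of each equivalence class directly, anchoring at the Young column of weight $\Lambda_{0}$ whose unique addable block is a $0$-block and matching it with $b_{\Lambda_{0}}=\overline{6}0$, then propagating by adding and removing blocks. Type $\Eaff{7}$ is identical, anchoring at the column of weight $\Lambda_{0}$ and $b_{\Lambda_{0}}=\overline{7}0$. Alternatively, one may note that deleting all $0$-arrows leaves a connected minuscule $\Uq$-crystal on each side, necessarily isomorphic to $B(\varpi_{1})$ resp. $B(\varpi_{7})$, which pins down the weight multiset; the $0$-arrows are then recovered automatically from the criterion above.

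The main obstacle is this last verification: enumerating the $27$ (resp. $56$) equivalence classes of Young columns, confirming the string bound, and checking that their weights coincide with those of $B_{6}$ (resp. $B_{7}$). This is elementary but requires a careful reading of how the periodic patterns wrap around and of the interaction of the L-shaped blocks with their neighbours; in practice it can be carried out by matching the Young columns directly against the explicit crystal graphs of Figures \ref{B6 crystal graph} and \ref{B7 crystal graph}.
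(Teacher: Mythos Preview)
Your proposal is correct. The paper itself offers no argument beyond the sentence ``From Figures \ref{B6 crystal graph} and \ref{B7 crystal graph} we see that the equivalence classes of Young columns inside our patterns provide us with combinatorial models [for] our level $1$ perfect crystals $B_{6}$ and $B_{7}$'', i.e.\ a direct visual comparison of the $27$ (resp.\ $56$) vertices and their arrows. Your route reaches the same finite verification but organises it more conceptually: you isolate the minuscule rigidity principle (all $i$-strings of length $\leq 1$ plus injective weights implies the crystal is determined by its weight multiset), so that the only thing left to check is the coincidence of weights rather than a full graph isomorphism. This buys a cleaner explanation of \emph{why} the check succeeds, at the cost of having to verify the string-length bound on the Young-column side --- which, as you note, is the only place requiring genuine care (the L-shaped blocks and the periodic seam). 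Both approaches are ultimately the same finite inspection; yours simply makes explicit what the paper leaves implicit.
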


\begin{eg}
    Figure \ref{E6 and E7 ground state columns} gives the equivalence classes $\psi(b_{\Lambda_{i}})$ for all $i\in \Imin$ in each type, from which it is easy to deduce the remaining values of $\psi$ by adding and removing blocks.
\end{eg}

\begin{figure}[H]
    \centering
    \subfloat[Type $\Eaff{6}$]{
    \begin{tikzpicture}[scale=0.15, line width = 0.5]

    \node at (2,-10) {$\psi(b_{\Lambda_{0}})$};
    \node at (2,-6) {\LARGE $\vdots$};

    \draw[fill=col2] (0,-4) rectangle ++(2,4);
    \node[color=white] at (1,-2) {$2$};
    \draw[fill=col3] (2,-4) rectangle ++(2,4);
    \node[color=white] at (3,-2) {$3$};

    \node[color=black] at (1,1) {$0$};
    \draw[fill=col4] (0,2) |- (4,4) |- (2,0) |- cycle;
    \node[color=white] at (3,3) {$4$};

    \node[color=black] at (1,6) {$2$};
    \draw[fill=col5] (2,4) rectangle ++(2,4);
    \node[color=white] at (3,6) {$5$};

    \draw[fill=col6] (2,10) rectangle ++(2,2);
    \node[color=white] at (3,11) {$6$};
    \node[color=black] at (1,9) {$4$};

    \draw[line width = 1.5] (0,-4) rectangle ++(4,4);
    \draw[line width = 1.5] (0,0) rectangle ++(4,4);
    \draw[line width = 1.5] (0,4) rectangle ++(4,4);
    \draw[line width = 1.5] (0,8) rectangle ++(4,4);

    \draw[line width = 1.5] (4,-6) -- (4,12);
    \draw[line width = 1.5] (0,-6) -- (0,12);
    \end{tikzpicture}

    \qquad

    \begin{tikzpicture}[scale=0.15, line width = 0.5]

    \node at (2,-10) {$\psi(b_{\Lambda_{1}})$};
    \node at (2,-6) {\LARGE $\vdots$};

    \draw[fill=col3] (0,-4) rectangle ++(2,4);
    \node[color=white] at (1,-2) {$3$};
    \draw[fill=col5] (2,-4) rectangle ++(2,4);
    \node[color=white] at (3,-2) {$5$};

    \node[color=black] at (1,1) {$1$};
    \draw[fill=col4] (0,2) |- (4,4) |- (2,0) |- cycle;
    \node[color=white] at (3,3) {$4$};

    \node[color=black] at (1,6) {$3$};
    \draw[fill=col2] (2,4) rectangle ++(2,4);
    \node[color=white] at (3,6) {$2$};

    \draw[fill=col0] (2,10) rectangle ++(2,2);
    \node[color=white] at (3,11) {$0$};
    \node[color=black] at (1,9) {$4$};

    \draw[line width = 1.5] (0,-4) rectangle ++(4,4);
    \draw[line width = 1.5] (0,0) rectangle ++(4,4);
    \draw[line width = 1.5] (0,4) rectangle ++(4,4);
    \draw[line width = 1.5] (0,8) rectangle ++(4,4);

    \draw[line width = 1.5] (4,-6) -- (4,12);
    \draw[line width = 1.5] (0,-6) -- (0,12);
    \end{tikzpicture}

    \qquad

    \begin{tikzpicture}[scale=0.15, line width = 0.5]

    \node at (2,-10) {$\psi(b_{\Lambda_{6}})$};
    \node at (2,-6) {\LARGE $\vdots$};

    \draw[fill=col5] (0,-4) rectangle ++(2,4);
    \node[color=white] at (1,-2) {$5$};
    \draw[fill=col2] (2,-4) rectangle ++(2,4);
    \node[color=white] at (3,-2) {$2$};

    \node[color=black] at (1,1) {$6$};
    \draw[fill=col4] (0,2) |- (4,4) |- (2,0) |- cycle;
    \node[color=white] at (3,3) {$4$};

    \node[color=black] at (1,6) {$5$};
    \draw[fill=col3] (2,4) rectangle ++(2,4);
    \node[color=white] at (3,6) {$3$};

    \draw[fill=col1] (2,10) rectangle ++(2,2);
    \node[color=white] at (3,11) {$1$};
    \node[color=black] at (1,9) {$4$};

    \draw[line width = 1.5] (0,-4) rectangle ++(4,4);
    \draw[line width = 1.5] (0,0) rectangle ++(4,4);
    \draw[line width = 1.5] (0,4) rectangle ++(4,4);
    \draw[line width = 1.5] (0,8) rectangle ++(4,4);

    \draw[line width = 1.5] (4,-6) -- (4,12);
    \draw[line width = 1.5] (0,-6) -- (0,12);
    \end{tikzpicture}
    }
    ~~~~~~~~~
    \subfloat[Type $\Eaff{7}$]{
    \begin{tikzpicture}[scale=0.15, line width = 0.5]

    \node at (2,-10) {$\psi(b_{\Lambda_{0}})$};
    \node at (3,-6) {\LARGE $\vdots$};

    \draw[fill=col1] (0,-4) rectangle ++(2,4);
    \node[color=white] at (1,-2) {$1$};
    \draw[fill=col4] (2,-4) rectangle ++(4,4);
    \node[color=white] at (4,-2) {$4$};

    \node[color=black] at (1,1) {$0$};
    \draw[fill=col3] (0,2) |- (4,4) |- (2,0) |- cycle;
    \node[color=white] at (3,3) {$3$};
    \draw[fill=col2] (4,0) rectangle ++(2,4);
    \node[color=white] at (5,2) {$2$};

    \node[color=black] at (1,6) {$1$};
    \draw[fill=col4] (2,4) rectangle ++(4,4);
    \node[color=white] at (4,6) {$4$};

    \draw[fill=col5] (2,10) |- (6,12) |- (4,8) |- cycle;
    \node[color=white] at (5,11) {$5$};
    \node[color=black] at (1,9) {$3$};

    \node[color=black] at (2,14) {$4$};
    \draw[fill=col6] (4,12) rectangle ++(2,4);
    \node[color=white] at (5,14) {$6$};

    \draw[fill=none] (2,16) |- (4,20) |- (6,18) |- cycle;
    \draw[fill=col7] (4,18) rectangle ++(2,2);
    \node[color=black] at (3,17) {$5$};
    \node[color=black] at (1,18) {$2$};
    \node[color=white] at (5,19) {$7$};

    \draw[line width = 1.5] (0,-4) rectangle ++(6,4);
    \draw[line width = 1.5] (0,0) rectangle ++(6,4);
    \draw[line width = 1.5] (0,4) rectangle ++(6,4);
    \draw[line width = 1.5] (0,8) rectangle ++(6,4);
    \draw[line width = 1.5] (0,12) rectangle ++(6,4);
    \draw[line width = 1.5] (0,16) rectangle ++(6,4);

    \draw[line width = 1.5] (6,-6) -- (6,20);
    \draw[line width = 1.5] (0,-6) -- (0,20);
    \end{tikzpicture}

    \qquad

    \begin{tikzpicture}[scale=0.15, line width = 0.5]

    \node at (2,-10) {$\psi(b_{\Lambda_{7}})$};
    \node at (3,-6) {\LARGE $\vdots$};

    \draw[fill=col6] (0,-4) rectangle ++(2,4);
    \node[color=white] at (1,-2) {$6$};
    \draw[fill=col4] (2,-4) rectangle ++(4,4);
    \node[color=white] at (4,-2) {$4$};

    \node[color=black] at (1,1) {$7$};
    \draw[fill=col5] (0,2) |- (4,4) |- (2,0) |- cycle;
    \node[color=white] at (3,3) {$5$};
    \draw[fill=col2] (4,0) rectangle ++(2,4);
    \node[color=white] at (5,2) {$2$};

    \node[color=black] at (1,6) {$6$};
    \draw[fill=col4] (2,4) rectangle ++(4,4);
    \node[color=white] at (4,6) {$4$};

    \draw[fill=col3] (2,10) |- (6,12) |- (4,8) |- cycle;
    \node[color=white] at (5,11) {$3$};
    \node[color=black] at (1,9) {$5$};

    \node[color=black] at (2,14) {$4$};
    \draw[fill=col1] (4,12) rectangle ++(2,4);
    \node[color=white] at (5,14) {$1$};

    \draw[fill=none] (2,16) |- (4,20) |- (6,18) |- cycle;
    \draw[fill=col0] (4,18) rectangle ++(2,2);
    \node[color=black] at (3,17) {$3$};
    \node[color=black] at (1,18) {$2$};
    \node[color=white] at (5,19) {$0$};

    \draw[line width = 1.5] (0,-4) rectangle ++(6,4);
    \draw[line width = 1.5] (0,0) rectangle ++(6,4);
    \draw[line width = 1.5] (0,4) rectangle ++(6,4);
    \draw[line width = 1.5] (0,8) rectangle ++(6,4);
    \draw[line width = 1.5] (0,12) rectangle ++(6,4);
    \draw[line width = 1.5] (0,16) rectangle ++(6,4);

    \draw[line width = 1.5] (6,-6) -- (6,20);
    \draw[line width = 1.5] (0,-6) -- (0,20);
    \end{tikzpicture}
    }
    \caption{\hspace{.5em}Equivalence classes $\psi(b_{\Lambda_{i}})$ of Young columns in types $\Eaff{6}$ and $\Eaff{7}$}\label{E6 and E7 ground state columns}
\end{figure}
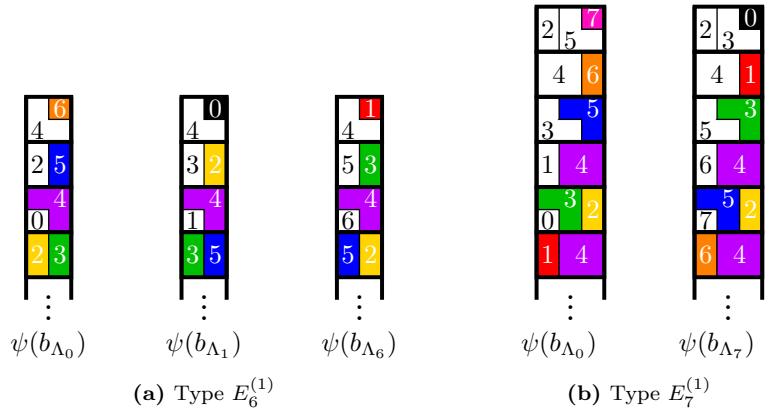

\subsubsection{Type \texorpdfstring{$\Eaff{8}$}{E8(1)}}

Figure \ref{E8 Young column pattern} contains the Young column pattern for type $\Eaff{8}$.

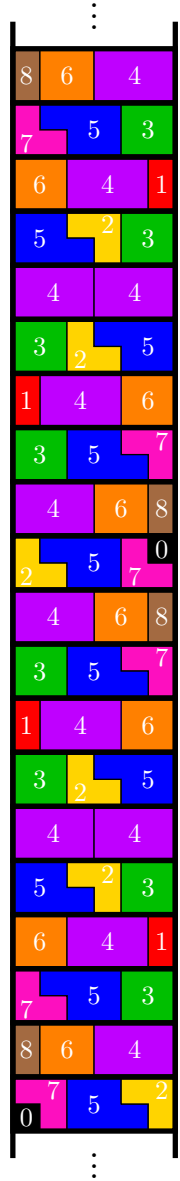
\begin{figure}[H]
    \centering
    \begin{tikzpicture}[scale=0.18, line width = 0.5]

    \node at (6,-2) {\LARGE $\vdots$};

    \draw[fill=col0] (0,0) rectangle ++(2,2);
    \node[color=white] at (1,1) {$0$};
    \draw[fill=col7] (0,2) |- (4,4) |- (2,0) |- cycle;
    \node[color=white] at (3,3) {$7$};
    \draw[fill=col5] (4,4) |- (10,0) |- (8,2) |- cycle;
    \node[color=white] at (6,2) {$5$};
    \draw[fill=col2] (8,2) |- (12,4) |- (10,0) |- cycle;
    \node[color=white] at (11,3) {$2$};

    \draw[fill=col8] (0,4) rectangle ++(2,4);
    \node[color=white] at (1,6) {$8$};
    \draw[fill=col6] (2,4) rectangle ++(4,4);
    \node[color=white] at (4,6) {$6$};
    \draw[fill=col4] (6,4) rectangle ++(6,4);
    \node[color=white] at (9,6) {$4$};

    \draw[fill=col5] (2,10) |- (8,12) |- (4,8) |- cycle;
    \node[color=white] at (6,10) {$5$};
    \draw[fill=col7] (0,8) |- (2,12) |- (4,10) |- cycle;
    \node[color=white] at (1,9) {$7$};
    \draw[fill=col3] (8,8) rectangle ++(4,4);
    \node[color=white] at (10,10) {$3$};

    \draw[fill=col6] (0,12) rectangle ++(4,4);
    \node[color=white] at (2,14) {$6$};
    \draw[fill=col4] (4,12) rectangle ++(6,4);
    \node[color=white] at (7,14) {$4$};
    \draw[fill=col1] (10,12) rectangle ++(2,4);
    \node[color=white] at (11,14) {$1$};

    \draw[fill=col2] (4,18) |- (8,20) |- (6,16) |- cycle;
    \node[color=white] at (7,19) {$2$};
    \draw[fill=col5] (0,16) |- (4,20) |- (6,18) |- cycle;
    \node[color=white] at (2,18) {$5$};
    \draw[fill=col3] (8,16) rectangle ++(4,4);
    \node[color=white] at (10,18) {$3$};

    \draw[fill=col4] (0,20) rectangle ++(6,4);
    \node[color=white] at (3,22) {$4$};
    \draw[fill=col4] (6,20) rectangle ++(6,4);
    \node[color=white] at (9,22) {$4$};

    \draw[fill=col3] (0,24) rectangle ++(4,4);
    \node[color=white] at (2,26) {$3$};
    \draw[fill=col2] (4,24) |- (6,28) |- (8,26) |- cycle;
    \node[color=white] at (5,25) {$2$};
    \draw[fill=col5] (6,26) |- (12,28) |- (8,24) |- cycle;
    \node[color=white] at (10,26) {$5$};

    \draw[fill=col6] (8,28) rectangle ++(4,4);
    \node[color=white] at (10,30) {$6$};
    \draw[fill=col4] (2,28) rectangle ++(6,4);
    \node[color=white] at (5,30) {$4$};
    \draw[fill=col1] (0,28) rectangle ++(2,4);
    \node[color=white] at (1,30) {$1$};

    \draw[fill=col7] (8,34) |- (12,36) |- (10,32) |- cycle;
    \node[color=white] at (11,35) {$7$};
    \draw[fill=col5] (4,32) |- (8,36) |- (10,34) |- cycle;
    \node[color=white] at (6,34) {$5$};
    \draw[fill=col3] (0,32) rectangle ++(4,4);
    \node[color=white] at (2,34) {$3$};

    \draw[fill=col6] (6,36) rectangle ++(4,4);
    \node[color=white] at (8,38) {$6$};
    \draw[fill=col4] (0,36) rectangle ++(6,4);
    \node[color=white] at (3,38) {$4$};
    \draw[fill=col8] (10,36) rectangle ++(2,4);
    \node[color=white] at (11,38) {$8$};

    \draw[fill=col2] (0,40) |- (2,44) |- (4,42) |- cycle;
    \node[color=white] at (1,41) {$2$};
    \draw[fill=col5] (2,42) |- (8,44) |- (4,40) |- cycle;
    \node[color=white] at (6,42) {$5$};
    \draw[fill=col7] (8,40) |- (10,44) |- (12,42) |- cycle;
    \node[color=white] at (9,41) {$7$};
    \draw[fill=col0] (10,42) rectangle ++(2,2);
    \node[color=white] at (11,43) {$0$};

    \draw[fill=col6] (6,44) rectangle ++(4,4);
    \node[color=white] at (8,46) {$6$};
    \draw[fill=col4] (0,44) rectangle ++(6,4);
    \node[color=white] at (3,46) {$4$};
    \draw[fill=col8] (10,44) rectangle ++(2,4);
    \node[color=white] at (11,46) {$8$};

    \draw[fill=col7] (8,50) |- (12,52) |- (10,48) |- cycle;
    \node[color=white] at (11,51) {$7$};
    \draw[fill=col5] (4,48) |- (8,52) |- (10,50) |- cycle;
    \node[color=white] at (6,50) {$5$};
    \draw[fill=col3] (0,48) rectangle ++(4,4);
    \node[color=white] at (2,50) {$3$};

    \draw[fill=col6] (8,52) rectangle ++(4,4);
    \node[color=white] at (10,54) {$6$};
    \draw[fill=col4] (2,52) rectangle ++(6,4);
    \node[color=white] at (5,54) {$4$};
    \draw[fill=col1] (0,52) rectangle ++(2,4);
    \node[color=white] at (1,54) {$1$};

    \draw[fill=col3] (0,56) rectangle ++(4,4);
    \node[color=white] at (2,58) {$3$};
    \draw[fill=col2] (4,56) |- (6,60) |- (8,58) |- cycle;
    \node[color=white] at (5,57) {$2$};
    \draw[fill=col5] (6,58) |- (12,60) |- (8,56) |- cycle;
    \node[color=white] at (10,58) {$5$};

    \draw[fill=col4] (0,60) rectangle ++(6,4);
    \node[color=white] at (3,62) {$4$};
    \draw[fill=col4] (6,60) rectangle ++(6,4);
    \node[color=white] at (9,62) {$4$};

    \draw[fill=col2] (4,66) |- (8,68) |- (6,64) |- cycle;
    \node[color=white] at (7,67) {$2$};
    \draw[fill=col5] (0,64) |- (4,68) |- (6,66) |- cycle;
    \node[color=white] at (2,66) {$5$};
    \draw[fill=col3] (8,64) rectangle ++(4,4);
    \node[color=white] at (10,66) {$3$};

    \draw[fill=col6] (0,68) rectangle ++(4,4);
    \node[color=white] at (2,70) {$6$};
    \draw[fill=col4] (4,68) rectangle ++(6,4);
    \node[color=white] at (7,70) {$4$};
    \draw[fill=col1] (10,68) rectangle ++(2,4);
    \node[color=white] at (11,70) {$1$};

    \draw[fill=col5] (2,74) |- (8,76) |- (4,72) |- cycle;
    \node[color=white] at (6,74) {$5$};
    \draw[fill=col7] (0,72) |- (2,76) |- (4,74) |- cycle;
    \node[color=white] at (1,73) {$7$};
    \draw[fill=col3] (8,72) rectangle ++(4,4);
    \node[color=white] at (10,74) {$3$};

    \draw[fill=col8] (0,76) rectangle ++(2,4);
    \node[color=white] at (1,78) {$8$};
    \draw[fill=col6] (2,76) rectangle ++(4,4);
    \node[color=white] at (4,78) {$6$};
    \draw[fill=col4] (6,76) rectangle ++(6,4);
    \node[color=white] at (9,78) {$4$};

    \draw[line width = 2] (0,0) rectangle ++(12,4);
    \draw[line width = 2] (0,4) rectangle ++(12,4);
    \draw[line width = 2] (0,8) rectangle ++(12,4);
    \draw[line width = 2] (0,12) rectangle ++(12,4);
    \draw[line width = 2] (0,16) rectangle ++(12,4);
    \draw[line width = 2] (0,20) rectangle ++(12,4);
    \draw[line width = 2] (0,24) rectangle ++(12,4);
    \draw[line width = 2] (0,28) rectangle ++(12,4);
    \draw[line width = 2] (0,32) rectangle ++(12,4);
    \draw[line width = 2] (0,36) rectangle ++(12,4);
    \draw[line width = 2] (0,40) rectangle ++(12,4);
    \draw[line width = 2] (0,44) rectangle ++(12,4);
    \draw[line width = 2] (0,48) rectangle ++(12,4);
    \draw[line width = 2] (0,52) rectangle ++(12,4);
    \draw[line width = 2] (0,56) rectangle ++(12,4);
    \draw[line width = 2] (0,60) rectangle ++(12,4);
    \draw[line width = 2] (0,64) rectangle ++(12,4);
    \draw[line width = 2] (0,68) rectangle ++(12,4);
    \draw[line width = 2] (0,72) rectangle ++(12,4);
    \draw[line width = 2] (0,76) rectangle ++(12,4);

    \node at (6,83) {\LARGE $\vdots$};

    \draw[line width = 2] (12,-2) -- (12,82);
    \draw[xshift=-12cm, line width = 2] (12,-2) -- (12,82);
    \end{tikzpicture}
    \caption{\hspace{.5em}Young column pattern for type $\Eaff{8}$}\label{E8 Young column pattern}
\end{figure}

The Young columns -- together with their equivalence relation, addable blocks and removable blocks -- are defined exactly as in Section \ref{E6 and E7 Young columns}, except that we need to add two extra valid equivalence classes of Young columns.
These are displayed in Figure \ref{E8 additional Young columns}, and shall correspond to $x_{\pm\alpha_{2}}$.

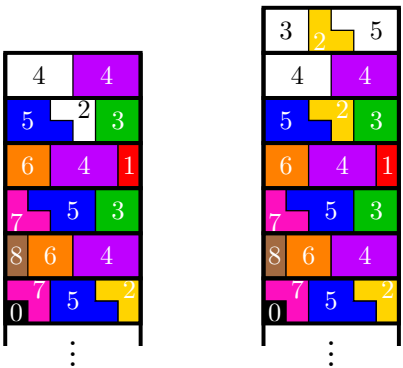
\begin{figure}[H]
    \centering
    \begin{tikzpicture}[scale=0.15, line width = 0.5]

    \node at (6,-2) {\LARGE $\vdots$};

    \draw[fill=col0] (0,0) rectangle ++(2,2);
    \node[color=white] at (1,1) {$0$};
    \draw[fill=col7] (0,2) |- (4,4) |- (2,0) |- cycle;
    \node[color=white] at (3,3) {$7$};
    \draw[fill=col5] (4,4) |- (10,0) |- (8,2) |- cycle;
    \node[color=white] at (6,2) {$5$};
    \draw[fill=col2] (8,2) |- (12,4) |- (10,0) |- cycle;
    \node[color=white] at (11,3) {$2$};

    \draw[fill=col8] (0,4) rectangle ++(2,4);
    \node[color=white] at (1,6) {$8$};
    \draw[fill=col6] (2,4) rectangle ++(4,4);
    \node[color=white] at (4,6) {$6$};
    \draw[fill=col4] (6,4) rectangle ++(6,4);
    \node[color=white] at (9,6) {$4$};

    \draw[fill=col5] (2,10) |- (8,12) |- (4,8) |- cycle;
    \node[color=white] at (6,10) {$5$};
    \draw[fill=col7] (0,8) |- (2,12) |- (4,10) |- cycle;
    \node[color=white] at (1,9) {$7$};
    \draw[fill=col3] (8,8) rectangle ++(4,4);
    \node[color=white] at (10,10) {$3$};

    \draw[fill=col6] (0,12) rectangle ++(4,4);
    \node[color=white] at (2,14) {$6$};
    \draw[fill=col4] (4,12) rectangle ++(6,4);
    \node[color=white] at (7,14) {$4$};
    \draw[fill=col1] (10,12) rectangle ++(2,4);
    \node[color=white] at (11,14) {$1$};

    \node[color=black] at (7,19) {$2$};
    \draw[fill=col5] (0,16) |- (4,20) |- (6,18) |- cycle;
    \node[color=white] at (2,18) {$5$};
    \draw[fill=col3] (8,16) rectangle ++(4,4);
    \node[color=white] at (10,18) {$3$};

    \node[color=black] at (3,22) {$4$};
    \draw[fill=col4] (6,20) rectangle ++(6,4);
    \node[color=white] at (9,22) {$4$};

    \draw[line width = 1.5] (0,0) rectangle ++(12,4);
    \draw[line width = 1.5] (0,4) rectangle ++(12,4);
    \draw[line width = 1.5] (0,8) rectangle ++(12,4);
    \draw[line width = 1.5] (0,12) rectangle ++(12,4);
    \draw[line width = 1.5] (0,16) rectangle ++(12,4);
    \draw[line width = 1.5] (0,20) rectangle ++(12,4);

    \draw[line width = 1.5] (12,-2) -- (12,24);
    \draw[xshift=-12cm, line width = 1.5] (12,-2) -- (12,24);
    \end{tikzpicture}
    \qquad \qquad
    \begin{tikzpicture}[scale=0.15, line width = 0.5]

    \node at (6,-2) {\LARGE $\vdots$};

    \draw[fill=col0] (0,0) rectangle ++(2,2);
    \node[color=white] at (1,1) {$0$};
    \draw[fill=col7] (0,2) |- (4,4) |- (2,0) |- cycle;
    \node[color=white] at (3,3) {$7$};
    \draw[fill=col5] (4,4) |- (10,0) |- (8,2) |- cycle;
    \node[color=white] at (6,2) {$5$};
    \draw[fill=col2] (8,2) |- (12,4) |- (10,0) |- cycle;
    \node[color=white] at (11,3) {$2$};

    \draw[fill=col8] (0,4) rectangle ++(2,4);
    \node[color=white] at (1,6) {$8$};
    \draw[fill=col6] (2,4) rectangle ++(4,4);
    \node[color=white] at (4,6) {$6$};
    \draw[fill=col4] (6,4) rectangle ++(6,4);
    \node[color=white] at (9,6) {$4$};

    \draw[fill=col5] (2,10) |- (8,12) |- (4,8) |- cycle;
    \node[color=white] at (6,10) {$5$};
    \draw[fill=col7] (0,8) |- (2,12) |- (4,10) |- cycle;
    \node[color=white] at (1,9) {$7$};
    \draw[fill=col3] (8,8) rectangle ++(4,4);
    \node[color=white] at (10,10) {$3$};

    \draw[fill=col6] (0,12) rectangle ++(4,4);
    \node[color=white] at (2,14) {$6$};
    \draw[fill=col4] (4,12) rectangle ++(6,4);
    \node[color=white] at (7,14) {$4$};
    \draw[fill=col1] (10,12) rectangle ++(2,4);
    \node[color=white] at (11,14) {$1$};

    \draw[fill=col2] (4,18) |- (8,20) |- (6,16) |- cycle;
    \node[color=white] at (7,19) {$2$};
    \draw[fill=col5] (0,16) |- (4,20) |- (6,18) |- cycle;
    \node[color=white] at (2,18) {$5$};
    \draw[fill=col3] (8,16) rectangle ++(4,4);
    \node[color=white] at (10,18) {$3$};

    \node[color=black] at (3,22) {$4$};
    \draw[fill=col4] (6,20) rectangle ++(6,4);
    \node[color=white] at (9,22) {$4$};

    \node[color=black] at (2,26) {$3$};
    \draw[fill=col2] (4,24) |- (6,28) |- (8,26) |- cycle;
    \node[color=white] at (5,25) {$2$};
    \node[color=black] at (10,26) {$5$};

    \draw[line width = 1.5] (0,0) rectangle ++(12,4);
    \draw[line width = 1.5] (0,4) rectangle ++(12,4);
    \draw[line width = 1.5] (0,8) rectangle ++(12,4);
    \draw[line width = 1.5] (0,12) rectangle ++(12,4);
    \draw[line width = 1.5] (0,16) rectangle ++(12,4);
    \draw[line width = 1.5] (0,20) rectangle ++(12,4);
    \draw[line width = 1.5] (0,24) rectangle ++(12,4);

    \draw[line width = 1.5] (12,-2) -- (12,28);
    \draw[xshift=-12cm, line width = 1.5] (12,-2) -- (12,28);
    \end{tikzpicture}
    \caption{\hspace{.5em}The additional equivalence classes of Young columns in type $\Eaff{8}$}\label{E8 additional Young columns}
\end{figure}

As in Section \ref{E6 and E7 Young columns} the set of Young columns has the structure of an affine crystal, but here we need to be more careful with our definitions of $\et_{i}$ and $\ft_{i}$.
This is to account for the fact that our $B_{8}$ crystal has a more complex structure than $B_{6}$ or $B_{7}$, for example with $i$-strings of length greater than $1$.

\begin{defn} \label{E8 Young column crystal structure definition}
    $\ft_{i}$ acts on a Young column by adding an addable $i$-block if it exists and mapping to $0$ otherwise, with the following caveats.
    \begin{itemize}
        \item If there are two addable $i$-blocks then $\ft_{i}$ adds the higher one for $i\not= 4$, and the one lying on top of a $5$-block for $i = 4$.
        \item If a Young column $y$ is obtained by adding an $i$-block to some other Young column under the rules above, and moreover has an addable $i$-block itself, then
        $\ft_{j}(y) = \et_{j}(y) = 0$ for all $j\not= i$.
    \end{itemize}
    Similarly, $\et_{i}$ acts on a Young column by removing a removable $i$-block if it exists and mapping to $0$ otherwise, with the following caveats.
    \begin{itemize}
        \item If there are two removable $i$-blocks then $\et_{i}$ removes the lower one for $i\not= 4$, and the one lying on top of a $3$-block for $i = 4$.
        \item If a Young column $y$ is obtained by removing an $i$-block from some other Young column under the rules above, and moreover has a removable $i$-block itself, then
        $\ft_{j}(y) = \et_{j}(y) = 0$ for all $j\not= i$.
    \end{itemize}
\end{defn}

\begin{rmk}
    Fan-Han-Kang-Shin similarly require extra conditions for their Young column models in types $D_{4}^{(3)}$ and $G_{2}^{(1)}$, given in Definition 3.1 (2) and Remark 3.4 of \cite{FHKS23}.
\end{rmk}

As in types $\Eaff{6}$ and $\Eaff{7}$ this affine crystal structure descends to a classical crystal structure on the set $C_{8}$ of equivalence classes of Young columns, by projecting the weights to $\Pbar$.
And once again, the affinization $(C_{8})_{\aff}$ recovers the original affine crystal.
\\

From Appendix \ref{B8 crystal graph appendix} and the definitions above, we see that the equivalence classes of Young columns provide a combinatorial realization of the level $1$ perfect crystal $B_{8}$.

\begin{prop} \label{E8 Young column isomorphism}
    There is an isomorphism $\psi : B_{8} \rightarrow C_{8}$ of classical crystals.
\end{prop}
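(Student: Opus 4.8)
The proof will be a direct verification, exactly in the spirit of Proposition~\ref{E6 and E7 Young column isomorphisms} but requiring more care because of the non-generic local structure of $B_8$. Since $B_8$ is (by Definition~\ref{perfect crystal definition}) the crystal graph of an irreducible $\Udash$-module it is connected, so a classical crystal isomorphism out of it is determined by its value at one vertex. The plan is therefore to pin down $\psi$ at the ground state $\emptyset = b_{\Lambda_0}$ and propagate along the crystal operators: declaring $\psi(\emptyset)$ to be the equivalence class of the Young column in the pattern of Figure~\ref{E8 Young column pattern} whose only addable block is a $0$-block, set $\psi(\ft_i b):=\ft_i\psi(b)$ whenever $\ft_i b\neq 0$ (with $\ft_i$ on the right interpreted via Definition~\ref{E8 Young column crystal structure definition}), and symmetrically $\psi(\et_i b):=\et_i\psi(b)$. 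Equivalently -- and this is how one actually produces it -- $\psi$ is given by an explicit bijective table matching each $\xa$ ($\alpha\in\Phi^+\sqcup\Phi^-$), each $y_j$ ($j\in I_0$) and $\emptyset$ with a unique equivalence class of Young columns, the two classes of Figure~\ref{E8 additional Young columns} being assigned to $x_{\pm\alpha_2}$.

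The content of the proof is then the finite verification that this assignment is well defined and is a crystal isomorphism: for every vertex $b$ of $B_8$ and every colour $i\in I$ one checks that $\psi(b)$ has an addable (resp.\ removable) $i$-block exactly when $\ft_i b\neq 0$ (resp.\ $\et_i b\neq 0$), that performing the move prescribed by Definition~\ref{E8 Young column crystal structure definition} yields the class assigned to $\ft_i b$ (resp.\ $\et_i b$), and that $\wt(\psi(b))=\wt(b)$, $\varepsilon_i(\psi(b))=\varepsilon_i(b)$, $\varphi_i(\psi(b))=\varphi_i(b)$. The last three equalities propagate from $b=\emptyset$ by the inductive definition together with (\ref{varphi and varepsilon maximum definitions}) once the first two are known, so the real task is the combinatorial matching of addable/removable blocks against the arrows of $B_8$. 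Those arrows are described explicitly by Theorem~\ref{uniform level 1 perfect crystal theorem} and Lemma~\ref{0-arrows lemma} (and drawn in Appendix~\ref{B8 crystal graph appendix}), and the whole check -- its $249$ vertices against $9$ colours -- can also be confirmed with SageMath~\cite{SageMath} as in the previous subsection; bijectivity follows since each $\xa$, $y_j$ and $\emptyset$ is reached exactly once.

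The main obstacle is precisely the part of $B_8$ where the crystal structure is non-generic and the caveats of Definition~\ref{E8 Young column crystal structure definition} are invoked: unlike $B_6$ and $B_7$, here there are $i$-strings of length $2$ (through the vertices $y_j$) and vertices $\xa$ with $\alpha\in\Phi^\pm_2$ sitting on longer $0$-strings. One must check (i) that wherever the pattern offers two addable or two removable $i$-blocks, the selection rule -- the higher (resp.\ lower) one for $i\neq 4$, and the one resting on a $5$-block (resp.\ a $3$-block) for $i=4$ -- is the one reproducing $\ft_i$ (resp.\ $\et_i$) on the relevant vertex; and (ii) that the ``freezing'' clause, which forces $\ft_j=\et_j=0$ for $j\neq i$ on a column obtained by adding (resp.\ removing) an $i$-block that still has a further addable (resp.\ removable) $i$-block, faithfully encodes the middle vertex of each length-$2$ $i$-string, so that configurations such as $x_{\alpha_i}\xrightarrow{\,i\,}y_i\xrightarrow{\,i\,}x_{-\alpha_i}$ are realized correctly. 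Once these local checks pass -- and the two extra classes of Figure~\ref{E8 additional Young columns} are seen to behave as $x_{\pm\alpha_2}$ -- the global isomorphism $\psi:B_8\to C_8$ follows, just as in the $\Eaff{6}$ and $\Eaff{7}$ cases.
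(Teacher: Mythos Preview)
Your proposal is correct and takes essentially the same approach as the paper: the paper's proof is nothing more than the sentence ``From Appendix~\ref{B8 crystal graph appendix} and the definitions above, we see that the equivalence classes of Young columns provide a combinatorial realization of the level $1$ perfect crystal $B_{8}$,'' i.e.\ a direct finite verification by inspection of the crystal graph against the Young column pattern and the rules of Definition~\ref{E8 Young column crystal structure definition}. Your write-up is in fact more explicit than the paper's about how that verification proceeds and where the caveats of Definition~\ref{E8 Young column crystal structure definition} and the extra columns of Figure~\ref{E8 additional Young columns} enter.
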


Notice that Lemma \ref{0-arrows lemma} can therefore be concretely seen in the Young column pattern of Figure \ref{E8 Young column pattern} by looking at the dependency between the $0$-blocks and the $1$-blocks.

\begin{rmk}
    The caveats in Definition \ref{E8 Young column crystal structure definition}, together with the additional Young columns in Figure \ref{E8 additional Young columns}, ensure in particular that our $C_{8}$ crystal:
    \begin{itemize}
        \item has well-defined $\et_{i}$ and $\ft_{i}$ maps,
        \item contains $i$-strings of length $2$ corresponding to
        $x_{\alpha_{i}}\xrightarrow{i} y_{i} \xrightarrow{i} x_{-\alpha_{i}}$
        and
        $x_{-\theta}\xrightarrow{0} \emptyset \xrightarrow{0} x_{\theta}$ with no other arrows incident to $y_{i}$ and $\emptyset$,
        \item accurately models the $B_{8}$ section in Figure \ref{B8 example section a} with Figure \ref{C8 example section a}.
    \end{itemize}
\end{rmk}

We conclude this section with some example pieces of $B_{8}$ and their images under $\psi$, from which all values of $\psi$ can be derived using the crystal structure on $C_{8}$ and Appendix \ref{B8 crystal graph appendix}.
These examples should also make clear the role of the additional Young columns from Figure \ref{E8 additional Young columns} within the crystal, as well as the ordering rules for adding and removing blocks.

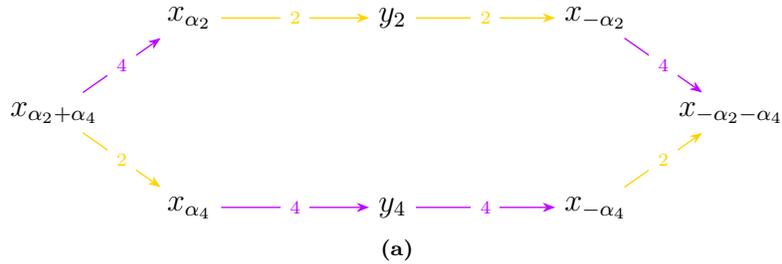
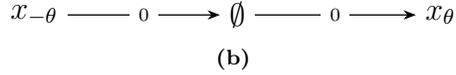
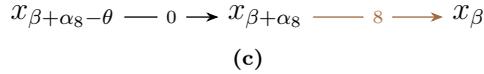
\begin{figure}[H]
    \centering
    \subfloat[\label{B8 example section a}]{
    \begin{tikzpicture}[-{Stealth[scale=0.9]}, scale=0.18, line width = 0.5]
    \node(0) at (0,0) {\large $x_{\alpha_{2} + \alpha_{4}}$};
    \node(1u) at (10,7) {\large $x_{\alpha_{2}}$};
    \node(1d) at (10,-7) {\large $x_{\alpha_{4}}$};
    \node(2u) at (25,7) {\large $y_{2}$};
    \node(2d) at (25,-7) {\large $y_{4}$};
    \node(3u) at (40,7) {\large $x_{-\alpha_{2}}$};
    \node(3d) at (40,-7) {\large $x_{-\alpha_{4}}$};
    \node(4) at (50,0) {\large $x_{-\alpha_{2} - \alpha_{4}}$};

    \tikzstyle{every node}=[midway, fill=white]
    \draw[col4] (0) -- (1u) node{$\scriptstyle 4$};
    \draw[col4] (1d) -- (2d) node{$\scriptstyle 4$};
    \draw[col4] (2d) -- (3d) node{$\scriptstyle 4$};
    \draw[col4] (3u) -- (4) node{$\scriptstyle 4$};
    \draw[col2] (0) -- (1d) node{$\scriptstyle 2$};
    \draw[col2] (1u) -- (2u) node{$\scriptstyle 2$};
    \draw[col2] (2u) -- (3u) node{$\scriptstyle 2$};
    \draw[col2] (3d) -- (4) node{$\scriptstyle 2$};
    \end{tikzpicture}
    }
    \\
    \bigskip
    \subfloat[]{
    \begin{tikzpicture}[-{Stealth[scale=0.9]}, scale=0.18, line width = 0.5]
    \node(0) at (0,0) {\large $x_{-\theta}$};
    \node(1) at (15,0) {\large $\emptyset$};
    \node(2) at (30,0) {\large $x_{\theta}$};

    \tikzstyle{every node}=[midway, fill=white]
    \draw[col0] (0) -- (1) node{$\scriptstyle 0$};
    \draw[col0] (1) -- (2) node{$\scriptstyle 0$};
    \end{tikzpicture}
    }
    \\
    \bigskip
    \subfloat[]{
    \begin{tikzpicture}[-{Stealth[scale=0.9]}, scale=0.18, line width = 0.5]
    \node(0) at (0,0) {\large $x_{\beta+\alpha_{8}-\theta}$};
    \node(1) at (15,0) {\large $x_{\beta+\alpha_{8}}$};
    \node(2) at (30,0) {\large $x_{\beta}$};

    \tikzstyle{every node}=[midway, fill=white]
    \draw[col0] (0) -- (1) node{$\scriptstyle 0$};
    \draw[col8] (1) -- (2) node{$\scriptstyle 8$};
    \end{tikzpicture}
    }
    \caption{\hspace{.5em}Example sections of $B_{8}$}\label{E8 example sections of B8}
\end{figure}

\input{E8_example_sections_of_C8}

\section{Level 1 irreducible highest weight crystals in type E} \label{Irreducible highest weight section}

Recall the path realization of the level $1$ irreducible highest weight crystals from Section \ref{Preliminaries}, in which the highest weight element $u_{\lambda}\in B(\lambda)$ corresponds to a ground state sequence $\pb_{\lambda}\in \Pcal(\lambda)$.
Table \ref{highest weight ground state sequences table} gives the sequence $\pb_{\lambda}$ in each case, with entries in the relevant level $1$ perfect crystal $B_{6}$, $B_{7}$ or $B_{8}$ from Section \ref{Perfect crystal section}.
It also contains the values of the energy function on consecutive factors, which is required for both the affine weight formula (\ref{affine weights on paths}) on $\Pcal(\lambda)$ and the ensuing Young wall realization.

\begin{table}[H]
\begin{center}
\begin{tabular}{|c|c|c|c|}
\hline
Type &
\begin{tabular}{@{}c@{}}
$\lambda\in\Pbar^{+}$ \\[1pt] of level $1$
\end{tabular}
&
\begin{tabular}{@{}c@{}}
Ground state sequence \\[1pt] $\pb_{\lambda} = (b_{r})_{r=0}^{\infty}$
\end{tabular}
&
$(H(b_{r+1}\otimes b_{r}))_{r=0}^{\infty}$
\\
\hline
& & & \\[-11pt]
\hline
& & & \\[-11pt]
$\Eaff{6}$
&
\begin{tabular}{@{}c@{}}
$\Lambda_{0}$ \\[1pt] $\Lambda_{1}$ \\[1pt] $\Lambda_{6}$
\end{tabular}
&
\begin{tabular}{@{}c@{}}
$\dots \otimes \overline{0}1 \otimes \overline{1}6 \otimes \overline{6}0 \otimes \overline{0}1 \otimes \overline{1}6 \otimes \overline{6}0$ \\[1pt]
$\dots \otimes \overline{1}6 \otimes \overline{6}0 \otimes \overline{0}1 \otimes \overline{1}6 \otimes \overline{6}0 \otimes \overline{0}1$ \\[1pt]
$\dots \otimes \overline{6}0 \otimes \overline{0}1 \otimes \overline{1}6 \otimes \overline{6}0 \otimes \overline{0}1 \otimes \overline{1}6$
\end{tabular}
&
\begin{tabular}{@{}c@{}}
$(\dots,0,2,2,0,2,2)$ \\[1pt]
$(\dots,2,2,0,2,2,0)$ \\[1pt]
$(\dots,2,0,2,2,0,2)$
\end{tabular} \\
\hline
& & & \\[-11pt]
$\Eaff{7}$
&
\begin{tabular}{@{}c@{}}
$\Lambda_{0}$ \\[1pt] $\Lambda_{7}$
\end{tabular}
&
\begin{tabular}{@{}c@{}}
$\dots \otimes \overline{0}7 \otimes \overline{7}0 \otimes \overline{0}7 \otimes \overline{7}0 \otimes \overline{0}7 \otimes \overline{7}0$ \\[1pt]
$\dots \otimes \overline{7}0 \otimes \overline{0}7 \otimes \overline{7}0 \otimes \overline{0}7 \otimes \overline{7}0 \otimes \overline{0}7$
\end{tabular}
&
\begin{tabular}{@{}c@{}}
$(\dots,0,3,0,3,0,3)$ \\[1pt]
$(\dots,3,0,3,0,3,0)$
\end{tabular} \\
\hline
& & & \\[-11pt]
$\Eaff{8}$
&
$\Lambda_{0}$
&
$\dots \otimes \hspace{.25em} \emptyset \hspace{.25em} \otimes \hspace{.25em} \emptyset \hspace{.25em} \otimes \hspace{.25em} \emptyset \hspace{.25em} \otimes \hspace{.25em} \emptyset \hspace{.25em} \otimes \hspace{.25em} \emptyset \hspace{.25em} \otimes \hspace{.25em} \emptyset \hspace{.25em}$
&
$(\dots,2,2,2,2,2,2)$
\\[1pt]
\hline
\end{tabular}
\caption{\hspace{.5em}Ground state sequences for level $1$ irreducible highest weight crystals}\label{highest weight ground state sequences table}
\end{center}
\end{table}

\begin{rmk} \label{removing first columns of reduced Young walls remark}
    In type $\Eaff{6}$ we see that $\pb_{\Lambda_{6}}$ and $\pb_{\Lambda_{1}}$ are just $\pb_{\Lambda_{0}}$ without the first one or two entries, while removing the first entry of $\pb_{\Lambda_{0}}$ in type $\Eaff{7}$ gives $\pb_{\Lambda_{7}}$.
    It follows that Young wall models for all level $1$ irreducible highest weight crystals $B(\Lambda_{i})$ can be obtained by simply removing the first one or two columns from a Young wall model for $B(\Lambda_{0})$.
    For the remainder of this section we shall therefore only deal with the $B(\Lambda_{0})$ case, unless stated otherwise.
\end{rmk}

As shorthand we shall often denote each of the crystals $B_{6}$, $B_{7}$ and $B_{8}$ by $B$, provided that it is clear which type(s) we are referring to.
Similarly, we shall denote each of the crystals $C_{6}$, $C_{7}$ and $C_{8}$ of equivalence classes of Young columns by $C$.
\\

Recall that equivalence classes of Young columns inside the relevant pattern from Figures \ref{E6 and E7 Young column patterns} and \ref{E8 Young column pattern} provide a combinatorial model for $B$, and removing the equivalence relation gives a model for $\Baff$.
Pick some representative Young column in the equivalence class $\psi(b_{r})$ corresponding to each entry of $\pb_{\Lambda_{0}}$ (given in Figures \ref{E6 and E7 ground state columns} and \ref{E8 second example section of C8}).
Then lining them up at the same height and orientation -- so they occupy the same spaces within each vertical strip of unit cubes -- we obtain the following \emph{Young column patterns} and \emph{ground state walls}.

\begin{figure}[H]
    \centering
    \begin{tabular}{cc}
    \adjustbox{valign=m}{
    \begin{tabular}{@{}c@{}}
    \subfloat[Type $\Eaff{6}$]{
    \input{E6_Young_wall_pattern}
    } \\
    \subfloat[Type $\Eaff{7}$]{
    \begin{tikzpicture}[scale=0.15, line width = 0.5]

    \node at (3,-2) {\LARGE $\vdots$};

    \draw[fill=col0] (0,0) rectangle ++(2,2);
    \node[color=white] at (1,1) {$0$};
    \draw[fill=col3] (0,2) |- (4,4) |- (2,0) |- cycle;
    \node[color=white] at (3,3) {$3$};
    \draw[fill=col2] (4,0) rectangle ++(2,4);
    \node[color=white] at (5,2) {$2$};

    \draw[fill=col1] (0,4) rectangle ++(2,4);
    \node[color=white] at (1,6) {$1$};
    \draw[fill=col4] (2,4) rectangle ++(4,4);
    \node[color=white] at (4,6) {$4$};

    \draw[fill=col5] (2,10) |- (6,12) |- (4,8) |- cycle;
    \node[color=white] at (5,11) {$5$};
    \draw[fill=col3] (0,8) |- (2,12) |- (4,10) |- cycle;
    \node[color=white] at (1,9) {$3$};

    \draw[fill=col4] (0,12) rectangle ++(4,4);
    \node[color=white] at (2,14) {$4$};
    \draw[fill=col6] (4,12) rectangle ++(2,4);
    \node[color=white] at (5,14) {$6$};

    \draw[fill=col5] (2,16) |- (4,20) |- (6,18) |- cycle;
    \node[color=white] at (3,17) {$5$};
    \draw[fill=col2] (0,16) rectangle ++(2,4);
    \node[color=white] at (1,18) {$2$};
    \draw[fill=col7] (4,18) rectangle ++(2,2);
    \node[color=white] at (5,19) {$7$};

    \draw[fill=col4] (0,20) rectangle ++(4,4);
    \node[color=white] at (2,22) {$4$};
    \draw[fill=col6] (4,20) rectangle ++(2,4);
    \node[color=white] at (5,22) {$6$};

    \draw[fill=col5] (2,26) |- (6,28) |- (4,24) |- cycle;
    \node[color=white] at (5,27) {$5$};
    \draw[fill=col3] (0,24) |- (2,28) |- (4,26) |- cycle;
    \node[color=white] at (1,25) {$3$};

    \draw[fill=col1] (0,28) rectangle ++(2,4);
    \node[color=white] at (1,30) {$1$};
    \draw[fill=col4] (2,28) rectangle ++(4,4);
    \node[color=white] at (4,30) {$4$};

    \draw[line width = 1.5] (0,0) rectangle ++(6,4);
    \draw[line width = 1.5] (0,4) rectangle ++(6,4);
    \draw[line width = 1.5] (0,8) rectangle ++(6,4);
    \draw[line width = 1.5] (0,12) rectangle ++(6,4);
    \draw[line width = 1.5] (0,16) rectangle ++(6,4);
    \draw[line width = 1.5] (0,20) rectangle ++(6,4);
    \draw[line width = 1.5] (0,24) rectangle ++(6,4);
    \draw[line width = 1.5] (0,28) rectangle ++(6,4);

    \node at (3,35) {\LARGE $\vdots$};

    \node at (-3,-2) {\LARGE $\vdots$};

    \draw[fill=col7] (-6,0) rectangle ++(2,2);
    \node[color=white] at (-5,1) {$7$};
    \draw[fill=col5] (-6,2) |- (-2,4) |- (-4,0) |- cycle;
    \node[color=white] at (-3,3) {$5$};
    \draw[fill=col2] (-2,0) rectangle ++(2,4);
    \node[color=white] at (-1,2) {$2$};

    \draw[fill=col6] (-6,4) rectangle ++(2,4);
    \node[color=white] at (-5,6) {$6$};
    \draw[fill=col4] (-4,4) rectangle ++(4,4);
    \node[color=white] at (-2,6) {$4$};

    \draw[fill=col3] (-4,10) |- (0,12) |- (-2,8) |- cycle;
    \node[color=white] at (-1,11) {$3$};
    \draw[fill=col5] (-6,8) |- (-4,12) |- (-2,10) |- cycle;
    \node[color=white] at (-5,9) {$5$};

    \draw[fill=col4] (-6,12) rectangle ++(4,4);
    \node[color=white] at (-4,14) {$4$};
    \draw[fill=col1] (-2,12) rectangle ++(2,4);
    \node[color=white] at (-1,14) {$1$};

    \draw[fill=col3] (-4,16) |- (-2,20) |- (0,18) |- cycle;
    \node[color=white] at (-3,17) {$3$};
    \draw[fill=col2] (-6,16) rectangle ++(2,4);
    \node[color=white] at (-5,18) {$2$};
    \draw[fill=col0] (-2,18) rectangle ++(2,2);
    \node[color=white] at (-1,19) {$0$};

    \draw[fill=col4] (-6,20) rectangle ++(4,4);
    \node[color=white] at (-4,22) {$4$};
    \draw[fill=col1] (-2,20) rectangle ++(2,4);
    \node[color=white] at (-1,22) {$1$};

    \draw[fill=col3] (-4,26) |- (0,28) |- (-2,24) |- cycle;
    \node[color=white] at (-1,27) {$3$};
    \draw[fill=col5] (-6,24) |- (-4,28) |- (-2,26) |- cycle;
    \node[color=white] at (-5,25) {$5$};

    \draw[fill=col6] (-6,28) rectangle ++(2,4);
    \node[color=white] at (-5,30) {$6$};
    \draw[fill=col4] (-4,28) rectangle ++(4,4);
    \node[color=white] at (-2,30) {$4$};

    \draw[line width = 1.5] (-6,0) rectangle ++(6,4);
    \draw[line width = 1.5] (-6,4) rectangle ++(6,4);
    \draw[line width = 1.5] (-6,8) rectangle ++(6,4);
    \draw[line width = 1.5] (-6,12) rectangle ++(6,4);
    \draw[line width = 1.5] (-6,16) rectangle ++(6,4);
    \draw[line width = 1.5] (-6,20) rectangle ++(6,4);
    \draw[line width = 1.5] (-6,24) rectangle ++(6,4);
    \draw[line width = 1.5] (-6,28) rectangle ++(6,4);

    \node at (-3,35) {\LARGE $\vdots$};

    \node at (-9,-2) {\LARGE $\vdots$};

    \draw[fill=col0] (-12,0) rectangle ++(2,2);
    \node[color=white] at (-11,1) {$0$};
    \draw[fill=col3] (-12,2) |- (-8,4) |- (-10,0) |- cycle;
    \node[color=white] at (-9,3) {$3$};
    \draw[fill=col2] (-8,0) rectangle ++(2,4);
    \node[color=white] at (-7,2) {$2$};

    \draw[fill=col1] (-12,4) rectangle ++(2,4);
    \node[color=white] at (-11,6) {$1$};
    \draw[fill=col4] (-10,4) rectangle ++(4,4);
    \node[color=white] at (-8,6) {$4$};

    \draw[fill=col5] (-10,10) |- (-6,12) |- (-8,8) |- cycle;
    \node[color=white] at (-7,11) {$5$};
    \draw[fill=col3] (-12,8) |- (-10,12) |- (-8,10) |- cycle;
    \node[color=white] at (-11,9) {$3$};

    \draw[fill=col4] (-12,12) rectangle ++(4,4);
    \node[color=white] at (-10,14) {$4$};
    \draw[fill=col6] (-8,12) rectangle ++(2,4);
    \node[color=white] at (-7,14) {$6$};

    \draw[fill=col5] (-10,16) |- (-8,20) |- (-6,18) |- cycle;
    \node[color=white] at (-9,17) {$5$};
    \draw[fill=col2] (-12,16) rectangle ++(2,4);
    \node[color=white] at (-11,18) {$2$};
    \draw[fill=col7] (-8,18) rectangle ++(2,2);
    \node[color=white] at (-7,19) {$7$};

    \draw[fill=col4] (-12,20) rectangle ++(4,4);
    \node[color=white] at (-10,22) {$4$};
    \draw[fill=col6] (-8,20) rectangle ++(2,4);
    \node[color=white] at (-7,22) {$6$};

    \draw[fill=col5] (-10,26) |- (-6,28) |- (-8,24) |- cycle;
    \node[color=white] at (-7,27) {$5$};
    \draw[fill=col3] (-12,24) |- (-10,28) |- (-8,26) |- cycle;
    \node[color=white] at (-11,25) {$3$};

    \draw[fill=col1] (-12,28) rectangle ++(2,4);
    \node[color=white] at (-11,30) {$1$};
    \draw[fill=col4] (-10,28) rectangle ++(4,4);
    \node[color=white] at (-8,30) {$4$};

    \draw[line width = 1.5] (-12,0) rectangle ++(6,4);
    \draw[line width = 1.5] (-12,4) rectangle ++(6,4);
    \draw[line width = 1.5] (-12,8) rectangle ++(6,4);
    \draw[line width = 1.5] (-12,12) rectangle ++(6,4);
    \draw[line width = 1.5] (-12,16) rectangle ++(6,4);
    \draw[line width = 1.5] (-12,20) rectangle ++(6,4);
    \draw[line width = 1.5] (-12,24) rectangle ++(6,4);
    \draw[line width = 1.5] (-12,28) rectangle ++(6,4);

    \node at (-9,35) {\LARGE $\vdots$};

    \node at (-15,-2) {\LARGE $\vdots$};

    \draw[fill=col7] (-18,0) rectangle ++(2,2);
    \node[color=white] at (-17,1) {$7$};
    \draw[fill=col5] (-18,2) |- (-14,4) |- (-16,0) |- cycle;
    \node[color=white] at (-15,3) {$5$};
    \draw[fill=col2] (-14,0) rectangle ++(2,4);
    \node[color=white] at (-13,2) {$2$};

    \draw[fill=col6] (-18,4) rectangle ++(2,4);
    \node[color=white] at (-17,6) {$6$};
    \draw[fill=col4] (-16,4) rectangle ++(4,4);
    \node[color=white] at (-14,6) {$4$};

    \draw[fill=col3] (-16,10) |- (-12,12) |- (-14,8) |- cycle;
    \node[color=white] at (-13,11) {$3$};
    \draw[fill=col5] (-18,8) |- (-16,12) |- (-14,10) |- cycle;
    \node[color=white] at (-17,9) {$5$};

    \draw[fill=col4] (-18,12) rectangle ++(4,4);
    \node[color=white] at (-16,14) {$4$};
    \draw[fill=col1] (-14,12) rectangle ++(2,4);
    \node[color=white] at (-13,14) {$1$};

    \draw[fill=col3] (-16,16) |- (-14,20) |- (-12,18) |- cycle;
    \node[color=white] at (-15,17) {$3$};
    \draw[fill=col2] (-18,16) rectangle ++(2,4);
    \node[color=white] at (-17,18) {$2$};
    \draw[fill=col0] (-14,18) rectangle ++(2,2);
    \node[color=white] at (-13,19) {$0$};

    \draw[fill=col4] (-18,20) rectangle ++(4,4);
    \node[color=white] at (-16,22) {$4$};
    \draw[fill=col1] (-14,20) rectangle ++(2,4);
    \node[color=white] at (-13,22) {$1$};

    \draw[fill=col3] (-16,26) |- (-12,28) |- (-14,24) |- cycle;
    \node[color=white] at (-13,27) {$3$};
    \draw[fill=col5] (-18,24) |- (-16,28) |- (-14,26) |- cycle;
    \node[color=white] at (-17,25) {$5$};

    \draw[fill=col6] (-18,28) rectangle ++(2,4);
    \node[color=white] at (-17,30) {$6$};
    \draw[fill=col4] (-16,28) rectangle ++(4,4);
    \node[color=white] at (-14,30) {$4$};

    \draw[line width = 1.5] (-18,0) rectangle ++(6,4);
    \draw[line width = 1.5] (-18,4) rectangle ++(6,4);
    \draw[line width = 1.5] (-18,8) rectangle ++(6,4);
    \draw[line width = 1.5] (-18,12) rectangle ++(6,4);
    \draw[line width = 1.5] (-18,16) rectangle ++(6,4);
    \draw[line width = 1.5] (-18,20) rectangle ++(6,4);
    \draw[line width = 1.5] (-18,24) rectangle ++(6,4);
    \draw[line width = 1.5] (-18,28) rectangle ++(6,4);

    \node at (-15,35) {\LARGE $\vdots$};

    \draw[line width = 1.5] (6,-2) -- (6,34);
    \draw[xshift=-6cm, line width = 1.5] (6,-2) -- (6,34);
    \draw[xshift=-12cm, line width = 1.5] (6,-2) -- (6,34);
    \draw[xshift=-18cm, line width = 1.5] (6,-2) -- (6,34);
    \draw[xshift=-24cm, line width = 1.5] (6,-2) -- (6,34);

    \draw[line width = 1.5] (4,0) -- (-20,0);
    \draw[yshift=4cm, line width = 1.5] (4,0) -- (-20,0);
    \draw[yshift=8cm, line width = 1.5] (4,0) -- (-20,0);
    \draw[yshift=12cm, line width = 1.5] (4,0) -- (-20,0);
    \draw[yshift=16cm, line width = 1.5] (4,0) -- (-20,0);
    \draw[yshift=20cm, line width = 1.5] (4,0) -- (-20,0);
    \draw[yshift=24cm, line width = 1.5] (4,0) -- (-20,0);
    \draw[yshift=28cm, line width = 1.5] (4,0) -- (-20,0);
    \draw[yshift=32cm, line width = 1.5] (4,0) -- (-20,0);

    \node at (-20,2) {\Large $\dots$};
    \node at (-20,6) {\Large $\dots$};
    \node at (-20,10) {\Large $\dots$};
    \node at (-20,14) {\Large $\dots$};
    \node at (-20,18) {\Large $\dots$};
    \node at (-20,22) {\Large $\dots$};
    \node at (-20,26) {\Large $\dots$};
    \node at (-20,30) {\Large $\dots$};
    \end{tikzpicture}
    }
    \end{tabular}}
    &
    \adjustbox{valign=m}{
    \subfloat[Type $\Eaff{8}$]{
    \input{E8_Young_wall_pattern}
    }
    }
    \end{tabular}
    \caption{\hspace{.5em}Young wall patterns for types $\Eaff{6}$, $\Eaff{7}$ and $\Eaff{8}$}\label{Young wall patterns}
\end{figure}

\begin{figure}[H]
    \centering
    \begin{tabular}{cc}
    \adjustbox{valign=m}{
    \begin{tabular}{@{}c@{}}
    \subfloat[Type $\Eaff{6}$]{
    \begin{tikzpicture}[scale=0.15, line width = 0.5]

    \node at (2,-6) {\LARGE $\vdots$};

    \draw[fill=col2] (0,-4) rectangle ++(2,4);
    \node[color=white] at (1,-2) {$2$};
    \draw[fill=col3] (2,-4) rectangle ++(2,4);
    \node[color=white] at (3,-2) {$3$};

    \node[color=black] at (1,1) {$0$};
    \draw[fill=col4] (0,2) |- (4,4) |- (2,0) |- cycle;
    \node[color=white] at (3,3) {$4$};

    \node[color=black] at (1,6) {$2$};
    \draw[fill=col5] (2,4) rectangle ++(2,4);
    \node[color=white] at (3,6) {$5$};

    \draw[fill=col6] (2,10) rectangle ++(2,2);
    \node[color=white] at (3,11) {$6$};
    \node[color=black] at (1,9) {$4$};

    \draw[line width = 1.5] (0,-4) rectangle ++(4,4);
    \draw[line width = 1.5] (0,0) rectangle ++(4,4);
    \draw[line width = 1.5] (0,4) rectangle ++(4,4);
    \draw[line width = 1.5] (0,8) rectangle ++(4,4);

    \draw[line width = 1.5] (4,-6) -- (4,12);
    \draw[line width = 1.5] (0,-6) -- (0,12);

    \begin{scope}[shift={(-4,0)}]
    \node at (2,-6) {\LARGE $\vdots$};

    \draw[fill=col5] (0,-4) rectangle ++(2,4);
    \node[color=white] at (1,-2) {$5$};
    \draw[fill=col2] (2,-4) rectangle ++(2,4);
    \node[color=white] at (3,-2) {$2$};

    \node[color=black] at (1,1) {$6$};
    \draw[fill=col4] (0,2) |- (4,4) |- (2,0) |- cycle;
    \node[color=white] at (3,3) {$4$};

    \node[color=black] at (1,6) {$5$};
    \draw[fill=col3] (2,4) rectangle ++(2,4);
    \node[color=white] at (3,6) {$3$};

    \draw[fill=col1] (2,10) rectangle ++(2,2);
    \node[color=white] at (3,11) {$1$};
    \node[color=black] at (1,9) {$4$};

    \draw[line width = 1.5] (0,-4) rectangle ++(4,4);
    \draw[line width = 1.5] (0,0) rectangle ++(4,4);
    \draw[line width = 1.5] (0,4) rectangle ++(4,4);
    \draw[line width = 1.5] (0,8) rectangle ++(4,4);

    \draw[line width = 1.5] (4,-6) -- (4,12);
    \draw[line width = 1.5] (0,-6) -- (0,12);
    \end{scope}

    \begin{scope}[shift={(-8,0)}]
    \node at (2,-6) {\LARGE $\vdots$};

    \draw[fill=col3] (0,-4) rectangle ++(2,4);
    \node[color=white] at (1,-2) {$3$};
    \draw[fill=col5] (2,-4) rectangle ++(2,4);
    \node[color=white] at (3,-2) {$5$};

    \node[color=black] at (1,1) {$1$};
    \draw[fill=col4] (0,2) |- (4,4) |- (2,0) |- cycle;
    \node[color=white] at (3,3) {$4$};

    \node[color=black] at (1,6) {$3$};
    \draw[fill=col2] (2,4) rectangle ++(2,4);
    \node[color=white] at (3,6) {$2$};

    \draw[fill=col0] (2,10) rectangle ++(2,2);
    \node[color=white] at (3,11) {$0$};
    \node[color=black] at (1,9) {$4$};

    \draw[line width = 1.5] (0,-4) rectangle ++(4,4);
    \draw[line width = 1.5] (0,0) rectangle ++(4,4);
    \draw[line width = 1.5] (0,4) rectangle ++(4,4);
    \draw[line width = 1.5] (0,8) rectangle ++(4,4);

    \draw[line width = 1.5] (4,-6) -- (4,12);
    \draw[line width = 1.5] (0,-6) -- (0,12);
    \end{scope}

    \begin{scope}[shift={(-12,0)}]
    \node at (2,-6) {\LARGE $\vdots$};

    \draw[fill=col2] (0,-4) rectangle ++(2,4);
    \node[color=white] at (1,-2) {$2$};
    \draw[fill=col3] (2,-4) rectangle ++(2,4);
    \node[color=white] at (3,-2) {$3$};

    \node[color=black] at (1,1) {$0$};
    \draw[fill=col4] (0,2) |- (4,4) |- (2,0) |- cycle;
    \node[color=white] at (3,3) {$4$};

    \node[color=black] at (1,6) {$2$};
    \draw[fill=col5] (2,4) rectangle ++(2,4);
    \node[color=white] at (3,6) {$5$};

    \draw[fill=col6] (2,10) rectangle ++(2,2);
    \node[color=white] at (3,11) {$6$};
    \node[color=black] at (1,9) {$4$};

    \draw[line width = 1.5] (0,-4) rectangle ++(4,4);
    \draw[line width = 1.5] (0,0) rectangle ++(4,4);
    \draw[line width = 1.5] (0,4) rectangle ++(4,4);
    \draw[line width = 1.5] (0,8) rectangle ++(4,4);

    \draw[line width = 1.5] (4,-6) -- (4,12);
    \draw[line width = 1.5] (0,-6) -- (0,12);
    \end{scope}

    \begin{scope}[shift={(-16,0)}]
    \node at (2,-6) {\LARGE $\vdots$};

    \draw[fill=col5] (0,-4) rectangle ++(2,4);
    \node[color=white] at (1,-2) {$5$};
    \draw[fill=col2] (2,-4) rectangle ++(2,4);
    \node[color=white] at (3,-2) {$2$};

    \node[color=black] at (1,1) {$6$};
    \draw[fill=col4] (0,2) |- (4,4) |- (2,0) |- cycle;
    \node[color=white] at (3,3) {$4$};

    \node[color=black] at (1,6) {$5$};
    \draw[fill=col3] (2,4) rectangle ++(2,4);
    \node[color=white] at (3,6) {$3$};

    \draw[fill=col1] (2,10) rectangle ++(2,2);
    \node[color=white] at (3,11) {$1$};
    \node[color=black] at (1,9) {$4$};

    \draw[line width = 1.5] (0,-4) rectangle ++(4,4);
    \draw[line width = 1.5] (0,0) rectangle ++(4,4);
    \draw[line width = 1.5] (0,4) rectangle ++(4,4);
    \draw[line width = 1.5] (0,8) rectangle ++(4,4);

    \draw[line width = 1.5] (4,-6) -- (4,12);
    \draw[line width = 1.5] (0,-6) -- (0,12);
    \end{scope}

    \begin{scope}[shift={(-20,0)}]
    \node at (2,-6) {\LARGE $\vdots$};

    \draw[fill=col3] (0,-4) rectangle ++(2,4);
    \node[color=white] at (1,-2) {$3$};
    \draw[fill=col5] (2,-4) rectangle ++(2,4);
    \node[color=white] at (3,-2) {$5$};

    \node[color=black] at (1,1) {$1$};
    \draw[fill=col4] (0,2) |- (4,4) |- (2,0) |- cycle;
    \node[color=white] at (3,3) {$4$};

    \node[color=black] at (1,6) {$3$};
    \draw[fill=col2] (2,4) rectangle ++(2,4);
    \node[color=white] at (3,6) {$2$};

    \draw[fill=col0] (2,10) rectangle ++(2,2);
    \node[color=white] at (3,11) {$0$};
    \node[color=black] at (1,9) {$4$};

    \draw[line width = 1.5] (0,-4) rectangle ++(4,4);
    \draw[line width = 1.5] (0,0) rectangle ++(4,4);
    \draw[line width = 1.5] (0,4) rectangle ++(4,4);
    \draw[line width = 1.5] (0,8) rectangle ++(4,4);

    \draw[line width = 1.5] (4,-6) -- (4,12);
    \draw[line width = 1.5] (0,-6) -- (0,12);
    \end{scope}
    
    \draw[yshift=-4cm, line width = 1.5] (4,0) -- (-22,0);
    \draw[line width = 1.5] (4,0) -- (-22,0);
    \draw[yshift=4cm, line width = 1.5] (4,0) -- (-22,0);
    \draw[yshift=8cm, line width = 1.5] (4,0) -- (-22,0);
    \draw[yshift=12cm, line width = 1.5] (4,0) -- (-22,0);

    \node at (-22,-2) {\Large $\dots$};
    \node at (-22,2) {\Large $\dots$};
    \node at (-22,6) {\Large $\dots$};
    \node at (-22,10) {\Large $\dots$};
    \end{tikzpicture}
    } \\
    \subfloat[Type $\Eaff{7}$]{
    \begin{tikzpicture}[scale=0.15, line width = 0.5]

    \node at (3,-6) {\LARGE $\vdots$};

    \draw[fill=col1] (0,-4) rectangle ++(2,4);
    \node[color=white] at (1,-2) {$1$};
    \draw[fill=col4] (2,-4) rectangle ++(4,4);
    \node[color=white] at (4,-2) {$4$};

    \node[color=black] at (1,1) {$0$};
    \draw[fill=col3] (0,2) |- (4,4) |- (2,0) |- cycle;
    \node[color=white] at (3,3) {$3$};
    \draw[fill=col2] (4,0) rectangle ++(2,4);
    \node[color=white] at (5,2) {$2$};

    \node[color=black] at (1,6) {$1$};
    \draw[fill=col4] (2,4) rectangle ++(4,4);
    \node[color=white] at (4,6) {$4$};

    \draw[fill=col5] (2,10) |- (6,12) |- (4,8) |- cycle;
    \node[color=white] at (5,11) {$5$};
    \node[color=black] at (1,9) {$3$};

    \node[color=black] at (2,14) {$4$};
    \draw[fill=col6] (4,12) rectangle ++(2,4);
    \node[color=white] at (5,14) {$6$};

    \draw[fill=none] (2,16) |- (4,20) |- (6,18) |- cycle;
    \draw[fill=col7] (4,18) rectangle ++(2,2);
    \node[color=black] at (3,17) {$5$};
    \node[color=black] at (1,18) {$2$};
    \node[color=white] at (5,19) {$7$};

    \draw[line width = 1.5] (0,-4) rectangle ++(6,4);
    \draw[line width = 1.5] (0,0) rectangle ++(6,4);
    \draw[line width = 1.5] (0,4) rectangle ++(6,4);
    \draw[line width = 1.5] (0,8) rectangle ++(6,4);
    \draw[line width = 1.5] (0,12) rectangle ++(6,4);
    \draw[line width = 1.5] (0,16) rectangle ++(6,4);

    \draw[line width = 1.5] (6,-6) -- (6,20);
    \draw[line width = 1.5] (0,-6) -- (0,20);

    \begin{scope}[shift={(-6,0)}]
    \node at (3,-6) {\LARGE $\vdots$};

    \draw[fill=col6] (0,-4) rectangle ++(2,4);
    \node[color=white] at (1,-2) {$6$};
    \draw[fill=col4] (2,-4) rectangle ++(4,4);
    \node[color=white] at (4,-2) {$4$};

    \node[color=black] at (1,1) {$7$};
    \draw[fill=col5] (0,2) |- (4,4) |- (2,0) |- cycle;
    \node[color=white] at (3,3) {$5$};
    \draw[fill=col2] (4,0) rectangle ++(2,4);
    \node[color=white] at (5,2) {$2$};

    \node[color=black] at (1,6) {$6$};
    \draw[fill=col4] (2,4) rectangle ++(4,4);
    \node[color=white] at (4,6) {$4$};

    \draw[fill=col3] (2,10) |- (6,12) |- (4,8) |- cycle;
    \node[color=white] at (5,11) {$3$};
    \node[color=black] at (1,9) {$5$};

    \node[color=black] at (2,14) {$4$};
    \draw[fill=col1] (4,12) rectangle ++(2,4);
    \node[color=white] at (5,14) {$1$};

    \draw[fill=none] (2,16) |- (4,20) |- (6,18) |- cycle;
    \draw[fill=col0] (4,18) rectangle ++(2,2);
    \node[color=black] at (3,17) {$3$};
    \node[color=black] at (1,18) {$2$};
    \node[color=white] at (5,19) {$0$};

    \draw[line width = 1.5] (0,-4) rectangle ++(6,4);
    \draw[line width = 1.5] (0,0) rectangle ++(6,4);
    \draw[line width = 1.5] (0,4) rectangle ++(6,4);
    \draw[line width = 1.5] (0,8) rectangle ++(6,4);
    \draw[line width = 1.5] (0,12) rectangle ++(6,4);
    \draw[line width = 1.5] (0,16) rectangle ++(6,4);

    \draw[line width = 1.5] (6,-6) -- (6,20);
    \draw[line width = 1.5] (0,-6) -- (0,20);
    \end{scope}

    \begin{scope}[shift={(-12,0)}]
    \node at (3,-6) {\LARGE $\vdots$};

    \draw[fill=col1] (0,-4) rectangle ++(2,4);
    \node[color=white] at (1,-2) {$1$};
    \draw[fill=col4] (2,-4) rectangle ++(4,4);
    \node[color=white] at (4,-2) {$4$};

    \node[color=black] at (1,1) {$0$};
    \draw[fill=col3] (0,2) |- (4,4) |- (2,0) |- cycle;
    \node[color=white] at (3,3) {$3$};
    \draw[fill=col2] (4,0) rectangle ++(2,4);
    \node[color=white] at (5,2) {$2$};

    \node[color=black] at (1,6) {$1$};
    \draw[fill=col4] (2,4) rectangle ++(4,4);
    \node[color=white] at (4,6) {$4$};

    \draw[fill=col5] (2,10) |- (6,12) |- (4,8) |- cycle;
    \node[color=white] at (5,11) {$5$};
    \node[color=black] at (1,9) {$3$};

    \node[color=black] at (2,14) {$4$};
    \draw[fill=col6] (4,12) rectangle ++(2,4);
    \node[color=white] at (5,14) {$6$};

    \draw[fill=none] (2,16) |- (4,20) |- (6,18) |- cycle;
    \draw[fill=col7] (4,18) rectangle ++(2,2);
    \node[color=black] at (3,17) {$5$};
    \node[color=black] at (1,18) {$2$};
    \node[color=white] at (5,19) {$7$};

    \draw[line width = 1.5] (0,-4) rectangle ++(6,4);
    \draw[line width = 1.5] (0,0) rectangle ++(6,4);
    \draw[line width = 1.5] (0,4) rectangle ++(6,4);
    \draw[line width = 1.5] (0,8) rectangle ++(6,4);
    \draw[line width = 1.5] (0,12) rectangle ++(6,4);
    \draw[line width = 1.5] (0,16) rectangle ++(6,4);

    \draw[line width = 1.5] (6,-6) -- (6,20);
    \draw[line width = 1.5] (0,-6) -- (0,20);
    \end{scope}

    \begin{scope}[shift={(-18,0)}]
    \node at (3,-6) {\LARGE $\vdots$};

    \draw[fill=col6] (0,-4) rectangle ++(2,4);
    \node[color=white] at (1,-2) {$6$};
    \draw[fill=col4] (2,-4) rectangle ++(4,4);
    \node[color=white] at (4,-2) {$4$};

    \node[color=black] at (1,1) {$7$};
    \draw[fill=col5] (0,2) |- (4,4) |- (2,0) |- cycle;
    \node[color=white] at (3,3) {$5$};
    \draw[fill=col2] (4,0) rectangle ++(2,4);
    \node[color=white] at (5,2) {$2$};

    \node[color=black] at (1,6) {$6$};
    \draw[fill=col4] (2,4) rectangle ++(4,4);
    \node[color=white] at (4,6) {$4$};

    \draw[fill=col3] (2,10) |- (6,12) |- (4,8) |- cycle;
    \node[color=white] at (5,11) {$3$};
    \node[color=black] at (1,9) {$5$};

    \node[color=black] at (2,14) {$4$};
    \draw[fill=col1] (4,12) rectangle ++(2,4);
    \node[color=white] at (5,14) {$1$};

    \draw[fill=none] (2,16) |- (4,20) |- (6,18) |- cycle;
    \draw[fill=col0] (4,18) rectangle ++(2,2);
    \node[color=black] at (3,17) {$3$};
    \node[color=black] at (1,18) {$2$};
    \node[color=white] at (5,19) {$0$};

    \draw[line width = 1.5] (0,-4) rectangle ++(6,4);
    \draw[line width = 1.5] (0,0) rectangle ++(6,4);
    \draw[line width = 1.5] (0,4) rectangle ++(6,4);
    \draw[line width = 1.5] (0,8) rectangle ++(6,4);
    \draw[line width = 1.5] (0,12) rectangle ++(6,4);
    \draw[line width = 1.5] (0,16) rectangle ++(6,4);

    \draw[line width = 1.5] (6,-6) -- (6,20);
    \draw[line width = 1.5] (0,-6) -- (0,20);
    \end{scope}

    \draw[yshift=-4cm, line width = 1.5] (4,0) -- (-20,0);
    \draw[line width = 1.5] (4,0) -- (-20,0);
    \draw[yshift=4cm, line width = 1.5] (4,0) -- (-20,0);
    \draw[yshift=8cm, line width = 1.5] (4,0) -- (-20,0);
    \draw[yshift=12cm, line width = 1.5] (4,0) -- (-20,0);
    \draw[yshift=16cm, line width = 1.5] (4,0) -- (-20,0);
    \draw[yshift=20cm, line width = 1.5] (4,0) -- (-20,0);

    \node at (-20,-2) {\Large $\dots$};
    \node at (-20,2) {\Large $\dots$};
    \node at (-20,6) {\Large $\dots$};
    \node at (-20,10) {\Large $\dots$};
    \node at (-20,14) {\Large $\dots$};
    \node at (-20,18) {\Large $\dots$};
    \end{tikzpicture}
    }
    \end{tabular}}
    &
    \adjustbox{valign=m}{
    \subfloat[Type $\Eaff{8}$]{
    \input{E8_ground_state_wall}
    }
    }
    \end{tabular}
    \caption{\hspace{.5em}Ground state walls for types $\Eaff{6}$, $\Eaff{7}$ and $\Eaff{8}$}\label{Ground state walls}
\end{figure}

\begin{defn} \label{Young wall definition}
    In each type, a Young wall is a collection of blocks stacked inside the Young wall pattern such that
    \begin{itemize}
        \item it differs from the ground state wall in finitely many blocks,
        \item each column of the wall is a Young column.
    \end{itemize}
\end{defn}

Many authors assume two further conditions on their Young walls, the first of which we shall call the \emph{right block property}:
\begin{itemize}
    \item if a Young wall contains a block, then it must contain the block occupying the same position in the column to the right,
    $\hfill \refstepcounter{equation}(\theequation)\label{right block property}$
    \item a Young wall must be built on top of the ground state wall.
    $\hfill \refstepcounter{equation}(\theequation)\label{built on ground state wall property}$
\end{itemize}
We have chosen to omit these conditions from our definition since it does not seem inherently obvious that they should hold for the walls in our models for $B(\Lambda_{i})$.
However, with Proposition \ref{highest weight right block property proposition} and Corollary \ref{highest weight built on ground state wall corollary} we confirm that the reduced Young walls do indeed satisfy (\ref{right block property}) and (\ref{built on ground state wall property}) respectively.
\\

In this section we write each Young wall $Y$ as a sequence $(y_{r})_{r=0}^{\infty}$ of Young columns, considered only up to equivalence class as elements of $C$.
We also define $|y_{r}|_{0}$ (resp. $|y_{r}|$) to be the difference in the number of $0$-blocks (resp. blocks) between $Y$ and the ground state wall in column $r$.

\begin{defn} \label{reduced definition}
A pair of adjacent columns $(y_{r+1},y_{r})$ in a Young wall $Y = (y_{r})_{r=0}^{\infty}$ is reduced if
\begin{equation} \label{reduced definition equation}
    H(y_{r+1}\otimes y_{r}) - |y_{r+1}|_{0} + |y_{r}|_{0}
        = H(b_{r+1}\otimes b_{r}).
\end{equation}
The Young wall $Y$ is reduced if $(y_{r+1},y_{r})$ is reduced for every $r\in\Nbb$.
\end{defn}

In particular, if a pair of adjacent columns $(y_{r+1},y_{r})$ is reduced then there is exactly one option for the integer $|y_{r}|_{0} - |y_{r+1}|_{0}$.
Denote the set of reduced Young walls by $\Ycal(\Lambda_{0})$.

\begin{prop} \label{highest weight right block property proposition}
    If a Young wall is reduced then it must satisfy the right block property (\ref{right block property}).
\end{prop}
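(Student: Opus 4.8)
The statement to prove is Proposition~\ref{highest weight right block property proposition}: a reduced Young wall $Y = (y_r)_{r=0}^\infty$ satisfies the right block property (\ref{right block property}), i.e.\ whenever column $y_r$ contains a block in some position, so does column $y_{r+1}$. The plan is to reduce this to a purely local statement about a single reduced pair $(y_{r+1}, y_r)$, and then to a case analysis driven by the energy function description (Lemma~\ref{B6 and B7 energy function lemma} in types $\Eaff{6}$, $\Eaff{7}$ and Theorem~\ref{uniform crystal maximal vector theorem} / Proposition~\ref{component of xt otimes yi} in type $\Eaff{8}$) together with the explicit Young column patterns of Figures~\ref{E6 and E7 Young column patterns} and~\ref{E8 Young column pattern}. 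The key reformulation is that (\ref{reduced definition equation}) pins down $|y_r|_0 - |y_{r+1}|_0$ to a single value; combined with the ground state walls of Figure~\ref{Ground state walls}, which themselves satisfy the right block property columnwise, this forces $y_{r+1}$ to dominate $y_r$ block-by-block.

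\textbf{Main steps.} First I would observe that it suffices to prove the claim for each adjacent pair, since the right block property is a condition on consecutive columns. So fix $r$ and set $a = \psi^{-1}(y_r)$, $b = \psi^{-1}(y_{r+1})$ viewed in the relevant perfect crystal $B$, with the ground state entries $b_r, b_{r+1}$. Using the Young column realization (Propositions~\ref{E6 and E7 Young column isomorphisms} and~\ref{E8 Young column isomorphism}), the quantity $|y_r|$ is the number of blocks by which the $a$-column differs from the $b_r$-column, and $|y_r|_0$ counts only the $0$-blocks; crucially, in each pattern the height of a Young column is essentially determined by which blocks it contains, and $\varepsilon_0, \varphi_0$ on $B$ translate into the number of removable/addable $0$-blocks. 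Next I would unpack (\ref{reduced definition equation}): writing $H(b\otimes a) = H(b_{r+1}\otimes b_r) + |y_{r+1}|_0 - |y_r|_0$, and using the characterisation of $H$ as (minus, or up to normalisation) a count of $0$-arrows along a minimal path, the right-hand side is fixed, so the only freedom is the single integer $d := |y_r|_0 - |y_{r+1}|_0$, and reducedness selects exactly one admissible $d$. Then I would go through the possible pairs $(a,b)$: because $B$ has only finitely many elements and in types $\Eaff{6}, \Eaff{7}$ all $i$-strings have length $\le 1$, the possible column shapes are few; for each, I check directly from the pattern picture that the unique reduced configuration has $y_{r+1}$ containing every block of $y_r$. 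In type $\Eaff{8}$ I would additionally invoke the maximal-vector/energy data of Theorem~\ref{uniform crystal maximal vector theorem} and Proposition~\ref{component of xt otimes yi} to handle the longer $i$-strings and the three exceptional columns of Figure~\ref{E8 additional Young columns}, since there the column-height vs.\ block-content correspondence is slightly looser.

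\textbf{Expected obstacle.} The main difficulty is type $\Eaff{8}$: the $B_8$ crystal has $i$-strings of length $>1$, the Young column pattern is far more intricate (Figure~\ref{E8 Young column pattern}), and the three extra equivalence classes of columns corresponding to $x_{\pm\alpha_2}$ break the naive "height determines content" heuristic, so the case analysis there is genuinely delicate and must be cross-referenced against the explicit energy values. A secondary subtlety, present in all types, is bookkeeping the $180^\circ$ rotation equivalence on Young columns (relevant mainly in type $\Eaff{6}$): one must make sure that when a column is represented by a rotated stacking, the comparison "$y_{r+1}$ contains the block of $y_r$" is made after aligning orientations consistently, as was done when forming the ground state walls. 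I expect the clean way to organise the $\Eaff{6}, \Eaff{7}$ cases is to note that the reduced condition forces $|y_r|_0 - |y_{r+1}|_0 \in \{0, \pm 1\}$ with the sign dictated by the ground state pattern, and that in each of these few cases the dependency structure visible in Figure~\ref{Young wall patterns} immediately yields the inclusion; the bulk of the written proof will then be the $\Eaff{8}$ verification, likely deferred in part to the explicit crystal data and SageMath computation already cited.
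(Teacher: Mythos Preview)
There are two problems. First, you have the direction of the right block property reversed: you write ``whenever column $y_r$ contains a block in some position, so does column $y_{r+1}$'' and later ``$y_{r+1}$ containing every block of $y_r$'', but (\ref{right block property}) says that a block in column $r+1$ (which lies to the \emph{left}) forces the block in the same position of column $r$ (to the right). It is $y_r$ that must dominate $y_{r+1}$, consistently with Proposition~\ref{difference in added blocks for reduced adjacent columns}, which gives $|y_r| - |y_{r+1}| \geq 0$.

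Second, and more substantially, the paper does not attempt the brute-force pairwise check you outline, and your sketch underestimates its size. The paper's argument is a two-line forward reference: a reduced Young wall is precisely an element of the Fock-space crystal $\Zcal(\Lambda_0)$ with every $\Haff(z^{n_{r+1}}y_{r+1}\otimes z^{n_r}y_r) = 1$, so Propositions~\ref{E6 and E7 Fock right block proposition} and~\ref{E8 Fock right block proposition} apply. Those propositions are in turn proved not by inspecting column shapes but by introducing an automorphism $\sigma$ of $\Baff$ (as an unlabelled digraph) that shifts from the column-$(r{+}1)$ pattern to the column-$r$ pattern, and then showing that whenever $\Haff > 0$ (respectively $\Haff \neq 2$ in type $\Eaff{8}$) there is a directed path $\sigma(z^n b) \to \cdots \to z^m a$ in $\Baff$; since arrows add blocks, this yields the containment directly. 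The verification collapses to a single inequality $p + 1 - H(b\otimes a) \geq H(a\otimes c)$ checkable from a short table (Appendix~\ref{Right block appendix}). By contrast, your ``possible column shapes are few'' masks $27^2$, $56^2$, and roughly $249^2$ ordered pairs in the three types, with no mechanism in the proposal to organise them beyond raw enumeration.
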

\begin{proof}
    This follows from Propositions \ref{E6 and E7 Fock right block proposition} and \ref{E8 Fock right block proposition} since $\Ycal(\Lambda_{0})$ can be viewed as the set of $(z^{n_{r}} y_{r})_{r=0}^{\infty} \in \Zcal(\Lambda_{0})$ with every $\Haff(z^{n_{r+1}} y_{r+1}\otimes z^{n_{r}} y_{r}) = 1$.
    In particular, a Young wall $(y_{r})_{r=0}^{\infty}$ in $\Ycal(\Lambda_{0})$ is precisely the same as the Young wall $(z^{m_{r}-|y_{r}|_{0}} y_{r})_{r=0}^{\infty}$ in $\Zcal(\Lambda_{0})$, where the $m_{r}$ are given by the ground state sequence $\sbold_{\Lambda_{0}} = (z^{m_{r}}b_{r})_{r=0}^{\infty}$ for the Fock space.
\end{proof}

\begin{cor} \label{highest weight built on ground state wall corollary}
    Every reduced Young wall is built on top of the ground state wall.
\end{cor}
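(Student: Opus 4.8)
The plan is to deduce Corollary~\ref{highest weight built on ground state wall corollary} directly from the right block property established in Proposition~\ref{highest weight right block property proposition}, together with the structure of reduced Young walls. The key observation is that ``built on top of the ground state wall'' is a statement about the \emph{leftmost} non-ground-state behaviour, so it suffices to control the first column $y_{0}$ and then propagate rightward. Concretely, I would argue as follows.

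First I would recall that any reduced Young wall $Y = (y_{r})_{r=0}^{\infty}$ differs from the ground state wall $(b_{r})_{r=0}^{\infty}$ in only finitely many blocks, so there is some $N$ with $y_{r}$ equal (as an equivalence class in $C$) to the ground state column for all $r \geq N$; in particular $y_{N}$ is built on top of the ground state column. Now I would induct downward on $r$: assuming $y_{r+1}$ is built on top of the ground state wall in column $r+1$, I want to show $y_{r}$ is built on top of the ground state wall in column $r$. The right block property~(\ref{right block property}), which holds by Proposition~\ref{highest weight right block property proposition}, says that every block of $y_{r}$ forces the corresponding block of $y_{r+1}$ to be present. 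Combined with the gravity condition~(\ref{column gravity condition}) defining Young columns --- there is no empty space below any block --- this will let me compare $y_{r}$ against the ground state column position by position. The point is that a block of the ground state wall in column $r$ that fails to appear in $y_{r}$ would, via the right block property applied in reverse and the reducedness equation~(\ref{reduced definition equation}), be incompatible with $y_{r+1}$ already containing the full ground state column $r+1$ up to the relevant height.

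The cleanest route is probably to use the reformulation already invoked in the proof of Proposition~\ref{highest weight right block property proposition}: $\Ycal(\Lambda_{0})$ is identified with the set of $(z^{n_{r}} y_{r})_{r=0}^{\infty} \in \Zcal(\Lambda_{0})$ with every $\Haff(z^{n_{r+1}} y_{r+1} \otimes z^{n_{r}} y_{r}) = 1$, where a reduced wall $(y_{r})_{r=0}^{\infty}$ corresponds to $(z^{m_{r} - |y_{r}|_{0}} y_{r})_{r=0}^{\infty}$. Under this identification, ``built on top of the ground state wall'' for $Y$ should translate into a statement that will be proven (in the Fock space section) for all elements of $\Zcal(\Lambda_{0})$, namely the analogue right block property and its consequences in Propositions~\ref{E6 and E7 Fock right block proposition} and~\ref{E8 Fock right block proposition}. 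So I would phrase the proof as: ``This is immediate from Proposition~\ref{highest weight right block property proposition} and the description of $\Ycal(\Lambda_{0})$ as a subset of $\Zcal(\Lambda_{0})$ given there, since by construction every element of $\Zcal(\Lambda_{0})$ stabilises to the ground state wall and the right block property forces each column to sit on top of the corresponding ground state column.'' If a self-contained argument is wanted instead, the downward induction sketched above does it.

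The main obstacle I anticipate is making precise the step ``the right block property plus the gravity condition forces $y_{r}$ to contain the full ground state column in column $r$.'' This requires knowing that the ground state column in each strip is itself a valid Young column built from the bottom up (true by construction of the ground state wall in Figure~\ref{Ground state walls}), and that a reduced pair $(y_{r+1}, y_{r})$ with $y_{r+1}$ containing its ground state column cannot have $y_{r}$ missing a block that the ground state column $r$ has below the top of $y_{r+1}$'s guaranteed content --- this is exactly where~(\ref{reduced definition equation}) pins down $|y_{r}|_{0} - |y_{r+1}|_{0}$ and hence limits how far $y_{r}$ can recede below $y_{r+1}$. Since the actual verification is deferred to the Fock space propositions, I expect the corollary's proof to be a one-line appeal to those results rather than a place where new difficulty arises.
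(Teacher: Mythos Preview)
Your approach is essentially the paper's, but you have the direction of the right block property backwards. Property~(\ref{right block property}) says that a block in column $r+1$ forces the same-position block in column $r$ (columns are indexed from right to left), not the other way around as you state. With the correct direction your downward induction becomes immediate: if $y_{r+1}$ contains the ground state column at index $r+1$, then by~(\ref{right block property}) $y_{r}$ contains the same-position blocks, which is exactly the ground state column at index $r$ since all ground state columns occupy the same vertical positions. There is no need for the gravity condition, the reducedness equation, or the $\Zcal(\Lambda_{0})$ embedding; the paper's proof is simply that the wall matches the ground state wall in all sufficiently far-left columns (finitely many differences), and the right block property propagates this rightward column by column.

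Your phrase ``right block property applied in reverse'' is where the confusion surfaces: there is no reverse implication available, and none is needed once the forward direction is stated correctly.
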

\begin{proof}
    Since a Young wall differs from the ground state wall in finitely many blocks and thus matches it in all columns sufficiently far to the left, this condition follows from the right block property.
\end{proof}

\begin{prop} \label{difference in added blocks for reduced adjacent columns}
    If a pair of adjacent columns $(y_{r+1},y_{r})$ in a Young wall is reduced then $|y_{r}| - |y_{r+1}|$ is a fixed non-negative integer.
\end{prop}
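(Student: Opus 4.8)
The plan is to convert the statement into a weight computation for Young columns, and then combine it with the remark following Definition \ref{reduced definition} (that reducedness of $(y_{r+1},y_{r})$ already fixes $|y_{r}|_{0}-|y_{r+1}|_{0}$) and with the right block property of Proposition \ref{highest weight right block property proposition}.

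First I would record a weight identity. The set of Young columns, with the crystal structure of Definition \ref{E8 Young column crystal structure definition} and its analogues in types $\Eaff{6}$ and $\Eaff{7}$, realizes the affinization $(C)_{\aff}$, which by Propositions \ref{E6 and E7 Young column isomorphisms} and \ref{E8 Young column isomorphism} is isomorphic to $\Baff$ and is in particular a connected crystal. Along every arrow of this crystal the affine weight changes by $\mp\alpha_{i}$ while the number of $i$-blocks changes by $\pm 1$, so summing along any path gives, for Young columns $y$ and $y'$ lying over the same column of the pattern,
\[
\wt(y)-\wt(y') = -\sum_{i\in I} m_{i}\,\alpha_{i} \quad\text{in } P,
\]
where $m_{i}$ is the (path-independent) difference between the number of $i$-blocks of $y$ and of $y'$. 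Crucially this does not require a physical block addition to be of the form $\ft_{i}$, which is what makes the type $\Eaff{8}$ caveats of Definition \ref{E8 Young column crystal structure definition} harmless here.

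Now let $Y=(y_{r})_{r=0}^{\infty}$ be a reduced Young wall. By Corollary \ref{highest weight built on ground state wall corollary} it is built on top of the ground state wall, so in column $r$ we have $y_{r}=b_{r}$ together with $m_{i}^{(r)}\geq 0$ added $i$-blocks for each $i$, with $|y_{r}|=\sum_{i\in I}m_{i}^{(r)}$ and $|y_{r}|_{0}=m_{0}^{(r)}$. Applying the identity with $y=y_{r}$, $y'=b_{r}$ and projecting to $\Pbar$, where $\alpha_{0}=-\theta$ with $\theta=\sum_{i\in I_{0}}a_{i}\alpha_{i}$, gives
\[
\sum_{i\in I_{0}} m_{i}^{(r)}\,\alpha_{i} = \wt(b_{r})-\wt(y_{r}) + |y_{r}|_{0}\,\theta .
\]
Since $\{\alpha_{i}\}_{i\in I_{0}}$ is a basis of the finite root lattice, pairing with the functional $\rho^{\vee}$ with $\langle\rho^{\vee},\alpha_{i}\rangle=1$ for all $i\in I_{0}$ yields
\[
|y_{r}| = \Bigl(1+\sum_{i\in I_{0}}a_{i}\Bigr)|y_{r}|_{0} + \langle\rho^{\vee},\,\wt(b_{r})-\wt(y_{r})\rangle .
\]
Subtracting the same identity for column $r+1$ and using that reducedness of $(y_{r+1},y_{r})$ fixes $|y_{r}|_{0}-|y_{r+1}|_{0}=H(b_{r+1}\otimes b_{r})-H(y_{r+1}\otimes y_{r})$ (the remark after Definition \ref{reduced definition}), every term of
\[
|y_{r}|-|y_{r+1}| = \Bigl(1+\sum_{i\in I_{0}}a_{i}\Bigr)\bigl(|y_{r}|_{0}-|y_{r+1}|_{0}\bigr) + \langle\rho^{\vee},\,\wt(b_{r})-\wt(b_{r+1})-\wt(y_{r})+\wt(y_{r+1})\rangle
\]
depends only on $r$ and on $y_{r},y_{r+1}$ as elements of $C$; hence $|y_{r}|-|y_{r+1}|$ is fixed. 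Non-negativity then follows from Proposition \ref{highest weight right block property proposition}: $Y$ satisfies the right block property (\ref{right block property}), so, being built on the ground state wall, its added blocks ``lean to the right'' and the column block-counts $|y_{r}|$ are weakly decreasing in $r$, giving $|y_{r}|\geq|y_{r+1}|$.

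The step I expect to be the main obstacle is this last one. In types $\Eaff{6}$ and $\Eaff{7}$ adjacent columns of the Young wall pattern sit in different phases (Figure \ref{Young wall patterns}), and blocks of different shapes have different sizes, so one has to verify that (\ref{right block property}) genuinely matches whole blocks to whole blocks across adjacent columns before the counting argument applies; this is where one must argue with the explicit patterns rather than formally, and it is essentially the content of Proposition \ref{highest weight right block property proposition} for our walls. By contrast the weight computation above is routine, its only delicate input being the path-independence observation that disposes of the type $\Eaff{8}$ caveats.
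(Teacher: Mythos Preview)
Your proof is correct and follows the same three-step skeleton as the paper's argument: reducedness fixes $|y_{r}|_{0}-|y_{r+1}|_{0}$, this in turn fixes $|y_{r}|-|y_{r+1}|$, and the right block property gives non-negativity. The difference lies in the middle step. The paper simply observes ``by looking at the Young column patterns'' that the number of $0$-blocks determines the total block count once the equivalence classes of $y_{r}$ and $y_{r+1}$ are known (essentially because each period of the pattern contains a fixed number of blocks between consecutive $0$-blocks). You instead derive this via the explicit weight formula $|y_{r}| = (1+\sum_{i\in I_{0}}a_{i})\,|y_{r}|_{0} + \langle\rho^{\vee},\wt(b_{r})-\wt(y_{r})\rangle$, which is a nice uniform replacement for the pattern inspection and makes the dependence completely transparent.

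Your closing worry about ``phases'' and block shapes is unnecessary. The Young wall patterns (Figure \ref{Young wall patterns}) are constructed so that every column has the \emph{same} geometric cutting of unit cubes into blocks---only the colours differ, via the outer automorphism $\pi_{i}$---and the ground state columns (Figure \ref{Ground state walls}) occupy identical positions across columns. Hence ``the block in the same position in the column to the right'' is unambiguous, and the right block property together with Corollary \ref{highest weight built on ground state wall corollary} immediately gives that added blocks in column $r+1$ correspond bijectively to a subset of the added blocks in column $r$, so $|y_{r}|\geq|y_{r+1}|$. No separate pattern-by-pattern verification is needed beyond what is already packaged into Proposition \ref{highest weight right block property proposition}.
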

\begin{proof}
    It is clear from Definition \ref{reduced definition} that if we specify the columns $y_{r+1}$ and $y_{r}$ (up to equivalence), then there is precisely one value of $|y_{r}|_{0} - |y_{r+1}|_{0}$ which makes $(y_{r+1},y_{r})$ reduced.
    By looking at the Young column patterns from Figures \ref{E6 and E7 Young column patterns} and \ref{E8 Young column pattern} we see that this in turn fixes $|y_{r}| - |y_{r+1}|$.
    The right block property of Proposition \ref{highest weight right block property proposition} implies that this number must be non-negative.
\end{proof}

It therefore follows that up to vertical shift, there are precisely $|B|^{2}$ pairs of reduced adjacent columns $(y_{r+1},y_{r})$ for each $r\in\Nbb$, one for each choice of equivalence class for both $y_{r+1}$ and $y_{r}$.
However, it is important to note that in types $\Eaff{6}$ and $\Eaff{7}$ the options change with $r$ since from Table \ref{highest weight ground state sequences table} we see that the sequence $(H(b_{r+1}\otimes b_{r}))_{r=0}^{\infty}$ is not constant.
Figure \ref{Young wall examples} demonstrates these phenomena with some examples in type $\Eaff{6}$.
The first wall is reduced, but the second is not since it does not have the required value of $|y_{r}|_{0} - |y_{r+1}|_{0}$ (or equivalently $|y_{r}| - |y_{r+1}|$).
The third wall shows how the reduced adjacent pairs $(y_{r+1},y_{r})$ depend on $r$.

\begin{figure}[H]
    \centering
    \subfloat[reduced]{
    \begin{tikzpicture}[scale=0.15, line width = 0.5]

    \node at (2,-2) {\LARGE $\vdots$};

    \draw[fill=col0] (0,0) rectangle ++(2,2);
    \node[color=white] at (1,1) {$0$};
    \draw[fill=col4] (0,2) |- (4,4) |- (2,0) |- cycle;
    \node[color=white] at (3,3) {$4$};

    \draw[fill=col2] (0,4) rectangle ++(2,4);
    \node[color=white] at (1,6) {$2$};
    \draw[fill=col5] (2,4) rectangle ++(2,4);
    \node[color=white] at (3,6) {$5$};

    \draw[fill=col6] (2,10) rectangle ++(2,2);
    \node[color=white] at (3,11) {$6$};
    \draw[fill=col4] (0,8) |- (2,12) |- (4,10) |- cycle;
    \node[color=white] at (1,9) {$4$};

    \draw[fill=col3] (0,12) rectangle ++(2,4);
    \node[color=white] at (1,14) {$3$};
    \draw[fill=col5] (2,12) rectangle ++(2,4);
    \node[color=white] at (3,14) {$5$};

    \draw[fill=col1] (0,16) rectangle ++(2,2);
    \node[color=white] at (1,17) {$1$};
    \draw[fill=col4] (0,18) |- (4,20) |- (2,16) |- cycle;
    \node[color=white] at (3,19) {$4$};

    \draw[fill=col3] (0,20) rectangle ++(2,4);
    \node[color=white] at (1,22) {$3$};
    \draw[fill=col2] (2,20) rectangle ++(2,4);
    \node[color=white] at (3,22) {$2$};

    \draw[fill=col0] (2,26) rectangle ++(2,2);
    \node[color=white] at (3,27) {$0$};
    \draw[fill=col4] (0,24) |- (2,28) |- (4,26) |- cycle;
    \node[color=white] at (1,25) {$4$};

    \draw[fill=col5] (0,28) rectangle ++(2,4);
    \node[color=white] at (1,30) {$5$};
    \draw[fill=col2] (2,28) rectangle ++(2,4);
    \node[color=white] at (3,30) {$2$};

    \draw[fill=col6] (0,32) rectangle ++(2,2);
    \node[color=white] at (1,33) {$6$};
    \draw[fill=col4] (0,34) |- (4,36) |- (2,32) |- cycle;
    \node[color=white] at (3,35) {$4$};

    \draw[line width = 1.5] (0,0) rectangle ++(4,4);
    \draw[line width = 1.5] (0,4) rectangle ++(4,4);
    \draw[line width = 1.5] (0,8) rectangle ++(4,4);
    \draw[line width = 1.5] (0,12) rectangle ++(4,4);
    \draw[line width = 1.5] (0,16) rectangle ++(4,4);
    \draw[line width = 1.5] (0,20) rectangle ++(4,4);
    \draw[line width = 1.5] (0,24) rectangle ++(4,4);
    \draw[line width = 1.5] (0,28) rectangle ++(4,4);
    \draw[line width = 1.5] (0,32) rectangle ++(4,4);

    \node at (-2,-2) {\LARGE $\vdots$};

    \draw[fill=col6] (-4,0) rectangle ++(2,2);
    \node[color=white] at (-3,1) {$6$};
    \draw[fill=col4] (-4,2) |- (0,4) |- (-2,0) |- cycle;
    \node[color=white] at (-1,3) {$4$};

    \draw[fill=col5] (-4,4) rectangle ++(2,4);
    \node[color=white] at (-3,6) {$5$};
    \draw[fill=col3] (-2,4) rectangle ++(2,4);
    \node[color=white] at (-1,6) {$3$};

    \draw[fill=col1] (-2,10) rectangle ++(2,2);
    \node[color=white] at (-1,11) {$1$};
    \draw[fill=none] (-4,8) |- (-2,12) |- (0,10) |- cycle;
    \node[color=black] at (-3,9) {$4$};

    \draw[line width = 1.5] (-4,0) rectangle ++(4,4);
    \draw[line width = 1.5] (-4,4) rectangle ++(4,4);
    \draw[line width = 1.5] (-4,8) rectangle ++(4,4);

    \draw[line width = 1.5] (4,-2) -- (4,0);
    \draw[xshift=-4cm, line width = 1.5] (4,-2) -- (4,0);
    \draw[xshift=-8cm, line width = 1.5] (4,-2) -- (4,0);

    \draw[line width = 1.5] (4,0) -- (-6,0);
    \draw[yshift=4cm, line width = 1.5] (4,0) -- (-6,0);
    \draw[yshift=8cm, line width = 1.5] (4,0) -- (-6,0);
    \draw[yshift=12cm, line width = 1.5] (4,0) -- (-6,0);

    \node at (-6,2) {\Large $\dots$};
    \node at (-6,6) {\Large $\dots$};
    \node at (-6,10) {\Large $\dots$};
\end{tikzpicture}
    }
    \qquad
    \subfloat[non-reduced]{ \label{Young wall example 2}
    \begin{tikzpicture}[scale=0.15, line width = 0.5]

    \node at (2,-2) {\LARGE $\vdots$};

    \draw[fill=col0] (0,0) rectangle ++(2,2);
    \node[color=white] at (1,1) {$0$};
    \draw[fill=col4] (0,2) |- (4,4) |- (2,0) |- cycle;
    \node[color=white] at (3,3) {$4$};

    \draw[fill=col2] (0,4) rectangle ++(2,4);
    \node[color=white] at (1,6) {$2$};
    \draw[fill=col5] (2,4) rectangle ++(2,4);
    \node[color=white] at (3,6) {$5$};

    \draw[fill=col6] (2,10) rectangle ++(2,2);
    \node[color=white] at (3,11) {$6$};
    \draw[fill=col4] (0,8) |- (2,12) |- (4,10) |- cycle;
    \node[color=white] at (1,9) {$4$};

    \draw[line width = 1.5] (0,0) rectangle ++(4,4);
    \draw[line width = 1.5] (0,4) rectangle ++(4,4);
    \draw[line width = 1.5] (0,8) rectangle ++(4,4);

    \node at (-2,-2) {\LARGE $\vdots$};

    \draw[fill=col6] (-4,0) rectangle ++(2,2);
    \node[color=white] at (-3,1) {$6$};
    \draw[fill=col4] (-4,2) |- (0,4) |- (-2,0) |- cycle;
    \node[color=white] at (-1,3) {$4$};

    \draw[fill=col5] (-4,4) rectangle ++(2,4);
    \node[color=white] at (-3,6) {$5$};
    \draw[fill=col3] (-2,4) rectangle ++(2,4);
    \node[color=white] at (-1,6) {$3$};

    \draw[fill=col1] (-2,10) rectangle ++(2,2);
    \node[color=white] at (-1,11) {$1$};
    \draw[fill=none] (-4,8) |- (-2,12) |- (0,10) |- cycle;
    \node[color=black] at (-3,9) {$4$};

    \draw[line width = 1.5] (-4,0) rectangle ++(4,4);
    \draw[line width = 1.5] (-4,4) rectangle ++(4,4);
    \draw[line width = 1.5] (-4,8) rectangle ++(4,4);

    \draw[line width = 1.5] (4,-2) -- (4,0);
    \draw[xshift=-4cm, line width = 1.5] (4,-2) -- (4,0);
    \draw[xshift=-8cm, line width = 1.5] (4,-2) -- (4,0);

    \draw[line width = 1.5] (4,0) -- (-6,0);
    \draw[yshift=4cm, line width = 1.5] (4,0) -- (-6,0);
    \draw[yshift=8cm, line width = 1.5] (4,0) -- (-6,0);
    \draw[yshift=12cm, line width = 1.5] (4,0) -- (-6,0);

    \node at (-6,2) {\Large $\dots$};
    \node at (-6,6) {\Large $\dots$};
    \node at (-6,10) {\Large $\dots$};
\end{tikzpicture}
    }
    \qquad
    \subfloat[non-reduced]{
    \input{E6_Young_wall_example_3}
    }
    \caption{\hspace{.5em}Examples of Young walls in type $\Eaff{6}$}\label{Young wall examples}
\end{figure}

Next we define the structure of an affine crystal on the set $\Ycal(\Lambda_{0})$ of reduced Young walls.
Recall that $\varphi_{i}(y)$ (resp. $\varepsilon_{i}(y)$) is by definition the maximum number of $i$-blocks which can be added to (resp. removed from) a Young column $y$ sequentially, while still remaining a Young column.

\begin{defn}
    The \textit{$i$-signature} of $y$ is the sequence $\sign_{i}(y) = \underbrace{-\dots-}_{\varepsilon_{i}(y)}\underbrace{+\dots+}_{\varphi_{i}(y)}$.
\end{defn}

For a Young wall $Y = (y_{r})_{r=0}^{\infty}$ we define the pre-$i$-signature to be the (possibly infinite) sequence
\begin{equation*}
    \presign_{i}(Y) = \dots\sign_{i}(y_{2})\sign_{i}(y_{1})\sign_{i}(y_{0})
\end{equation*}
of $+$'s and $-$'s.
Cancelling every $+-$ pair leaves a finite number of $-$'s followed by a finite number of $+$'s, which we call the $i$-signature $\sign_{i}(Y)$ of $Y$.
\\

Let $\et_{i}(Y)$ be the Young wall obtained from $Y$ by applying $\et_{i}$ to the column containing the rightmost $-$ in $\sign_{i}(Y)$ if it exists, and $0$ otherwise.
Conversely, $\ft_{i}(Y)$ is defined by applying $\ft_{i}$ to the column containing the leftmost $+$ in $\sign_{i}(Y)$ if it exists, and is $0$ otherwise.

\begin{prop} \label{reduced walls closed under e and f proposition}
    For any $Y\in\Ycal(\Lambda_{0})$ we have $\et_{i}(Y),\ft_{i}(Y)\in\Ycal(\Lambda_{0})\sqcup\lbrace 0\rbrace$.
\end{prop}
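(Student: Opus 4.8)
The plan is to show that applying $\et_i$ or $\ft_i$ to a reduced Young wall $Y = (y_r)_{r=0}^{\infty}$ changes exactly one column $y_k$ by adding or removing one $i$-block, and that the reducedness condition (\ref{reduced definition equation}) is preserved on the two pairs $(y_{k+1}, y_k)$ and $(y_k, y_{k-1})$ (all other pairs being untouched). The key observation is that the crystal operators here are defined by the usual signature rule, which is precisely the tensor-product rule (\ref{tensor product of crystals}) applied to the sequence of Young columns viewed as an element of $C \otimes C \otimes \dots$; since $C$ is (isomorphic to) the perfect crystal $B$ via Propositions \ref{E6 and E7 Young column isomorphisms} and \ref{E8 Young column isomorphism}, the combinatorics of signatures behave exactly as in $B \otimes B$.

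First I would reduce to the case $i \neq 0$ versus $i = 0$ separately. For $i \neq 0$: adding or removing an $i$-block does not change any $|y_r|_0$, and by the energy function axiom $H(\ft_i(b\otimes b')) = H(b\otimes b')$ for $i \neq 0$, the left-hand side of (\ref{reduced definition equation}) for the affected pairs is unchanged (note $H(b_{r+1}\otimes b_r)$ on the right is fixed). However, one must be careful: $\ft_i$ might act on column $y_k$ but the \emph{tensor-product signature rule} could in principle route the block to a different column than in the isolated column; the point is that the signature rule is exactly consistent with the crystal structure on $C \otimes C$, so $\et_i(Y)$ and $\ft_i(Y)$ agree with applying the tensor-product crystal operators to the underlying sequence. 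Hence if $(y_{k+1}, y_k)$ and $(y_k, y_{k-1})$ were reduced and the operator changes $y_k \mapsto \ft_i y_k$ (as a column), reducedness is immediate because nothing in (\ref{reduced definition equation}) changes.

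For $i = 0$: now $\ft_0$ adds a $0$-block, changing $|y_k|_0$ by $+1$ for exactly one $k$, and by the energy axiom $H(\ft_0(b\otimes b'))$ equals $H(b\otimes b') + 1$ if the block is added to the left factor and $H(b\otimes b') - 1$ if added to the right factor. So for the pair $(y_{k+1}, y_k)$: $y_k$ is the right factor, $|y_k|_0$ increases by $1$, and $H(y_{k+1}\otimes y_k)$ decreases by $1$ (since the signature rule adding to $y_k$ corresponds to acting on the right tensor factor, which happens precisely when $\varphi_0(y_{k+1}) \leq \varepsilon_0(y_k)$ in the reduced-down signature) — so $H(y_{k+1}\otimes y_k) + |y_k|_0$ is unchanged, hence the whole left side of (\ref{reduced definition equation}) for this pair is unchanged. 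Symmetrically, for the pair $(y_k, y_{k-1})$: $y_k$ is the left factor, $|y_k|_0$ increases by $1$, $H(y_k \otimes y_{k-1})$ increases by $1$, and $-|y_{k+1}|_0 + |y_k|_0$ with $y_k$ now in the "$y_{r+1}$" slot gains $+1$ while $H$ gains $+1$... one must chase the signs here carefully using Definition \ref{reduced definition}: in the pair $(y_k, y_{k-1})$ the column $y_k$ plays the role of $y_{r+1}$, so (\ref{reduced definition equation}) reads $H(y_k \otimes y_{k-1}) - |y_k|_0 + |y_{k-1}|_0 = H(b_k \otimes b_{k-1})$, and adding a $0$-block to $y_k$ bumps both $H(y_k \otimes y_{k-1})$ by $+1$ and $|y_k|_0$ by $+1$, leaving the left side invariant. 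The same argument with signs reversed handles $\et_0$.

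The main obstacle I expect is the bookkeeping connecting the \emph{global} signature rule on the infinite sequence $\presign_i(Y)$ — which after cancellation tells us which single column actually gets modified — to the \emph{local} energy axiom, i.e.\ verifying that "the rightmost surviving $-$ lies in column $y_k$" forces the $0$-block to be added to the correct tensor factor in each of the two adjacent pairs simultaneously. This requires knowing that reducedness of the whole wall implies the partial signatures $\presign_0(y_{k-1}), \presign_0(y_k y_{k-1}), \dots$ interact consistently; concretely, one uses that $\Haff$ is constant on connected components (Lemma \ref{Haff constant on components}) together with the fact, recorded in the discussion after Definition \ref{reduced definition}, that reducedness pins down $|y_r|_0 - |y_{r+1}|_0$ uniquely given the equivalence classes. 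I would also need to check the two anomalous length-$2$ strings in type $\Eaff{8}$ (through $y_i$ and $\emptyset$) do not cause trouble, but by the remark following Proposition \ref{E8 Young column isomorphism} these behave as isolated $i$-strings with no side arrows, so the signature argument goes through verbatim. Once these consistency points are nailed down, the proof is a short finite case-check.
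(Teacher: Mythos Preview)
Your proposal is correct and follows essentially the same approach as the paper: identify the column $k$ on which $\ft_i$ acts via the signature rule, then check that reducedness is preserved on the two affected adjacent pairs by controlling how $H$ and $|y_k|_0$ change. The only organizational difference is that the paper avoids your $i=0$ versus $i\neq 0$ case split by lifting both adjacent pairs to $\Caff\otimes\Caff$ and invoking Lemma~\ref{Haff constant on components} (constancy of $\Haff$ on connected components) once, which packages the energy-function bookkeeping uniformly; the signature analysis ensuring $\ft_i$ hits the correct tensor factor in each pair is identical in both arguments.
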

\begin{proof}
    Let $Y = (y_{r})_{r=0}^{\infty}$ be a reduced Young wall.
    If $\ft_{i}(Y)=0$ then we are done, so instead suppose that $\ft_{i}(Y) = (\dots,y_{k+1},z_{k},y_{k-1},\dots,y_{0})$ where $z_{k}$ is obtained by adding an $i$-block to $y_{k}$.
    Both $\presign_{i}(Y)$ and $\sign_{i}(Y)$ have at least one $+$ in column $k$, and no $+$ in column $k+1$.
    In $\presign_{i}(Y)$ there are strictly fewer $-$'s in column $k-1$ than $+$'s in column $k$, and hence there is no $-$ in column $k-1$ of $\sign_{i}(Y)$.
    From the formulae (\ref{tensor product of crystals}) for the tensor product of crystals, we therefore have
    \begin{align*}
        \ft_{i}(z^{-|y_{k+1}|_{0}}y_{k+1}\otimes z^{-|y_{k}|_{0}}y_{k}) &= z^{-|y_{k+1}|_{0}}y_{k+1}\otimes z^{-|z_{k}|_{0}}z_{k}, \\
        \ft_{i}(z^{-|y_{k}|_{0}}y_{k}\otimes z^{-|y_{k-1}|_{0}}y_{k-1}) &= z^{-|z_{k}|_{0}}z_{k}\otimes z^{-|y_{k-1}|_{0}}y_{k-1}.
    \end{align*}
    By Lemma \ref{Haff constant on components} it follows that
    \begin{align*}
        \Haff(z^{-|y_{k+1}|_{0}}y_{k+1}\otimes z^{-|z_{k}|_{0}}z_{k}) &= \Haff(z^{-|y_{k+1}|_{0}}y_{k+1}\otimes z^{-|y_{k}|_{0}}y_{k}), \\
        \Haff(z^{-|y_{k}|_{0}}y_{k}\otimes z^{-|y_{k-1}|_{0}}y_{k-1}) &= \Haff(z^{-|z_{k}|_{0}}z_{k}\otimes z^{-|y_{k-1}|_{0}}y_{k-1}).
    \end{align*}
    From equation (\ref{Haff definition}) which defines $\Haff$ we can deduce that since $(y_{k+1},y_{k})$ and $(y_{k},y_{k-1})$ are reduced, so are $(y_{k+1},z_{k})$ and $(z_{k},y_{k-1})$.
    All other pairs of adjacent columns in $\ft_{i}(Y)$ are the same as those in $Y$ and thus satisfy (\ref{reduced definition equation}), hence $\ft_{i}(Y)$ is a reduced Young wall.
    The case of $\et_{i}(Y)$ is similar.
\end{proof}

If we further define
\begin{itemize}
    \item $\varepsilon_{i}(Y) =$ number of $-$'s in $\sign_{i}(Y)$
    \item $\varphi_{i}(Y) =$ number of $+$'s in $\sign_{i}(Y)$
    \item $\wt(Y) = \Lambda_{0} - \sum_{i\in I} k_{i} \alpha_{i}$
\end{itemize}
where $k_{i}$ is the number of $i$-blocks in $Y$ added to the ground state wall, then a routine check proves the following.

\begin{thm}
    The maps $\et_{i},\ft_{i} : \Ycal(\Lambda_{0}) \rightarrow \Ycal(\Lambda_{0})\sqcup\lbrace 0\rbrace$, $\varepsilon_{i},\varphi_{i} : \Ycal(\Lambda_{0}) \rightarrow \Zbb\cup\lbrace -\infty\rbrace$ and $\wt : \Ycal(\Lambda_{0}) \rightarrow P$ above endow $\Ycal(\Lambda_{0})$ with the structure of an affine crystal.
\end{thm}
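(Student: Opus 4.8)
The plan is to verify directly that the seven axioms in the definition of an affine crystal hold for the tuple $(\Ycal(\Lambda_{0}), \et_{i}, \ft_{i}, \varepsilon_{i}, \varphi_{i}, \wt)$ just defined, using the signature combinatorics and the fact (Proposition \ref{reduced walls closed under e and f proposition}) that $\et_{i}$ and $\ft_{i}$ preserve $\Ycal(\Lambda_{0}) \sqcup \lbrace 0 \rbrace$. First I would record the basic bookkeeping: by construction $\varepsilon_{i}(Y)$ and $\varphi_{i}(Y)$ are the numbers of uncancelled $-$'s and $+$'s in $\sign_{i}(Y)$, both finite since $Y$ differs from the ground state wall in finitely many blocks, so these maps land in $\Nbb \subset \Zbb \cup \lbrace -\infty \rbrace$ and the last axiom (the $\varphi_{i} = -\infty$ case) is vacuous. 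The weight $\wt(Y) = \Lambda_{0} - \sum_{i} k_{i}\alpha_{i} \in P$ is well defined because the $k_{i}$ are finite.

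Next I would check the weight compatibility axioms. Applying $\ft_{i}$ adds a single $i$-block (to the column with the leftmost uncancelled $+$), so $k_{i}$ increases by $1$ and all other $k_{j}$ are unchanged, giving $\wt(\ft_{i}Y) = \wt(Y) - \alpha_{i}$ whenever $\ft_{i}Y \neq 0$; the $\et_{i}$ statement is symmetric. For $\varphi_{i}(Y) - \varepsilon_{i}(Y) = \langle h_{i}, \wt(Y)\rangle$ I would argue locally: the quantity $\varphi_{i}(y_{r}) - \varepsilon_{i}(y_{r}) = \langle h_{i}, \wt(y_{r})\rangle$ holds in the Young column crystal $C$ (it is an affine/classical crystal by Propositions \ref{E6 and E7 Young column isomorphisms} and \ref{E8 Young column isomorphism}), and summing the signed counts over columns, the $+-$ cancellations in forming $\sign_{i}(Y)$ from $\presign_{i}(Y)$ preserve the difference (number of $+$'s minus number of $-$'s), so $\varphi_{i}(Y) - \varepsilon_{i}(Y) = \sum_{r}(\varphi_{i}(y_{r}) - \varepsilon_{i}(y_{r})) = \sum_{r}\langle h_{i}, \wt(y_{r})\rangle$, and comparing with the block count defining $\wt(Y)$ gives the claim. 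Then the behaviour of $\varepsilon_{i}, \varphi_{i}$ under $\et_{i}, \ft_{i}$: applying $\ft_{i}$ to the column with the leftmost uncancelled $+$ deletes exactly that $+$ from $\sign_{i}(Y)$ and (since it was leftmost and uncancelled) leaves the rest of the cancelled signature untouched, so $\varphi_{i}$ drops by $1$ and $\varepsilon_{i}$ rises by $1$; here I would need the small lemma that adding an $i$-block to $y_{r}$ changes $\sign_{i}(y_{r})$ from $-^{\varepsilon_{i}}+^{\varphi_{i}}$ to $-^{\varepsilon_{i}+1}+^{\varphi_{i}-1}$, which is immediate from $\varphi_{i}(y_{r}) = \max\lbrace n : \ft_{i}^{n}y_{r} \neq 0\rbrace$ and the analogous $\varepsilon_{i}$ formula in $C$.

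The remaining axiom, $\ft_{i}Y = Y' \iff Y = \et_{i}Y'$, is the one requiring genuine care, and I expect it to be the main obstacle. The content is that the column selected by ``leftmost uncancelled $+$ in $\sign_{i}(Y)$'' and the column selected by ``rightmost uncancelled $-$ in $\sign_{i}(Y')$'' are the same column, and that adding then removing an $i$-block there returns the original wall. This is the standard tensor-product signature argument: I would show that if $Y'$ is obtained from $Y$ by adding an $i$-block to column $k$ via the $\ft_{i}$ rule, then in passing to $\sign_{i}(Y')$ the newly created $-$ in column $k$ survives cancellation and is the rightmost surviving $-$ (because no uncancelled $-$ lay weakly to the right of the chosen $+$ in $\sign_{i}(Y)$, by definition of ``leftmost''), so $\et_{i}$ acts back on column $k$; then invoke that $\et_{i}$ and $\ft_{i}$ are mutually inverse on a single Young column (true in $C$), and that all other columns are unchanged. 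The delicate point in types $\Eaff{6}$ and $\Eaff{7}$ versus $\Eaff{8}$ is that in $\Eaff{8}$ the caveats of Definition \ref{E8 Young column crystal structure definition} govern which of two addable $i$-blocks is added, but since these caveats are exactly the rules making $\et_{i}, \ft_{i}$ mutually inverse on columns in $C$, the column-level inversion still holds and the wall-level argument goes through unchanged. Once this axiom is established, all seven conditions hold and $\Ycal(\Lambda_{0})$ is an affine crystal.
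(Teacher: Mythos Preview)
Your proposal is correct and is precisely the ``routine check'' the paper alludes to without writing out; the paper gives no further proof beyond that phrase. Your detailed verification of the crystal axioms via the standard signature combinatorics is exactly what is intended, and your treatment of the $\Eaff{8}$ caveats is appropriate.
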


Note that it follows from Corollary \ref{highest weight built on ground state wall corollary} and the definitions of $\et_{i}$ and $\ft_{i}$ that the ground state wall is the highest weight element of $\Ycal(\Lambda_{0})$.
As an example, we have included the top part of the crystal $\Ycal(\Lambda_{0})$ in type $\Eaff{6}$ in Appendix \ref{E6 top part of Young wall crystal appendix}.

\begin{thm}
    There is an isomorphism of affine crystals $\Ycal(\Lambda_{0}) \xrightarrow{\sim} B(\Lambda_{0})$ sending the ground state wall to the highest weight element $u_{\Lambda_{0}}$.
\end{thm}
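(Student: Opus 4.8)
The plan is to factor the desired isomorphism through the path realization of Theorem \ref{path realization theorem}, using the Young column model $\psi : B \xrightarrow{\sim} C$ furnished by Propositions \ref{E6 and E7 Young column isomorphisms} and \ref{E8 Young column isomorphism}. Concretely, I would define a map $\Theta : \Ycal(\Lambda_{0}) \to \Pcal(\Lambda_{0})$ sending a reduced Young wall $Y = (y_{r})_{r=0}^{\infty}$ to the sequence $(\psi^{-1}(y_{r}))_{r=0}^{\infty}$, where each column $y_{r}$ is regarded up to equivalence as an element of $C$. The first thing to check is that $\Theta$ lands in $\Pcal(\Lambda_{0})$: since $Y$ differs from the ground state wall in only finitely many blocks, $y_{r}$ is the ground state column for $r \gg 0$, so $\psi^{-1}(y_{r}) = b_{r}$ for $r \gg 0$, exactly as required of a $\Lambda_{0}$-path. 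It then remains to verify that $\Theta$ is a bijection and that it intertwines the operators $\et_{i}, \ft_{i}$; composing with $\Psi_{\Lambda_{0}}^{-1}$ then yields the statement, with the ground state wall mapping to $\pb_{\Lambda_{0}}$ and hence to $u_{\Lambda_{0}}$.

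For bijectivity, the key point is that the reducedness condition (\ref{reduced definition equation}) rigidly controls how the columns of a reduced Young wall sit relative to one another. Given the equivalence classes of any two adjacent columns $y_{r+1}, y_{r}$, Definition \ref{reduced definition} forces a unique value of $|y_{r}|_{0} - |y_{r+1}|_{0}$, which by inspection of the Young column patterns (Figures \ref{E6 and E7 Young column patterns} and \ref{E8 Young column pattern}) pins down the vertical offset between the two columns. Injectivity of $\Theta$ is then immediate, since a reduced Young wall is determined by its sequence of column equivalence classes together with these forced relative positions and the requirement that it agree with the ground state wall far to the left. For surjectivity I would start from a path $\pb = (p_{r})$, set $y_{r}$ to be the ground state column for $r \gg 0$ (where $p_{r} = b_{r}$), and then work leftwards, placing a representative of $\psi(p_{r})$ at the unique height making $(y_{r+1}, y_{r})$ reduced; Proposition \ref{difference in added blocks for reduced adjacent columns} together with the right block property (Proposition \ref{highest weight right block property proposition}) and Corollary \ref{highest weight built on ground state wall corollary} guarantees that the resulting stacking differs from the ground state wall in finitely many blocks, so it is a genuine element of $\Ycal(\Lambda_{0})$ mapping to $\pb$.

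To see that $\Theta$ commutes with $\et_{i}$ and $\ft_{i}$, I would match the signature rule defining the crystal structure on $\Ycal(\Lambda_{0})$ with the tensor product rule (\ref{tensor product of crystals}) underlying (\ref{crystal structure on paths}). Since $\psi$ is an isomorphism of classical crystals we have $\varepsilon_{i}(y_{r}) = \varepsilon_{i}(p_{r})$ and $\varphi_{i}(y_{r}) = \varphi_{i}(p_{r})$, so the pre-$i$-signature $\presign_{i}(Y)$ is exactly the string of signs obtained by concatenating the $i$-signatures of the tensor factors $p_{r}$ of the corresponding path, and the $+-$ cancellation leaving the finite $i$-signature $\sign_{i}(Y)$ is precisely the cancellation that computes the action of $\et_{i}, \ft_{i}$ on an iterated tensor product. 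Consequently $\et_{i}$ (resp. $\ft_{i}$) acts on $Y$ in the same column in which it acts on the corresponding tensor factor of $\Theta(Y)$, and the Young column crystal structure adds or removes an $i$-block there exactly as $\psi$ transports the action of $\et_{i}, \ft_{i}$ on $B$; that the image is again reduced, so that $\Theta$ is defined on it, is Proposition \ref{reduced walls closed under e and f proposition}. Matching of weights can then be checked directly (the classical weight of $Y$ is $\Lambda_{0}$ minus the total colour content, agreeing with $\wt$ computed from (\ref{crystal structure on paths}), and the $\delta$-contribution in (\ref{affine weights on paths}) telescopes via reducedness to the count of added $0$-blocks), or deduced from connectedness of both crystals and the fact that their highest weight elements have weight $\Lambda_{0}$ and $\et_{i}, \ft_{i}$ transform weights identically.

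The main obstacle I anticipate is the bijectivity argument, and within it the bookkeeping showing that reconstructing a Young wall from a path produces a configuration differing from the ground state wall in only finitely many blocks: one must combine the rigidity of the reducedness condition with the right block property and the non-negativity in Proposition \ref{difference in added blocks for reduced adjacent columns} to rule out an escaping column that would force infinitely many discrepancies. The remaining verifications — that $\presign_{i}$ cancels correctly and that the column-level action of $\et_{i}, \ft_{i}$ matches $\psi$ — are essentially bookkeeping, already prepared for by Definition \ref{reduced definition} and Proposition \ref{reduced walls closed under e and f proposition}.
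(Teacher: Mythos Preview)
Your proposal is correct and follows essentially the same route as the paper: define $\Theta(y_r)_r = (\psi^{-1}(y_r))_r$, check it intertwines the crystal structure via the signature/tensor-product correspondence, and prove bijectivity using the rigidity of the reducedness condition (Proposition~\ref{difference in added blocks for reduced adjacent columns}). The only point where you overcomplicate things is the ``main obstacle'' you flag for surjectivity: since you fix $y_r$ to be the ground state column for all $r > k$ (where $k$ is maximal with $p_k \neq b_k$) and then only modify columns $0,\dots,k$, finiteness of the discrepancy with the ground state wall is automatic and does not require invoking the right block property or Corollary~\ref{highest weight built on ground state wall corollary}.
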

\begin{proof}
    From the path realization, we only need to prove that $\Ycal(\Lambda_{0})$ and $\Pcal(\Lambda_{0})$ are isomorphic.
    Define a map $\Psi : \Ycal(\Lambda_{0}) \rightarrow \Pcal(\Lambda_{0})$ by
    \begin{align*}
        (y_{r})_{r=0}^{\infty} \mapsto (\psi^{-1}(y_{r}))_{r=0}^{\infty}
    \end{align*}
    where $\psi : B \xrightarrow{\sim} C$ is as in Propositions \ref{E6 and E7 Young column isomorphisms} and \ref{E8 Young column isomorphism}.
    The ground state wall is clearly sent to the ground state sequence $\pb_{\lambda}$, and by comparing the crystal structure of $\Ycal(\Lambda_{0})$ with (\ref{tensor product of crystals}) it is easy to see that $\Psi$ commutes with the actions of all $\et_{i}$, $\ft_{i}$, $\varepsilon_{i}$, $\varphi_{i}$ and $\wt$.
    \\

    If reduced Young walls $(y_{r})_{r=0}^{\infty}$ and $(z_{r})_{r=0}^{\infty}$ in $\Ycal(\Lambda_{0})$ are mapped by $\Psi$ to the same sequence in $\Pcal(\Lambda_{0})$, then $y_{r}$ and $z_{r}$ must represent the same Young column equivalence class for all $r\in\Nbb$.
    But since both Young walls are reduced, all $|y_{r}| - |y_{r+1}| = |z_{r}| - |z_{r+1}|$ by Proposition \ref{difference in added blocks for reduced adjacent columns}.
    So as $|y_{r}| = |z_{r}| = 0$ for $r\gg 0$ it follows that $|y_{r}| = |z_{r}|$ for all $r\in\Nbb$ and hence $\Psi$ is injective.
    \\

    Given a path $\pb = (p_{r})_{r=0}^{\infty}$ in $\Pcal(\Lambda_{0})$ let $k$ be maximal with $p_{k}\not= b_{k}$.
    We can form a reduced Young wall $(y_{r})_{r=0}^{\infty}$ by first specifying that it matches the ground state wall after column $k$, and then recursively from $r=k$ down to $r=0$ choosing $y_{r}$ so that $(y_{r+1},y_{r})$ is reduced and $\psi^{-1}(y_{r}) = p_{r}$.
    Such a choice exists and is unique by Proposition \ref{difference in added blocks for reduced adjacent columns}.
    Moreover, $\Psi$ maps this Young wall to $\pb$ by construction.
    So $\Psi$ is surjective and our proof is complete.
\end{proof}

\begin{rmk}
    As mentioned in Remark \ref{removing first columns of reduced Young walls remark}, we have analogues of all of these results for the other level $1$ dominant weights $\Lambda_{i}$ simply by removing one or two columns from the Young wall pattern and ground state wall.
    The only other edit is to replace $\Lambda_{0}$ with $\Lambda_{i}$ in the definition of the weight function.
\end{rmk}

\section{Fock space crystals in type E} \label{Fock space section}

Recall the following preliminaries from Section \ref{Fock space preliminaries}.
\begin{itemize}
    \item Starting with a good $\Udash$-module with level $1$ perfect crystal basis $B$, we can construct a Fock space representation $\Fcal(\Lambda_{i})$ of $\Uaff$ for each level $1$ dominant integral weight $\Lambda_{i}\in\Pbar^{+}$.
    \item There is an associated ground state sequence $\sbold_{\Lambda_{i}} = (g_{r})_{r=0}^{\infty} = (z^{m_{r}}b_{r})_{r=0}^{\infty}$ in $\Baff$, where $\pb_{\Lambda_{i}} = (b_{r})_{r=0}^{\infty}$ is the ground state sequence for $B(\Lambda_{i})$ and the $m_{r}\in\Zbb$ are chosen so that all $\Haff(z^{m_{r+1}}b_{r+1}\otimes z^{m_{r}}b_{r}) = 1$.
    \item A sequence $\sbold = (s_{r})_{r=0}^{\infty}$ in $\Baff$ is normally ordered if all $H(s_{r+1}\otimes s_{r})>0$.
    \item The crystal basis $B(\Fcal(\Lambda_{i}))$ consists of the normally ordered sequences $\sbold = (s_{r})_{r=0}^{\infty}$ in $\Baff$ with $s_{r} = g_{r}$ for $r\gg 0$, and has an affine crystal structure given by (\ref{crystal structure on paths}) and (\ref{affine weights on paths}).
\end{itemize}

As in Section \ref{Irreducible highest weight section} we will often denote the crystals $B_{6}$, $B_{7}$ and $B_{8}$ by $B$ and the crystals $C_{6}$, $C_{7}$ and $C_{8}$ by $C$, provided it is clear which type(s) we are referring to.
From the ground state sequences $\pb_{\lambda}$ and energy function values in Table \ref{highest weight ground state sequences table}, together with equation (\ref{Haff definition}), we derive ground state sequences $\sbold_{\lambda}$ in each case.

\begin{table}[H]
\begin{center}
\begin{tabular}{|c|c|c|}
\hline
Type &
\begin{tabular}{@{}c@{}}
$\lambda\in\Pbar^{+}$ \\[1pt] of level $1$
\end{tabular}
&
Ground state sequence $\sbold_{\lambda} = (g_{r})_{r=0}^{\infty}$
\\
\hline
& & \\[-11pt]
\hline
& & \\[-11pt]
$\Eaff{6}$
&
\begin{tabular}{@{}c@{}}
$\Lambda_{0}$ \\[1pt] $\Lambda_{1}$ \\[1pt] $\Lambda_{6}$
\end{tabular}
&
\begin{tabular}{@{}c@{}}
$\dots \otimes z^{-3}\hspace{.1em}\overline{0}1 \otimes z^{-2}\hspace{.1em}\overline{1}6 \otimes z^{-1}\hspace{.1em}\overline{6}0 \otimes z^{-2}\hspace{.1em}\overline{0}1 \otimes z^{-1}\hspace{.1em}\overline{1}6 \otimes z^{0}\hspace{.1em}\overline{6}0 \hspace{.6em}$ \\[1pt]
$\dots \otimes z^{-3}\hspace{.1em}\overline{1}6 \otimes z^{-2}\hspace{.1em}\overline{6}0 \otimes z^{-3}\hspace{.1em}\overline{0}1 \otimes z^{-2}\hspace{.1em}\overline{1}6 \otimes z^{-1}\hspace{.1em}\overline{6}0 \otimes z^{-2}\hspace{.1em}\overline{0}1$ \\[1pt]
$\dots \otimes z^{-2}\hspace{.1em}\overline{6}0 \otimes z^{-3}\hspace{.1em}\overline{0}1 \otimes z^{-2}\hspace{.1em}\overline{1}6 \otimes z^{-1}\hspace{.1em}\overline{6}0 \otimes z^{-2}\hspace{.1em}\overline{0}1 \otimes z^{-1}\hspace{.1em}\overline{1}6$
\end{tabular} \\
\hline
& & \\[-11pt]
$\Eaff{7}$
&
\begin{tabular}{@{}c@{}}
$\Lambda_{0}$ \\[1pt] $\Lambda_{7}$
\end{tabular}
&
\begin{tabular}{@{}c@{}}
$\dots \otimes z^{-4}\hspace{.1em}\overline{0}7 \otimes z^{-2}\hspace{.1em}\overline{7}0 \otimes z^{-3}\hspace{.1em}\overline{0}7 \otimes z^{-1}\hspace{.1em}\overline{7}0 \otimes z^{-2}\hspace{.1em}\overline{0}7 \otimes z^{0}\hspace{.1em}\overline{7}0 \hspace{.6em}$ \\[1pt]
$\dots \otimes z^{-3}\hspace{.1em}\overline{7}0 \otimes z^{-4}\hspace{.1em}\overline{0}7 \otimes z^{-2}\hspace{.1em}\overline{7}0 \otimes z^{-3}\hspace{.1em}\overline{0}7 \otimes z^{-1}\hspace{.1em}\overline{7}0 \otimes z^{-2}\hspace{.1em}\overline{0}7$
\end{tabular} \\
\hline
& & \\[-11pt]
$\Eaff{8}$
&
$\Lambda_{0}$
&
$\dots \otimes \hspace{.25em} z^{-5}\hspace{.1em}\emptyset \hspace{.25em} \otimes \hspace{.25em} z^{-4}\hspace{.1em}\emptyset \hspace{.25em} \otimes \hspace{.25em} z^{-3}\hspace{.1em}\emptyset \hspace{.25em} \otimes \hspace{.25em} z^{-2}\hspace{.1em}\emptyset \hspace{.25em} \otimes \hspace{.25em} z^{-1}\hspace{.1em}\emptyset \hspace{.25em} \otimes \hspace{.25em} z^{0}\hspace{.1em}\emptyset \hspace{.25em} \hspace{.6em}$
\\[1pt]
\hline
\end{tabular}
\caption{\hspace{.5em}Ground state sequences for Fock space crystals}\label{Fock space ground state sequences table}
\end{center}
\end{table}

\begin{rmk}
    As in Remark \ref{removing first columns of reduced Young walls remark}, Young wall models for all other Fock space crystals $B(\Fcal(\Lambda_{i}))$ can be obtained by removing the first one or two columns from the model for $B(\Fcal(\Lambda_{0}))$, and so we only need to consider $B(\Fcal(\Lambda_{0}))$ for the remainder of this section.
\end{rmk}

Recall that Young columns stacked inside the patterns of Figures \ref{E6 and E7 Young column patterns} and \ref{E8 Young column pattern} provide combinatorial models for $\Baff$.
Lining up each Young column $\psi_{\mathrm{aff}}(g_{r})$ at the same height and orientation (so they occupy the same spaces within each vertical strip) we obtain the \emph{Young column pattern} and \emph{ground state wall} for $\Fcal(\Lambda_{0})$ in each type, which are precisely the same as those in Figures \ref{Young wall patterns} and \ref{Ground state walls}.
\\

Young walls in the Fock space situation are defined exactly as in Definition \ref{Young wall definition}.
But here we write them as sequences $(z^{n_{r}}y_{r})_{r=0}^{\infty}$ where all $y_{r}\in C$ and $n_{r}\in\Zbb$, and hence each $z^{n_{r}}y_{r}$ is a Young column (not up to equivalence) in $\Caff$.
Let $\Zcal(\Lambda_{0})$ be the set of Young walls $(z^{n_{r}}y_{r})_{r=0}^{\infty}$ with all $\Haff(z^{n_{r+1}}y_{r+1} \otimes z^{n_{r}}y_{r}) > 0$.
It is clear that every $\sbold = (s_{r})_{r=0}^{\infty}$ in $B(\Fcal(\Lambda_{0}))$ can be represented by an element of $\Zcal(\Lambda_{0})$ since each $s_{r}\in\Baff$ and only finitely many $s_{r}\not= g_{r}$.
\\

Moreover $\Zcal(\Lambda_{0})$ has an affine crystal structure defined exactly as for $\Ycal(\Lambda_{0})$ in Section \ref{Irreducible highest weight section}, using the same notions of pre-$i$-signatures and $i$-signatures:
\begin{itemize}
    \item $\et_{i}$ acts on the column corresponding to the rightmost $-$ in $\sign_{i}(Y)$
    \item $\ft_{i}$ acts on the column corresponding to the leftmost $+$ in $\sign_{i}(Y)$
    \item $\varepsilon_{i}(Y) =$ number of $-$'s in $\sign_{i}(Y)$
    \item $\varphi_{i}(Y) =$ number of $+$'s in $\sign_{i}(Y)$
    \item $\wt(Y) = \Lambda_{0} - \sum_{i\in I} k_{i} \alpha_{i}$
\end{itemize}
where $k_{i}$ is the difference in the number of $i$-blocks between $Y$ and the ground state wall.
A very similar proof to Proposition \ref{reduced walls closed under e and f proposition} shows that $\et_{i}$ and $\ft_{i}$ either map to $0$ or preserve the values of $\Haff(z^{n_{r+1}}y_{r+1} \otimes z^{n_{r}}y_{r})$, and thus $\et_{i},\ft_{i} : \Zcal(\Lambda_{0}) \rightarrow \Zcal(\Lambda_{0})\sqcup\lbrace 0\rbrace$.
Furthermore, the same routine check as for $\Ycal(\Lambda_{0})$ verifies the following.

\begin{thm}
    The maps $\et_{i},\ft_{i} : \Zcal(\Lambda_{0}) \rightarrow \Zcal(\Lambda_{0})\sqcup\lbrace 0\rbrace$, $\varepsilon_{i},\varphi_{i} : \Zcal(\Lambda_{0}) \rightarrow \Zbb\cup\lbrace -\infty\rbrace$ and $\wt : \Zcal(\Lambda_{0}) \rightarrow P$ above endow $\Zcal(\Lambda_{0})$ with the structure of an affine crystal.
\end{thm}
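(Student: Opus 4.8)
The plan is to verify each axiom in the definition of an affine crystal (from Section~\ref{crystals preliminaries}) directly for the maps defined on $\Zcal(\Lambda_{0})$, exactly as one would for $\Ycal(\Lambda_{0})$, and to reuse the structural work already done in the preceding paragraph. The key inputs are: (i) $\et_{i},\ft_{i}$ genuinely land in $\Zcal(\Lambda_{0})\sqcup\lbrace 0\rbrace$ (already asserted via the analogue of Proposition~\ref{reduced walls closed under e and f proposition}); (ii) the pre-$i$-signature / $i$-signature cancellation procedure is precisely the combinatorial shadow of the iterated tensor product rule~(\ref{tensor product of crystals}) applied to the sequence of Young columns $(z^{n_{r}}y_{r})_{r=0}^{\infty}$, read right to left, with entries in $\Caff$; and (iii) each individual Young column, as an element of $\Caff$, already satisfies the affine crystal axioms (this is the content of the Young column realizations, Propositions~\ref{E6 and E7 Young column isomorphisms} and~\ref{E8 Young column isomorphism} together with their affinizations).

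First I would fix $Y=(z^{n_{r}}y_{r})_{r=0}^{\infty}\in\Zcal(\Lambda_{0})$ and choose $N$ large enough that $z^{n_{r}}y_{r}=g_{r}$ for all $r\geq N$; then $\sign_{i}(Y)$ only depends on the finite initial segment $z^{n_{N}}y_{N}\otimes\dots\otimes z^{n_{0}}y_{0}$ (appending the ground-state tail contributes no uncancelled signs, since the ground-state sequence is normally ordered and its columns are fixed). This reduces every axiom check to a statement about a finite tensor product of affine crystals, where the tensor product $B\mapsto B\otimes B'$ is associative and the axioms are preserved under $\otimes$ by~(\ref{tensor product of crystals}). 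Concretely: the weight identity $\wt(Y)=\Lambda_{0}-\sum_{i}k_{i}\alpha_{i}$ together with $\wt(\ft_{i}Y)=\wt(Y)-\alpha_{i}$ follows because applying $\ft_{i}$ adds exactly one $i$-block; the relation $\varphi_{i}(Y)-\varepsilon_{i}(Y)=\langle h_{i},\wt(Y)\rangle$ follows from the corresponding relation on the finite tensor product $z^{n_{N}}y_{N}\otimes\dots\otimes z^{n_{0}}y_{0}$, using $\langle h_{i},\delta\rangle=0$ so that the $z$-shifts do not affect it; the counting identities for $\varepsilon_{i},\varphi_{i}$ under $\et_{i},\ft_{i}$ and the mutual inverse property $\ft_{i}b=b'\iff b=\et_{i}b'$ follow from the signature-cancellation description, since $\et_{i}$ flips the rightmost surviving $-$ to a $+$ and $\ft_{i}$ flips the leftmost surviving $+$ to a $-$, operations which are clearly inverse to one another on the cancelled signature. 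Finally, whenever $\varphi_{i}(Y)=-\infty$ (which can only happen if some column has $\varphi_{i}=-\infty$, i.e. in type $\Eaff{8}$ for the exceptional columns of Definition~\ref{E8 Young column crystal structure definition}), the caveat clauses in that definition force $\et_{i}Y=\ft_{i}Y=0$, matching the last axiom.

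The main obstacle, and the only place requiring genuine care, is making the reduction to finite tensor products airtight in type $\Eaff{8}$: there the local crystal $C_{8}$ has $i$-strings of length $>1$ and the extra ordering caveats in Definition~\ref{E8 Young column crystal structure definition}, so one must check that the signature rule still correctly computes $\et_{i},\ft_{i}$ on a wall, i.e. that acting on ``the column of the rightmost $-$'' is compatible with the within-column ordering rule for which of two addable/removable $i$-blocks is used. This is handled by noting that the caveats are engineered precisely so that $C_{8}$ (and hence $(C_{8})_{\aff}$) is an honest affine crystal with the stated $\varepsilon_{i},\varphi_{i}$, so the per-column maps already obey all the axioms, and then the global statement is inherited from the tensor product — exactly as in the $\Ycal(\Lambda_{0})$ case, which is why the theorem can be dispatched with ``the same routine check.'' I would therefore present the proof as: observe the finite-support reduction, invoke associativity and axiom-preservation of $\otimes$ from~(\ref{tensor product of crystals}), cite the affine crystal structure on each $z^{n_{r}}y_{r}\in\Caff$, and conclude; with a one-line remark that the $\Eaff{8}$ caveats guarantee the $\varphi_{i}=-\infty$ axiom.

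\begin{proof}
    This is verified in exactly the same way as for $\Ycal(\Lambda_{0})$ in Section~\ref{Irreducible highest weight section}.
    Given $Y = (z^{n_{r}}y_{r})_{r=0}^{\infty} \in \Zcal(\Lambda_{0})$, choose $N\in\Nbb$ with $z^{n_{r}}y_{r} = g_{r}$ for all $r\geq N$.
    Since the ground state sequence $\sbold_{\Lambda_{0}}$ is normally ordered, its tail contributes no uncancelled signs, so $\sign_{i}(Y)$ depends only on the finite initial segment $z^{n_{N}}y_{N}\otimes\dots\otimes z^{n_{0}}y_{0}$, whose entries lie in $\Caff$.
    The signature-cancellation rule defining $\et_{i}$, $\ft_{i}$, $\varepsilon_{i}$ and $\varphi_{i}$ on $Y$ is precisely the one obtained from iterating the tensor product formulae (\ref{tensor product of crystals}) on this finite tensor product, read from right to left.
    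Each $z^{n_{r}}y_{r} \in \Caff = (C_{6})_{\aff}$, $(C_{7})_{\aff}$ or $(C_{8})_{\aff}$ carries an affine crystal structure (Propositions \ref{E6 and E7 Young column isomorphisms} and \ref{E8 Young column isomorphism} together with their affinizations), and the tensor product of affine crystals is again an affine crystal, so all axioms hold for the finite segment.
    The identities $\varphi_{i}(Y) - \varepsilon_{i}(Y) = \langle h_{i},\wt(Y)\rangle$ and $\wt(\et_{i}Y) = \wt(Y) + \alpha_{i}$, $\wt(\ft_{i}Y) = \wt(Y) - \alpha_{i}$ follow from the corresponding relations on the finite segment, using $\langle h_{i},\delta\rangle = 0$ so that the $z$-shifts do not interfere, together with the fact that $\et_{i}$ (resp. $\ft_{i}$) removes (resp. adds) exactly one $i$-block.
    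The counting relations for $\varepsilon_{i}$, $\varphi_{i}$ under $\et_{i}$, $\ft_{i}$ and the mutual inverse property $\ft_{i}Y = Y' \iff Y = \et_{i}Y'$ follow from the signature description, since $\et_{i}$ flips the rightmost surviving $-$ and $\ft_{i}$ flips the leftmost surviving $+$, which are inverse operations on the cancelled signature.
    Finally, when $\varphi_{i}(Y) = -\infty$ -- which in type $\Eaff{8}$ can only occur via the exceptional columns -- the caveats in Definition \ref{E8 Young column crystal structure definition} ensure $\et_{i}Y = \ft_{i}Y = 0$, which is the last axiom.
    Combined with the fact that $\et_{i},\ft_{i} : \Zcal(\Lambda_{0}) \rightarrow \Zcal(\Lambda_{0})\sqcup\lbrace 0\rbrace$, this shows that $\Zcal(\Lambda_{0})$ is an affine crystal.
\end{proof}
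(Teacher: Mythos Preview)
Your proposal is correct and follows the same route as the paper, which dispatches the theorem with the single sentence ``the same routine check as for $\Ycal(\Lambda_{0})$ verifies the following''; you have simply written out that routine check in detail, reducing to a finite tensor product via the signature rule and invoking the affine crystal structure on each column in $\Caff$.

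One small correction: your treatment of the last axiom is confused. The values $\varphi_{i}(Y)$ and $\varepsilon_{i}(Y)$ are by definition the number of $+$'s and $-$'s in $\sign_{i}(Y)$, hence always lie in $\Nbb$ and never equal $-\infty$. The caveats in Definition~\ref{E8 Young column crystal structure definition} force $\et_{j}y=\ft_{j}y=0$ for certain columns $y$ and indices $j\neq i$, but this corresponds to $\varepsilon_{j}(y)=\varphi_{j}(y)=0$, not $-\infty$. The axiom ``if $\varphi_{i}(b)=-\infty$ then $\et_{i}b=\ft_{i}b=0$'' is therefore vacuous for $\Zcal(\Lambda_{0})$, and you can simply delete that sentence.
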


And indeed, this provides a combinatorial Young wall model for the Fock space crystal $B(\Fcal(\Lambda_{0}))$.

\begin{thm}
    There is an isomorphism of affine crystals $\Zcal(\Lambda_{0}) \xrightarrow{\sim} B(\Fcal(\Lambda_{0}))$ sending the ground state wall to the ground state sequence $\sbold_{\Lambda_{0}}$.
\end{thm}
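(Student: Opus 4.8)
The plan is to exhibit an explicit crystal isomorphism $\Psi : \Zcal(\Lambda_{0}) \xrightarrow{\sim} B(\Fcal(\Lambda_{0}))$ by sending a Young wall $(z^{n_{r}}y_{r})_{r=0}^{\infty}$ to the sequence $(z^{n_{r}}\psi^{-1}(y_{r}))_{r=0}^{\infty}$ in $\Baff$, where $\psi : B \xrightarrow{\sim} C$ is the classical crystal isomorphism of Propositions \ref{E6 and E7 Young column isomorphisms} and \ref{E8 Young column isomorphism} and $\psi^{-1}$ is applied to the equivalence class represented by $y_{r}$. First I would check this map is well defined: the condition $\Haff(z^{n_{r+1}}y_{r+1}\otimes z^{n_{r}}y_{r})>0$ defining membership in $\Zcal(\Lambda_{0})$ corresponds exactly, under $\psi_{\aff}$, to the normally ordered condition $H(s_{r+1}\otimes s_{r})>0$ on the image; and since a Young wall differs from the ground state wall in only finitely many blocks, only finitely many $z^{n_{r}}y_{r}$ differ from $\psi_{\aff}(g_{r})$, so the image sequence satisfies $s_{r}=g_{r}$ for $r\gg 0$. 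Hence $\Psi$ lands in $B(\Fcal(\Lambda_{0}))$.

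Next I would verify that $\Psi$ commutes with all $\et_{i}$, $\ft_{i}$, $\varepsilon_{i}$, $\varphi_{i}$ and $\wt$. This is where the bulk of the (routine) work lies: the affine crystal structure on $\Zcal(\Lambda_{0})$ is defined via pre-$i$-signatures and $i$-signatures, acting by adding/removing an $i$-block on the column picked out by the leftmost $+$ (resp. rightmost $-$) after cancellation, and these are precisely the operators one obtains on $B(\Fcal(\Lambda_{0}))$ from the tensor product rule (\ref{tensor product of crystals}) applied to the path structure (\ref{crystal structure on paths}); the $\varphi_{i}$ and $\varepsilon_{i}$ counts match by construction; and the affine weight formula (\ref{affine weights on paths}) matches $\wt(Y)=\Lambda_{0}-\sum_{i}k_{i}\alpha_{i}$ because the $\delta$-correction term in (\ref{affine weights on paths}) is bookkept by the $n_{r}$ exponents, exactly as in the analogous computation for $\Ycal(\Lambda_{0})$ in Section \ref{Irreducible highest weight section}. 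I would state that this is a straightforward check paralleling the $B(\Lambda_{0})$ case and not grind through it. The ground state wall maps to $\sbold_{\Lambda_{0}}$ essentially by the definition of the ground state wall as the stacking of the $\psi_{\aff}(g_{r})$ at matched heights (Table \ref{Fock space ground state sequences table}).

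Finally I would prove bijectivity. Injectivity: if $(z^{n_{r}}y_{r})_{r}$ and $(z^{n'_{r}}y'_{r})_{r}$ have the same image, then $\psi^{-1}(y_{r})=\psi^{-1}(y'_{r})$ and $n_{r}=n'_{r}$ for every $r$, so the two sequences agree in $\Caff$; since distinct elements of $\Caff$ correspond to distinct Young columns (not up to equivalence), the two Young walls coincide. Surjectivity: given $\sbold=(s_{r})_{r=0}^{\infty}\in B(\Fcal(\Lambda_{0}))$, write each $s_{r}=z^{n_{r}}b_{r}$ with $b_{r}\in B$, set $y_{r}=\psi(b_{r})\in C$, and form the Young column $z^{n_{r}}y_{r}\in\Caff$; lining these up produces a Young wall in $\Zcal(\Lambda_{0})$ (it differs from the ground state wall in finitely many blocks since $s_{r}=g_{r}$ for $r\gg 0$, and the $\Haff$-positivity is inherited from normal orderedness), and $\Psi$ sends it to $\sbold$. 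The main obstacle I anticipate is not any single step in isolation but making the correspondence between the combinatorial signature operators on Young walls and the tensor-product crystal operators on $B(\Fcal(\Lambda_{0}))$ fully precise in type $\Eaff{8}$, where the Young column crystal structure $C_{8}$ requires the extra caveats of Definition \ref{E8 Young column crystal structure definition} (length-$2$ $i$-strings, ordering rules for adding blocks, the auxiliary columns of Figure \ref{E8 additional Young columns}); one must confirm that adding/removing a block in a column still realises $\et_{i},\ft_{i}$ on $\Baff$ even when the signature computation interacts with those caveats, which is exactly the content already packaged in Proposition \ref{E8 Young column isomorphism} and its affinization, so I would invoke that rather than re-derive it.
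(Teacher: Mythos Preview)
Your proposal is correct and follows essentially the same approach as the paper: both construct the isomorphism by applying $\psi_{\aff}^{\pm 1}$ entrywise, observe that the $\Haff>0$ condition on $\Zcal(\Lambda_{0})$ matches the normally ordered condition on $B(\Fcal(\Lambda_{0}))$ under $\psi_{\aff}$, and then compare the signature-based crystal structure on $\Zcal(\Lambda_{0})$ with the tensor-product structure (\ref{crystal structure on paths}), (\ref{affine weights on paths}) on $B(\Fcal(\Lambda_{0}))$. The paper's proof is terser (it simply writes down the bijection $(s_{r})\mapsto(\psi_{\aff}(s_{r}))$ and declares the compatibility with the crystal operators and weight a routine comparison), whereas you spell out injectivity, surjectivity, and the potential $\Eaff{8}$ subtleties more explicitly, but there is no substantive difference in strategy.
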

\begin{proof}
    Let $\psi_{\aff} : \Baff \xrightarrow{\sim} \Caff$ be the isomorphism of affine crystals coming from the isomorphism $\psi : B \xrightarrow{\sim} C$ in Proposition \ref{E6 and E7 Young column isomorphisms} or \ref{E8 Young column isomorphism}.
    From the definition of $\Zcal(\Lambda_{0})$ it is clear that
    \begin{align*}
        (s_{r})_{r=0}^{\infty} \mapsto (\psi_{\aff}(s_{r}))_{r=0}^{\infty}
    \end{align*}
    is a bijection from $B(\Fcal(\Lambda_{0}))$ to $\Zcal(\Lambda_{0})$ which sends the ground state sequence $\sbold_{\lambda}$ to the ground state wall.
    Furthermore, by comparing the crystal structure of $\Zcal(\Lambda_{0})$ with (\ref{crystal structure on paths}) and (\ref{affine weights on paths}) it is easy to see that this commutes with all $\et_{i}$, $\ft_{i}$, $\varepsilon_{i}$, $\varphi_{i}$ and $\wt$.
\end{proof}

The next proposition rephrases the normally ordered condition for elements of $B(\Fcal(\Lambda_{0}))$ as a combinatorial condition on the set of Young walls.

\begin{prop}
    A Young wall $Y = (z^{n_{r}}y_{r})_{r=0}^{\infty}$ lies in $\Zcal(\Lambda_{0})$ if and only if
    \begin{align} \label{0-block normally ordered condition}
        |z^{n_{r}}y_{r}|_{0} > |z^{n_{r+1}}y_{r+1}|_{0} - H(y_{r+1} \otimes y_{r}) + m_{r} - m_{r+1}
    \end{align}
    for all $r\in\Nbb$, where $|z^{n_{r}}y_{r}|_{0}$ is the difference in the number of $0$-blocks between $z^{n_{r}}y_{r}$ and column $r$ of the ground state wall.
\end{prop}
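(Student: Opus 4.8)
The plan is to unwind the definition of $\Zcal(\Lambda_{0})$ directly, translating the condition $\Haff(z^{n_{r+1}}y_{r+1}\otimes z^{n_{r}}y_{r}) > 0$ into a statement about the number of $0$-blocks in each column. First I would recall that by construction of the Young wall patterns, the exponent $n_r$ of a column $z^{n_r}y_r$ differs from the ground-state exponent $m_r$ precisely by the number of $0$-blocks added or removed relative to the ground state wall: more precisely, moving one unit cube up the column (equivalently, adding one full ``period'' of blocks topped by a $0$-block) shifts the vertical position by an amount detected by $z$. The key bookkeeping identity I need is
\begin{align*}
    n_r = m_r - |z^{n_r}y_r|_{0},
\end{align*}
or rather the relation that $n_r$ is determined by $|z^{n_r}y_r|_{0}$ together with the choice of equivalence-class representative $y_r \in C$, since $|z^{n_r}y_r|_{0}$ counts the net number of $0$-blocks relative to the ground state column $r$. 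I would justify this by inspecting the Young column patterns in Figures \ref{E6 and E7 Young column patterns} and \ref{E8 Young column pattern}, where exactly one $0$-block appears per unit cube.

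Next I would combine this with the formula (\ref{Haff definition}) for the affine energy function, namely $\Haff(z^{m}a\otimes z^{n}b) = H(a\otimes b) + m - n$ where $a,b \in C$ (or their preimages in $B$). Writing $z^{n_r}y_r = z^{m_r - |z^{n_r}y_r|_0} y_r$ and similarly for $r+1$, I get
\begin{align*}
    \Haff(z^{n_{r+1}}y_{r+1}\otimes z^{n_r}y_r) = H(y_{r+1}\otimes y_r) + (m_{r+1} - |z^{n_{r+1}}y_{r+1}|_0) - (m_r - |z^{n_r}y_r|_0).
\end{align*}
The condition $\Haff(z^{n_{r+1}}y_{r+1}\otimes z^{n_r}y_r) > 0$ then rearranges, using that $\Haff$ takes integer values so ``$>0$'' is ``$\geq 1$'' — actually just ``$>0$'' suffices — into exactly the inequality (\ref{0-block normally ordered condition}). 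Since $Y \in \Zcal(\Lambda_{0})$ means this holds for all $r \in \Nbb$, both directions of the equivalence follow simultaneously from this single algebraic rearrangement.

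The main obstacle, and the only place requiring genuine care rather than routine algebra, is establishing the bookkeeping identity relating $n_r$ to $|z^{n_r}y_r|_0$ and to $m_r$ — in particular being precise about sign conventions and about what ``column $r$ of the ground state wall'' means when $y_r$ is a representative rather than the ground-state column itself. I would handle this by noting that $H$ and hence $\Haff$ only depend on the equivalence classes (elements of $C$ and $\Caff$), so I may fix convenient representatives; the quantity $|z^{n_r}y_r|_0$ is well-defined on $\Caff$ as the net $0$-block count against the fixed ground-state column, and the affinization structure in the Young wall picture is exactly set up so that applying $z$ to a column changes $|\cdot|_0$ by one. Once this is pinned down the rest is the substitution above. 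A secondary subtlety is that in types $\Eaff{6}$ and $\Eaff{7}$ the $m_r$ and $b_r$ vary with $r$ (as seen from Table \ref{highest weight ground state sequences table} and Table \ref{Fock space ground state sequences table}), but since the statement already carries $m_r$, $m_{r+1}$ and $H(y_{r+1}\otimes y_r)$ explicitly, no uniformity is needed and the argument goes through verbatim in all three types.
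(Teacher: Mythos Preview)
Your approach is essentially identical to the paper's: both establish the identity $|z^{n_r}y_r|_0 = m_r - n_r$ (the paper justifies this by noting that adding a $0$-block corresponds to applying $\ft_0$, which decreases the $z$-exponent by one in the affinization), then substitute into $\Haff(z^{n_{r+1}}y_{r+1}\otimes z^{n_r}y_r) > 0$ using (\ref{Haff definition}) and rearrange. One small slip: your claim that ``exactly one $0$-block appears per unit cube'' is false (inspect the patterns), but you do state the correct fact later, namely that applying $z$ changes $|\cdot|_0$ by one --- this is because $a_0 = 1$, so a $\delta$-column contains exactly one $0$-block.
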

\begin{proof}
    Since adding a $0$-block to a Young column corresponds to applying $\ft_{0}$ we see that $|z^{n_{r}}y_{r}|_{0} = m_{r} - n_{r}$.
    Putting this into the normally ordered condition and applying (\ref{Haff definition}) exactly gives condition (\ref{0-block normally ordered condition}).
\end{proof}

Recall that Proposition \ref{component of ground state sequence proposition} embeds $B(\Lambda_{0})$ into $B(\Fcal(\Lambda_{0}))$ as the set of $(s_{r})_{r=0}^{\infty}$ with all $\Haff(s_{r+1} \otimes s_{r}) = 1$.
In the combinatorial language above, since $m_{r} - m_{r+1} = H(b_{r+1} \otimes b_{r}) - 1$ by definition of $\sbold_{\Lambda_{0}}$ this precisely becomes the condition (\ref{reduced definition equation}) for a Young wall to be reduced.
So in the Young wall setting this embedding is simply the identity.
\\

The other interpretation of $B(\Lambda_{0})$ as those $(s_{r})_{r=0}^{\infty} \in B(\Fcal(\Lambda_{0}))$ such that applying $z$ to any entry gives a sequence which no longer lies in $B(\Fcal(\Lambda_{0}))$ can also be seen combinatorially.

\begin{defn}
    A $\delta$-column is a continuous piece of the Young column pattern consisting of $a_{i}$ many $i$-blocks for each $i\in I$.
\end{defn}

By continuous we mean that if any part of some block lies vertically between two blocks inside the $\delta$-column, then it must also be part of the $\delta$-column.
It is clear that removing a $\delta$-column from a Young column precisely corresponds to applying $z$.

\begin{defn}
    A $\delta$-column lying inside a Young wall $Y \in \Zcal(\Lambda_{0})$ is removable if removing it from $Y$ produces another Young wall in $\Zcal(\Lambda_{0})$.
\end{defn}

Therefore $B(\Lambda_{0})$ can further be viewed as those elements of $\Zcal(\Lambda_{0})$ without a removable $\delta$-column.

\subsection{Analysing the structure of Young walls in \texorpdfstring{$\Zcal(\Lambda_{0})$}{Z(Lambda0)}}

The purpose of this subsection is to investigate in more detail the structure of Young walls lying inside the crystal $\Zcal(\Lambda_{0})$.
In particular, we prove the following propositions.

\begin{prop} \label{E6 and E7 Fock right block proposition}
    In types $\Eaff{6}$ and $\Eaff{7}$ each Young wall in $\Zcal(\Lambda_{0})$ must satisfy the right block property (\ref{right block property}).
\end{prop}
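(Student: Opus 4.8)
The plan is to reduce the right block property for a Young wall $Y = (z^{n_r}y_r)_{r=0}^{\infty} \in \Zcal(\Lambda_0)$ in types $\Eaff{6}$ and $\Eaff{7}$ to a finite local check on each pair of adjacent columns. Since $Y$ differs from the ground state wall in only finitely many blocks, and the ground state wall obviously satisfies the right block property, it suffices to show: whenever $(z^{n_{r+1}}y_{r+1}, z^{n_r}y_r)$ is a pair of adjacent columns with $\Haff(z^{n_{r+1}}y_{r+1} \otimes z^{n_r}y_r) > 0$, every block present in column $r$ also appears in the corresponding position of column $r+1$. Because each column is a Young column (stable under gravity, bounded above), a column is determined up to equivalence by its top block, and after applying the vertical shift $z^{n_r}$ it is determined outright. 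So the right block property for an adjacent pair is equivalent to an inequality comparing the ``height'' (number of blocks, tracked by $|z^{n_r}y_r|$) of column $r$ against that of column $r+1$, together with a compatibility of which blocks occupy which positions — and this depends only on the two equivalence classes $y_{r+1}, y_r \in C$ and the integer $n_r - n_{r+1}$.

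First I would translate the normally ordered condition $\Haff(z^{n_{r+1}}y_{r+1}\otimes z^{n_r}y_r)>0$ into the $0$-block inequality of the preceding proposition, namely
\begin{align*}
    |z^{n_r}y_r|_0 > |z^{n_{r+1}}y_{r+1}|_0 - H(y_{r+1}\otimes y_r) + m_r - m_{r+1},
\end{align*}
using $|z^{n_r}y_r|_0 = m_r - n_r$. Next, using the Young column patterns of Figure \ref{E6 and E7 Young column patterns}, I would record for each of the finitely many pairs $(y_{r+1}, y_r) \in C \times C$ the exact relationship between the number of $0$-blocks $|y_r|_0$ and the total number of blocks $|y_r|$: the patterns are periodic, so $|y_r|$ determines $|y_r|_0$ up to a bounded discrepancy, and conversely. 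Feeding this into the inequality above pins down $n_r - n_{r+1}$ to lie in a half-infinite range, and for the extremal (smallest-gap, i.e. normally-ordered-but-barely) case one checks directly from the pattern that column $r$ sits entirely below or level with column $r+1$ in the sense required by (\ref{right block property}). Increasing the gap only lowers column $r$ relative to column $r+1$, which preserves the property, so the general case follows.

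The main obstacle is the bookkeeping of the \emph{relative vertical offset} of the two Young column patterns. Unlike the single Young column pattern, the Young wall pattern of Figure \ref{Young wall patterns} lines up columns whose underlying Young column patterns are genuinely different cross-sections of the strip of cubes (corresponding to the distinct entries $\overline{6}0, \overline{0}1, \overline{1}6$ of the ground state sequence in type $\Eaff{6}$, and $\overline{7}0, \overline{0}7$ in type $\Eaff{7}$), so ``same position in the column to the right'' must be interpreted with respect to how these patterns are stacked side by side. I would handle this by working column-pair by column-pair: there are only a handful of pattern types, the ground state sequence is eventually periodic, and for each adjacent pattern pairing one verifies that the block in position $p$ of the left column forces the block in position $p$ of the right column precisely when the $0$-block count inequality holds. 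Since everything is finite and periodic, this is a finite verification; I would present the argument for one representative adjacent pair in each type in detail and note that the remaining cases are identical, also invoking the $180^\circ$ rotational symmetry in type $\Eaff{6}$ to cut down the casework. The energy function description of Lemma \ref{B6 and B7 energy function lemma} — $H(b\otimes a)$ counts $0$-arrows in a minimal path — gives an independent sanity check on the values of $H(y_{r+1}\otimes y_r)$ entering the inequality.
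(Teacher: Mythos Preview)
Your approach is in principle sound --- the right block property for a pair of adjacent columns is indeed determined by the equivalence classes $y_{r+1}, y_r$ and the integer $n_r - n_{r+1}$, and the monotonicity observation (that making column $r$ taller only helps) correctly reduces the verification to the extremal case $n_r = n_{r+1} + H(y_{r+1}\otimes y_r) - 1$. However, there are two issues. First, a slip: you wrote ``every block present in column $r$ also appears in the corresponding position of column $r+1$'', but the right block property runs the other way (blocks in column $r+1$ force blocks in column $r$, since $r+1$ is to the \emph{left}). Second, your claim that ``the remaining cases are identical'' after checking one representative pair is not justified: the pairs $(y_{r+1},y_r)$ genuinely differ in their energy values and block configurations, so you would be facing $27^2$ pairs in type $\Eaff{6}$ and $56^2$ in type $\Eaff{7}$, each requiring its own check.

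The paper's proof avoids this combinatorial explosion by introducing a single organising idea that your proposal lacks. It defines an automorphism $\sigma$ of $\Baff$ (as an unlabelled digraph) which, under $\psi_{\aff}$, sends a Young column in position $r+1$ to the column occupying exactly the same blocks in position $r$; concretely, $\sigma(z^n b) = z^{n+p}c$ where $c$ is obtained from $b$ by applying the outer automorphism $\pi_1$ (resp.\ $\pi_7$) to the edge-labels, and $p$ is tabulated once for each $b\in B$. The right block property then becomes the existence of a directed path $\sigma(z^{n_{r+1}}b) \rightarrow \cdots \rightarrow z^{n_r}a$ in $\Baff$. Using Lemma~\ref{B6 and B7 energy function lemma} --- that $H(a\otimes c)$ counts minimum $0$-arrows in a path $c\to a$ --- this reduces to the single inequality $p + 1 - H(b\otimes a) \geq H(a\otimes c)$, which is a linear-sized check (one row per $b\in B$) against the table of $(b,c,p)$ values. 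What the paper's approach buys is that the ``relative vertical offset'' bookkeeping you flagged as the main obstacle is absorbed entirely into the map $\sigma$ and the integer $p$, and the finite verification collapses from quadratic to linear in $|B|$.
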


A slightly weaker result holds in type $\Eaff{8}$.
Consider the right block property for a pair of adjacent columns in a Young wall:
\begin{itemize}
    \item if the Young wall contains a block in column $r+1$ then it contains the block occupying the same position in column $r$.
    $\hfill \refstepcounter{equation}(\theequation)\label{adjacent right block property}$
\end{itemize}

\begin{prop} \label{E8 Fock right block proposition}
    In type $\Eaff{8}$ a Young wall $(z^{n_{r}}y_{r})_{r=0}^{\infty} \in \Zcal(\Lambda_{0})$ satisfies condition (\ref{adjacent right block property}) on columns $r+1$ and $r$ whenever $\Haff(z^{n_{r+1}}y_{r+1}\otimes z^{n_{r}}y_{r}) \not= 2$.
\end{prop}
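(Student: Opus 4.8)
The plan is to show that condition (\ref{adjacent right block property}) for a pair of adjacent columns of a wall in $\Zcal(\Lambda_0)$ is a statement purely about two Young columns inside the (common) pattern of Figure \ref{E8 Young column pattern}, and then to settle it by a finite check organised around the energy function values furnished by Theorem \ref{uniform crystal maximal vector theorem} and Lemma \ref{Haff constant on components}. First I would make the reduction explicit. Since $\pb_{\Lambda_0}$ is the constant sequence $\emptyset$, every column of the $\Eaff{8}$ Young wall pattern is an identical copy of the Young column pattern of Figure \ref{E8 Young column pattern}, so ``occupying the same position in the column to the right'' is unambiguous, and whether $(z^{n_{r+1}}y_{r+1}, z^{n_r}y_r)$ satisfies (\ref{adjacent right block property}) depends only on these two Young columns as subsets of the pattern. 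Likewise, writing $z^{n_{r+1}}y_{r+1} = \psi_{\aff}(z^{m'}a)$ and $z^{n_r}y_r = \psi_{\aff}(z^{m}b)$ via Proposition \ref{E8 Young column isomorphism}, equation (\ref{Haff definition}) gives $\Haff(z^{n_{r+1}}y_{r+1}\otimes z^{n_r}y_r) = H(a\otimes b) + m' - m$, which again depends only on $a$, $b$ and the relative shift $d = m' - m$. So the proposition is equivalent to the finite assertion: for every $a,b\in B_8$ and every $d\in\Zbb$, if the two Young columns $\psi_{\aff}(z^{m'}a)$ and $\psi_{\aff}(z^{m}b)$ with $m'-m=d$ fail (\ref{adjacent right block property}), then $H(a\otimes b) + d = 2$.

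Next I would pin down the energy. By Lemma \ref{Haff constant on components}, $H(a\otimes b)$ equals the value of the renormalised energy at the maximal vector of the connected component of $a\otimes b$ in $B_8\otimes B_8$ (all arrows included), and Theorem \ref{uniform crystal maximal vector theorem} lists these maximal vectors together with their values, all lying in $\{0,1,2\}$. Since $B_8 = B^{8,1}$, this data — the full list of pairs, their components and their energies — can also be generated directly in SageMath as in the code of the preceding subsection, which turns the case analysis into a bounded computation rather than an open-ended one. The geometric input I would pair with this is the observation that, for two Young columns, (\ref{adjacent right block property}) holds precisely when the left one is contained in the right one as a subset of the pattern, and that this containment is monotone in the shift $d$: there is a threshold $d_0(a,b)$ such that it holds for all $d$ on one side and fails on the other. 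Normal ordering of the pair means $H(a\otimes b)+d\geq 1$. Hence the proposition reduces to the identity that $d_0(a,b)$ is attained exactly where $H(a\otimes b)+d = 2$ — one step away from the normally-ordered boundary $\Haff = 1$, on the failing side.

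The core of the proof is therefore this last identity, verified pair by pair by comparing the Young columns of Figure \ref{E8 Young column pattern} at the relevant shifts, with the three non-gravity columns corresponding to $x_{\alpha_2}$, to $y_2$ and to $x_{-\alpha_2}$ (Figure \ref{E8 additional Young columns}), together with the caveats of Definition \ref{E8 Young column crystal structure definition}, treated separately. I would organise the check using the weight grading on $B_8$ and the symmetry of the pattern to collapse it to a manageable number of genuinely distinct configurations, and I expect the containment threshold to be located uniformly by tracking, in each Young column, the position of its topmost blocks of each colour relative to the ground state column.

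The main obstacle is exactly the bookkeeping in this last step: $B_8$ has well over two hundred elements, its column pattern is tall and irregular, and the reason one cannot replace $2$ by a cleaner bound (as one can in types $\Eaff{6}$ and $\Eaff{7}$ via Proposition \ref{E6 and E7 Fock right block proposition}) is the presence of the $H=2$ maximal vectors of Theorem \ref{uniform crystal maximal vector theorem} — in particular the components $\Ccal(\xt\otimes y_i)$ described in Proposition \ref{component of xt otimes yi}. So the delicate point is to see that these, and only these, are what push the containment threshold $d_0(a,b)$ up to the $\Haff = 2$ level, whereas everywhere else the tightly-packed ($\Haff=1$) configuration already forces the left column into the right one.
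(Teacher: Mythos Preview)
Your reduction to a two-column containment question is sound, and your observation that in type $\Eaff{8}$ all columns of the wall pattern are identical is correct and useful. But there is a genuine logical inconsistency at the heart of the plan. You claim that the containment ``left column $\subseteq$ right column'' is \emph{monotone} in the relative shift $d$, so that there is a single threshold $d_{0}(a,b)$; and you then claim that the proposition reduces to locating that threshold at $\Haff = 2$. These two claims cannot both be compatible with the statement you are trying to prove. The proposition asserts that (\ref{adjacent right block property}) holds at $\Haff = 1$ \emph{and} at every $\Haff \geq 3$. If containment really is monotone in $\Haff$ --- and it is, since every Young column (including the two exceptional ones of Figure~\ref{E8 additional Young columns}) is closed under downward shift by one period --- then holding at $\Haff = 1$ forces it to hold for all $\Haff \geq 1$, including $\Haff = 2$. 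So the threshold can be at most $1$, never $2$; your ``identity'' is not the right target, and the planned pair-by-pair verification is aimed at the wrong level. (A small side error: Figure~\ref{E8 additional Young columns} contains two exceptional columns, not three --- $y_{2}$ is handled by the caveats of Definition~\ref{E8 Young column crystal structure definition}, not by adding a non-gravity column.)

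The paper takes a different route that sidesteps monotonicity entirely. Rather than comparing block sets, it observes that the right block property for the pair $(z^{n}b, z^{m}a)$ is implied by the existence of a directed path $\sigma(z^{n}b) = z^{n+1}b \to \cdots \to z^{m}a$ in $\Baff$, since each edge is an $\ft_{i}$ which adds a block. This is sufficient but not necessary, which is precisely why the paper only obtains the weakened statement: the length-one path $z^{k}a \to z^{k-1}a$ fails for $a \in \{\emptyset, x_{\pm\theta}\}$ (Lemma~\ref{path to itself lemma}), and this gap in the path argument is exactly what forces the exclusion $\Haff \neq 2$. The verification at $\Haff = 1$ is then organised by the classical connected components of $B_{8}\otimes B_{8}$ from Theorem~\ref{uniform crystal maximal vector theorem}, with most components handled structurally via Proposition~\ref{component of xt otimes yi} and Lemma~\ref{0-arrows lemma}, and only $\Ccal(x_{\theta}\otimes x_{\theta-\alpha_{8}})$ and $\Ccal(x_{\theta}\otimes x_{\beta})$ left to a SageMath check (Appendix~\ref{Right block appendix}). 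If you repaired your argument --- proving monotonicity carefully and then verifying containment at $\Haff = 1$ --- you would in fact establish the full right block property in type $\Eaff{8}$, stronger than Proposition~\ref{E8 Fock right block proposition}; but the $\Haff = 1$ verification still needs a structural organisation comparable to the paper's, not a raw enumeration over $\lvert B_{8}\rvert^{2}$ pairs.
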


In either case, just as for Corollary \ref{highest weight built on ground state wall corollary} we can easily deduce that Young walls in $\Zcal(\Lambda_{0})$ are built on top of the ground state wall, since they differ in only finitely many blocks.
\\

In order to prove Propositions \ref{E6 and E7 Fock right block proposition} and \ref{E8 Fock right block proposition} we fix $r\in\Nbb$ and introduce the following automorphism of $\Caff$ as an \emph{unlabelled digraph}.
Each Young column is viewed inside column $r+1$ of the Young wall pattern, and sent to the column with blocks in the same positions but in column $r$.
For example, the ground state column $\psi_{\aff}(g_{r+1})$ is sent to $\psi_{\aff}(g_{r})$.
By inspecting the Young wall patterns in Figure \ref{Young wall patterns} it is clear that this map is independent of $r$.
\\

Conjugating by $\psi_{\aff}$ gives an automorphism $\sigma$ of $\Baff$ which acts on edge labels by the elements $\pi_{1}$, $\pi_{7}$ and $\pi_{0} = \mathrm{id}$ of $\Omega$ in types $\Eaff{6}$, $\Eaff{7}$ and $\Eaff{8}$ respectively.
\\

While $\sigma$ simply acts by $z$ in type $\Eaff{8}$, to describe $\sigma$ for $\Eaff{6}$ and $\Eaff{7}$ we let $\sigma(z^{n}b) := z^{n+p}c$ for all $z^{n}b\in \Baff$.
It is clear that if $b = \overline{i_{1}}\dots \overline{i_{m}}j_{1}\dots j_{n}$ then $c = \overline{\pi(i_{1})}\dots \overline{\pi(i_{m})}\pi(j_{1})\dots \pi(j_{n})$ where $\pi$ is $\pi_{1}$ and $\pi_{7}$ respectively.
For completeness, we list all values of $p\in\Zbb$ and $c\in B$ in Appendix \ref{Right block appendix}.

\begin{proof}[Proof of Proposition {\upshape\ref{E6 and E7 Fock right block proposition}}]
    It suffices to show that if $\Haff(z^{n}b\otimes z^{m}a) > 0$ then there is a directed path $\sigma(z^{n}b) = z^{n+p}c \rightarrow \cdots \rightarrow z^{m}a$ in $\Baff$, as $\psi_{\aff}(z^{m}a)$ can then be obtained by adding blocks to $\psi_{\aff}(\sigma(z^{n}b))$.
    \\

    Since $\sigma$ commutes with the action of $z$ on $\Baff$ we may without loss of generality take $n=0$, so $m < H(b\otimes a)$ by equation (\ref{Haff definition}).
    And as there is always a path $z^{k+1}a\rightarrow \cdots \rightarrow z^{k}a$ we can further restrict to the case $m = H(b\otimes a) - 1$.
    \\
    
    Lemma \ref{B6 and B7 energy function lemma} tells us that $H(a\otimes c)$ is the minimum number of $0$-arrows in a path $c\rightarrow \cdots \rightarrow a$ in $B$.
    Combining this with the existence of paths $z^{k+1}a\rightarrow \cdots \rightarrow z^{k}a$ it remains to verify that $p + 1 - H(b\otimes a) \geq H(a\otimes c)$, which is a finite check using Table \ref{Sigma tables} in Appendix \ref{Right block appendix}.
\end{proof}

\begin{proof}[Proof of Proposition {\upshape\ref{E8 Fock right block proposition}}]
    Similarly to the proof above, it suffices to show that if $\Haff(z^{n}b\otimes z^{m}a) > 0$ and $\Haff(z^{n}b\otimes z^{m}a) \not= 2$ then there is a directed path $\sigma(z^{n}b) = z^{n+1}b \rightarrow \cdots \rightarrow z^{m}a$ in $\Baff$.
    We may without loss of generality take $n = -1$, whereby $m \leq H(b\otimes a) - 2$ and $m \not= H(b\otimes a) - 3$ by equation (\ref{Haff definition}).
    Furthermore it is clear that there always exist paths $z^{m}a\rightarrow\dots\rightarrow z^{m-2}a$ and $z^{m}a\rightarrow\dots\rightarrow z^{m-3}a$ in $\Baff$, so it is enough to consider $m = H(b\otimes a) - 2$.
    The existence of a path $z^{0}b \rightarrow \cdots \rightarrow z^{H(b\otimes a) - 2}a$ in $\Baff$ is clear for all $b\otimes a$ inside any of the connected components
    \begin{align*}
        \Ccal(\emptyset \otimes \emptyset),~
        \Ccal(\emptyset \otimes \xt),~
        \Ccal(\xt \otimes \emptyset),~
        \Ccal(\xt \otimes \xt),~
        \Ccal(\xt \otimes x_{-\theta}),
    \end{align*}
    and $\Ccal(\xt \otimes y_{8})$ is a quick check using Proposition \ref{component of xt otimes yi}.
    For example, for elements of the form $x_{\theta - \alpha} \otimes x_{-\beta}$ and $x_{\beta} \otimes x_{-\theta + \alpha}$ there exist paths
    \begin{align*}
        &z^{0}x_{\theta - \alpha} \rightarrow z^{0}x_{\alpha_{8}} \rightarrow z^{0}y_{8} \rightarrow z^{0}x_{-\alpha_{8}} \rightarrow z^{0}x_{-\beta} \\
        &z^{0}x_{\beta} \rightarrow z^{0}x_{\alpha_{8}} \rightarrow z^{0}y_{8} \rightarrow z^{0}x_{-\alpha_{8}} \rightarrow z^{0}x_{-\theta + \alpha}
    \end{align*}
    since $\theta - \alpha,\beta\in\Phi_{1}^{+}$.
    So we are left with $\Ccal(\xt \otimes x_{\theta - \alpha_{8}})$ and $\Ccal(\xt \otimes \xb)$ where $\beta$ is the maximal element of $\Phi_{0}^{+}$, which can be checked computationally with the help of SageMath \cite{SageMath} as outlined in Appendix \ref{Right block appendix}.
\end{proof}

In fact, we can say more.
Consider a pair of adjacent columns in some Young wall in $\Zcal(\Lambda_{0})$ in type $\Eaff{8}$ which fails condition (\ref{adjacent right block property}).
From the proof of Proposition \ref{E8 Fock right block proposition} this must be of the form $(z^{n}\psi(b),z^{n + H(b\otimes a) - 1}\psi(a))$ where there does not exist a path $z^{0}b \rightarrow \cdots \rightarrow z^{H(b\otimes a) - 1}a$ in $\Baff$.
Using Lemma \ref{path to itself lemma} and the fact that $x_{-\theta} \otimes x_{\pm \theta} \in \Ccal(\xt \otimes \xt)$ we see that the options for $b\otimes a$ are precisely $\emptyset \otimes \emptyset$, $\emptyset \otimes \xt$, $x_{-\theta} \otimes \emptyset$ and $x_{-\theta} \otimes \xt$.

\begin{lem} \label{path to itself lemma}
    In type $\Eaff{8}$ there is a path $z^{n}a \rightarrow \dots \rightarrow z^{n-1}a$ in $\Baff$ if and only if $a\not= \emptyset,x_{\pm \theta}$.
\end{lem}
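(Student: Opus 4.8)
The plan is to first translate the statement into a question about closed directed walks in $B_{8}$, then dispose of the three exceptional elements by a short local argument, and finally handle the generic case by reducing to a finite verification inside the classical crystal $B(\theta)$.

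First I would observe that a directed path $z^{n}a \to \cdots \to z^{n-1}a$ in $\Baff$ projects, upon forgetting the $z$-grading, to a closed directed walk through $a$ in $B_{8}$; since $\ft_{0}$ lowers the exponent of $z$ by $1$ while every $\ft_{i}$ with $i \neq 0$ fixes it, the net drop of $1$ forces such a walk to use exactly one $0$-arrow, and conversely any closed walk through $a$ in $B_{8}$ with exactly one $0$-arrow lifts to a path of the required form. So the lemma is equivalent to the assertion that $a$ lies on a closed directed walk in $B_{8}$ using exactly one $0$-arrow if and only if $a \notin \lbrace \emptyset, \xt, \xmt \rbrace$. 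Recall from Lemma \ref{0-arrows lemma} that the $0$-arrows are $\xmt \xrightarrow{0} \emptyset \xrightarrow{0} \xt$ together with $\xa \xrightarrow{0} x_{\alpha+\theta}$ for $\alpha \in \Phi_{1}^{-}$, and that after deleting all $0$-arrows $B_{8}$ becomes the disjoint union of $\lbrace \emptyset \rbrace$ with the connected acyclic crystal $B(\theta)$ of the $248$-dimensional $\Uq$-module. Note moreover that $\xt$ has no outgoing $0$-arrow (since $\theta \notin \Phi_{1}^{-}$ and $2\theta \notin \Phi$) and that any closed walk using one of the two $0$-arrows incident to $\emptyset$ must pass through $\emptyset$ and hence use at least two $0$-arrows; consequently a walk with exactly one $0$-arrow uses a single arrow of the form $\xa \xrightarrow{0} x_{\alpha+\theta}$ and stays inside $B(\theta)$.

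For the ``only if'' direction I would argue by local structure. In $\Baff$ the only arrows meeting $z^{m}\emptyset$ are $z^{m+1}\xmt \xrightarrow{0} z^{m}\emptyset \xrightarrow{0} z^{m-1}\xt$; the only arrow into $z^{m}\xt$ is the $0$-arrow $z^{m+1}\emptyset \xrightarrow{0} z^{m}\xt$ (as $\xt$ is the highest weight element of $B(\theta)$ and has no outgoing $0$-arrow); and dually $\xmt$ is the lowest weight element of $B(\theta)$, so it has no outgoing non-$0$-arrow and its only outgoing arrow is $z^{m}\xmt \xrightarrow{0} z^{m-1}\emptyset$. In each case a closed walk meeting $\emptyset$, $\xt$ or $\xmt$ is forced to traverse the $0$-string $\xmt \xrightarrow{0} \emptyset \xrightarrow{0} \xt$ and therefore uses at least two $0$-arrows, which rules out a path $z^{n}a \to \cdots \to z^{n-1}a$ for these three elements.

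For the ``if'' direction, let $a \in B(\theta) \setminus \lbrace \xt, \xmt \rbrace$; it suffices to find $\gamma \in \Phi_{1}^{-}$ admitting non-$0$ directed paths $a \to \cdots \to x_{\gamma}$ and $x_{\gamma+\theta} \to \cdots \to a$ in $B(\theta)$, since concatenating these with $x_{\gamma} \xrightarrow{0} x_{\gamma+\theta}$ yields the desired walk. Two observations cut this down. Since $\langle \theta, \alpha_{i}^{\vee} \rangle = \delta_{i,8}$ in type $E_{8}$, the unique non-$0$-arrow out of $\xt$ is $\xt \xrightarrow{8} x_{\theta - \alpha_{8}}$; as $B(\theta)$ is connected with highest weight $\xt$, every element other than $\xt$ is therefore reached by a non-$0$ path from $x_{\theta - \alpha_{8}} = x_{\gamma_{0} + \theta}$, where $\gamma_{0} = -\alpha_{8} \in \Phi_{1}^{-}$; dually every element other than $\xmt$ reaches $x_{-\theta + \alpha_{8}} = x_{\gamma_{1}}$, where $\gamma_{1} = -\theta + \alpha_{8} \in \Phi_{1}^{-}$. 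Hence $\gamma = \gamma_{0}$ works whenever $a$ admits a non-$0$ path to $x_{-\alpha_{8}}$, and $\gamma = \gamma_{1}$ works whenever $a$ is reached by a non-$0$ path from $x_{\alpha_{8}}$. It then remains to treat the elements of $B(\theta)$ that are crystal-incomparable to both $x_{\alpha_{8}}$ and $x_{-\alpha_{8}}$: these form an explicit short list inside the $248$-element crystal, and for each such $a = \xa$ one exhibits a suitable $\gamma \in \Phi_{1}^{-}$ — concretely the element of $\Phi_{1}^{-}$ reached from $\xa$ along a maximal non-$0$ path out of it — and checks the reverse reachability $x_{\gamma+\theta} \to \cdots \to \xa$. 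This finite verification can be done by hand from the crystal graph in Appendix \ref{B8 crystal graph appendix}, or computationally with SageMath \cite{SageMath} exactly as in Appendix \ref{Right block appendix}.

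The main obstacle is this last part of the ``if'' direction: covering the elements incomparable to both $x_{\pm\alpha_{8}}$ is where the genuine (finite but not purely formal) combinatorial content lies, since crystal reachability in $B(\theta)$ is strictly finer than the dominance order on weights. By contrast the ``only if'' direction and the two reduction steps are short and structural.
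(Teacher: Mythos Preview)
Your argument is correct and follows essentially the same route as the paper's proof: both handle the extremal elements of $B(\theta)$ comparable to $x_{\pm\alpha_{8}}$ via the single $0$-arrow $x_{-\alpha_{8}}\xrightarrow{0}x_{\theta-\alpha_{8}}$ (your $\gamma_{0}$) or its dual (your $\gamma_{1}$) --- which is exactly the paper's explicit path for $\alpha\in\Phi_{1}^{\pm}$ --- and both defer the remaining elements to a finite check against the crystal graph. Your treatment of the ``only if'' direction is more explicit than the paper's, which leaves it implicit.

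One point of presentation: the elements incomparable to both $x_{\alpha_{8}}$ and $x_{-\alpha_{8}}$ are precisely $\lbrace x_{\alpha} : \alpha\in\Phi_{0}^{\pm}\rbrace\cup\lbrace y_{i} : i\neq 8\rbrace$, which is $133$ of the $248$ elements --- so ``explicit short list'' rather undersells the residual verification (the paper's ``quick check'' is equally understated). Also, your residual case should read ``for each such $a$'' rather than ``for each such $a=x_{\alpha}$'', since the $y_{i}$ with $i\neq 8$ are among them; and the suggested recipe of taking $\gamma$ at the end of a maximal non-$0$ path from $a$ may land in $\Phi_{2}^{-}=\lbrace -\theta\rbrace$ rather than $\Phi_{1}^{-}$, so you would need to stop one step earlier.
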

\begin{proof}
    For $a = \xa$ with $\alpha\in\Phi_{1}^{+}$ we can take
    \begin{align*}
        z^{n}\xa \rightarrow \dots \rightarrow z^{n}x_{\alpha_{8}} \rightarrow z^{n}y_{8} \rightarrow z^{n}x_{-\alpha_{8}} \xrightarrow{0} z^{n-1}x_{\theta - \alpha_{8}} \rightarrow \dots \rightarrow z^{n-1}\xa
    \end{align*}
    by Lemma \ref{0-arrows lemma}, and similarly for $\alpha\in\Phi_{1}^{-}$.
    If $\alpha\in\Phi_{0}^{\pm}$ this is just a quick check using the crystal graph of $B_{8}$ in Appendix \ref{B8 crystal graph appendix}, while each $a = y_{i}$ follows from the case $a = x_{\alpha_{i}}$.
\end{proof}

It is also important to note that in each type, not every Young wall satisfying the relevant right block property is an element of $\Zcal(\Lambda_{0})$.
Indeed, Figure \ref{Young wall example 2} provides an example of such a wall.
In types $\Eaff{6}$ and $\Eaff{7}$ a pair of adjacent columns $(z^{n}\psi(b),z^{m}\psi(a))$ satisfies condition (\ref{adjacent right block property}) precisely when $m \leq n + p - H(a \otimes c)$.
On the other hand, $\Haff(z^{n}\psi(b)\otimes z^{m}\psi(a)) > 0$ when $m \leq n - 1 + H(b \otimes a)$.
Hence $p + 1 - H(b \otimes a) - H(a \otimes c)$ is the difference in the number of $0$-blocks between the lowest column $z^{m}\psi(a)$ such that $(z^{n}\psi(b),z^{m}\psi(a))$ satisfies condition (\ref{adjacent right block property}), and the lowest such that $(z^{n}\psi(b),z^{m}\psi(a))$ could be adjacent columns in an element of $\Zcal(\Lambda_{0})$.
In type $\Eaff{8}$ this difference is given by $2 - H(b \otimes a)$ plus the minimum number of $0$-arrows in a path $b \rightarrow \dots \rightarrow a$ in $B_{8}$.

\pagebreak

\appendix

\section{The crystal graph of \texorpdfstring{$B_{8}$}{B8}} \label{B8 crystal graph appendix}

\input{E8_level_1_perfect_crystal_page_1}
\pagebreak
\input{E8_level_1_perfect_crystal_page_2}

\pagebreak

\section{Young wall crystals in type \texorpdfstring{$\Eaff{6}$}{E6(1)}} \label{E6 top part of Young wall crystal appendix}

\begin{figure}[H]
\centering

\begin{tikzpicture}[scale=0.11, line width = 0.5]

\input{wall_1}

\begin{scope}[shift={(30,-15)}]
\input{wall_2}
\end{scope}

\begin{scope}[shift={(60,-30)}]
\input{wall_3}
\end{scope}

\begin{scope}[shift={(30,-45)}]
\input{wall_4}
\end{scope}

\begin{scope}[shift={(10,-70)}]
\input{wall_5}
\end{scope}

\begin{scope}[shift={(50,-70)}]
\input{wall_6}
\end{scope}

\begin{scope}[shift={(-10,-95)}]
\input{wall_7}
\end{scope}

\begin{scope}[shift={(30,-95)}]
\input{wall_8}
\end{scope}

\begin{scope}[shift={(70,-95)}]
\input{wall_9}
\end{scope}

\begin{scope}[shift={(-10,-135)}]
\input{wall_10}
\end{scope}

\begin{scope}[shift={(30,-135)}]
\input{wall_11}
\end{scope}

\begin{scope}[shift={(70,-135)}]
\input{wall_12}
\end{scope}

\tikzstyle{every node}=[midway, fill=white]
\draw[col0, line width = 1, -{Stealth[scale=0.9]}] (6,2) -- (20,-5) node{$\scriptstyle 0$};

\draw[col2, line width = 1, -{Stealth[scale=0.9]}] (36,-13) -- (50,-20) node{$\scriptstyle 2$};

\draw[col4, line width = 1, -{Stealth[scale=0.9]}] (50,-28) -- (36,-35) node{$\scriptstyle 4$};

\draw[col5, line width = 1, -{Stealth[scale=0.9]}] (20,-43) -- (12,-52) node{$\scriptstyle 5$};
\draw[col3, line width = 1, -{Stealth[scale=0.9]}] (36,-43) -- (48,-52) node{$\scriptstyle 3$};

\draw[col6, line width = 1, -{Stealth[scale=0.9]}] (0,-68) -- (-8,-77) node{$\scriptstyle 6$};
\draw[col3, line width = 1, -{Stealth[scale=0.9]}] (16,-68) -- (28,-77) node{$\scriptstyle 3$};
\draw[col5, line width = 1, -{Stealth[scale=0.9]}] (40,-68) -- (32,-77) node{$\scriptstyle 5$};
\draw[col1, line width = 1, -{Stealth[scale=0.9]}] (56,-68) -- (68,-77) node{$\scriptstyle 1$};

\draw[col3, line width = 1, -{Stealth[scale=0.9]}] (-10,-101) -- (-10,-117) node{$\scriptstyle 3$};
\draw[col6, line width = 1, -{Stealth[scale=0.9]}] (20,-93) -- (-8,-117) node{$\scriptstyle 6$};
\draw[col4, line width = 1, -{Stealth[scale=0.9]}] (30,-101) -- (30,-113) node{$\scriptstyle 4$};
\draw[col1, line width = 1, -{Stealth[scale=0.9]}] (36,-93) -- (68,-117) node{$\scriptstyle 1$};
\draw[col5, line width = 1, -{Stealth[scale=0.9]}] (70,-101) -- (70,-113) node{$\scriptstyle 5$};

\end{tikzpicture}
    
\caption{\hspace{.5em}The top part of the crystals $\Ycal(\Lambda_{0})$ and $\Zcal(\Lambda_{0})$ in type $\Eaff{6}$}\label{E6 top part of Young wall crystal}
\end{figure}

\pagebreak

\section{The right block property} \label{Right block appendix}

\begin{table}[H]
\begin{center}
\subfloat[Type $E_{6}$]{
\begin{tabular}{ |c|c|c| }
    \hline
    $b$ & $c$ & $p$ \\
    \hline
    & & \\[-11pt]
    \hline & & \\[-11pt]
    $\overline{0}1$ & $\overline{1}6$ & $1$ \\ \hline & & \\[-11pt]
    $\overline{01}3$ & $\overline{16}5$ & $1$ \\ \hline & & \\[-11pt]
    $\overline{03}4$ & $\overline{15}4$ & $1$ \\ \hline & & \\[-11pt]
    $\overline{04}25$ & $\overline{14}23$ & $1$ \\ \hline & & \\[-11pt]
    $\overline{05}26$ & $\overline{12}03$ & $1$ \\ \hline & & \\[-11pt]
    $\overline{2}5$ & $\overline{3}2$ & $1$ \\ \hline & & \\[-11pt]
    $\overline{06}2$ & $\overline{01}3$  & $0$ \\ \hline & & \\[-11pt]
    $\overline{25}46$ & $\overline{23}04$ & $1$ \\ \hline & & \\[-11pt]
    $\overline{26}4$ & $\overline{03}4$  & $0$ \\ \hline & & \\[-11pt]
    $\overline{4}36$ & $\overline{4}05$ & $1$ \\ \hline & & \\[-11pt]
    $\overline{46}35$ & $\overline{04}25$  & $0$ \\ \hline & & \\[-11pt]
    $\overline{3}16$ & $\overline{5}06$ & $1$ \\ \hline & & \\[-11pt]
    $\overline{5}3$ & $\overline{2}5$  & $0$ \\ \hline & & \\[-11pt]
    $\overline{36}15$ & $\overline{05}26$  & $0$ \\ \hline & & \\[-11pt]
    $\overline{1}6$ & $\overline{6}0$ & $1$ \\ \hline & & \\[-11pt]
    $\overline{35}14$ & $\overline{25}46$  & $0$ \\ \hline & & \\[-11pt]
    $\overline{16}5$ & $\overline{06}2$  & $0$ \\ \hline & & \\[-11pt]
    $\overline{4}12$ & $\overline{4}36$ & $0$ \\ \hline & & \\[-11pt]
    $\overline{15}4$ & $\overline{26}4$  & $0$ \\ \hline & & \\[-11pt]
    $\overline{2}01$ & $\overline{3}16$ & $0$ \\ \hline & & \\[-11pt]
    $\overline{14}23$ & $\overline{46}35$ & $0$ \\ \hline & & \\[-11pt]
    $\overline{12}03$ & $\overline{36}15$ & $0$ \\ \hline & & \\[-11pt]
    $\overline{3}2$ & $\overline{5}3$ & $0$ \\ \hline & & \\[-11pt]
    $\overline{23}04$ & $\overline{35}14$ & $0$ \\ \hline & & \\[-11pt]
    $\overline{4}05$ & $\overline{4}12$ & $0$ \\ \hline & & \\[-11pt]
    $\overline{5}06$ & $\overline{2}01$ & $0$ \\ \hline & & \\[-11pt]
    $\overline{6}0$ & $\overline{0}1$  & $-1$ \\ \hline
\end{tabular}
}
\qquad
\subfloat[Type $E_{7}$]{
\begin{tabular}{ |c|c|c| }
\hline
$b$ & $c$ & $p$ \\
\hline
& & \\[-11pt]
\hline & & \\[-11pt]
$\overline{0}7$ & $\overline{7}0$ & $1$ \\ \hline & & \\[-11pt]
$\overline{07}6$ & $\overline{07}1$ & $0$ \\ \hline & & \\[-11pt]
$\overline{06}5$ & $\overline{17}3$ & $0$ \\ \hline & & \\[-11pt]
$\overline{05}4$ & $\overline{37}4$ & $0$ \\ \hline & & \\[-11pt]
$\overline{04}23$ & $\overline{47}25$ & $0$ \\ \hline & & \\[-11pt]
$\overline{03}12$ & $\overline{57}26$ & $0$ \\ \hline & & \\[-11pt]
$\overline{02}3$ & $\overline{27}5$ & $0$ \\ \hline & & \\[-11pt]
$\overline{1}2$ & $\overline{6}2$ & $0$ \\ \hline & & \\[-11pt]
$\overline{023}14$ & $\overline{257}46$ & $0$ \\ \hline & & \\[-11pt]
$\overline{12}4$ & $\overline{26}4$ & $0$ \\ \hline & & \\[-11pt]
$\overline{04}15$ & $\overline{47}36$ & $0$ \\ \hline & & \\[-11pt]
$\overline{14}35$ & $\overline{46}35$ & $0$ \\ \hline & & \\[-11pt]
$\overline{05}16$ & $\overline{37}16$ & $0$ \\ \hline & & \\[-11pt]
$\overline{3}5$ & $\overline{5}3$ & $0$ \\ \hline & & \\[-11pt]
$\overline{15}36$ & $\overline{36}15$ & $0$ \\ \hline & & \\[-11pt]
$\overline{06}17$ & $\overline{17}06$ & $0$ \\ \hline & & \\[-11pt]
$\overline{35}46$ & $\overline{35}14$ & $0$ \\ \hline & & \\[-11pt]
$\overline{16}37$ & $\overline{16}05$ & $0$ \\ \hline & & \\[-11pt]
$\overline{07}1$ & $\overline{07}6$ & $-1$ \\ \hline & & \\[-11pt]
$\overline{4}26$ & $\overline{4}12$ & $0$ \\ \hline & & \\[-11pt]
$\overline{36}47$ & $\overline{15}04$ & $0$ \\ \hline & & \\[-11pt]
$\overline{17}3$ & $\overline{06}5$ & $-1$ \\ \hline & & \\[-11pt]
$\overline{2}6$ & $\overline{2}1$ & $0$ \\ \hline & & \\[-11pt]
$\overline{46}257$ & $\overline{14}023$ & $0$ \\ \hline & & \\[-11pt]
$\overline{37}4$ & $\overline{05}4$ & $-1$ \\ \hline & & \\[-11pt]
$\overline{26}57$ & $\overline{12}03$ & $0$ \\ \hline & & \\[-11pt]
$\overline{5}27$ & $\overline{3}02$ & $0$ \\ \hline & & \\[-11pt]
$\overline{47}25$ & $\overline{04}23$ & $-1$ \\ \hline
\end{tabular}
\quad
\begin{tabular}{ |c|c|c| }
\hline
$b$ & $c$ & $p$ \\
\hline
& & \\[-11pt]
\hline & & \\[-11pt]
$\overline{25}47$ & $\overline{23}04$ & $0$ \\ \hline & & \\[-11pt]
$\overline{27}5$ & $\overline{02}3$ & $-1$ \\ \hline & & \\[-11pt]
$\overline{57}26$ & $\overline{03}12$ & $-1$ \\ \hline & & \\[-11pt]
$\overline{4}37$ & $\overline{4}05$ & $0$ \\ \hline & & \\[-11pt]
$\overline{257}46$ & $\overline{023}14$ & $-1$ \\ \hline & & \\[-11pt]
$\overline{6}2$ & $\overline{1}2$ & $-1$ \\ \hline & & \\[-11pt]
$\overline{3}17$ & $\overline{5}06$ & $0$ \\ \hline & & \\[-11pt]
$\overline{47}36$ & $\overline{04}15$ & $-1$ \\ \hline & & \\[-11pt]
$\overline{26}4$ & $\overline{12}4$ & $-1$ \\ \hline & & \\[-11pt]
$\overline{1}07$ & $\overline{6}07$ & $0$ \\ \hline & & \\[-11pt]
$\overline{37}16$ & $\overline{05}16$ & $-1$ \\ \hline & & \\[-11pt]
$\overline{46}35$ & $\overline{14}35$ & $-1$ \\ \hline & & \\[-11pt]
$\overline{17}06$ & $\overline{06}17$ & $-1$ \\ \hline & & \\[-11pt]
$\overline{36}15$ & $\overline{15}36$ & $-1$ \\ \hline & & \\[-11pt]
$\overline{5}3$ & $\overline{3}5$ & $-1$ \\ \hline & & \\[-11pt]
$\overline{16}05$ & $\overline{16}37$ & $-1$ \\ \hline & & \\[-11pt]
$\overline{35}14$ & $\overline{35}46$ & $-1$ \\ \hline & & \\[-11pt]
$\overline{15}04$ & $\overline{36}47$ & $-1$ \\ \hline & & \\[-11pt]
$\overline{4}12$ & $\overline{4}26$ & $-1$ \\ \hline & & \\[-11pt]
$\overline{14}023$ & $\overline{46}257$ & $-1$ \\ \hline & & \\[-11pt]
$\overline{2}1$ & $\overline{2}6$ & $-1$ \\ \hline & & \\[-11pt]
$\overline{3}02$ & $\overline{5}27$ & $-1$ \\ \hline & & \\[-11pt]
$\overline{12}03$ & $\overline{26}57$ & $-1$ \\ \hline & & \\[-11pt]
$\overline{23}04$ & $\overline{25}47$ & $-1$ \\ \hline & & \\[-11pt]
$\overline{4}05$ & $\overline{4}37$ & $-1$ \\ \hline & & \\[-11pt]
$\overline{5}06$ & $\overline{3}17$ & $-1$ \\ \hline & & \\[-11pt]
$\overline{6}07$ & $\overline{1}07$ & $-1$ \\ \hline & & \\[-11pt]
$\overline{7}0$ & $\overline{0}7$ & $-2$ \\ \hline
\end{tabular}
}
\caption{\hspace{.5em}Describing the function $\sigma : z^{n}b \mapsto z^{n+p}c$ in types $\Eaff{6}$ and $\Eaff{7}$}\label{Sigma tables}
\end{center}
\end{table}

\begin{proof}[Finishing the proof of Proposition {\upshape\ref{E8 Fock right block proposition}}]
As mentioned before, SageMath can be used to show that the Young walls in our crystal $\Zcal(\Lambda_{0})$ in type $\Eaff{8}$ satisfy the weakened right block property of Proposition \ref{E8 Fock right block proposition}.
In particular, we can check that for any $b\otimes a$ in $\Ccal(\xt \otimes x_{\theta - \alpha_{8}})$ or $\Ccal(\xt \otimes \xb)$ there exists a path $z^{0}b \rightarrow \cdots \rightarrow z^{H(b\otimes a) - 2}a$ in $\Baff$.
\\

Indeed, since our level $1$ perfect crystal $B_{8}$ is equal to the crystal basis of the level $0$ fundamental representation $W(\varpi_{8})$ of $\Udash$, the following produces a list of maximal vectors in $B_{8} \otimes B_{8}$.

\begin{lstlisting}
sage: K=crystals.kirillov_reshetikhin.LSPaths(['E',8,1],8)
sage: K2=crystals.TensorProduct(K,K)
sage: hw=K2.classically_highest_weight_vectors()
\end{lstlisting}

The second and sixth entries are $\xt \otimes x_{\theta - \alpha_{8}}$ and $\xt \otimes \xb$ respectively, where we note that SageMath uses an alternative tensor crystal structure in which tensor factors are reversed compared to (\ref{tensor product of crystals}).
Substituting ${\tt n}$ below with ${\tt 1}$ and ${\tt 5}$ therefore gives the components $\Ccal(\xt \otimes x_{\theta - \alpha_{8}})$ and $\Ccal(\xt \otimes \xb)$ respectively.

\begin{lstlisting}
sage: C=K2.subcrystal(generators=[hw[n]],
      index_set=[1,2,3,4,5,6,7,8])
sage: C.digraph().vertices()
\end{lstlisting}

The next step is to weight the edges of $B_{8}$ so that $0$-arrows have weight $1$ and all other arrows have weight $1000$.
To this end, we start by obtaining the list of edges in $B_{8}$.

\begin{lstlisting}
sage: K.digraph().edges()
\end{lstlisting}

With a simple `find and replace' procedure we can turn this into a list {\tt E} of \textit{weighted} edges, where for technical reasons we must also replace any {\tt Lambda[j]} with {\tt Lj}.
The following then defines the desired weighted digraph, and computes the minimal weight of a path between any two vertices.

\begin{lstlisting}
sage: var('L0 L1 L2 L3 L4 L5 L6 L7 L8')
sage: from sage.graphs.base.boost_graph
      import floyd_warshall_shortest_paths
sage: D=DiGraph(E, weighted=True)
sage: floyd_warshall_shortest_paths(D)
\end{lstlisting}

The final digit of the minimal weight of a path from $b$ to $a$ in our weighted digraph is equal to the minimum number of $0$-arrows in a path from $b$ to $a$ in $B$.
Alternatively, this is the minimal $k$ for which there is a path $z^{0}b \rightarrow \cdots \rightarrow z^{-k}a$ in $\Baff$.
\\

For every $b \otimes a \in \Ccal(\xt \otimes \xb)$ this is $0$ as desired.
For $b \otimes a \in \Ccal(\xt \otimes x_{\theta - \alpha_{8}})$ it is either $1$ -- in which case we are done -- or it is $0$.
Since $b\not= \xt,\emptyset$ or $a\not= x_{-\theta},\emptyset$ our proof is complete by Lemma \ref{path to itself lemma}.
\end{proof}

\pagebreak
\addcontentsline{toc}{section}{References}

\begin{bibsection}
\begin{biblist}

\bib{AK97}{article}{
    title={Finite-dimensional representations of quantum affine algebras},
    author={T. Akasaka},
    author={M. Kashiwara},
    journal={Publ. Res. Inst. Math. Sci.},
    volume={33},
    number={5},
    pages={839–-867},
    year={1997},
    note={\url{https://doi.org/10.2977/prims/1195145020}},
}

\bib{BFKL06}{article}{
    title={Level 1 perfect crystals and path realizations of basic representations at q=0},
    author={G. Benkart},
    author={I. Frenkel},
    author={S-J. Kang},
    author={H. Lee},
    journal={Int. Math. Res. Not.},
    volume={2006},
    number={10312},
    pages={1--28},
    year={2006},
    note={\url{https://doi.org/10.1155/IMRN/2006/10312}},
}

\bib{FHKS23}{article}{
    title={Young wall construction of level-1 highest weight crystals over $U_{q}(D_{4}^{(3)})$ and $U_{q}(G_{2}^{(1)})$},
    author={Z. Fan},
    author={S. Han},
    author={S-J. Kang},
    author={Y-S. Shin},
    journal={J. Algebra},
    year={2023},
    note={\url{https://doi.org/10.1016/j.jalgebra.2023.08.001}},
}

\bib{Hiroshima21}{article}{
    title={Perfectness of Kirillov-Reshetikhin Crystals $B^{r, s}$ for types $E_{6}^{(1)}$ and $E_{7}^{(1)}$ with a minuscule node $r$},
    author={T. Hiroshima},
    journal={arXiv preprint},
    year={2021},
    eprint={arXiv:2107.08614},
}

\bib{HKKOT00}{article}{
    title={Finite crystals and paths},
    author={G. Hatayama},
    author={Y. Koga},
    author={A. Kuniba},
    author={M. Okado},
    author={T. Takagi},
    conference={
    title={Combinatorial methods in representation theory},
    address={Kyoto},
    date={1998},
    },
    book={
    series={Adv. Stud. in Pure Math.},
    volume={28},
    publisher={Math. Soc. Jpn.},
    date={2000},
    },
    pages={113--133},
    note={\url{https://doi.org/10.2969/ASPM/02810113}},
}

\bib{HKL04}{article}{
    title={Young wall realization of crystal graphs for $U_{q}(C_{n}^{(1)})$},
    author={J. Hong},
    author={S-J. Kang},
    author={H. Lee},
    journal={Comm. Math. Phys.},
    volume={244},
    number={1},
    pages={111--131},
    year={2004},
    note={\url{https://doi.org/10.1007/s00220-003-0966-6}},
}

\bib{JS10}{article}{
    title={Affine structures and a tableau model for $E_{6}$ crystals},
    author={B. Jones},
    author={A. Schilling},
    journal={J. Algebra},
    volume={324},
    number={9},
    pages={2512--2542},
    year={2010},
    note={\url{https://doi.org/10.1016/j.jalgebra.2010.07.041}},
}

\bib{Kac90}{book}{
	title={Infinite-dimensional Lie algebras},
	author={V. Kac},
	year={1990},
	publisher={Cambridge University Press},
        note={\url{https://doi.org/10.1017/CBO9780511626234}},
}

\bib{Kang03}{article}{
    title={Crystal bases for quantum affine algebras and combinatorics of Young walls},
    author={S-J. Kang},
    journal={Proc. London Math. Soc.},
    volume={86},
    number={1},
    pages={29--69},
    year={2003},
    note={\url{https://doi.org/10.1112/S0024611502013734}},
}

\bib{KKMMNN92}{article}{
    title={Affine crystals and vertex models},
    author={S-J. Kang},
    author={M. Kashiwara},
    author={K. C. Misra},
    author={T. Miwa},
    author={T. Nakashima},
    author={A. Nakayashiki},
    journal={Int. J. Mod. Phys. A},
    volume={7},
    number={supp01a},
    pages={449--484},
    year={1992},
    note={\url{https://doi.org/10.1142/S0217751X92003896}},
}

\bib{KKMMNN92(2)}{article}{
    title={Perfect crystals of quantum affine Lie algebras},
    author={S-J. Kang},
    author={M. Kashiwara},
    author={K. C. Misra},
    author={T. Miwa},
    author={T. Nakashima},
    author={A. Nakayashiki},
    journal={Duke Math. J.},
    volume={68},
    number={3},
    pages={499--607},
    year={1992},
    note={\url{https://doi.org/10.1215/S0012-7094-92-06821-9}},
}

\bib{KK04}{article}{
    title={Crystal bases of the Fock space representations and string functions},
    author={S-J. Kang},
    author={J-H. Kwon},
    journal={J. Algebra},
    volume={280},
    number={1},
    pages={313--349},
    year={2004},
    note={\url{https://doi.org/10.1016/J.JALGEBRA.2004.04.013}},
}

\bib{KK08}{article}{
    title={Fock space representations of quantum affine algebras and generalized Lascoux-Leclerc-Thibon algorithm},
    author={S-J. Kang},
    author={J-H. Kwon},
    journal={J. Korean Math. Soc.},
    volume={45},
    number={4},
    pages={1135--1202},
    year={2008},
    note={\url{https://doi.org/10.4134/JKMS.2008.45.4.1135}},
}

\bib{Kashiwara90}{article}{
    title={Crystalizing the $q$-analogue of universal enveloping algebras},
    author={M. Kashiwara},
    journal={Commun. Math. Phys.},
    volume={133},
    number={2},
    pages={249--260},
    year={1990},
    note={\url{https://doi.org/10.1007/BF02097367}},
}

\bib{Kashiwara91}{article}{
    title={On crystal bases of the q-analogue of universal enveloping algebras},
    author={M. Kashiwara},
    journal={Duke Math. J.},
    volume={63},
    number={2},
    pages={465--516},
    year={1991},
    note={\url{https://doi.org/10.1215/S0012-7094-91-06321-0}},
}

\bib{Kashiwara93}{article}{
    title={Global crystal bases of quantum groups},
    author={M. Kashiwara},
    journal={Duke Math. J.},
    volume={69},
    number={2},
    pages={455--485},
    year={1993},
    note={\url{https://doi.org/10.1215/S0012-7094-93-06920-7}},
}

\bib{Kashiwara94}{article}{
    title={Crystal bases of modified quantized enveloping algebra},
    author={M. Kashiwara},
    journal={Duke Math. J.},
    volume={73},
    number={2},
    pages={383--413},
    year={1994},
    note={\url{https://doi.org/10.1215/S0012-7094-94-07317-1}},
}

\bib{Kashiwara95}{article}{
    title={On crystal bases},
    author={M. Kashiwara},
    conference={
    title={Representations of Groups, Proc. of the 1994 Annual Seminar of the Canadian Math. Soc.},
    address={Banff Center, Banff, Alberta},
    date={1995},
    },
    book={
    address={Providence, RI},
    series={CMS Conf. Proc.},
    volume={16},
    publisher={Amer. Math. Soc.},
    date={1995},
    },
    pages={155--197},
    note={\url{https://doi.org/10.1515/9783110801897.195}},
}

\bib{Kashiwara02}{article}{
    title={On level zero representations of quantized affine algebras},
    author={M. Kashiwara},
    journal={Duke Math. J.},
    volume={112},
    number={1},
    pages={117--175},
    year={2002},
    note={\url{https://doi.org/10.1215/S0012-9074-02-11214-9}},
}

\bib{KMPY96}{article}{
    title={Perfect crystals and q-deformed Fock spaces},
    author={M. Kashiwara},
    author={T. Miwa},
    author={J.-U. H. Petersen},
    author={C. M. Yung},
    journal={Sel. Math. New Ser.},
    volume={2},
    number={3},
    pages={415--499},
    year={1996},
    note={\url{https://doi.org/10.1007/BF01587950}},
}

\bib{KMS95}{article}{
    title={Decomposition of $q$-deformed Fock spaces},
    author={M. Kashiwara},
    author={T. Miwa},
    author={E. Stern},
    journal={Sel. Math. New Ser.},
    volume={1},
    pages={787--805},
    year={1995},
    note={\url{https://doi.org/10.1007/BF01587910}},
}

\bib{KN94}{article}{
    title={Crystal Graphs for Representations of the $q$-Analogue of Classical Lie Algebras},
    author={M. Kashiwara},
    author={T. Nakashima},
    journal={J. Algebra},
    volume={165},
    number={2},
    pages={295--345},
    year={1994},
    note={\url{https://doi.org/10.1006/jabr.1994.1114}},
}

\bib{Lusztig90}{article}{
    title={Canonical bases arising from quantized enveloping algebras},
    author={G. Lusztig},
    journal={J. Amer. Math. Soc.},
    volume={3},
    number={2},
    pages={447--498},
    year={1990},
    note={\url{https://doi.org/10.2307/1990961}},
}

\bib{SageMath}{manual}{
    author={Developers, The~Sage},
    title={{S}agemath, the {S}age {M}athematics {S}oftware {S}ystem ({V}ersion 9.8)},
    date={2023},
    note={\tt{https://www.sagemath.org}. \url{https://doi.org/10.5281/zenodo.593563}}
}

\bib{Stern95}{article}{
    title={Semi-infinite wedges and vertex operators},
    author={E. Stern},
    journal={Int. Math. Res. Not.},
    volume={1995},
    number={4},
    pages={201--220},
    year={1995},
    note={\url{https://doi.org/10.1155/S107379289500016X}},
}

\end{biblist}
\end{bibsection}

\end{document}